\documentclass{imsart}

\RequirePackage{amsthm,amsmath,amsfonts,amssymb,mathrsfs}

\RequirePackage[numbers]{natbib}
\RequirePackage{graphicx}
\usepackage{multirow}
\usepackage{booktabs}
\usepackage{enumitem}
\usepackage[colorlinks]{hyperref}
\usepackage{xcolor}
\usepackage{comment}
\usepackage{float}

\addtolength{\oddsidemargin}{-.5in}%
\addtolength{\evensidemargin}{-.5in}%
\addtolength{\textwidth}{1in}%
\addtolength{\textheight}{0in}%
\addtolength{\topmargin}{0in}%

\startlocaldefs
\numberwithin{equation}{section}

\newtheorem{axiom}{Axiom}
\newtheorem{claim}[axiom]{Claim}
\newtheorem{theorem}{Theorem}[section]
\newtheorem{thm}{Theorem}[section]
\newtheorem{lem}[theorem]{Lemma}


\newtheorem{rem}[theorem]{Remark}
\newtheorem{defn}[theorem]{Definition}

\newtheorem{condition}[theorem]{Condition}
\newtheorem{cor}[theorem]{Corollary}
\newtheorem{prop}[theorem]{Proposition}


\endlocaldefs

\begin{document}
	
	\begin{frontmatter}
		\title{Tracy-Widom limit for the largest eigenvalue of high-dimensional
			covariance matrices in elliptical distributions}
		\runtitle{Tracy-Widom limit in elliptical distributions}
		
		\begin{aug}
			\author[B]{\fnms{Wen} \snm{Jun}\ead[label=e1]{jun.wen@u.nus.edu}},
			\author[B]{\fnms{Xie} \snm{Jiahui}\ead[label=e2]{jiahui.xie@u.nus.edu}},
			\author[B]{\fnms{Yu} \snm{Long}\ead[label=e3]{stayl@nus.edu.sg}}
			\and
			\author[B]{\fnms{Zhou} \snm{Wang}\ead[label=e4,mark]{wangzhou@nus.edu.sg}}
			
			\address[B]{Department of Statistics and Applied Probability,
				National University of Singapore,
				\printead{e4}}
		\end{aug}
		
		\begin{abstract}
			Let $X$ be an $M\times N$ random matrix consisting of independent
			$M$-variate elliptically distributed column vectors $\mathbf{x}_{1},\dots,\mathbf{x}_{N}$
			with general population covariance matrix $\Sigma$. In the literature,
			the quantity $XX^{*}$ is referred to as the sample covariance matrix after scaling,
			where $X^{*}$ is the transpose of $X$. In this article, we prove that the limiting behavior of the scaled largest eigenvalue of $XX^{*}$
			is universal for a wide class of elliptical distributions, namely,
			the scaled largest eigenvalue converges weakly to the same limit regardless of the distributions
			that $\mathbf{x}_{1},\dots,\mathbf{x}_{N}$ follow as
			$M,N\to\infty$ with $M/N\to\phi_0>0$ if the  weak fourth moment of the radius of $\mathbf{x}_{1}$ exists . In particular,
			via comparing the Green function with that of the sample covariance
			matrix of multivariate normally distributed data, we conclude that
			the limiting distribution of the scaled largest eigenvalue is the
			celebrated Tracy-Widom law. 
		\end{abstract}

		\begin{keyword}
			\kwd{Sample covariance matrices}
			\kwd{Elliptical distributions}
			\kwd{Edge universality}
			\kwd{Tracy-Widom distribution}
			\kwd{Tail probability}
		\end{keyword}
		
	\end{frontmatter}
	
	\section{Introduction}

	Suppose one observed independent and identically distributed (i.i.d.) data $\mathbf{x}_{1},$ $\dots,\mathbf{x}_{N}$
	with mean $0$  from $\mathbb{R}^{M}$, where the positive integers
	$N$ and $M$ are the sample size and the dimension of data respectively.
	Define ${\cal W}=N^{-1}\sum_{i=1}^{N}\mathbf{x}_{i}\mathbf{x}_{i}^{*}$,
	referred to as the sample covariance matrix of $\mathbf{x}_{1},\dots,\mathbf{x}_{N}$,
	where $*$ is the conjugate transpose of matrices throughout this
	article. A fundamental research question in statistics is to analyze the behavior
	of ${\cal W}$. Let $X=(\mathbf{x}_{1},\dots,\mathbf{x}_{N})$ be
	the $M\times N$ matrix, while  $\Sigma=\mathbb{E}\mathbf{x}_{1}\mathbf{x}_{1}^{*}$ is defined as
	the population covariance matrix. In recent decades, fruitful results
	exploring the asymptotic property of ${\cal W}$ have been established
	by random matrix theory under the high-dimensional asymptotic regime.
	In contrast to the traditional low dimensional asymptotic regime where
	the dimension $M$ is usually fixed or small and the sample size $N$
	is large, the high-dimensional asymptotic regime refers to that both
	$N,M$ are large and even of comparable magnitude. For a list of introductory
	materials on random matrix theory, see e.g. \cite{Bai1999review,BaiandSilverstein2010,locallawlecturenotes,Ding2017,ErdosSurvey,Silverstein_notes}.
	Let $T$ be a matrix such that $TT^*=\Sigma$,
	It is worth noting that most
	works on the inference of high-dimensional covariance matrices using
	random matrix theory assume that $X=TY$ with the $M\times N$
	matrix $Y$ consisting of i.i.d. entries with  mean $0$ and variance
	$1$. This assumption excludes many practically useful statistical
	models, for instance, almost all members in the family of elliptical
	distributions. One exception is the case where $\mathbf{x}_{1},$ $\dots,\mathbf{x}_{N}$
	follow  $M$-variate normal distribution with mean $0$ and population
	covariance matrix $\Sigma$. If $\mathbf{x}_{1},$ $\dots,\mathbf{x}_{N}$
	are not normal, the entries in each column of $X$ are only guaranteed
	to be uncorrelated instead of independent, the latter being a much
	stronger notion than the former.
	
	In this article, we consider the case where $\mathbf{x}_{1},$ $\dots,\mathbf{x}_{N}$
	follow elliptical distribution which is a family of probability distributions
	widely used in statistical modeling. See e.g. the technical report
	\cite{fang1990} for a comprehensive introduction to elliptical distributions.
	Generally, we say a random vector $\mathbf{y}$ follows elliptical
	distribution if $\mathbf{y}$ can be written as
	\begin{equation}
	\mathbf{y}=\xi A\mathbf{u},\label{eq:elliptical model}
	\end{equation}
	where $A\in\mathbb{R}^{M\times M}$ is a nonrandom
	matrix with ${\rm rank}(A)=M$, $\xi\ge0$ is a scalar random variable
	representing the radius of $\mathbf{y}$, and $\mathbf{u}\in\mathbb{R}^{M}$
	is the random direction, which is independent of $\xi$ and uniformly
	distributed on the $M-1$ dimensional unit sphere $\mathbb{S}^{M-1}$
	in $\mathbb{R}^{M}$, denoted as $\mathbf{u}\sim U(\mathbb{S}^{M-1})$.
	See e.g. \cite{Dobriban2018b,Dobriban2018a,ZhouWang2018a,Yang2017}
	for some recent advances on statistical inference for elliptically
	distributed data. The current paper focuses on the problem involving the largest
	eigenvalue of sample covariance matrix with elliptically distributed
	data. Briefly speaking, we show the following result.
	\begin{claim}
		\textit{If $\Sigma$ satisfies some mild assumptions, then the rescaled largest eigenvalue $N^{2/3}(\lambda_1(\mathcal{W})-\lambda_{+})$
			converges weakly to the celebrated Tracy-Widom law (\cite{elkaroui2007,Johnstone2001,onatski2008,Tracy1994}) if  the radius satisfies the following tail probability condition:
			\begin{equation}\label{eq:tail condition intro}
			\lim_{s\rightarrow\infty}\limsup_{N\to\infty}s^{2}\mathbb{P}(|N\xi^2-M|\ge \sqrt M \emph{}s)=0
			\end{equation}}
	\end{claim}
	\noindent Our arguments are  built upon the pioneering works \cite{Bao2013,Ding2018,ERDOS2012,Knowles2017,pillai2014,Yin2014}.
	
	The eigenvalues of sample covariance matrix widely appear in statistical
	applications such as principal component analysis (PCA), factor analysis,
	hypothesis testing. As an instance, in PCA, the eigenvalue of covariance matrix represents the
	variance of each component of rotated vector. In many practical
	situations, such as financial asset pricing and signal processing
	(see, e.g. \cite{BaiandNg2002,Cai2015}), the data observed are usually
	of high-dimension but are actually sparse in nature. A common practical act is to keep only the small portion of
	components of large variances suggested by the eigenvalues of
	covariance matrix with others discarded. This way of data manipulation
	often acts as an effective dimension reduction technique in practice.
	However, most of the time, the eigenvalues of population covariance
	matrix are unknown. At this time, the largest sample eigenvalue performs
	as a good candidate for the inference of properties of population
	eigenvalues. See e.g. \cite{bao2015,johnstone2008,Johnstone&Nadler2017,Onatski2009}.
	
	We summarize the contributions of this paper here. We first prove the local law and Tracy-Widom limit for the sample covariance matrices of elliptical high-dimensional random vectors. The weak correlations cross the coordinates differentiate the model from the existing studies, hence facilitate to the applications in more general scenarios. The correlations also bring new challenge to the technical proofs, such as calculating the large deviation bounds and fluctuation averaging errors. Corresponding lemmas in this paper can be of independent interest.  Moreover, we relax the typical moment conditions  in existing results, e.g.\cite{ZhouWang2018a,ZhouWang2018b}, to the sharper tail probability assumption \eqref{eq:tail condition intro}.  We prove that the tail probability assumption is  sufficient for the Tracy-Widom limit under elliptical distribution.   This result will undoubtedly push forward  the research on random matrix theory with elliptically distributed data. 
	
	This article is organized as follows. In Section \ref{sec:2}, we
	introduce our notation and list the basic conditions. In Section \ref{sec:3},
	we present our main results and the sketch of proof. We first prove the deformed local law which is a bound of
	difference between the Stieltjes transform
	of empirical distribution of sample eigenvalues and that
	of its limiting counterpart under some bounded restrictions. This result will be our starting point
	to derive the limiting distribution of the rescaled largest eigenvalue. Meanwhile, it may be of interest for its own right since a number
	of other useful consequences regarding sample covariance matrix can be obtained from it, such
	as eigenvector delocalization and eigenvalue spacing (see e.g. \cite{locallawlecturenotes}). Taking the deformed local law as an input, the next step is to prove a strong average local law with some restrictions on bounded support and the  four leading moments of $\xi$. Finally, we show that the limiting
	distribution of the rescaled largest eigenvalue does not depend on
	the specific distribution of matrix entries under a tail probability condition. Comparison with the
	normally distributed data then indicates that the limiting distribution
	of the rescaled largest eigenvalue is the Tracy-Widom (TW) law. In Sections \ref{sec:Preliminary-results} to \ref{sec:9},
	we give the detailed proof of our main theorems, while several lemmas and results will be also put into the Appendices.

	\section{\label{sec:2}Notation and basic conditions}
	
	Throughout this article, we set $C>0$ to be a constant whose value
	may be different from line to line. $\mathbb{Z}$, $\mathbb{Z}_{+}$,
	$\mathbb{R}$, $\mathbb{R}_{+}$, $\mathbb{C}$, $\mathbb{C}^{+}$
	denote the sets of integers, positive integers, real numbers, positive real
	numbers, complex numbers and the upper half complex plane respectively.
	For $a,b\in\mathbb{R}$, $a\land b=\min(a,b)$ and $a\lor b=\max(a,b)$.
	$\imath=\sqrt{-1}.$ For a complex number $z$, $\operatorname{Re} z$ and $\operatorname{Im} z$
	denote the real and imaginary parts of $z$ respectively. For a matrix
	$A=(A_{ij})$, ${\rm Tr}A$ denotes the trace of $A$, $\|A\|$ denotes
	the spectral norm of $A$ equal to the largest singular value
	of $A$ (usually we use $\|\cdot\|_2$ as well) and $\|A\|_{F}$ denotes the Frobenius norm of $A$ equal to
	$(\sum_{ij}|A_{ij}|^{2})^{1/2}$. For $M\in\mathbb{Z}_{+}$, ${\rm diag}(a_{1},\dots,a_{M})$
	denote the diagonal matrix with $a_{1},\dots,a_{M}$ as its diagonal
	elements. For two sequences of numbers $\{a_{N}\}_{N=1}^{\infty}$,
	$\{b_{N}\}_{N=1}^{\infty}$, $a_{N}\asymp b_{N}$ if there exist
	constants $C_{1},C_{2}>0$ such that $C_{1}|b_{N}|\le|a_{N}|\le C_{2}|b_{N}|$
	and $O(a_{N})$ and $o(a_{N})$ denote the sequences such that $|O(a_{N})/a_{N}|\le C$
	with some constant $C>0$ for all large $N$ and $\lim_{N\to\infty}o(a_{N})/a_{N}=0$.
	$I$ denotes the identity matrix of appropriate size. For a set $A$,
	$A^{c}$ denotes its complement (with respect to some whole set which
	is clear in the context). For some integer $M\in\mathbb{Z}_{+}$,
	$\chi_{M}^{2}$ denotes the chi-square distribution with degrees of
	freedom $M$. For a measure $\varrho$, ${\rm supp}(\varrho)$ denotes
	its support. For any finite set $T$, we let $|T|$ denote the cardinality
	of $T$. For any event $\Xi$, $\mathbf{1}(\Xi)$ denotes the indicator
	of the event $\Xi$, equal to $1$ if $\Xi$ occurs and
	$0$ if $\Xi$ does not occur. For any $a,b\in\mathbb{R}$ with $a\le b$,
	$\mathbf{1}_{[a,b]}(x)$ is equal to $1$ if $x\in[a,b]$ and $0$
	if $x\notin[a,b]$.
	
	We consider the $M\times N$ data matrix ${X}$ as follows. Let $U=(\mathbf{u}_{1},\dots,\mathbf{u}_{N})$ and $\mathscr{D}={\rm diag}(\xi_{1},\dots,\xi_{N})$. Then the corresponding column vectors and data matrices are
	\[
	{ X:=(\mathbf{x}_{1},\dots,\mathbf{x}_{N}),\qquad \Sigma^{1/2}U=:(\mathbf{r}_1,\dots,\mathbf{r}_N)}
	,\]
	\[
	{ X=\Sigma^{1/2}U\mathscr{D}=\Sigma^{1/2}(\xi_1\mathbf{u}_1,\dots,\xi_N\mathbf{u}_N)=(\xi_1\mathbf{r}_1,\dots,\xi_N\mathbf{r}_N)}
	\]
	where $\mathbf{u}_i$'s are i.i.d. from $U(\mathbb{S}^{M-1})$ and $\xi_i$'s are i.i.d. nonnegative random variables independent with all $\mathbf{u}_i$'s.
	Our sample covariance matrix is $XX^*$. So $\xi_i$ has absorbed the usual normalised factor $1/\sqrt{N}$ for all $i$.
	For convenience we write $\hat\xi_i:=\sqrt{N}{\xi}_i$.
	
	The $\Sigma^{1/2}$ in the above equation can be also replaced by some general $M\times M$ matrix $A$ with $AA^*=\Sigma$. We claim that the technical proof is totally the same, using  the singular value decomposition $A=U_AD_AV_A$ and the observation that the distribution of $\mathbf{u}_{1},\dots,\mathbf{u}_{N}$
	is orthogonally invariant. For the same reason, without loss of generality we assume
	that $\Sigma={\rm diag}(\sigma_{1},\dots,\sigma_{M})$, where $\sigma_{1},\dots,\sigma_{M}$ denote the descending eigenvalues.
	
	Denote the empirical spectral density of $\Sigma$ as
	\[
	\pi:=\frac{1}{M}\sum_{i=1}^{M}\delta_{\sigma_i}.
	\]
	Following the general assumptions on $\Sigma$ in the literature, we suppose that for a small enough constant $\tau>0$,
	\begin{equation}\label{2.1}
	\sigma_1\leqslant\tau^{-1}\qquad \pi([0,\tau])\leqslant1-\tau.
	\end{equation}
	Fix $0<\tau<1$, and define
	\[
	\mathbf{D}\equiv\mathbf{D}(\tau,N):=\{z=E+\imath\eta\in\mathbb{C}^{+}:|z|\ge\tau,|E|\le\tau^{-1},N^{-1+\tau}\le\eta\le\tau^{-1}\}.
	\]
	For $z:=E+\imath\eta\in\mathbb{C}^{+}$, further define the following quantities
	\[
	W =X^{*} X,\qquad \mathcal{W}=XX^{*},
	\]
	while the respective Green functions of $W$ and $\mathcal{W}$ are
	\[
	G(z)= (W-zI)^{-1},\qquad{\cal G}(z)=({\cal W}-zI)^{-1}.
	\]
	We denote the respective empirical spectral density of $W$ and $\mathcal{W}$ as
	\[
	\rho_{W}:=\frac{1}{N}\sum_{i=1}^{N}\delta_{\lambda_i(W)},\qquad\rho_{\mathcal{W}}:=\frac{1}{M}\sum_{i=1}^{M}\delta_{\lambda_i(\mathcal{W})}.
	\]
	The stieltjes' transforms of $\rho_{W}$ and $\rho_{\mathcal{W}}$ are given by
	\[
	m_N^{W}:=\int\frac{1}{x-z}\rho_{W}=\frac{1}{N}\operatorname{Tr}G(z),  \qquad m_N^{\mathcal{W}}:=\int\frac{1}{x-z}\rho_{\mathcal{W}}=\frac{1}{M}\operatorname{Tr}\mathcal{G}(z).
	\]
	Throughout the rest of the paper, we denote $m_N(z):=m_N^{W}(z)$ for simplification.
	
	It's easy to see that the eigenvalues of $W$ and ${\cal W}$ are the
	same up to $|M- N|$ number of $0$s. We denote the descending
	eigenvalues of $W$ and ${\cal W}$ in the unified manner as $\lambda_{1}\ge\lambda_{2}\ge\cdots\ge\lambda_{M\lor N}$,
	where $\lambda_{1},\dots,\lambda_{M}$ and $\lambda_{1},\dots,\lambda_{N}$
	are understood to be the eigenvalues of ${\cal W}$ and $W$ respectively.
	In particular, $\lambda_{M\land N+1},\dots,\lambda_{M\lor N}$ are
	all $0$. Consequently,
	\begin{equation}
	\phi_N\rho_{\mathcal{W}}=\rho_{W}+(1-\phi_N)\delta_0
	\end{equation}
	and
	\begin{equation}\label{eq:relationship between m_w and m_N}
	\phi_N m_N^{\mathcal{W}}(z)=\frac{1-\phi_N}{z}+m_N(z),
	\end{equation}
	where $\phi_N:=M/N$. We may suppress the subscript $N$ and use $\phi$ directly hereafter.
	
	Denote the index set ${\cal I}=\{1,\dots,N\}$. For $T\subset{\cal I}$, we introduce
	the notation $X^{(T)}$ to denote the $M\times(N-|T|)$ minor of $X$
	obtained from removing all the $i$th columns of $X$ for $i\in T$. In
	particular, $X^{(\emptyset)}=X$. For convenience, we briefly write
	$(\{i\})$, $(\{i,j\})$ and $\{i,j\}\cup T$ as $(i)$, $(i,j)$
	and $(ijT)$ respectively. Correspondingly,
	\[
	W^{(T)}=(X^{(T)})^{*} X^{(T)},\qquad{\cal W}^{(T)}=X^{(T)}(X^{(T)})^{*},
	\]
	and
	\[
	G^{(T)}(z)=(W^{(T)}-zI)^{-1}, \quad{\cal G}^{(T)}(z)=({\cal W}^{(T)}-zI)^{-1},\quad
	m_{N}^{(T)}(z)=\frac{1}{N}{\rm Tr}G^{(T)}(z).
	\]
	Throughout this article, we denote $X_{ij}$ as the $(i,j)$-th
	entry of a matrix $X$. In particular, in the minor $X_{ij}^{(T)}$ with $i,j\notin T$, we keep the original  indices of $X$.
	
	In the following, we present a notion introduced in \cite{Erdos2013}.
	It provides a simple way of systematizing and making precise statements for two families of random variables $A,B$
	of the form ``$A$ is bounded with high probability by $B$ up to
	small powers of $N$'' .
	\begin{defn}[Stochastic domination]\mbox{}{\newline}
		(a). For two families of nonnegative random variables
		\[
		A=\{A_{N}(t):N\in\mathbb{Z}_{+},t\in T_{N}\},\qquad B=\{B_{N}(t):N\in\mathbb{Z}_{+},t\in T_{N}\},
		\]
		where $T_{N}$ is
		a possibly $N$-dependent parameter set, we say that $A$ is stochastically
		dominated by $B$, uniformly in $t$ if for all (small) $\varepsilon>0$
		and (large) $D>0$ there exists $N_{0}(\varepsilon,D)\in\mathbb{Z}_{+}$
		such that 	as $N\ge N_{0}(\varepsilon,D)$,
		\[
		\sup_{t\in T_{N}}\mathbb{P}\big(A_{N}(t)>N^{\varepsilon}B_{N}(t)\big)\le N^{-D}.
		\]
		If $A$ is stochastically dominated
		by $B$, uniformly in $t$, we use notation $A\prec B$ or $A=O_{\prec}(B)$.
		Moreover, for some complex family $A$ if $|A|\prec B$ we also write $A=O_{\prec}(B)$. \\
		(b). Let $A$ be a family of random matrices and $\zeta$ be a family of nonnegative random variables. Then we denote $A=O_{\prec}(\zeta)$ if $A$ is dominated under weak operator norm sense, i.e. $|\langle\mathbf{v},A\mathbf{w}\rangle|\prec\zeta\|\mathbf{v}\|_2\|\mathbf{w}\|_2$  for any deterministic vectors $\mathbf{v}$ and $\mathbf{w}$.\\
		(c). For two sequences of numbers $\{a_{N}\}_{N=1}^{\infty}$,
		$\{b_{N}\}_{N=1}^{\infty}$, $a_{N}\prec b_{N}$ if for all $\epsilon>0$, $a_N\leq N^{\epsilon}b_N$.
	\end{defn}
	
	\begin{rem}
		The stochastic domination throughout this article holds uniformly
		for the matrix indices and $z\in\mathbf{D}$ (or the set $\mathbf{D}^{e}$ defined later).
		For simplicity, in the proof of each result, we omit the explicit
		indication of this uniformity.
	\end{rem}

	The discussion in this paper highly relies on the following global definitions.
	\begin{defn}[High probability event]
		We say that an $N$-dependent event $\Omega$ holds with overwhelming high probability if there exists constant $c>0$ independent of $N$, such that
		\begin{equation}
		\mathbb{P}(\Omega)\geqslant 1-\exp{(-N^c)},
		\end{equation}
		for all sufficiently large $N$.
	\end{defn}
	\begin{defn}[Bounded support condition]\label{def:Bounded support cond}
		We say that an $N$-dependent random variable $x:=x(N)$ satisfies the bounded support condition with $q\equiv q(N)$ if
		\begin{equation}\label{2.5}
		\mathbb{P}(|x|\leqslant q)\geqslant 1-\exp{(-N^c)},
		\end{equation}
		for some $c>0$.
	\end{defn}
	\begin{rem}
		Note that if $x$ satisfies Condition \eqref{2.5}, it is equivalent to that the event $\{|x|\leqslant q\}$ holds with high probability. Consequently, we can neglect the bad event $\{|x|>q\}$. In other words, in our proof, we are in a high probability whole set $\Omega$. For instance, under $\Omega$, the entries of a data matrix satisfy the bounded support condition.
	\end{rem}
	
	Throughout this article, we assume the following conditions.
	\begin{condition}
		\label{cond:1}$N\to\infty$ with $M\equiv M(N)\to\infty$ such that
		$\phi:=M/N\rightarrow \phi_0\in[a,b]$ for all large $N$ where $a<b$ are two positive
		numbers.
	\end{condition}
	
	\begin{condition}
		\label{cond:2}$\xi_{1},\dots,\xi_{N}$ are independent nonnegative random variables such that $\mathbb{E}\xi_{i}^{2}=\phi$ and
		\begin{equation}\label{2.6}
		\lim_{s\rightarrow\infty}\limsup_{N\to \infty}s^2\mathbb{P}(|\hat{\xi}_i^2-M|\geqslant\sqrt{M} s)=0,
		\end{equation}
		for all $i\in\{1,\cdots,N\}$. Recall that $\xi_i=\hat{\xi}_i/\sqrt{N}$ and $\hat{\xi}_i^2$ has left tight support i.e. $\hat{\xi}^2\ge0$.
	\end{condition}
	We remark that in Condition \ref{cond:2}, the general choice of $s$ diverges with $N$, e.g. $s=N^{1/2-\epsilon}$ in the proof of Theorem \ref{thm:universality} below.
	
	We put an alternative restriction on $\xi_1,\dots,\xi_N$.
	\begin{condition}
		\label{cond:3} $\xi_{1},\dots,\xi_{N}$ are independent nonnegative random variables such that $\mathbb{E}\xi_{i}^{2}=\phi$ and $\xi^2_i-\phi$ has bounded support $q$ in the sense of Definition \ref{def:Bounded support cond} with
		\[
		N^{-1/2}\log N\leqslant q\leqslant N^{-c}
		\]
		for some $c<1/2$ and all $i\in\{1,\cdots,N\}$.
	\end{condition}
	
	\begin{rem}
		Condition \ref{cond:2} excludes some elliptical distributions, such as multivariate student-\emph{t }distributions and normal scale mixtures. The limiting spectral distribution of sample covariance matrix from these distributions do not follow the Mar\v{c}enko-Pastur equation
		(\ref{eq:MP equation}), see (\cite{elkaroui2009,LiandYao2018}),
		and hence is out of scope of this article. Actually there are still
		a wide range of distributions satisfying Condition \ref{cond:2},
		including the multivariate Pearson type II distribution and the family
		of Kotz-type distributions, see the examples and Table 1 in \cite{ZhouWang2018a}.
		In particular, if $\xi^{2}$ can be written as $\xi^{2}=N^{-1}(y_{1}^2+\cdots+y_{M}^2)$
		with $y_{1},\dots,y_{M}$ being an positive i.i.d. sequence such that
		$\mathbb{E}y_{1}=1$ and $\mathbb{E}y_{1}^{4}<\infty$,
		then $\xi^2$ satisfies Condition \ref{cond:2}.
	\end{rem}
	\begin{rem}
		\eqref{2.6} is weaker than
		\[
		\limsup_{N\to \infty}\frac{1}{M}\mathbb{E}|\hat{\xi}_i^2-M|^2<\infty
		\]
		but stronger than
		\[
		\limsup_{N\to \infty}\frac{1}{M}\mathbb{E}|\hat{\xi}_i^2-M|^{2-\delta}<\infty
		\]
		for arbitrary $\delta>0$.
	\end{rem}
	
	One can check that Conditions \ref{cond:1} and \ref{cond:3} are
	sufficient for Theorem 1.1 of \cite{BaiandZhou2008}. Hence
	we have the following result.
	\begin{lem}
		\label{lem:global law}Suppose, given Conditions \ref{cond:1} and
		\ref{cond:3}, $\pi$ converges weakly to a probability distribution
		$\pi_{0}$ and $\phi\to\phi_{0}\in(0,\infty)$. Then, almost surely,
		$\varrho_{W}$ converges weakly to a deterministic limiting probability
		distribution $\varrho_0$ and for any $z\in\mathbb{C}^{+}$, almost
		surely, $m_{N}(z)$ converges to the Stieltjes transform of $\varrho_0$
		which we denote as $m_0(z)$. Moreover, for all $z\in\mathbb{C}^{+}$,
		$m_0(z)$ is the unique value in $\mathbb{C}^{+}$ satisfying the equation
		\begin{equation}
		z=-\frac{1}{m_0(z)}+\phi_{0}\int\frac{x}{1+xm_0(z)}\pi_{0}({\rm d}x).\label{eq:MP equation}
		\end{equation}
	\end{lem}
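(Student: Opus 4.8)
The plan is to recognise $W$ (equivalently ${\cal W}$, with which it shares every nonzero eigenvalue) as a sample covariance matrix of the kind treated in \citep{BaiandZhou2008} --- one with independent columns but \emph{without} an independence structure inside the columns --- and then to invoke Theorem~1.1 there. The work thus splits into (i) putting ${\cal W}$ in the required form and identifying the population matrix, and (ii) verifying that theorem's hypotheses under Conditions \ref{cond:1}--\ref{cond:xi} together with the assumed convergences $\pi\Rightarrow\pi_{0}$ and $\phi\to\phi_{0}$.

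For (i): a direct computation shows ${\cal W}=\tfrac1N\sum_{i=1}^{N}\tilde{\mathbf{r}}_{i}\tilde{\mathbf{r}}_{i}^{*}$ with $\tilde{\mathbf{r}}_{i}:=\sqrt{N}\,\xi_{i}\Sigma^{1/2}\mathbf{u}_{i}$. By Conditions \ref{cond:u}--\ref{cond:xi} the columns $\tilde{\mathbf{r}}_{i}$ are independent and have mean zero, and, using $\mathbb{E}[\mathbf{u}_{i}\mathbf{u}_{i}^{*}]=M^{-1}I_{M}$ and $\mathbb{E}\xi_{i}^{2}=M/N$, they share the common covariance $\mathbb{E}\,\tilde{\mathbf{r}}_{i}\tilde{\mathbf{r}}_{i}^{*}=\Sigma$. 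Hence the population covariance is $\Sigma$, whose spectral distribution is $\pi\Rightarrow\pi_{0}$, and the dimension ratio is $\phi=M/N\to\phi_{0}$. It remains to check the main probabilistic hypothesis of \citep{BaiandZhou2008}, a concentration (``trace'') bound of the form
\[
\max_{1\le i\le N}\ \sup_{\|B\|\le1}\ \mathbb{E}\big|\,\tilde{\mathbf{r}}_{i}^{*}B\,\tilde{\mathbf{r}}_{i}-{\rm Tr}(B\Sigma)\,\big|^{2}=o(N^{2}),
\]
where the supremum is over nonrandom $M\times M$ matrices $B$.

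For (ii) I would reduce this bound to two standard inputs applied to $\tilde{B}:=\Sigma^{1/2}B\Sigma^{1/2}$, observing that $\tilde{\mathbf{r}}_{i}^{*}B\,\tilde{\mathbf{r}}_{i}=N\xi_{i}^{2}\,\mathbf{u}_{i}^{*}\tilde{B}\mathbf{u}_{i}$, ${\rm Tr}(B\Sigma)={\rm Tr}\tilde{B}$, and $\|\tilde{B}\|\le\|\Sigma\|\,\|B\|$ is bounded. First, concentration of the quadratic form of a uniform vector on the sphere: $\mathbb{E}[\mathbf{u}_{i}^{*}\tilde{B}\mathbf{u}_{i}]=M^{-1}{\rm Tr}\tilde{B}$ and ${\rm Var}(\mathbf{u}_{i}^{*}\tilde{B}\mathbf{u}_{i})=O(M^{-2}\|\tilde{B}\|_{F}^{2})=O(M^{-1})$, which follows from L\'evy/Lipschitz concentration on $\mathbb{S}^{M-1}$, or from writing $\mathbf{u}_{i}=\mathbf{g}_{i}/\|\mathbf{g}_{i}\|$ with $\mathbf{g}_{i}$ a standard Gaussian vector and computing moments. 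Second, Condition \ref{cond:xi}, which (combined with the finiteness of all moments of $\xi_{i}$) gives ${\rm Var}(\xi_{i}^{2})=O(N^{-1+\varepsilon})$ for every $\varepsilon>0$. Since $\xi_{i}$ and $\mathbf{u}_{i}$ are independent, $N^{-1}{\rm Tr}\tilde{B}=\mathbb{E}[\xi_{i}^{2}\mathbf{u}_{i}^{*}\tilde{B}\mathbf{u}_{i}]$, and expanding the product $\xi_{i}^{2}\mathbf{u}_{i}^{*}\tilde{B}\mathbf{u}_{i}$ about this mean makes the cross terms vanish, so ${\rm Var}(\xi_{i}^{2}\mathbf{u}_{i}^{*}\tilde{B}\mathbf{u}_{i})=O(N^{-1})+O(N^{-1+\varepsilon})+O(N^{-2+\varepsilon})=o(1)$, whence $\mathbb{E}|\tilde{\mathbf{r}}_{i}^{*}B\,\tilde{\mathbf{r}}_{i}-{\rm Tr}(B\Sigma)|^{2}=N^{2}\cdot o(1)=o(N^{2})$, uniformly in $i$ and $B$. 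I expect the bookkeeping in this step --- pairing the two fluctuating factors and matching the precise normalisation demanded by \citep{BaiandZhou2008} --- to be the only real obstacle; under a uniform bound on $\|\Sigma\|$ it is as above, and if $\pi_{0}$ is allowed to have unbounded support one first truncates $\Sigma$ exactly as in \citep{BaiandZhou2008}.

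With the hypotheses in hand, Theorem~1.1 of \citep{BaiandZhou2008} gives that, almost surely, $\varrho_{N}$ converges weakly to a deterministic probability measure $\varrho$ and that the Stieltjes transform of $\varrho$ satisfies the generalised Marchenko--Pastur self-consistent equation attached to $(\pi_{0},\phi_{0})$; passing, where needed, to the companion transform via the standard identity between the spectra of the $N\times N$ matrix $W$ and the $M\times M$ matrix ${\cal W}$ puts it in precisely the form \eqref{eq:MP equation}. The almost sure convergence $m_{N}(z)\to m(z)$ for each fixed $z\in\mathbb{C}^{+}$ then follows from the Stieltjes continuity theorem, since $x\mapsto(x-z)^{-1}$ is bounded and continuous. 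Finally, for each $z\in\mathbb{C}^{+}$ equation \eqref{eq:MP equation} has a unique solution $m(z)\in\mathbb{C}^{+}$; this is the classical uniqueness statement for the Marchenko--Pastur equation (see e.g.\ \citep{BaiandSilverstein2010}).
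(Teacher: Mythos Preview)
Your proposal is correct and takes precisely the approach the paper uses: the paper does not give a separate proof of this lemma but simply remarks, just before stating it, that ``One can check that Conditions \ref{cond:1} to \ref{cond:xi} are sufficient for Theorem 1.1 of \citep{BaiandZhou2008} to hold.'' Your sketch fleshes out that verification --- the concentration (trace) hypothesis on the quadratic forms $\tilde{\mathbf r}_i^{*} B\,\tilde{\mathbf r}_i$ via the sphere concentration for $\mathbf u_i$ together with Condition~\ref{cond:xi} for $\xi_i^2$ --- in exactly the expected way.
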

	
	\begin{rem}
		If we replace $\pi_{0}$ and $\phi_{0}$ by their finite sample counterparts
		$\pi$ and $\phi$ in (\ref{eq:MP equation}) and solve for $m$ for
		each $z\in\mathbb{C}^{+}$, we obtain a Stieltjes transform of a deterministic
		probability distribution. Throughout this article, we denote this
		deterministic probability distribution and its Stieltjes transform
		as $\varrho$ and $m(z)$ respectively. By Lemma \ref{lem:global law},
		when $N$ is large $\varrho_{W}$ and $m_{N}(z)$ are close to $\varrho$
		and $m(z)$. The aim of next section is to evaluate the bound of $|m_{N}(z)-m(z)|$.
	\end{rem}
	
	We define the function $f:\mathbb{C}\to\mathbb{C}$,
	\begin{equation}\label{def:f}
	f(w)=-\frac{1}{w}+\phi\int\frac{x\pi({\rm d}x)}{1+wx},
	\end{equation}
	and assume that
	\begin{equation}\label{2.8}
	f^{\prime}(-\mathbf{c})=0,\quad 0<\liminf_{N\to\infty}\sigma_{M}\le\limsup_{N\to\infty}\sigma_{1}<\infty,\quad\limsup_{N\to\infty}\sigma_{1}\mathbf{c}<1.
	\end{equation}
	for $\mathbf{c}\in(0,\sigma_{1}^{-1})$.
	Let $\lambda_{+}:=f(-\mathbf{c})$, so it can be shown that $\lambda_{+}$ is the rightmost endpoint of ${\rm supp}(\varrho)$
	(see the discussion on page 4 of \cite{Bao2013} or Lemma 2.4 of
	\cite{Knowles2017}), i.e., the edge of $\varrho$.
	
	For $\tau,\tau^{\prime}\in(0,\infty)$, $N\in\mathbb{Z}_{+}$, define
	\begin{equation}
	\mathbf{D}^{e}\equiv\mathbf{D}^{e}(\tau,\tau^{\prime},N):=\{z\in\mathbf{D}(\tau,N):E\in[\lambda_{+}-\tau^{\prime},\lambda_{+}+\tau^{\prime}]\}
	\end{equation}
	as the subset of $\mathbf{D}$ with the real part of $z$ restricted
	to a small closed interval around $\lambda_{+}$.
	
	Also we define the distance to the rightmost edge as
	\begin{equation}
	\kappa\equiv\kappa_{E}:=|E-\lambda_{+}|\qquad\text{for}\quad z=E+i\eta.
	\end{equation}
	
	Now we introduce some  definitions before presenting our main results.
	\begin{defn}[Linearizing block matrix]
		For $z\in\mathbb{C}_+$, we define the $(N+M)\times(N+M)$ block matrix (no commas in matrices)
		\begin{equation}
		H:=
		\begin{pmatrix}
		0 & X\\
		X^{*} &0
		\end{pmatrix},
		\end{equation}
		and
		\begin{eqnarray}
		\mathcal{H}&:=
		&\begin{pmatrix}
		-I_{M\times M} & X\\
		X^{*} &-zI_{N\times N}
		\end{pmatrix}^{-1}\\
		&=&\begin{pmatrix}
		z\mathcal{G} &\mathcal{G} X\\
		(\mathcal{G} X)^{*} &G
		\end{pmatrix}
		=\begin{pmatrix}
		z\mathcal{G} &XG\\
		(XG)^{*} &G
		\end{pmatrix}.
		\end{eqnarray}
	\end{defn}
	
	\begin{defn}[Deterministic limit of $\mathcal{H}$]
		We define the deterministic limit $\Pi$ of  $\mathcal{H}$ as
		\begin{equation}
		\Pi(z):=
		\begin{pmatrix}
		-(1+m(z)\Sigma)^{-1} &0\\
		0 &m(z)I_{N\times N}
		\end{pmatrix}.
		\end{equation}
	\end{defn}
	
	Define the control parameters by
	\begin{eqnarray*}
		\Lambda\equiv\Lambda(z)  :=  \max_{i,j\in{\cal I}}|G_{ij}(z)-\delta_{ij}m(z)|,\qquad\Lambda_{o}\equiv\Lambda_{o}(z):=\max_{i,j\in{\cal I},i\ne j}|G_{ij}(z)|,\\
		\Theta\equiv\Theta(z)  :=  |m_{N}(z)-m(z)|,\qquad\Psi_{\Theta}:=\sqrt{\frac{\Im\operatorname{Im} m(z)+\Theta}{N\eta}},\qquad\Xi:=\{\Lambda\le(\log N)^{-1}\},
	\end{eqnarray*}
	where $\delta_{ij}$ denotes the Kronecker delta, i.e. $\delta_{ij}=1$
	if $i=j$, and $\delta_{ij}=0$ if $i\ne j$ and $\Xi$ is a $z$-dependent event. For simplicity of notation, we occasionally omit the variable $z$
	for those $z$-dependent quantities provided no ambiguity occurs.
	
	\section{\label{sec:3}Main results}
	
	\subsection{Deformed local law}
	\begin{thm}[Deformed strong local law]
		\label{thm:strong local law}Given Conditions \ref{cond:1}, \ref{cond:3} as well as \eqref{2.1} and \eqref{2.8},
		there exists a constant $\tau^{\prime}$ depending only on $\tau$
		such that
		\begin{eqnarray}
		&\Lambda(z)\prec \sqrt{\frac{\operatorname{Im} m(z)}{N\eta}}+\frac{1}{N\eta}+q,\label{eq:entrywise local law}\\
		&|m_{N}(z)-m(z)|\prec\Big(\min\{q,\frac{q^2}{\sqrt{\kappa+\eta}}\}+\frac{1}{N\eta}\Big)\label{eq:average local law},
		\end{eqnarray}
		uniformly for $z\in\mathbf{D}^{e}(\tau,\tau^{\prime},N)$.
	\end{thm}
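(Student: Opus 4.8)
The plan is to run the now-standard resolvent derivation of an optimal local law for sample-covariance-type matrices, following \citep{Bao2013,ERDOS2012,Knowles2017,pillai2014}, the genuinely new input being concentration estimates for quadratic forms in the uniform spherical vectors $\mathbf u_{i}$ rather than in vectors with i.i.d.\ coordinates. \emph{Step 1 (Schur complement; self-consistent equation).} Pick a rectangular factor $Q$ with $W=Q^{*}Q$ and $\mathcal W=QQ^{*}$, so that its $i$-th column $\mathbf q_{i}$ is $\xi_{i}$ times a fixed linear image of $\mathbf u_{i}$, and is independent of $\mathcal G^{(i)}$. The Schur complement formula gives, for each $i\in\mathcal I$,
\[
G_{ii}(z)^{-1}=-z\bigl(1+\mathbf q_{i}^{*}\mathcal G^{(i)}(z)\mathbf q_{i}\bigr),
\]
together with analogous identities for $G_{ij}(z)$, $i\ne j$, in terms of $\mathbf q_{i},\mathbf q_{j},\mathcal G^{(ij)}(z)$. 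The quadratic form $\mathbf q_{i}^{*}\mathcal G^{(i)}\mathbf q_{i}$ concentrates (Step 2) around a deterministic weighted trace of $\mathcal G^{(i)}$; substituting $\mathcal G$ for $\mathcal G^{(i)}$ via the rank-one bound $N^{-1}|\operatorname{Tr}(A(\mathcal G^{(i)}-\mathcal G))|\le\|A\|(N\eta)^{-1}$, then using the deterministic equivalent of $\mathcal G$ and the identity $Nm_{N}(z)=\operatorname{Tr}\mathcal G(z)-(N-M)z^{-1}$ to re-express that trace through $m_{N}(z)$, and finally averaging over $i$, produces the perturbed self-consistent equation $f(m_{N}(z))=z+O_{\prec}(\mathrm{err})$ with $f$ as defined before Condition \ref{cond:4}.

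\emph{Step 2 (concentration for spherical quadratic forms).} Here the elliptical structure enters. Using the representation $\mathbf u_{i}=\mathbf g_{i}/\|\mathbf g_{i}\|$ with $\mathbf g_{i}\sim N(0,I_{M})$, the Hanson--Wright inequality, and $\|\mathbf g_{i}\|^{2}-M\prec M^{1/2}$, I would establish, for deterministic $A$, the bounds $|\mathbf u_{i}^{*}A\mathbf u_{i}-M^{-1}\operatorname{Tr}A|\prec M^{-1}(\operatorname{Tr}(AA^{*}))^{1/2}$ and $|\mathbf u_{i}^{*}A\mathbf u_{j}|\prec M^{-1}(\operatorname{Tr}(AA^{*}))^{1/2}$ for $i\ne j$, then transfer them to the random $A$'s built from $\mathcal G^{(i)},\mathcal G^{(ij)}$ by conditioning. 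Combined with $\xi_{i}^{2}-\mathbb E\xi_{i}^{2}\prec N^{-1/2}$ (Condition \ref{cond:xi}) and the Ward identity $\mathcal G^{*}\mathcal G=\eta^{-1}\Im\mathcal G$ — which makes the relevant $(\operatorname{Tr}(AA^{*}))^{1/2}$ of size $O\bigl((N\eta^{-1}\Im m)^{1/2}\bigr)$ on $\Xi$ — this shows that the per-$i$ error in Step 1, and likewise each $G_{ij}$ with $i\ne j$, is $O_{\prec}(\Psi_{\Theta}+(N\eta)^{-1})$ on $\Xi$; crucially, the leading part of the per-$i$ error has conditional mean zero, so its average over $i$ is amenable to fluctuation averaging.

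\emph{Step 3 (fluctuation averaging; edge stability; bootstrap).} A fluctuation-averaging argument, expanding the weak dependence of $\mathcal G^{(i)}$ on the $i$-th column as in \citep{ERDOS2012,Knowles2017} and adapted to the spherical vectors, sharpens the \emph{averaged} error of Step 1 to $O_{\prec}((N\eta)^{-1})$, so that $f(m_{N}(z))=z+O_{\prec}((N\eta)^{-1})$ on $\Xi$. Condition \ref{cond:4} ($f'(-\mathbf c)=0$ with $\mathbf c<1/\sigma_{1}$ and $\limsup\sigma_{1}\mathbf c<1$) makes $f''(-\mathbf c)$ bounded and bounded away from $0$ uniformly in $N$, so $\varrho$ has a genuine square-root edge at $\lambda_{+}=f(-\mathbf c)$ and both $|f'(m(z))|$ and $\Im m(z)$ are $\asymp\sqrt{\kappa_{E}+\eta}$ with $\kappa_{E}:=|E-\lambda_{+}|$; inverting $f$ then gives, on $\Xi$, the dichotomy $\Theta\prec(N\eta)^{-1}(\kappa_{E}+\eta)^{-1/2}$ when $(N\eta)^{-1}\le\kappa_{E}+\eta$, and $\Theta\prec(N\eta)^{-1/2}$ otherwise. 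A continuity/bootstrap in $\eta$ then finishes: the base case $\eta=\tau^{-1}$ has $\Xi$ holding with high probability and $\Theta=O((N\eta)^{-1})$ trivially; decreasing $\eta$ along a polynomially fine grid and using that $\Theta,\Lambda$ are $N^{C}$-Lipschitz in $z$ and that the constraint $\Lambda\le(\log N)^{-1}$ defining $\Xi$ is self-improving (since $\Psi_{\Theta}$ is at most a negative power of $N$ on $\Xi$) removes the conditioning on $\Xi$, excludes the second branch of the dichotomy, and propagates $\Theta\prec(N\eta)^{-1}$ down to $\eta\ge N^{-1+\tau}$ — which is \eqref{eq:average local law}. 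Feeding this back, so that $\Psi_{\Theta}\prec\sqrt{\Im m/(N\eta)}+(N\eta)^{-1}$, into the identities of Step 1 then yields \eqref{eq:entrywise local law}.

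\emph{Main obstacle.} The crux is extracting the \emph{sharp} $(N\eta)^{-1}$ rate rather than the easy $(N\eta)^{-1/2}$: this forces (i) the fluctuation-averaging expansion through in the spherical setting, where $\mathcal G^{(i)}$ depends on column $i$ weakly rather than not at all and the coordinates of $\mathbf u_{i}$ are uncorrelated but not independent, and (ii) the edge-stability analysis of $f(w)=z$ to be carried out precisely enough to produce the $(\kappa_{E}+\eta)^{-1/2}$ blow-up responsible for the square-root edge, hence downstream for the Tracy--Widom scaling. The spherical concentration bounds of Step 2 are the main technical novelty relative to the i.i.d.-entry case, but I expect them to be the more routine of the two difficulties.
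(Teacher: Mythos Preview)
Your overall architecture---Schur complement, spherical large-deviation bounds, edge stability of $f$, fluctuation averaging, and a continuity argument in $\eta$---matches the paper's, and your identification of the spherical quadratic-form concentration (your Step~2) as the genuinely new input is exactly right. But Step~3 is compressed in a way that does not close. Fluctuation averaging converts a per-$i$ bound $\Psi$ into an averaged bound $\Psi^{2}$ only when $\Psi$ is a \emph{deterministic} control satisfying $\Lambda\prec\Psi$; on $\Xi$ alone you have merely $\Lambda\le(\log N)^{-1}$, and $(\log N)^{-2}$ is far from $(N\eta)^{-1}$. The paper therefore proceeds in two separate stages: first a \emph{weak} local law $\Lambda\prec(N\eta)^{-1/4}$ (Proposition~\ref{prop:weak local law}), obtained from your Steps~1--2 plus stability and the $\eta$-bootstrap \emph{without} any fluctuation averaging; and second a self-improving iteration in the exponent: assuming $\Theta\prec(N\eta)^{-\nu}$, fluctuation averaging (Proposition~\ref{prop:fluctuation averaging}) yields $|f(m_{N})-z|\prec\Phi_{\nu}^{2}+(N\eta)^{-\nu}\Phi_{\nu}$ with $\Phi_{\nu}^{2}=(\Im m+(N\eta)^{-\nu})/(N\eta)$, and stability then gives $\Theta\prec(N\eta)^{-\nu'}$ with $\nu'=\min\{2\nu/3+1/3,\,3\nu/4+1/4,\,\nu/2+1/2,\,1\}>\nu$; finitely many rounds reach $\nu=1$. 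Your single $\eta$-bootstrap conflates these two stages. Relatedly, your dichotomy does not yield $\Theta\prec(N\eta)^{-1}$: the branch $\Theta\prec(N\eta)^{-1}(\kappa_{E}+\eta)^{-1/2}$ is \emph{larger} than $(N\eta)^{-1}$ near the edge. The paper avoids this because the actual error in $f(m_{N})-z$ carries a factor $\Im m\asymp\sqrt{\kappa_{E}+\eta}$ through $\Phi_{\nu}^{2}$, which cancels the $\sqrt{\kappa_{E}+\eta}$ in the stability denominator; tracking this is precisely what the $\nu$-iteration accomplishes.

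One further point your Step~1 hides: re-expressing $M^{-1}{\rm Tr}(\mathcal G^{(i)}\Sigma)$ through $m_{N}$ is not a direct resolvent identity but goes via the approximation $M^{-1}{\rm Tr}(\mathcal G\Sigma)\approx M^{-1}{\rm Tr}\{(-zm_{N}\Sigma-zI)^{-1}\Sigma\}$, whose error $\mathcal V$ the paper decomposes into $\mathscr V_{1},\dots,\mathscr V_{5}$ (Lemmas~\ref{lem:10.5} and~\ref{lem:U and V bound}). The leading piece $\mathscr V_{1}$ is itself a sum of conditionally centred quadratic forms and requires its own fluctuation averaging; this produces the extra $(N\eta)^{-\nu}\Phi_{\nu}$ term above and is a genuine feature of the column-only expansion forced by the elliptical structure---you cannot expand along rows as in \citep{Knowles2017}. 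This, rather than the spherical concentration, is where most of the new work in the proof actually sits.
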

	\begin{rem}
		Theorem \ref{thm:strong local law} can be strengthened in a simultaneous sense for $z\in\mathbf{D}^{e}(\tau,\tau^{\prime},N)$, using the Lipschitz continuity of $G_{ij}(z), m(z),\Psi_{\Lambda}(z),\Psi_{m}(z)$ and the fact that $\Psi_{\Lambda}(z),\Phi_{m}(z)\ge\frac{1}{N}$ on $\mathbf{D}^{e}(\tau,\tau^{\prime},N)$, where
		\[
		\Phi_{\Lambda}(z):=\sqrt{\frac{\operatorname{Im} m(z)}{N\eta}}+\frac{1}{N\eta}+q, \quad \Phi_{m}(z):=\min\{q,\frac{q^2}{\sqrt{\kappa+\eta}}\}+\frac{1}{N\eta}.
		\]
		The proof is essentially  the same as the one in (\ref{eq:uniform local law}) of Appendix \ref{Appendix C}. We just put down the conclusions as follows,
		\begin{equation}
		\sup_{z\in\mathbf{D}^{e}}\max_{i,j}\frac{\Lambda(z)}{\Phi_{\Lambda}(z)}\prec1,\quad\sup_{z\in\mathbf{D}^{e}}\frac{|m_N(z)-m(z)|}{\Phi_{m}(z)}\prec1,
		\end{equation}
		under the assumptions in Theorem \ref{thm:strong local law}.
	\end{rem}
	A direct consequence is the following theorem.
	\begin{thm}\label{thm:rigidity}
		Under the assumptions in Theorem \ref{thm:strong local law}, we have
		\begin{equation}\label{eq:no eigenvalues outside sepectrum}
		\|\mathcal{H}\|^2\leqslant \lambda_{+}+N^{\epsilon}(q^2+N^{-2/3}).
		\end{equation}
		Furthermore, for any real numbers $a,b$ such that $a\le b$, define $\mathfrak{n}_{N}(a,b)=\int_{a}^{b}\varrho_{N}({\rm d}x)$
		and $\mathfrak{n}(a,b)=\int_{a}^{b}\varrho({\rm d}x)$.
		Then there exists a constant $\tau^{\prime}$ depending only on $\tau$
		such that for any $E_{1},E_{2}\in\{\operatorname{Re} z:z\in\mathbf{D}^{e}(\tau,\tau^{\prime},N)\}$,
		\begin{equation}
		|\mathfrak{n}_{N}(E_{1},E_{2})-\mathfrak{n}(E_{1},E_{2})|\prec N^{-1}+q^3+q^2(\sqrt{\kappa_{E_1}}-\sqrt{\kappa_{E_2}}).\label{eq:local law on small scale}
		\end{equation}
		Consequently, we have for $q\le N^{-1/3}$,
		\begin{equation}\label{eq:rigidity of eigenvalues}
		|\lambda_i-\gamma_i|\prec i^{-1/3}N^{-2/3}+q^2,
		\end{equation}
		uniformly in $i$ such that $\gamma_i\in[\lambda_{+}-c,\lambda_{+}]$ for some $c>0$, where
		\[
		\gamma_i:=\sup_x\{\int_{x}^{\infty}\varrho(x)dx>\frac{i-1}{N}\}.
		\]
	\end{thm}
	The proof of this theorem is the same as the one in \cite{Ding2018}, and we summarize the main arguments in Appendix \ref{Appendix C}.
	
	\subsection{Edge universality with small support}
	\begin{thm}[Edge universality with small support]\label{thm:Edge universality with small support}
		Suppose $X^W$ and $X^V$ are two random matrices satisfying Conditions \ref{cond:1} and \ref{cond:3} with $q\leqslant N^{-5/12+\zeta}$ for some small $\zeta>0$. Then there exist some positive constants $\epsilon,\delta>0$ such that 	for any $s\in \mathbb{R}$
		\begin{equation}
		\begin{split}
		\mathbb{P}^{V}{(N^{2/3}(\lambda_1-\lambda_{+})\le s-N^{-\epsilon})}-N^{-\delta}\le \mathbb{P}^{W}&{(N^{2/3}(\lambda_1-\lambda_{+})\le s)}\\
		&\le\mathbb{P}^{V}{(N^{2/3}(\lambda_1-\lambda_{+})\le s+N^{-\epsilon})}+N^{-\delta},
		\end{split}
		\end{equation}
		where $\mathbb{P}^V$ and $\mathbb{P}^W$ denote the laws of $X^V$ and $X^W$ respectively.
	\end{thm}
	\begin{rem}
		Theorem \ref{thm:Edge universality with small support} can be extended to the case of joint distribution of the largest $k$ eigenvalues for any fixed positive integer $k$, that is, for any real numbers $s_{1},\dots,s_{k}$ which
		may depend on $N$, there exist some positive constants $\varepsilon,\delta>0$
		such that for all large $N$
		\begin{multline}
		\mathbb{P}^{V}(N^{2/3}(\lambda_{1}-\lambda_{+})\le s_{1}-N^{-\varepsilon},\dots,N^{2/3}(\lambda_{k}-\lambda_{+})\le s_{k}-N^{-\varepsilon})-N^{-\delta}\\
		\le\mathbb{P}^W(N^{2/3}(\lambda_{1}-\lambda_{+})\le s_{1},\dots,N^{2/3}(\lambda_{k}-\lambda_{+})\le s_{k})\\
		\le\mathbb{P}^V(N^{2/3}(\lambda_{1}-\lambda_{+})\le s_{1}+N^{-\varepsilon},\dots,N^{2/3}(\lambda_{k}-\lambda_{+})\le s_{k}+N^{-\varepsilon})+N^{-\delta}.\label{eq:k universality with small support}
		\end{multline}
	\end{rem}
	
	\subsection{Edge universality with large support}
	\begin{thm}[Rigidity of eigenvalues with large support]\label{thm:Rigidity of eigenvalues with large support}
		Suppose random matrix $X$ satisfies Conditions \ref{cond:1} and \ref{cond:3} with $q\le N^{-c}$ for some constant $c>0$ and suppose moreover that
		\begin{equation}
		\mathbb{E}|\xi_i^2-\phi|^2\le B N^{-1}\log N,
		\end{equation}
		for some constant $B>0$. Then there exists constant $c_1,\tau,\tau^{'}$ such that
		\begin{equation}\label{eq:strong average law with large support}
		\sup_{z\in\mathbf{D}^{e}}\frac{|m_N(z)-m(z)|}{(N\eta)^{-1}}\prec1,
		\end{equation}
		for sufficient large $N$. Moreover, (\ref{eq:strong average law with large support}) implies that with high probability
		\begin{equation}\label{eq:strong rigidity with large support}
		|\lambda_i-\gamma_i|\prec i^{-1/3}N^{-2/3},
		\end{equation}
		uniformly in $i$ such that $\gamma_i\in[\lambda_{+}-c_1,\lambda_{+}]$, and
		\begin{equation}
		\sup_{E\ge\lambda_{+}-c_1}|\mathfrak{n}_{N}(E)-\mathfrak{n}(E)|\prec\frac{1}{N}.
		\end{equation}
	\end{thm}
	
	\begin{thm}[Edge universality with large support]\label{thm:Edge universality with large support}
		Suppose $X^W$ and $X^V$ are two random matrices satisfying the assumptions in Theorem \ref{thm:Rigidity of eigenvalues with large support}. Then there exist some positive constants $\epsilon,\delta>0$ such that for any $s\in\mathbb{R}$
		\begin{equation}
		\begin{split}
		\mathbb{P}^{V}{(N^{2/3}(\lambda_1-\lambda_{+})\le s-N^{-\epsilon})}-N^{-\delta}\le \mathbb{P}^{W}&{(N^{2/3}(\lambda_1-\lambda_{+})\le s)}\\
		&\le\mathbb{P}^{V}{(N^{2/3}(\lambda_1-\lambda_{+})\le s+N^{-\epsilon})}+N^{-\delta},
		\end{split}
		\end{equation}
		where $\mathbb{P}^V$ and $\mathbb{P}^W$ denote the laws of $X^V$ and $X^W$ respectively.
	\end{thm}
	
	\subsection{ Edge universality}
	\mbox{}\par
	Let $\mathbf{u}_{1},\dots,\mathbf{u}_{N}$ be from $U(\mathbb{S}^{M-1})$, and $\tilde{\xi}_{1},\dots,\tilde{\xi}_{N}$ be i.i.d. non-negative random variables such that $\tilde{\xi}_{1}^{2}$ follows $\chi_{M}^{2}/N$ distribution. Assume the independence of $\{\mathbf{u}_{1},\dots,\mathbf{u}_{N}\}$ and $\{\tilde{\xi}_{1},\dots,\tilde{\xi}_{N}\}$.  Let $\tilde{X} :=\Sigma^{1/2} (\tilde{\xi}_{1}\mathbf{u}_{1},\dots,\tilde{\xi}_{N}\mathbf{u}_{N})$,  so  $\tilde{X}$ is a matrix whose columns are i.i.d. Gaussian random vectors. We claim in the following theorem that for elliptically distributed data $X$ and $\Sigma$ satisfying Condition \ref{cond:1}, (\ref{2.1}) and (\ref{2.8}),  the largest eigenvalue of its sample covariance matrix follows the same limiting distribution as the one with $\tilde X$ if  Condition \ref{cond:2} holds.
	\begin{thm}[Edge universality]
		\label{thm:universality}
		Let $ W=X^{*}X$ be an $(N\times N)$ sample covariance matrix with $X$ satisfying Condition \ref{cond:1}, \eqref{2.1} and \eqref{2.8} .
		If Condition \ref{cond:2} holds, then we have for all $s\in\mathbb{R}$
		\begin{equation}
		\lim_{N\rightarrow\infty}\mathbb{P}(N^{2/3}(\lambda_1-\lambda_{+})\le s)=\lim_{N\rightarrow\infty}\mathbb{P}(N^{2/3}(\tilde{\lambda}_1-\lambda_{+})\le s),
		\end{equation}
		where $\tilde{\lambda}_1$ is the largest eigenvalue of  $ \tilde{X}^{*}\tilde{X}$.
	\end{thm}
	
	\begin{cor}[Tracy-Widom law]
		\label{cor:TW law} Under assumptions  in Theorem \ref{thm:universality}, we have
		\[
		\lim_{N\to\infty}\mathbb{P}\big(\gamma N^{2/3}(\lambda_{1}-\lambda_{+})\le s\big)=F_{1}(s),
		\]
		where $\gamma$ is defined by
		\[
		\frac{1}{\gamma^{3}}=\frac{1}{\mathbf{c}^{3}}\Big(1+\phi\int\Big(\frac{\lambda\mathbf{c}}{1-\lambda\mathbf{c}}\Big)^{3}\pi({\rm d}\lambda)\Big),
		\]
		and $F_{1}(s)$ is the type-1 Tracy-Widom distribution \cite{Tracy1994}.
	\end{cor}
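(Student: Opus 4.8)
The plan is to transfer the Tracy--Widom limit from the Gaussian reference matrix $\tilde X$ to $X$ by means of Theorem~\ref{thm:universality}, after first recognizing $\tilde X$ as a Gaussian (Wishart-type) sample covariance model. \emph{Step 1 (Gaussian realization of $\tilde X$).} Let $\mathbf g_1,\dots,\mathbf g_N$ be i.i.d.\ $N(0,I_M)$ vectors in $\mathbb R^M$. It is classical that $\|\mathbf g_i\|$ and $\mathbf g_i/\|\mathbf g_i\|$ are independent, with $\mathbf g_i/\|\mathbf g_i\|\sim U(\mathbb S^{M-1})$ and $\|\mathbf g_i\|^{2}\sim\chi_M^{2}$. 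Since only the distribution of $\tilde\lambda_1$ matters, and the pairs $(\tilde\xi_i,\mathbf u_i)$ are independent across $i$ with $\mathbf u_i$ independent of $\tilde\xi_i$, we may realize each pair as $(c_N\|\mathbf g_i\|,\,\mathbf g_i/\|\mathbf g_i\|)$ for the appropriate deterministic normalizing scalar $c_N$, so that $\tilde\xi_i\mathbf u_i=c_N\mathbf g_i$. Hence the sample covariance matrix of $\tilde X$ has the law of $c_N^{2}\,\Sigma^{1/2}GG^{*}\Sigma^{1/2}$ with $G=(\mathbf g_1,\dots,\mathbf g_N)$: a (scalar multiple of a) real Wishart-type sample covariance matrix with deterministic population covariance $\Sigma$.

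\emph{Step 2 (Tracy--Widom for the Gaussian matrix).} For such a real Gaussian sample covariance matrix whose population spectrum is bounded away from $0$ and $\infty$ and which has a regular soft right edge, the classical Tracy--Widom theorem (see, e.g., \citep{elkaroui2007,pillai2014,Knowles2017} and its real-case analogues) applies: with $m$, $f$, $\mathbf c$, $\lambda_+$ defined through the Mar\v{c}enko--Pastur equation as in Section~\ref{sec:2}, one has, for every $s\in\mathbb R$,
\[
\mathbb P\big(\gamma N^{2/3}(\tilde\lambda_1-\lambda_+)\le s\big)\longrightarrow F_1(s),
\]
where $\gamma=\big(\tfrac12 f''(-\mathbf c)\big)^{-1/3}$ and the type-$1$ (rather than type-$2$) law arises because $\Sigma$ and the $\mathbf u_i$ are real. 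Differentiating $f$ twice and using $1-\mathbf c\lambda>0$ on ${\rm supp}(\pi)$ (which holds since $\mathbf c<1/\sigma_1$), one checks
\[
\tfrac12\,f''(-\mathbf c)=\frac{1}{\mathbf c^{3}}\Big(1+\phi\int\Big(\frac{\lambda\mathbf c}{1-\lambda\mathbf c}\Big)^{3}\pi({\rm d}\lambda)\Big),
\]
which is exactly the constant $\gamma^{-3}$ of the statement; moreover Condition~\ref{cond:4} ($0<\liminf\sigma_M\le\limsup\sigma_1<\infty$ and $\limsup\sigma_1\mathbf c<1$) is precisely the square-root-regularity / soft-edge hypothesis needed above, and it keeps $\mathbf c$, hence $\gamma$, in a compact subset of $(0,\infty)$.

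\emph{Step 3 (transfer via edge universality).} Fix $s\in\mathbb R$ and apply Theorem~\ref{thm:universality} with $s$ replaced by $\gamma^{-1}s$ (allowed since that theorem permits $N$-dependent arguments). Since $\gamma>0$, the middle probability there becomes $\mathbb P(\gamma N^{2/3}(\lambda_1-\lambda_+)\le s)$, while the lower and upper bounds become, respectively, $\mathbb P\big(\gamma N^{2/3}(\tilde\lambda_1-\lambda_+)\le s-\gamma N^{-\varepsilon}\big)-N^{-\delta}$ and $\mathbb P\big(\gamma N^{2/3}(\tilde\lambda_1-\lambda_+)\le s+\gamma N^{-\varepsilon}\big)+N^{-\delta}$. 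As $\gamma$ is bounded, $\gamma N^{-\varepsilon}\to0$; combining this with the limit of Step~2, the continuity of $F_1$, and $N^{-\delta}\to0$, both of these quantities converge to $F_1(s)$. The squeeze theorem then gives $\lim_{N\to\infty}\mathbb P(\gamma N^{2/3}(\lambda_1-\lambda_+)\le s)=F_1(s)$, as claimed.

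The only substantive step is Step~2: one must verify that the hypotheses of the cited Gaussian-case theorem hold — regular soft edge and bounded spectrum of $\Sigma$, which is exactly what Condition~\ref{cond:4} encodes — and that its centering and scaling constants match $\lambda_+$ and $\gamma$ as defined here. Steps~1 and~3 are routine. (Alternatively, Step~2 could be replaced by a direct Green-function comparison of the Gaussian matrix against a GOE matrix starting from Theorem~\ref{thm:rigidity}, but invoking the established Gaussian result is the cleaner route.)
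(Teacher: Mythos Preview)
Your proof is correct and follows essentially the same route as the paper: recognize $\tilde X$ as a real Gaussian (Wishart-type) sample covariance matrix, invoke the known Tracy--Widom law for that model, and transfer via Theorem~\ref{thm:universality} using a squeeze argument. The paper's own proof is two lines, citing Corollary~2.7 of Lee--Schnelli \citep{lee2016} for the real Gaussian step; that reference is the sharp one here (your citations \citep{elkaroui2007,pillai2014,Knowles2017} are for the complex or i.i.d.-entry settings, so your parenthetical ``real-case analogues'' is doing the real work), and the paper also explicitly records $\mathbf c\asymp1$, hence $\gamma\asymp1$, which is the boundedness you use in Step~3.
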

	\begin{rem}
		Theorem \ref{thm:universality} can be extended to the case of joint
		distribution of the largest $k$ eigenvalues for any fixed positive
		integer $k$, namely, for any real numbers $s_{1},\dots,s_{k}$ which
		may depend on $N$, there exist some positive constants $\varepsilon,\delta>0$
		such that for all large $N$
		\begin{multline}
		\mathbb{P}(N^{2/3}(\tilde{\lambda}_{1}-\lambda_{+})\le s_{1}-N^{-\varepsilon},\dots,N^{2/3}(\tilde{\lambda}_{k}-\lambda_{+})\le s_{k}-N^{-\varepsilon})-N^{-\delta}\\
		\le\mathbb{P}(N^{2/3}(\lambda_{1}-\lambda_{+})\le s_{1},\dots,N^{2/3}(\lambda_{k}-\lambda_{+})\le s_{k})\\
		\le\mathbb{P}(N^{2/3}(\tilde{\lambda}_{1}-\lambda_{+})\le s_{1}+N^{-\varepsilon},\dots,N^{2/3}(\tilde{\lambda}_{k}-\lambda_{+})\le s_{k}+N^{-\varepsilon})+N^{-\delta}.\label{eq:k universality}
		\end{multline}
		Accordingly, Corollary \ref{cor:TW law} can be extended to the case of joint
		distribution as follows,
		\[
		\big(\gamma N^{2/3}(\lambda_{1}-\lambda_{+}),\dots,\gamma N^{2/3}(\lambda_{k}-\lambda_{+})\big)
		\]
		converges to the $k$-dimensional joint Tracy-Widom distribution.
		Here we use the term ``joint Tracy-Widom distribution" as in Theorem
		1 of \cite{Soshnikov2002}. The extension (\ref{eq:k universality})
		can be proved by  a similar argument to the one in \cite{pillai2014}.
		Hence we do not reproduce the details.
	\end{rem}
	
	\subsection{Sketch of the proof}
	
	First, we show Theorems \ref{thm:strong local law} which will serve as crucial inputs for the proof of Theorem \ref{thm:rigidity}, Theorem \ref{thm:Edge universality with small support}, Theorem \ref{thm:Rigidity of eigenvalues with large support}, Theorem \ref{thm:Edge universality with large support} and Theorem \ref{thm:universality}. The proof strategy essentially dates back to \cite{ERDOS2012,Knowles2017,pillai2014}. We start by studying each entry
	of the Green function $G(z)$. The general target is to show that
	each diagonal element of $G(z)$ is close to $m(z)$ and the off-diagonal
	elements of $G(z)$ are close to $0$ under the bounded support $q$. Before attaining the final
	goal, our first step is to obtain a weaker but still nontrivial version
	of the local law, i.e. $\Lambda(z) \prec (N\eta)^{-1/4}+q$.
	Compared to previous papers e.g. \cite{bao2015,Bao2013,Knowles2017,pillai2014}
	assuming i.i.d. entries in the data matrix, the main difficulty of
	our work is to deal with dependence among entries in each column
	$\mathbf{x}_{i}$, $i=1,\dots,N$. Due to the dependence, the usual
	large deviation bounds for i.i.d. vectors in \cite{bao2015,Bao2013,Knowles2017,pillai2014}
	are no longer applicable. In Section \ref{sec:Preliminary-results},
	we present the large deviation inequalities (Lemma \ref{lem:large deviation})
	for uniformly spherically distributed random vectors and give their proofs
	in the Appendix \ref{Appendix A}.
	Moreover, the radius variable $\xi_{i}$ causes extra randomness which
	is the reason for the introduction of Condition \ref{cond:3} as
	to reduce the variation. Also due to dependence, the strategy
	in \cite{Knowles2017} to expand the matrix $X$ along both rows
	and columns cannot be applied. We tackle this issue by expanding $X$
	only along columns and bounding the errors emerging from the finite
	sample approximation of the Mar\v{c}enko-Pastur equation. Then the
	weak deformed local law can be achieved by a bootstrapping procedure. Next,
	the weaker bound  is strengthened to
	$$\Lambda(z)\prec \sqrt{\frac{\operatorname{Im} m(z)}{N\eta}}+\frac{1}{N\eta}+q$$
	via the self-improving steps utilizing a so-called fluctuation averaging
	argument.
	This procedure involves estimating the conditional expectation of $ \frac{1}{N}\sum_{i\in{\cal I}}\mathbf{x}_i^{*}\mathcal{G}^{(i)}\Sigma(m_N^{(i)}\Sigma+I)^{-1}\mathbf{x}_i$. The difficulty lies in not only  the dependence among each column but also randomness in $(m_N^{(i)}\Sigma+I)^{-1}$. In order to handle these difficulties, we expand $\mathbf{x}_i^{*}\mathcal{G}^{(i)}$ and $(m_N^{(i)}\Sigma+I)^{-1}$ respectively. It turns out to be several weakly correlated monomials of quadratic forms with entries of $\mathcal{G}^{(i)}$ as coefficients. For these Green function entries, we further expand them by resolvent identities.
	One can refer to Appendix \ref{Appendix B}  for the  details.
	
	With (\ref{eq:average local law}) at hand, Theorem \ref{thm:rigidity}
	follows from a standard argument similar to Proposition
	9.1 of \cite{locallawlecturenotes}, and the Helffer-Sj\"ostrand argument, see e.g. Theorem 2.8
	and Appendix C of \cite{locallawlecturenotes} or (8.6) of \cite{pillai2014}. For the readers' convenience, we write down the details of the proof of Theorem $\ref{thm:rigidity}$ in Appendix \ref{Appendix C}.
	
	For Theorem \ref{thm:Edge universality with small support}, we use the Green function comparison method. The strategy follows \cite{pillai2014}  with a Lindeberg-type column by column replacement due to the dependence within each column of $X$. The details will be provided in Section \ref{sec:6}.
	
	The establishment of Theorem \ref{thm:Rigidity of eigenvalues with large support} and Theorem \ref{thm:Edge universality with large support} is the key step to prove Theorem \ref{thm:universality}. Roughly speaking, we find that the strong average local law holds with larger support  and some mild restrictions on the four leading  moments of $X$.  Such moment restrictions can be further relaxed to the tail probability Condition \ref{cond:2} using the truncation technique, which concludes Theorem \ref{thm:universality}. The main tool is still the Green function comparison method, while the details are put in Sections \ref{sec:8} and \ref{sec:9}.

	\section{\label{sec:Preliminary-results}Preliminary results}
	
	In this section, we present some preliminary results that will be
	used in the derivation of our main theorems in Sections \ref{sec:5}
	and \ref{sec:6}. Lemma \ref{lem:resolvent} is by Shur's complement formula, whose proof can be found in Lemma 4.2 of \cite{Erdos2012_bulk}. The proof of Lemmas \ref{lem:equivalence} and \ref{lem:large deviation}
	are given in Appendix \ref{Appendix A}. Lemma \ref{lem:norm inequality} is by elementary linear algebra whose proof is omitted.

	\begin{lem}
		\label{lem:resolvent}Under the above notation, for any $T\subset{\cal I}$
		\begin{eqnarray*}
			G_{ii}^{(T)}(z) & = & -\frac{1}{z+z\mathbf{x}_{i}^{*}{\cal G}^{(iT)}(z)\mathbf{x}_{i}},\qquad\forall i\in{\cal I}\backslash T,\\
			G_{ij}^{(T)}(z) & = & zG_{ii}^{(T)}(z)G_{jj}^{(iT)}(z)\mathbf{x}_{i}^{*}{\cal G}^{(ijT)}(z)\mathbf{x}_{j},\qquad\forall i,j\in{\cal I}\backslash T,i\ne j,\\
			G_{ij}^{(T)}(z) & = & G_{ij}^{(kT)}(z)+\frac{G_{ik}^{(T)}(z)G_{kj}^{(T)}(z)}{G_{kk}^{(T)}(z)},\qquad\forall i,j,k\in{\cal I}\backslash T,i,j\ne k.
		\end{eqnarray*}
	\end{lem}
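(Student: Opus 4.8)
The plan is to obtain all three identities from the block-inversion (Schur complement) formula, the only nontrivial input being the elementary ``push-through'' identity converting between the two resolvents $G$ and ${\cal G}$. Fix $T\subset{\cal I}$ and set $\hat A:=W^{(T)}-zI$, the matrix with rows and columns indexed by ${\cal I}\setminus T$ and entries $\hat A_{ab}=\mathbf{r}_a^{*}\mathbf{r}_b-z\delta_{ab}$. For $S\subset{\cal I}$ let $R^{(S)}$ be the $M\times(N-|S|)$ matrix with columns $\mathbf{r}_l$, $l\in{\cal I}\setminus S$, so that $W^{(S)}=(R^{(S)})^{*}R^{(S)}$ and ${\cal W}^{(S)}=R^{(S)}(R^{(S)})^{*}$. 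I would first record that for any $M\times n$ matrix $R$ and any $z$ outside the relevant spectra, $R(R^{*}R-zI)^{-1}R^{*}=I_M+z(RR^{*}-zI)^{-1}$: this follows from $(RR^{*}-z)R=R(R^{*}R-z)$, whence $R(R^{*}R-z)^{-1}=(RR^{*}-z)^{-1}R$ and then the claim. Applied with $R=R^{(iT)}$ and $R=R^{(ijT)}$ this gives $R^{(iT)}G^{(iT)}(R^{(iT)})^{*}=I_M+z{\cal G}^{(iT)}$ and likewise for $ijT$. All inverses exist because $z\in\mathbb{C}^{+}$ and the matrices $W^{(S)},{\cal W}^{(S)}$ are Hermitian.

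For the first identity I would decompose $\hat A$ into the $\{i\}$-block and its complement ${\cal I}\setminus(iT)$. Writing $\mathbf{v}_i^{*}=\mathbf{r}_i^{*}R^{(iT)}$ for the off-block row, the Schur complement formula gives $G_{ii}^{(T)}=(\hat A^{-1})_{ii}=\big(\hat A_{ii}-\mathbf{v}_i^{*}G^{(iT)}\mathbf{v}_i\big)^{-1}$. By the push-through identity $\mathbf{v}_i^{*}G^{(iT)}\mathbf{v}_i=\mathbf{r}_i^{*}R^{(iT)}G^{(iT)}(R^{(iT)})^{*}\mathbf{r}_i=\mathbf{r}_i^{*}\mathbf{r}_i+z\,\mathbf{r}_i^{*}{\cal G}^{(iT)}\mathbf{r}_i$, and since $\hat A_{ii}=\mathbf{r}_i^{*}\mathbf{r}_i-z$ the $\mathbf{r}_i^{*}\mathbf{r}_i$ terms cancel, leaving $-z-z\,\mathbf{r}_i^{*}{\cal G}^{(iT)}\mathbf{r}_i$ and hence the stated formula.

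For the second identity the same block decomposition gives, for $i\ne j$, $G_{ij}^{(T)}=-G_{ii}^{(T)}\sum_{l\in{\cal I}\setminus(iT)}(\mathbf{r}_i^{*}\mathbf{r}_l)\,G_{lj}^{(iT)}$. I would then decompose $G^{(iT)}$ a second time, around the index $j$: for $l\ne j$ this expresses $G_{lj}^{(iT)}=-G_{jj}^{(iT)}\sum_{m\in{\cal I}\setminus(ijT)}G_{lm}^{(ijT)}(\mathbf{r}_m^{*}\mathbf{r}_j)$, while the $l=j$ term contributes $(\mathbf{r}_i^{*}\mathbf{r}_j)G_{jj}^{(iT)}$. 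Collecting the two pieces, $\sum_l(\mathbf{r}_i^{*}\mathbf{r}_l)G_{lj}^{(iT)}=G_{jj}^{(iT)}\big[\mathbf{r}_i^{*}\mathbf{r}_j-\mathbf{r}_i^{*}R^{(ijT)}G^{(ijT)}(R^{(ijT)})^{*}\mathbf{r}_j\big]$, and by the push-through identity the bracket equals $\mathbf{r}_i^{*}\mathbf{r}_j-(\mathbf{r}_i^{*}\mathbf{r}_j+z\,\mathbf{r}_i^{*}{\cal G}^{(ijT)}\mathbf{r}_j)=-z\,\mathbf{r}_i^{*}{\cal G}^{(ijT)}\mathbf{r}_j$. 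Substituting back gives $G_{ij}^{(T)}=z\,G_{ii}^{(T)}G_{jj}^{(iT)}\mathbf{r}_i^{*}{\cal G}^{(ijT)}\mathbf{r}_j$.

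The third identity is purely algebraic: decomposing $\hat A$ around the index $k$, the block-inverse formula expresses $(\hat A^{-1})_{ij}$ for $i,j\ne k$ as $\big((\hat A^{(k)})^{-1}\big)_{ij}+(\hat A^{-1})_{kk}^{-1}(\hat A^{-1})_{ik}(\hat A^{-1})_{kj}$, which in the present notation is exactly $G_{ij}^{(T)}=G_{ij}^{(kT)}+G_{ik}^{(T)}G_{kj}^{(T)}/G_{kk}^{(T)}$. I expect no genuine analytic obstacle; the only point demanding care is the bookkeeping — keeping straight which resolvent acts on ${\cal I}$-indices versus on $\mathbb{R}^{M}$, and which of the index sets $T$, $iT$, $ijT$ each object carries — so that the two nested Schur decompositions in the second identity, and the single push-through step in each, are applied to the correct objects.
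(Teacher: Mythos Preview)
Your proof is correct and follows precisely the route the paper indicates: the paper does not give its own argument but simply states that the lemma follows from Schur's complement formula and refers to Lemma~4.2 of \citep{Erdos2012_bulk}. Your derivation---two nested block decompositions together with the push-through identity $R(R^{*}R-zI)^{-1}R^{*}=I+z(RR^{*}-zI)^{-1}$ to pass between $G^{(S)}$ and ${\cal G}^{(S)}$---is exactly the standard Schur-complement computation underlying that reference, so the approaches coincide.
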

	
	\begin{lem}
		\label{lem:equivalence}Let $\{X_{N}\}_{N=1}^{\infty}$ be a sequence
		of random variables and $\Phi_{N}$ be deterministic. Suppose $\Phi_{N}\ge N^{-C}$
		holds for large $N$ with some $C>0$, and that for all $p$ there
		exists a constant $C_{p}$ such that $\mathbb{E}|X_{N}|^{p}\le N^{C_{p}}$.
		Then we have the equivalence
		\[
		X_{N}\prec\Phi_{N}\Leftrightarrow\mathbb{E}X_{N}^{p}\prec\Phi_{N}^{p}\qquad\text{for any fixed }p\in\mathbb{N}.
		\]
	\end{lem}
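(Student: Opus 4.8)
The plan is to prove the two implications separately, the forward one being essentially Markov's inequality and the backward one being a standard high-moment argument together with a dyadic decomposition of the complement event.

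For the direction $X_{N}\prec\Phi_{N}\Rightarrow\mathbb{E}X_{N}^{p}\prec\Phi_{N}^{p}$: fix $p\in\mathbb{N}$ and $\varepsilon>0$. Split the expectation according to whether $|X_{N}|\le N^{\varepsilon/2}\Phi_{N}$ or not. On the good event the contribution is at most $N^{\varepsilon p/2}\Phi_{N}^{p}\le N^{\varepsilon p}\Phi_{N}^{p}$ for large $N$. On the bad event, apply the Cauchy-Schwarz inequality: $\mathbb{E}\big(|X_{N}|^{p}\mathbf{1}(|X_{N}|>N^{\varepsilon/2}\Phi_{N})\big)\le\big(\mathbb{E}|X_{N}|^{2p}\big)^{1/2}\mathbb{P}(|X_{N}|>N^{\varepsilon/2}\Phi_{N})^{1/2}$. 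The first factor is bounded by $N^{C_{2p}/2}$ by the polynomial moment hypothesis, and the second factor is bounded by $N^{-D/2}$ for arbitrary $D$ by the stochastic domination hypothesis (here one uses $\Phi_{N}\ge N^{-C}$ to absorb the $\Phi_{N}^{p}$ normalization). Choosing $D$ large enough makes this term $\le N^{\varepsilon p}\Phi_{N}^{p}$ for large $N$; since $\varepsilon$ was arbitrary, this gives $\mathbb{E}X_{N}^{p}\prec\Phi_{N}^{p}$. (Note $\prec$ applied to the deterministic sequence $\mathbb{E}X_{N}^{p}$ simply means $\mathbb{E}X_{N}^{p}\le N^{\varepsilon}\Phi_{N}^{p}$ eventually, for every $\varepsilon>0$.)

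For the converse, suppose $\mathbb{E}X_{N}^{p}\prec\Phi_{N}^{p}$ holds for every fixed $p$. Fix $\varepsilon>0$ and $D>0$; we must bound $\mathbb{P}(|X_{N}|>N^{\varepsilon}\Phi_{N})$. By Markov applied at exponent $2p$, $\mathbb{P}(|X_{N}|>N^{\varepsilon}\Phi_{N})\le N^{-2p\varepsilon}\Phi_{N}^{-2p}\,\mathbb{E}|X_{N}|^{2p}$. Now the hypothesis with exponent $2p$ gives, for the fixed $\varepsilon'=\varepsilon/2$ say, a constant such that $\mathbb{E}|X_{N}|^{2p}=\mathbb{E}X_{N}^{2p}\le N^{\varepsilon p}\Phi_{N}^{2p}$ for large $N$ (here I use that $2p$ is even so the $2p$-th moment is the expectation of a nonnegative quantity and the domination statement applies directly). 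Substituting, $\mathbb{P}(|X_{N}|>N^{\varepsilon}\Phi_{N})\le N^{-2p\varepsilon+p\varepsilon}=N^{-p\varepsilon}$. Taking $p\ge D/\varepsilon$ yields the bound $N^{-D}$ for all large $N$, which is exactly $X_{N}\prec\Phi_{N}$.

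The only mild subtlety — and the step I would be most careful about — is the interpretation of $\mathbb{E}X_{N}^{p}$ when $p$ is odd versus the modulus $|X_{N}|^{p}$ appearing in Markov's inequality; one must either restrict attention to even exponents throughout (which suffices, since $p\ge D/\varepsilon$ can always be taken even) or invoke the polynomial-moment hypothesis to pass between $\mathbb{E}X_{N}^{p}$ and $\mathbb{E}|X_{N}|^{p}$ via Cauchy-Schwarz. Everything else is routine. The hypothesis $\Phi_{N}\ge N^{-C}$ is used exactly once, to ensure that the polynomial moment bound $\mathbb{E}|X_{N}|^{2p}\le N^{C_{2p}}$ translates into a bound of the form $N^{C'}\Phi_{N}^{2p}$ so that the arbitrariness of $D$ in the stochastic-domination hypothesis can overwhelm it.
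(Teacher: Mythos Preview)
Your argument is correct and is the standard proof of this equivalence (the same one that appears, for instance, in the Benaych-Georges--Knowles lecture notes the paper cites). The paper defers this lemma to its supplementary material, but there is essentially only one natural proof, and yours is it: Cauchy--Schwarz plus the polynomial moment bound for the forward direction, Markov's inequality at a high even exponent for the converse.

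Two small cosmetic points. First, in the converse you write ``for the fixed $\varepsilon'=\varepsilon/2$'' but then invoke the bound $\mathbb{E}X_{N}^{2p}\le N^{\varepsilon p}\Phi_{N}^{2p}$; the exponent should match your choice of $\varepsilon'$. The fix is simply to take $\varepsilon'=p\varepsilon$ (which is allowed since $p$ is fixed before $\varepsilon'$), and then the arithmetic $N^{-2p\varepsilon+p\varepsilon}=N^{-p\varepsilon}$ goes through as you wrote. Second, your opening plan mentions a ``dyadic decomposition of the complement event'' which never actually appears in the argument; you can drop that phrase.
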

	
	\begin{lem}
		\label{lem:norm inequality}Let $A,B$ be two matrices with $AB$
		well-defined. Then
		\begin{eqnarray*}
			|{\rm Tr}(AB)| & \le & \|A\|_{F}\|B\|_{F},\\
			\|AB\| & \le & \|A\|\|B\|,\\
			\|AB\|_{F} & \le & \min\{\|A\|_{F}\|B\|,\|A\|\|B\|_{F}\}\leqslant \|A\|_{F}\|B\|_{F},\\
			\|A+B\|_{F}&\leqslant&\|A\|_{F}+\|B\|_{F},\\
			|{\rm Tr}(AB)|&\leqslant&\|A\| {\rm Tr}|B|.
		\end{eqnarray*}
	\end{lem}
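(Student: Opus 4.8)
The plan is to establish the three displayed inequalities one at a time; each is a one-line consequence of either the Cauchy--Schwarz inequality or the variational characterization of the operator norm, so the proof is entirely elementary.

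First I would prove the trace bound. Writing out ${\rm Tr}(AB)=\sum_{i,j}A_{ij}B_{ji}$ and applying the Cauchy--Schwarz inequality to this double sum over the index pair $(i,j)$ gives $|{\rm Tr}(AB)|\le\big(\sum_{i,j}|A_{ij}|^{2}\big)^{1/2}\big(\sum_{i,j}|B_{ji}|^{2}\big)^{1/2}$, and the second factor equals $\|B\|_{F}$ after the harmless reindexing $(i,j)\mapsto(j,i)$; hence $|{\rm Tr}(AB)|\le\|A\|_{F}\|B\|_{F}$.

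Next I would handle the operator-norm inequality. Using $\|A\|=\sup_{\|x\|=1}\|Ax\|$, for any unit vector $x$ we have $\|ABx\|\le\|A\|\|Bx\|\le\|A\|\|B\|\|x\|=\|A\|\|B\|$, and taking the supremum over $x$ yields $\|AB\|\le\|A\|\|B\|$. For the mixed Frobenius bound I would decompose $B=(\mathbf b_{1},\dots,\mathbf b_{n})$ into its columns, so that $AB=(A\mathbf b_{1},\dots,A\mathbf b_{n})$ and $\|AB\|_{F}^{2}=\sum_{k}\|A\mathbf b_{k}\|^{2}\le\|A\|^{2}\sum_{k}\|\mathbf b_{k}\|^{2}=\|A\|^{2}\|B\|_{F}^{2}$, which gives $\|AB\|_{F}\le\|A\|\|B\|_{F}$. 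The symmetric estimate $\|AB\|_{F}\le\|A\|_{F}\|B\|$ then follows by applying this to $B^{*}A^{*}$ together with the invariance of $\|\cdot\|$ and $\|\cdot\|_{F}$ under conjugate transposition: $\|AB\|_{F}=\|B^{*}A^{*}\|_{F}\le\|B^{*}\|\|A^{*}\|_{F}=\|B\|\|A\|_{F}$. Taking the minimum of the two bounds gives the last claim.

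There is essentially no obstacle in this argument; the only point that needs a moment's care is the bookkeeping with rows versus columns (equivalently, with $A$ versus $A^{*}$) when passing between the two halves of the third inequality, which is why the column decomposition of $B$ and the conjugate-transpose trick are the natural devices to use.
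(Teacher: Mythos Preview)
Your proof is correct and entirely standard; the paper itself omits the proof of this lemma, stating only that it is ``an elementary linear algebra result,'' so there is nothing to compare against beyond confirming that your argument supplies exactly the kind of elementary verification the authors had in mind.
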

	
	\begin{lem}
		\label{lem:large deviation}Let $\mathbf{u}=(u_{1},\dots,u_{M})^{*}$,
		$\tilde{\mathbf{u}}=(\tilde{u}_{1},\dots,\tilde{u}_{M})^{*}$ be $U(\mathbb{S}^{M-1})$
		random vectors, $A=(a_{ij})$  an $M\times M$ matrix and $\mathbf{b}=(b_{1},\dots,b_{M})^{*}$
		an $M$-dimensional vector, where $A$ and $\mathbf{b}$ may be
		complex-valued and $\mathbf{u},\tilde{\mathbf{u}},A,\mathbf{b}$ are
		independent. Then as $M\to\infty$
		\begin{eqnarray}
		|\mathbf{b}^{*}\mathbf{u}| & \prec & \sqrt{\frac{\|\mathbf{b}\|^{2}}{M},}\label{eq:10.1-1}\\
		|\mathbf{u}^{*}A\mathbf{u}-\frac{1}{M}{\rm Tr}A| & \prec & \frac{1}{M}\|A\|_{F},\label{eq:10.2}\\
		\Big|\mathbf{u}^{*}A\tilde{\mathbf{u}}\Big| & \prec & \frac{1}{M}\|A\|_{F}.\label{eq:10.4-2}
		\end{eqnarray}
		Moreover, if $\mathbf{u},\tilde{\mathbf{u}},A,\mathbf{b}$ depend
		on an index $t\in T$ for some set $T$, then the above domination
		bounds hold uniformly for $t\in T$.
	\end{lem}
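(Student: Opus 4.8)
The plan is to reduce all three bounds to standard Gaussian concentration via the representation $\mathbf{u}\overset{d}{=}\mathbf{g}/\|\mathbf{g}\|$, where $\mathbf{g}=(g_{1},\dots,g_{M})^{*}$ is a standard real Gaussian vector on $\mathbb{R}^{M}$, and likewise $\tilde{\mathbf{u}}\overset{d}{=}\tilde{\mathbf{g}}/\|\tilde{\mathbf{g}}\|$ with $\tilde{\mathbf{g}}$ an independent copy; by the independence hypothesis we may condition on $A$ and $\mathbf{b}$ and treat them as deterministic. The only fact needed about the denominators is that $\|\mathbf{g}\|^{2}=\sum_{i}g_{i}^{2}$ is a sum of i.i.d.\ mean-one variables with all moments, so a standard bound on its $2p$-th centered moment (e.g.\ Rosenthal's inequality) gives $\|\mathbf{g}\|^{2}=M+O_{\prec}(\sqrt{M})$; hence, using Condition~\ref{cond:1}, $\|\mathbf{g}\|=\sqrt{M}(1+O_{\prec}(M^{-1/2}))$ and $\|\mathbf{g}\|^{-2}-M^{-1}=O_{\prec}(M^{-3/2})$, with the analogous statements for $\tilde{\mathbf{g}}$. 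Thus it suffices to control the Gaussian linear/quadratic/bilinear forms in the numerators and then divide. Throughout one passes freely between $\prec$ and moments using Lemma~\ref{lem:equivalence}; this also yields the claimed uniformity in $t$, since every estimate below depends on $(A,\mathbf{b})$ only through $\|A\|_{F}$ and $\|\mathbf{b}\|$.

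For (\ref{eq:10.1-1}), write $\mathbf{b}^{*}\mathbf{g}=(\Re\mathbf{b})^{*}\mathbf{g}-\imath(\Im\mathbf{b})^{*}\mathbf{g}$; each summand is a centered real Gaussian of variance at most $\|\mathbf{b}\|^{2}$, so $\mathbb{E}|\mathbf{b}^{*}\mathbf{g}|^{2p}\le C_{p}\|\mathbf{b}\|^{2p}$ and $|\mathbf{b}^{*}\mathbf{g}|\prec\|\mathbf{b}\|$; dividing by $\|\mathbf{g}\|$ gives $|\mathbf{b}^{*}\mathbf{u}|\prec\|\mathbf{b}\|/\sqrt{M}$ (alternatively, L\'evy's concentration inequality on $\mathbb{S}^{M-1}$ applied to the $\|\mathbf{b}\|$-Lipschitz map $\mathbf{u}\mapsto\mathbf{b}^{*}\mathbf{u}$ gives this at once). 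For (\ref{eq:10.2}), since $\mathbf{g}$ is real, $\mathbf{g}^{*}A\mathbf{g}=\mathbf{g}^{*}A_{s}\mathbf{g}$ with $A_{s}:=(A+A^{T})/2$, which satisfies ${\rm Tr}\,A_{s}={\rm Tr}\,A$ and $\|A_{s}\|_{F}\le\|A\|_{F}$; splitting $A_{s}$ into real and imaginary parts and applying the Hanson--Wright inequality to each (or a direct $2p$-th moment computation) yields $|\mathbf{g}^{*}A\mathbf{g}-{\rm Tr}\,A|\prec\|A\|_{F}$. Then decompose
\[
\mathbf{u}^{*}A\mathbf{u}-\frac{{\rm Tr}\,A}{M}=\frac{\mathbf{g}^{*}A\mathbf{g}-{\rm Tr}\,A}{\|\mathbf{g}\|^{2}}+{\rm Tr}\,A\Big(\frac{1}{\|\mathbf{g}\|^{2}}-\frac{1}{M}\Big):
\]
the first term is $O_{\prec}(\|A\|_{F}/M)$, and for the second, the Cauchy--Schwarz bound $|{\rm Tr}\,A|\le\sqrt{M}\|A\|_{F}$ combined with $|\|\mathbf{g}\|^{-2}-M^{-1}|=O_{\prec}(M^{-3/2})$ again gives $O_{\prec}(\|A\|_{F}/M)$. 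This last matching --- the $\sqrt{M}$ gain in $|{\rm Tr}\,A|$ exactly absorbing the $M^{-3/2}$ fluctuation of $\|\mathbf{g}\|^{-2}$ --- is the one place where the bound is genuinely sharp, so it is the step to get right.

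For (\ref{eq:10.4-2}), condition on $\mathbf{g}$: then $\mathbf{g}^{*}A\tilde{\mathbf{g}}=\sum_{j}(\mathbf{g}^{*}A)_{j}\,\tilde{g}_{j}$ is, in its real and imaginary parts, a centered Gaussian in $\tilde{\mathbf{g}}$ of variance at most $\|\mathbf{g}^{*}A\|^{2}=\mathbf{g}^{*}(AA^{*})\mathbf{g}$, so $\mathbb{E}_{\tilde{\mathbf{g}}}|\mathbf{g}^{*}A\tilde{\mathbf{g}}|^{2p}\le C_{p}\|\mathbf{g}^{*}A\|^{2p}$. Since ${\rm Tr}(AA^{*})=\|A\|_{F}^{2}$, $\|AA^{*}\|=\|A\|^{2}\le\|A\|_{F}^{2}$ and $\|AA^{*}\|_{F}\le\|A\|\,\|A\|_{F}\le\|A\|_{F}^{2}$, Hanson--Wright applied to $AA^{*}$ gives $\mathbf{g}^{*}(AA^{*})\mathbf{g}=\|A\|_{F}^{2}+O_{\prec}(\|A\|_{F}^{2})$, i.e.\ $\|\mathbf{g}^{*}A\|\prec\|A\|_{F}$. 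A union bound over the overwhelming-probability event $\{\|\mathbf{g}^{*}A\|\le N^{\varepsilon/2}\|A\|_{F}\}$ together with the conditional Gaussian tail then yields $|\mathbf{g}^{*}A\tilde{\mathbf{g}}|\prec\|A\|_{F}$; dividing by $\|\mathbf{g}\|\,\|\tilde{\mathbf{g}}\|=M(1+O_{\prec}(M^{-1/2}))$ gives $|\mathbf{u}^{*}A\tilde{\mathbf{u}}|\prec\|A\|_{F}/M$. The main obstacle is the bookkeeping in this last case: making the two-stage conditioning rigorous so the high-probability events compose correctly, and checking that the Frobenius-norm (rather than operator-norm) scaling survives both the inner Gaussian step and the outer Hanson--Wright step. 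Once the Gaussian representation is installed, nothing deeper is required; Hanson--Wright (equivalently, a careful moment expansion over the sphere) does all the real work.
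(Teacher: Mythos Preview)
Your argument is correct: the Gaussian representation $\mathbf{u}=\mathbf{g}/\|\mathbf{g}\|$, Hanson--Wright for the quadratic form, conditioning for the bilinear form, and the Cauchy--Schwarz/fluctuation matching in the ${\rm Tr}\,A\cdot(\|\mathbf{g}\|^{-2}-M^{-1})$ term are exactly the right ingredients, and the moment-to-$\prec$ translation via Lemma~\ref{lem:equivalence} handles uniformity in $t$. The paper defers its own proof to the supplementary material \citep{WenandZhou2019supp}, so a line-by-line comparison is not possible from the main text, but the route you take is the standard one for large-deviation bounds on the sphere and is in all likelihood what the authors do as well.
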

	
	Recalling the definition of $\kappa$, we then introduce the following two results whose proof can be found in Lemmas A.4
	and A.5 of \cite{Knowles2017}. In particular, the edge regularity condition required in \cite{Knowles2017} is encompassed in \eqref{2.8}.
	\begin{lem}
		\label{lem:basic property of m} Fix $\tau>0$. Given  assumption \eqref{2.8}, there exists $\tau^{\prime}>0$ such that for any
		$z\in\mathbf{D}^{e}(\tau,\tau^{\prime},N)$ we have
		\begin{eqnarray}
		\operatorname{Im} m(z) & \asymp & \begin{cases}
		\sqrt{\kappa+\eta} & \text{if }E\in{\rm supp}(\varrho),\\
		\frac{\eta}{\sqrt{\kappa+\eta}} & \text{if }E\notin{\rm supp}(\varrho),
		\end{cases}\nonumber \\
		|1+m(z)\sigma_{i}| & \ge & \tau,\qquad\forall i\in\{1,\dots,M\}.\label{eq:1+m sigma lower bound}
		\end{eqnarray}
	\end{lem}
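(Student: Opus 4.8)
\emph{Proof strategy.} The plan is to work entirely from the self-consistent Mar\v{c}enko--Pastur equation $z=f(m(z))$ of Lemma~\ref{lem:global law}: recall that $m(z)$ is the unique solution in $\mathbb{C}^{+}$, that $\mathbf{c}\in(0,1/\sigma_{1})$ is determined by $f'(-\mathbf{c})=0$, and that $\lambda_{+}=f(-\mathbf{c})$ is the right endpoint of $\mathrm{supp}(\varrho)$. First I would record $f'(w)=w^{-2}-\phi\int x^{2}(1+wx)^{-2}\pi({\rm d}x)$ and $f''(w)=-2w^{-3}+2\phi\int x^{3}(1+wx)^{-3}\pi({\rm d}x)$; evaluating at $w=-\mathbf{c}$ and using $\sigma_{i}\le\sigma_{1}<1/\mathbf{c}$ (so $1-\mathbf{c}x>0$ on $\mathrm{supp}(\pi)$) gives $f''(-\mathbf{c})=2\mathbf{c}^{-3}\bigl(1+\phi\int(\lambda\mathbf{c}/(1-\lambda\mathbf{c}))^{3}\pi({\rm d}\lambda)\bigr)=2\gamma^{-3}$, which by Condition~\ref{cond:4} is bounded away from $0$ and $\infty$. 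Hence $\lambda_{+}$ is a genuine square-root branch point of $f^{-1}$, not a higher-order one.

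Second, I would invert $z=f(m)$ locally around $(m,z)=(-\mathbf{c},\lambda_{+})$. Writing $z-\lambda_{+}=f(m)-f(-\mathbf{c})=\gamma^{-3}(m+\mathbf{c})^{2}\bigl(1+O(m+\mathbf{c})\bigr)$ and extracting the square root of the bracket (legitimate once $|m+\mathbf{c}|$ is small) yields $m(z)+\mathbf{c}=\pm\gamma^{3/2}\sqrt{z-\lambda_{+}}\,\bigl(1+O(|z-\lambda_{+}|^{1/2})\bigr)$ for $z$ in a fixed complex $\delta$-ball around $\lambda_{+}$. The sign is pinned down by continuity of $m$ together with the boundary behaviour on $(\lambda_{+},\infty)$, where $m$ is real, increasing, and has limit $-\mathbf{c}$ at $\lambda_{+}$ (the cited characterization of the edge); the resulting branch is the one with $\Im m>0$ on $\mathbb{C}^{+}$. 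Letting $\eta\downarrow0$ then shows that $\varrho$ has no mass in $(\lambda_{+},\lambda_{+}+\delta)$ and has density $\rho(x)=\pi^{-1}\gamma^{3/2}\sqrt{(\lambda_{+}-x)_{+}}\,(1+o(1))$ as $x\uparrow\lambda_{+}$, and moreover $m(z)=-\mathbf{c}+O(|z-\lambda_{+}|^{1/2})$ uniformly on the $\delta$-ball.

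Third, the first display follows from $\Im m(z)=\int\eta\bigl((x-E)^{2}+\eta^{2}\bigr)^{-1}\varrho({\rm d}x)$. Pick a small constant $\delta_{0}$ and shrink $\tau'$ below $\delta_{0}$ and below $\delta$; set $\kappa:=|E-\lambda_{+}|$. If $\eta\ge\delta_{0}$, compactness of $\mathrm{supp}(\varrho)$ makes the integrand comparable to $\eta\asymp1$, and since $\kappa+\eta\asymp1$ there, both asserted comparisons reduce to $\Im m\asymp1$, which is immediate. If $\eta\le\delta_{0}$ then $z$ lies in the $\delta$-ball; split $\varrho$ into its near-edge part, on which the square-root density applies, and a remainder supported in $\{|x-\lambda_{+}|\ge\delta\}$, hence at distance $\ge\delta/2$ from $E$, which contributes $O(\eta)$. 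Substituting $x=\lambda_{+}-t$, the near-edge part is comparable to $\int_{0}^{\delta}\eta\sqrt{t}\,\bigl((t-(\lambda_{+}-E))^{2}+\eta^{2}\bigr)^{-1}{\rm d}t$, a standard integral whose value is of order $\sqrt{\kappa+\eta}$ when $E\le\lambda_{+}$ and of order $\eta/\sqrt{\kappa+\eta}$ when $E>\lambda_{+}$. Since for $E$ in the $\tau'$-window one has $E\in\mathrm{supp}(\varrho)\Leftrightarrow E\le\lambda_{+}$, and since the $O(\eta)$ remainder is dominated by $\sqrt{\kappa+\eta}$ (respectively by $\eta/\sqrt{\kappa+\eta}$, using $\kappa+\eta\lesssim1$) in the two cases, this gives exactly the stated dichotomy.

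Finally, for the lower bound $|1+m(z)\sigma_{i}|\ge\tau$ I would use the same split. When $\eta\ge\delta_{0}$, $|1+m(z)\sigma_{i}|\ge\Im\bigl(1+m(z)\sigma_{i}\bigr)=\sigma_{i}\Im m(z)\ge\sigma_{M}\Im m(z)\gtrsim1$, using $\liminf\sigma_{M}>0$. When $\eta\le\delta_{0}$, the uniform estimate $m(z)=-\mathbf{c}+O(|z-\lambda_{+}|^{1/2})$ from the second step gives $|1+m(z)\sigma_{i}|\ge(1-\mathbf{c}\sigma_{i})-\sigma_{1}O(|z-\lambda_{+}|^{1/2})\ge(1-\mathbf{c}\sigma_{1})-o(1)$, and $1-\mathbf{c}\sigma_{1}$ is bounded below by a positive constant by Condition~\ref{cond:4} ($\limsup\mathbf{c}\sigma_{1}<1$, $\limsup\sigma_{1}<\infty$); shrinking $\tau'$ absorbs the error, and the resulting positive constant is the $\tau$ in the statement (after relabelling). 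The main obstacle is the rigorous, uniform local inversion in the second step: one must confirm the nondegeneracy $f''(-\mathbf{c})\ne0$ under Condition~\ref{cond:4} and control the remainder in the expansion uniformly down to the scale $\eta\sim N^{-1+\tau}$ and across the transition at $E=\lambda_{+}$; once the square-root edge of $\varrho$ is established, the remaining integral estimates are routine. (Alternatively, one could avoid the explicit density and run a stability/continuation argument for the self-consistent equation, as in Lemmas~A.4 and A.5 of \citep{Knowles2017}.)
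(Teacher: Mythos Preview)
The paper does not give its own proof of this lemma but simply cites Lemmas~A.4 and~A.5 of \citep{Knowles2017}; your argument---locally inverting $z=f(m)$ at the critical point $-\mathbf{c}$ to exhibit the square-root branch, reading off the edge density of $\varrho$, and then extracting both the $\Im m$ dichotomy and the lower bound on $|1+m\sigma_{i}|$---is exactly the standard analysis carried out in that reference. Your proposal is correct and matches the intended approach (the only cosmetic point, which you already flag, is that the constant in \eqref{eq:1+m sigma lower bound} is really ``some positive constant depending on the model parameters'' rather than the same $\tau$ that parametrizes~$\mathbf{D}$).
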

	
	\begin{prop}
		\label{prop:stability}Fix $\tau>0$. There exists a constant $\tau^{\prime}>0$
		such that $z=f(m)$ is stable at the edge $\mathbf{D}^{e}(\tau,\tau^{\prime},N)$
		in the following sense. Suppose $\delta:\mathbf{D}^{e}\to(0,\infty)$
		satisfies $N^{-2}\le\delta(z)\le\log^{-1}N$ for $z\in\mathbf{D}^{e}$
		and that $\delta$ is Lipschitz continuous with Lipschitz constant
		$N^{2}$. Suppose moreover that for each fixed $E$, the function
		$\eta\to\delta(E+\imath\eta)$ is nonincreasing for $\eta>0$. Suppose
		that $u:\mathbf{D}^{e}\to\mathbb{C}$ is the Stieltjes transform of
		a probability measure supported on $[0,C]$. Let $z\in\mathbf{D}^{e}$
		and suppose that
		\[
		|f(u(z))-z|\le\delta(z).
		\]
		If $\operatorname{Im} z<1$, suppose also that
		\begin{equation}
		|u-m|\le\frac{C\delta}{\sqrt{\kappa+\eta}+\sqrt{\delta}},\label{eq:stability}
		\end{equation}
		holds at $z+\imath N^{-5}$. Then (\ref{eq:stability}) holds at $z$.
	\end{prop}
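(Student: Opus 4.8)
The plan is to read $z=f(m(z))$ as a fixed self-consistent equation and exploit the structure of $f$ near the edge $\lambda_+=f(-\mathbf c)$. I would set $\zeta:=f(u(z))$, so the hypothesis reads $|\zeta-z|\le\delta(z)$, and think of $u=u(z)$ and $m:=m(z)$ as solutions of the self-consistent equation at the nearby spectral parameters $\zeta$ and $z$. The first step is to record the behaviour of $f$ on a fixed complex neighbourhood $\mathcal U$ of $-\mathbf c$: Condition \ref{cond:4} forces $\limsup_N\sigma_1\mathbf c<1$, so $1+w\sigma_i$ stays uniformly bounded away from $0$ for $w\in\mathcal U$ and all $i$, whence $f$ is analytic on $\mathcal U$ with $N$-uniformly bounded derivatives; moreover $f'(-\mathbf c)=0$ by the definition of $\mathbf c$, and a direct computation gives $f''(-\mathbf c)\asymp1$, so $-\mathbf c$ is a non-degenerate critical point of $f$. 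Hence $|f'(w)|\asymp|w+\mathbf c|$ and $|f''(w)|\asymp1$ on a (possibly smaller) neighbourhood of $-\mathbf c$. Choosing $\tau'$ small enough, depending only on $\tau$ and the model constants, that $m(z)\in\mathcal U$ for all $z\in\mathbf D^e(\tau,\tau',N)$, and combining with the square-root edge expansion $|m(z)+\mathbf c|\asymp\sqrt{\kappa+\eta}$ — which follows from $\Im m(z)\asymp\sqrt{\kappa+\eta}$ of Lemma \ref{lem:basic property of m} together with $m(\lambda_+)=-\mathbf c$, where $\kappa:=|E-\lambda_+|$ — I obtain the key estimate
\[
|f'(m(z))|\asymp\sqrt{\kappa+\eta},\qquad|f''(m(z))|\asymp1\qquad\text{on }\mathbf D^e(\tau,\tau',N).
\]

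Next, assuming provisionally that $v:=u-m$ is small enough that $u\in\mathcal U$ as well, I would Taylor-expand $f$ to second order about $m$:
\[
\zeta-z=f(u)-f(m)=f'(m)\,v+\tfrac12 f''(m)\,v^2+O(|v|^3).
\]
This is a perturbed quadratic for $v$ with coefficients of size $|f'(m)|\asymp\sqrt{\kappa+\eta}$, $|f''(m)|\asymp1$ and right-hand side $|\zeta-z|\le\delta$, whose two roots are, up to the cubic correction,
\[
v_\pm=\frac{-f'(m)\pm\sqrt{f'(m)^2+2f''(m)(\zeta-z)}}{f''(m)}.
\]
Inspecting these using $|f'(m)|\asymp\sqrt{\kappa+\eta}$, $|f''(m)|\asymp1$ and $|\zeta-z|\le\delta$: if $\delta\gtrsim\kappa+\eta$ then both roots have modulus $\lesssim\sqrt\delta\asymp\delta(\sqrt{\kappa+\eta}+\sqrt\delta)^{-1}$, so (\ref{eq:stability}) holds for $v$ regardless of which root it is; if $\delta\ll\kappa+\eta$ the two roots are well separated, the small one of modulus $\asymp\delta/\sqrt{\kappa+\eta}\asymp\delta(\sqrt{\kappa+\eta}+\sqrt\delta)^{-1}$ and the large one of modulus $\asymp\sqrt{\kappa+\eta}$, and (\ref{eq:stability}) holds precisely when $v$ is the small root.

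It therefore remains to pin $v(z)$ to the small branch in the separated regime, and at the same time to justify the provisional smallness of $v$ and the negligibility of the $O(|v|^3)$ term; this is where the a priori estimate at $z+\imath N^{-5}$ enters, via a continuity argument along the vertical segment from $z+\imath N^{-5}$ down to $z$. Since $u$ and $m$ are Stieltjes transforms of compactly supported probability measures evaluated at parameters with imaginary part $\ge N^{-1+\tau}$, they are Lipschitz in $z$ with constant $\le N^{C}$; $\delta$ is $N^2$-Lipschitz and nonincreasing in $\eta$, and $\kappa$ is constant along the segment. At $z+\imath N^{-5}$, (\ref{eq:stability}) forces $|v|\le C\sqrt\delta\le C(\log N)^{-1/2}\to0$, so $u\in\mathcal U$ there, the expansion is legitimate, and $v(z+\imath N^{-5})$ lies at distance $\ll\sqrt{\kappa+\eta}$ from the small root; moving down to $z$ changes $v$, $\delta$ and $\sqrt{\kappa+\eta}$ by at most $N^{-c}$ for some $c>0$, which is far below the branch gap $\asymp\sqrt{\kappa+\eta}\gtrsim N^{(\tau-1)/2}$, so $v(z)$ cannot migrate to the large root and remains $\ll1$. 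Feeding $|v|\ll1$ back into the expansion makes $O(|v|^3)$ lower order than the retained quadratic terms, so the bound on the small root closes on itself, and after absorbing constants I arrive at (\ref{eq:stability}) at $z$. The hard part is precisely this branch-selection step — excluding the large root uniformly in $z\in\mathbf D^e$ — together with making the square-root dichotomy around the critical point $-\mathbf c$ quantitative and uniform in $N$; once these are in place the remainder is a routine perturbed-quadratic estimate.
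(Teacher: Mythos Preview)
Your argument is correct and is precisely the standard stability analysis of the self-consistent equation near a square-root edge; the paper does not give its own proof but refers to Lemma A.5 of \cite{Knowles2017}, whose argument proceeds exactly along the lines you outline (quadratic critical point of $f$ at $-\mathbf c$, Taylor expansion yielding a perturbed quadratic in $u-m$, and a continuity step to select the small branch). One minor point: Lemma \ref{lem:basic property of m} gives $\Im m(z)\asymp\sqrt{\kappa+\eta}$ only for $E\in{\rm supp}(\varrho)$ (outside the support it is $\asymp\eta/\sqrt{\kappa+\eta}$), so the key estimate $|m(z)+\mathbf c|\asymp\sqrt{\kappa+\eta}$ on all of $\mathbf D^e$ should be obtained directly from the quadratic critical-point structure $f(w)-\lambda_+\asymp(w+\mathbf c)^2$ that you already set up, rather than from $\Im m$ alone.
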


	\section{\label{sec:5}Proof of the local law}
	
	In this section, we prove Theorem \ref{thm:strong local law}.   Theorem \ref{thm:rigidity} follows from Theorem \ref{thm:strong local law} directly by standard arguments, whose details are put in Appendix \ref{Appendix C}. Firstly, we prove
	a weaker result.
	\begin{prop}[Deformed weak local law]
		\label{prop:deformed weak local law}Suppose Conditions \ref{cond:1} and \ref{cond:3} as well as (\ref{2.1}) and (\ref{2.8})
		hold. Then there exists a constant $\tau^{\prime}>0$ depending only
		on $\tau$ such that $\Lambda\prec(N\eta)^{-1/4}+q$ uniformly for $z\in\mathbf{D}^{e}(\tau,\tau^{\prime},N)$ with high probability.
	\end{prop}
	
	For $i\in{\cal I}$, define $P_{i}$ as the operator of expectation
	conditioning on all $(\mathbf{u}_{1},\dots,\mathbf{u}_{N})$ and $(\xi_{1},\dots,\xi_{N})$
	except $\mathbf{u}_{i}$. Denote $Q_{i}=1-P_{i}$. Define
	\[
	Z_{i}:=Q_{i}(\mathbf{x}_{i}^{*}{\cal G}^{(i)}\mathbf{x}_{i})=\mathbf{x}_{i}^{*}{\cal G}^{(i)}\mathbf{x}_{i}-\frac{\xi_{i}^{2}}{M}{\rm Tr}({\cal G}^{(i)}\Sigma).
	\]
	We observe from Lemma \ref{lem:resolvent} that,
	\begin{equation}
	\frac{1}{G_{ii}}=-z-z\mathbf{x}_{i}^{*}{\cal G}^{(i)}\mathbf{x}_{i}=-z-\frac{\xi_{i}^{2}}{M}z{\rm Tr}({\cal G}^{(i)}\Sigma)-zZ_{i}.\label{eq:resolvent}
	\end{equation}
	
	In the following, we denote
	\begin{eqnarray*}
		{\cal U}_{i} & = & \frac{1}{M}\{{\rm Tr}({\cal G}\Sigma)-{\rm Tr}({\cal G}^{(i)}\Sigma)\},\qquad i\in \cal{I},\\
		{\cal V} & = & \frac{1}{M}\{{\rm Tr}\{(-zm_{N}\Sigma-zI)^{-1}\Sigma\}-{\rm Tr}({\cal G}\Sigma)\}.
	\end{eqnarray*}
	Note that from (\ref{eq:resolvent}) and the definitions of ${\cal U}_{i}$
	and ${\cal V}$, we have
	\begin{equation}
	\frac{1}{G_{ii}}=-z+z\xi_{i}^{2}{\cal U}_{i}+z\xi_{i}^{2}{\cal V}-z\frac{\xi_{i}^{2}}{M}{\rm Tr}\{(-zm_{N}\Sigma-zI)^{-1}\Sigma\}-zZ_{i}.\label{eq:1/Gii}
	\end{equation}
	
	Before  proceeding to prove Proposition \ref{prop:deformed weak local law},
	we provide the following useful lemmas and propositions \ref{lem:10.5} to \ref{lem:crude bound on G and G inverse}, whose proofs are in Appendix \ref{Appendix B}. Recall that $\Xi$ is the event $\{\Lambda\le(\log N)^{-1}\}.$
	
	\begin{lem}
		\label{lem:10.5}
		\[
		{\cal G}-(-zm_{N}\Sigma-zI)^{-1}=\sum_{i\in{\cal I}}\frac{(m_{N}\Sigma+I)^{-1}}{z(1+\mathbf{x}_{i}^{*}{\cal G}^{(i)}\mathbf{x}_{i})}(\mathbf{x}_{i}\mathbf{x}_{i}^{*}{\cal G}^{(i)}-\frac{1}{N}\Sigma{\cal G}).
		\]
	\end{lem}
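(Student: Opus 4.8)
\textbf{Proof proposal for Lemma \ref{lem:10.5}.}

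The plan is to derive the identity by applying the resolvent (second) identity to the pair of matrices $\mathcal{W}$ and $-zm_N\Sigma - zI$, and then to rewrite the resulting difference $\mathcal{W} + zm_N\Sigma + zI$ as a sum over the columns. First I would write, using $A^{-1} - B^{-1} = B^{-1}(B-A)A^{-1}$ with $A = \mathcal{W} - zI$ and $B = -zm_N\Sigma - zI$,
\[
\mathcal{G} - (-zm_N\Sigma - zI)^{-1} = (-zm_N\Sigma - zI)^{-1}\big(-zm_N\Sigma - zI - \mathcal{W} + zI\big)\mathcal{G} = (-zm_N\Sigma - zI)^{-1}\big(-zm_N\Sigma - \mathcal{W}\big)\mathcal{G}.
\]
Next I would factor $(-zm_N\Sigma - zI)^{-1} = -z^{-1}(m_N\Sigma + I)^{-1}$, so the right-hand side becomes $z^{-1}(m_N\Sigma + I)^{-1}(zm_N\Sigma + \mathcal{W})\mathcal{G}$. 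Now recall $\mathcal{W} = \Sigma^{1/2}XX^*\Sigma^{1/2} = \sum_{i\in\mathcal{I}} \mathbf{r}_i\mathbf{r}_i^*$, where $\mathbf{r}_i = \Sigma^{1/2}\mathbf{x}_i$, and that $m_N = m_N(z) = N^{-1}\mathrm{Tr}\,G(z)$.

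The key algebraic step is to express $zm_N\Sigma + \mathcal{W}$ as a sum of $N$ terms indexed by $i$, one for each column. Using $m_N = N^{-1}\sum_{i} G_{ii}$ together with the self-consistent relation from Lemma \ref{lem:resolvent}, namely $G_{ii} = -1/(z + z\mathbf{r}_i^*\mathcal{G}^{(i)}\mathbf{r}_i)$, i.e. $zG_{ii}(1 + \mathbf{r}_i^*\mathcal{G}^{(i)}\mathbf{r}_i) = -1$, I would write
\[
zm_N\Sigma + \mathcal{W} = \sum_{i\in\mathcal{I}}\Big(\frac{z G_{ii}}{N}\Sigma\cdot\frac{1}{?} + \mathbf{r}_i\mathbf{r}_i^*\Big),
\]
and the precise matching is achieved by the identity $zG_{ii} = -1/(1 + \mathbf{r}_i^*\mathcal{G}^{(i)}\mathbf{r}_i)$, which lets me write $z G_{ii} N^{-1}\Sigma = -N^{-1}\Sigma/(1 + \mathbf{r}_i^*\mathcal{G}^{(i)}\mathbf{r}_i)$. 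On the other hand, I would use the rank-one perturbation identity $\mathbf{r}_i^*\mathcal{G} = \mathbf{r}_i^*\mathcal{G}^{(i)}/(1 + \mathbf{r}_i^*\mathcal{G}^{(i)}\mathbf{r}_i)$ (the standard Sherman--Morrison consequence relating $\mathcal{G}$ and $\mathcal{G}^{(i)}$), so that $\mathbf{r}_i\mathbf{r}_i^*\mathcal{G} = \mathbf{r}_i\mathbf{r}_i^*\mathcal{G}^{(i)}/(1 + \mathbf{r}_i^*\mathcal{G}^{(i)}\mathbf{r}_i)$. Combining these two, each summand of $(zm_N\Sigma + \mathcal{W})\mathcal{G}$ takes the form $\big(\mathbf{r}_i\mathbf{r}_i^*\mathcal{G}^{(i)} - N^{-1}\Sigma\mathcal{G}\big)/(1 + \mathbf{r}_i^*\mathcal{G}^{(i)}\mathbf{r}_i)$, and after pulling out the prefactor $z^{-1}(m_N\Sigma + I)^{-1}$ one obtains exactly the claimed display.

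I expect the main obstacle to be bookkeeping the rank-one identities correctly: one must be careful that $\mathbf{r}_i\mathbf{r}_i^*\mathcal{G}$ and $\mathbf{r}_i^*\mathcal{G}^{(i)}$ are related through division (not multiplication) by $1 + \mathbf{r}_i^*\mathcal{G}^{(i)}\mathbf{r}_i$, and that the scalar $zG_{ii}$ carries the \emph{same} denominator, which is what makes the two pieces $\mathbf{r}_i\mathbf{r}_i^*\mathcal{G}^{(i)}$ and $N^{-1}\Sigma\mathcal{G}$ share a common factor $1/(1 + \mathbf{r}_i^*\mathcal{G}^{(i)}\mathbf{r}_i)$. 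Everything else is linear algebra; the only genuine inputs are the Schur-complement formula of Lemma \ref{lem:resolvent} and the elementary Sherman--Morrison relation between $\mathcal{G}$ and $\mathcal{G}^{(i)}$, both of which are available. Since the statement asserts an exact algebraic identity (no error terms, no probability), no estimates are needed; the proof is purely a rearrangement, and I would present it as such, deferring it to the supplementary material as the paper indicates.
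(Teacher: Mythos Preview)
Your proposal is correct: the resolvent identity $A^{-1}-B^{-1}=B^{-1}(B-A)A^{-1}$ applied with $A=\mathcal{W}-zI$ and $B=-zm_N\Sigma-zI$, together with the Sherman--Morrison relation $\mathbf{r}_i^*\mathcal{G}=\mathbf{r}_i^*\mathcal{G}^{(i)}/(1+\mathbf{r}_i^*\mathcal{G}^{(i)}\mathbf{r}_i)$ and the Schur-complement identity $zG_{ii}=-1/(1+\mathbf{r}_i^*\mathcal{G}^{(i)}\mathbf{r}_i)$ from Lemma~\ref{lem:resolvent}, yields the claimed display verbatim. This is exactly the standard derivation the paper defers to the supplement, so your approach coincides with the intended one.
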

	
	\begin{lem}[Ward identity]
		\label{lem:ward}Let $T\subset{\cal I}$ such that $0\le|T|\le C$.
		Then $\|{\cal G}^{(T)}\|_{F}^{2}=\eta^{-1}\operatorname{Im}{\rm Tr}{\cal G}^{(T)}$.
	\end{lem}
	
	\begin{lem}
		\label{lem:trace_difference}For any $i\in{\cal I}$
		\begin{eqnarray*}
			|{\rm Tr}(G^{(i)}-G)| & \le & \eta^{-1},\\
			|{\rm Tr}({\cal G}^{(i)}-{\cal G})| & \le & |z|^{-1}+\eta^{-1},\\
			|\operatorname{Im}{\rm Tr}({\cal G}^{(i)}-{\cal G})| & \le & \eta|z|^{-2}+\eta^{-1}.
		\end{eqnarray*}
	\end{lem}
	
	\begin{prop}[General properties of $m$]
		\label{prop:Imm(z)}Fix $\tau>0$. Given \eqref{2.1} and \eqref{2.8}, there exists a constant $C>0$ such that
		\begin{equation}
		|m(z)|\asymp1,\quad \operatorname{Im} m(z)\ge C^{-1}\eta,\label{eq:Imgeeta}
		\end{equation}
		for all $z\in\mathbb{C}^{+}$ satisfying $\tau\le|z|\le\tau^{-1}$.
	\end{prop}

	\begin{lem}
		\label{lem:crude bound on G and G inverse}Let $T$ be an index set
		such that $0\le|T|\le C_{1}$ for some constant $C_{1}\ge0$ ( $T$
		may be empty set). Then
		\[
		\{\mathbf{1}(\Xi)+\mathbf{1}(\eta\ge1)\}|G_{ij}^{(T)}|+\mathbf{1}(\Xi)\Big|\frac{1}{G_{ii}^{(T)}}\Big|\le C,
		\]
		for some constant $C>0$ uniformly for $i,j\in{\cal I}$ and $z\in\mathbf{D}$.
	\end{lem}

	Now we proceed to prove the weak local law.  We start with the next lemma which provides a good control for the error when $\eta\ge 1$ or $\Xi$ holds.
	
	\begin{lem}
		\label{lem:bounds of Z and Lambda_o}Suppose Conditions \ref{cond:1},
		\ref{cond:3}, (\ref{2.1}) and (\ref{2.8}) hold. Then
		\begin{equation}
		\{\mathbf{1}(\eta\ge1)+\mathbf{1}(\Xi)\}(|Z_{i}|+\Lambda_{o})\prec\Psi_{\Theta},\label{eq:Zi+Lambdao bound}
		\end{equation}
		\begin{equation}\label{eq:V+U}
		\{\mathbf{1}(\eta\ge1)+\mathbf{1}(\Xi)\}(|\mathcal{V}|+|\mathcal{U}_i|)\prec \Psi_{\Theta},
		\end{equation}
		uniformly for $i\in{\cal I}$ and $z\in\mathbf{D}$.
	\end{lem}
	\begin{proof}
		We firstly show \eqref{eq:Zi+Lambdao bound}. Applying Lemmas \ref{lem:resolvent}, \ref{lem:norm inequality} and (\ref{eq:10.4-2}),
		we obtain that uniformly for $z\in\mathbf{D}$ and $i,j\in{\cal I}$
		with $i\ne j$,
		\begin{equation}
		\mathbf{1}(\Xi)|G_{ij}|\le\mathbf{1}(\Xi)|z||G_{ii}G_{jj}^{(i)}||\mathbf{x}_{i}^{*}{\cal G}^{(ij)}\mathbf{x}_{j}|\prec\mathbf{1}(\Xi)|G_{ii}G_{jj}^{(i)}|\xi_{i}\xi_{j}\frac{1}{M}\|\Sigma\|\|{\cal G}^{(ij)}\|_{F}.\label{eq:10.13}
		\end{equation}
		
		Using Lemma \ref{lem:resolvent}, we obtain that for any $k\in{\cal I}\backslash\{i,j\}$,
		\begin{eqnarray*}
			G_{kk}^{(ij)} & = & G_{kk}^{(i)}-\frac{G_{kj}^{(i)}G_{jk}^{(i)}}{G_{jj}^{(i)}}=G_{kk}-\frac{G_{ki}G_{ik}}{G_{ii}}-\frac{(G_{kj}-\frac{G_{ki}G_{ij}}{G_{ii}})(G_{jk}-\frac{G_{ji}G_{ik}}{G_{ii}})}{G_{jj}^{(i)}}\\
			& = & G_{kk}-\frac{G_{ki}G_{ik}}{G_{ii}}-\frac{G_{kj}G_{jk}-\frac{G_{ki}G_{ij}G_{jk}}{G_{ii}}-\frac{G_{kj}G_{ji}G_{ik}}{G_{ii}}+\frac{G_{ki}G_{ij}G_{ji}G_{ik}}{G_{ii}^{2}}}{G_{jj}^{(i)}}\\
			& = & G_{kk}-\frac{G_{ki}G_{ik}}{G_{ii}}-\bigg(\frac{G_{kj}G_{jk}}{G_{jj}^{(i)}}-\frac{G_{ki}G_{ij}G_{jk}}{G_{jj}^{(i)}G_{ii}}-\frac{G_{kj}G_{ji}G_{ik}}{G_{jj}^{(i)}G_{ii}}+\frac{G_{ki}G_{ij}G_{ji}G_{ik}}{G_{jj}^{(i)}G_{ii}^{2}}\bigg).
		\end{eqnarray*}
		
		Then we have from Lemma \ref{lem:crude bound on G and G inverse}
		that
		\begin{multline}
		\mathbf{1}(\Xi)|G_{kk}^{(ij)}-G_{kk}|\\
		\le\mathbf{1}(\Xi)\bigg(\frac{|G_{ki}G_{ik}|}{|G_{ii}|}+\frac{|G_{kj}G_{jk}|}{|G_{jj}^{(i)}|}+\frac{|G_{ki}G_{ij}G_{jk}|}{|G_{jj}^{(i)}G_{ii}|}+\frac{|G_{kj}G_{ji}G_{ik}|}{|G_{jj}^{(i)}G_{ii}|}+\frac{|G_{ki}G_{ij}G_{ji}G_{ik}|}{|G_{jj}^{(i)}G_{ii}^{2}|}\bigg)\\
		\le\mathbf{1}(\Xi)C(\Lambda_{o}^{2}+\Lambda_{o}^{3}+\Lambda_{o}^{4})\le\mathbf{1}(\Xi)C\Lambda_{o}^{2},\label{eq:3.19}
		\end{multline}
		where the last inequality holds because $\Lambda_{o}^{3}+\Lambda_{o}^{4}\le\Lambda_{o}^{2}$ for large $N$ given $\Xi$. Then it follows from (\ref{eq:3.19}) and Lemma \ref{lem:crude bound on G and G inverse}
		that
		\begin{eqnarray}
		\mathbf{1}(\Xi)|\operatorname{Im}{\rm Tr}G^{(ij)}-\operatorname{Im}{\rm Tr}G|\nonumber
		& = & \mathbf{1}(\Xi)\Big|\sum_{k\in{\cal I}\backslash\{i,j\}}\operatorname{Im} G_{kk}^{(ij)}-\sum_{k\in{\cal I}}\operatorname{Im} G_{kk}\Big|\nonumber \\
		& \le & \mathbf{1}(\Xi)\Big|\sum_{k\in{\cal I}\backslash\{i,j\}}(G_{kk}^{(ij)}-G_{kk})\Big|+\mathbf{1}(\Xi)\Big|\operatorname{Im} G_{ii}+\operatorname{Im} G_{jj}\Big|\nonumber \\
		& \le & \mathbf{1}(\Xi)CN\Lambda_{o}^{2}+\mathbf{1}(\Xi)2\operatorname{Im} m(z)+\frac{2}{\log N}.\label{eq:10.18}
		\end{eqnarray}
		We note that
		\begin{equation}
		{\rm Tr}{\cal G}^{(ij)}=\frac{(N-2-M)}{z}+{\rm Tr}G^{(ij)}.\label{eq:10.15}
		\end{equation}
		Applying Lemma \ref{lem:ward} and (\ref{eq:10.15}), we have
		\begin{equation}
		\mathbf{1}(\Xi)\frac{\|{\cal G}^{(ij)}\|_{F}^{2}}{M^{2}}=\mathbf{1}(\Xi)\frac{\operatorname{Im}{\rm Tr}{\cal G}^{(ij)}}{M^{2}\eta}=\mathbf{1}(\Xi)\Big\{\frac{\operatorname{Im}{\rm Tr}G^{(ij)}}{M^{2}\eta}-\frac{(N-2-M)}{M^{2}|z|^{2}}\Big\}.\label{eq:10.14}
		\end{equation}
		It then follows from (\ref{eq:Imgeeta}), (\ref{eq:10.18}), (\ref{eq:10.14})
		and $MN^{-1}\asymp1$ that
		\begin{equation}
		\mathbf{1}(\Xi)\frac{1}{M^{2}}\|{\cal G}^{(ij)}\|_{F}^{2}\le\mathbf{1}(\Xi)C\frac{\operatorname{Im} m(z)+\Theta+\Lambda_{o}^{2}}{N\eta}.\label{eq:10.16}
		\end{equation}
		Using Lemma \ref{lem:crude bound on G and G inverse}, (\ref{eq:10.13}),
		(\ref{eq:10.16}) and the fact $\xi_{i}\prec1$ uniformly for all
		$i\in{\cal I}$, we have
		\[
		\mathbf{1}(\Xi)|G_{ij}|\prec\mathbf{1}(\Xi)\bigg(\frac{\operatorname{Im} m+\Theta+\Lambda_{o}^{2}}{N\eta}\bigg)^{1/2}.
		\]
		Therefore, by the definition of stochastic domination,
		\[
		\mathbf{1}(\Xi)|\Lambda_{o}|\prec\mathbf{1}(\Xi)\sqrt{\frac{\operatorname{Im} m+\Theta}{N\eta}}+\mathbf{1}(\Xi)\frac{\Lambda_{o}}{(N\eta)^{1/2}}\quad \Rightarrow\quad  \mathbf{1}(\Xi)\Lambda_{o}\prec1(\Xi)\Psi_{\Theta}.
		\]

		Now we evaluate the bound for $Z_{i}$. Similarly to (\ref{eq:10.16}),
		we can easily derive that uniformly for any $i\in{\cal I}$,
		\begin{equation}
		\mathbf{1}(\Xi)\frac{1}{M^{2}}\|{\cal G}^{(i)}\|_{F}^{2}\prec\mathbf{1}(\Xi)\frac{\operatorname{Im} m(z)+\Theta+\Lambda_{o}^{2}}{N\eta}.\label{eq:calGFbound}
		\end{equation}
		
		It follows from
		Lemmas \ref{lem:large deviation}, \ref{lem:ward}, (\ref{eq:calGFbound}) and $\xi_{i}^2\prec1$
		by bounded support assumption for $i\in{\cal I}$ that
		\begin{eqnarray*}
			\mathbf{1}(\Xi)Z_{i} & = & \mathbf{1}(\Xi)\{z\mathbf{x}_{i}^{*}{\cal G}^{(i)}\mathbf{x}_{i}-z\frac{\xi_{i}^{2}}{M}{\rm Tr}({\cal G}^{(i)}\Sigma)\}
			\prec  \mathbf{1}(\Xi)|z|\xi_{i}^{2}\frac{1}{M}\|{\cal G}^{(i)}\Sigma\|_{F}\\
			& \le & \mathbf{1}(\Xi)|z|\xi_{i}^{2}\frac{\sigma_{1}^{2}}{M}\|{\cal G}^{(i)}\|_{F}
			\prec  \mathbf{1}(\Xi)\sqrt{\frac{\operatorname{Im} m+\Theta+\Lambda_{o}^{2}}{N\eta}}.
		\end{eqnarray*}
		Using the bound $\mathbf{1}(\Xi)\Lambda_{o}\prec\mathbf{1}(\Xi)\Psi_{\Theta}$,
		we obtain that
		\[
		\mathbf{1}(\Xi)Z_{i}\prec\mathbf{1}(\Xi)\bigg(\sqrt{\frac{\operatorname{Im} m+\Theta}{N\eta}}+\frac{\sqrt{\operatorname{Im} m+\Theta}}{N\eta}\bigg)\prec\mathbf{1}(\Xi)\sqrt{\frac{\operatorname{Im} m+\Theta}{N\eta}}.
		\]
		
		Now we show the result when $\eta\ge1$. Let $i,j\in{\cal I}$ such
		that $i\ne j$. It follows from Lemma \ref{lem:crude bound on G and G inverse},
		(\ref{eq:10.14}) and $\xi_{i}\prec1$ for $i\in{\cal I}$ that
		\begin{equation*}
		\begin{split}
		\mathbf{1}(\eta\ge1)|G_{ij}| & \le \mathbf{1}(\eta\ge1)|G_{ii}G_{jj}^{(i)}|\Big|\mathbf{x}_{i}^{*}{\cal G}^{(ij)}\mathbf{x}_{j}\Big|\\
		& \prec\mathbf{1}(\eta\ge1)\frac{1}{M}\|\Sigma\|\|{\cal G}^{(ij)}\|_{F}\\
		& \le \mathbf{1}(\eta\ge1)\|\Sigma\|\bigg(\frac{\operatorname{Im}{\rm Tr}{\cal G}^{(ij)}}{M^{2}\eta}\bigg)^{1/2}\\
		& =\mathbf{1}(\eta\ge1)\|\Sigma\|\bigg(\frac{\operatorname{Im}{\rm Tr}G^{(ij)}}{M^{2}\eta}-\frac{N-2-M}{M^{2}|z|^{2}}\bigg)^{1/2}.
		\end{split}
		\end{equation*}
		Let $T$ be a subset of ${\cal I}$ such that $|T|\le C$ for all
		large $N$. From Lemma \ref{lem:trace_difference}, we know that
		\begin{equation}
		|{\rm Tr}G^{(T)}-{\rm Tr}G|\le C\eta^{-1}.\label{eq:10.22-3}
		\end{equation}
		It then follows from Proposition \ref{prop:Imm(z)} and (\ref{eq:10.22-3})
		that
		\begin{multline}
		\mathbf{1}(\eta\ge1)\frac{\operatorname{Im}{\rm Tr}{\cal G}^{(T)}}{M^{2}\eta}=\mathbf{1}(\eta\ge1)\bigg(\frac{\operatorname{Im}{\rm Tr}G^{(T)}}{M^{2}\eta}-\frac{N-|T|-M}{M^{2}|z|^{2}}\bigg)\\
		\le\mathbf{1}(\eta\ge1)\bigg(\frac{\operatorname{Im}{\rm Tr}G}{M^{2}\eta}+\frac{C\eta^{-1}}{M^{2}\eta}-\frac{N-|T|-M}{M^{2}|z|^{2}}\bigg)\prec\frac{\operatorname{Im} m+\Theta}{N\eta}=\Psi_{\Theta}^{2}.\label{eq:TrG/M2eta bound when eta>1}
		\end{multline}
		Consequently
		\[
		\mathbf{1}(\eta\ge1)\Lambda_{o}=\mathbf{1}(\eta\ge1)\max_{i,j\in{\cal I},i\ne j}|G_{ij}|\prec\mathbf{1}(\eta\ge1)\sqrt{\frac{\operatorname{Im} m+\Theta}{N\eta}}.
		\]
		For $Z_{i}$, using Lemma \ref{lem:large deviation}, (\ref{eq:TrG/M2eta bound when eta>1})
		and $\xi_{i}^2\prec1$ for $i\in{\cal I}$, we have
		\begin{eqnarray*}
			\mathbf{1}(\eta\ge1)Z_{i} & = & \mathbf{1}(\eta\ge1)\{z\mathbf{x}_{i}^{*}{\cal G}^{(i)}\mathbf{x}_{i}-z\frac{\xi_{i}^{2}}{M}{\rm Tr}({\cal G}^{(i)}\Sigma)\}
			\prec  \mathbf{1}(\eta\ge1)|z|\xi_{i}^{2}\frac{1}{M}\|{\cal G}^{(i)}\Sigma\|_{F}\\
			& \le & \mathbf{1}(\eta\ge1)|z|\xi_{i}^{2}\|\Sigma\|\frac{1}{M}\|{\cal G}^{(i)}\|_{F}
			\prec  \mathbf{1}(\eta\ge1)\sqrt{\frac{\operatorname{Im} m+\Theta}{N\eta}}.
		\end{eqnarray*}
		Hence (\ref{eq:Zi+Lambdao bound}) follows. Next,  we will show \eqref{eq:V+U}. Under $\Xi$, applying Lemmas  \ref{lem:resolvent}, \ref{lem:norm inequality},
		\ref{lem:large deviation}, \ref{lem:ward}, (\ref{eq:calGFbound}) and $\xi_i^2\prec 1$, we get, for
		any $i\in{\cal I}$,
		\[
		\begin{split}
		|{\cal U}_{i}|&=\frac{1}{M}\Big|{\rm Tr}[({\cal G}^{(i)}-{\cal G})\Sigma]\Big|=\frac{1}{M}\Big|\frac{\mathbf{x}_{i}^{*}{\cal G}^{(i)}\Sigma{\cal G}^{(i)}\mathbf{x}_{i}}{1+\mathbf{x}_{i}^{*}{\cal G}^{(i)}\mathbf{x}_{i}}\Big|\\
		&=\frac{1}{M}\Big|zG_{ii}\mathbf{x}_{i}^{*}{\cal G}^{(i)}\Sigma{\cal G}^{(i)}\mathbf{x}_{i}\Big|\prec\frac{1}{M}|zG_{ii}|\Big(\Big|\frac{1}{M}{\rm Tr}({\cal G}^{(i)}\Sigma{\cal G}^{(i)}\Sigma)\Big|+\frac{1}{M}\|{\cal G}^{(i)}\Sigma{\cal G}^{(i)}\Sigma\|_{F}\Big)\\
		&\le\frac{2}{M^{2}}|zG_{ii}|\|{\cal G}^{(i)}\Sigma\|_{F}^{2}\le\frac{2}{M^{2}}|zG_{ii}|\|{\cal G}^{(i)}\|_{F}^{2}\|\Sigma\|^{2}\\
		&\prec\Psi^2_{\Theta}.
		\end{split}
		\]
		Similarly, under $\Xi$,
		\begin{equation}\label{eq:V}
		\begin{split}
		&\mathbf{1}{(\Xi)}|\mathcal{V}|= \mathbf{1}{(\Xi)}\Big|\frac{1}{M}\left({\rm Tr}(-zm_N\Sigma-zI)^{-1}\Sigma-{\rm Tr}\mathcal{G}\Sigma\right)\Big|\\
		=&\mathbf{1}{(\Xi)}\frac{1}{M}\Big|{\rm Tr}\left(\sum_{i\in\mathcal{I}}\frac{(m_N\Sigma+I)^{-1}}{z(1+\mathbf{x}_i^{*}\mathcal{G}^{(i)}\mathbf{x}_i)}(\mathbf{x}_i\mathbf{x}_i^{*}\mathcal{G}^{(i)}\Sigma-\frac{1}{N}\Sigma\mathcal{G}\Sigma+\frac{1}{N}\Sigma\mathcal{G}^{(i)}\Sigma-\frac{1}{N}\Sigma\mathcal{G}^{(i)}\Sigma)\right) \Big|\\
		\prec&\mathbf{1}{(\Xi)}\frac{1}{M}\sum_{i\in\mathcal{I}}\frac{\xi^2_i}{M}\|\Sigma\|\|(m_N\Sigma+I)^{-1}\Sigma\mathcal{G}^{(i)}\|_{F}+\frac{q}{\sqrt{N}}+\Psi_{\Theta}^2\\
		\leqslant& \mathbf{1}{(\Xi)}\frac{1}{M}\sum_{i\in\mathcal{I}}\frac{1}{M}\|\Sigma\|\|(m_N\Sigma+I)^{-1}\|\|\Sigma\mathcal{G}^{(i)}\|_{F}+\frac{q}{\sqrt{N}}+\Psi_{\Theta}^2.
		\end{split}
		\end{equation}
		Since by assumption \eqref{2.8}
		\[
		|m\sigma_i+1|\geqslant\tau,
		\]
		we have
		\[
		\mathbf{1}{(\Xi)}|1+m_N\sigma|\geqslant\mathbf{1}{(\Xi)}(|1+m\sigma_i|-|m-m_N|\sigma_i)\geqslant\tau^{'}>0.
		\]
		Combining  \eqref{eq:V} we have
		\begin{equation}
		\begin{split}
		\mathbf{1}{(\Xi)}|\mathcal{V}|&\prec\frac{1}{M}\sum_{i\in\mathcal{I}}\frac{1}{M}\|\Sigma\mathcal{G}^{(i)}\|_{F}+\frac{q}{\sqrt{N}}+\Psi^2_{\Theta} \prec\Psi_{\Theta}.
		\end{split}
		\end{equation}
		For $\eta\geqslant1$, the procedure is similar and we omit the details. Then the lemma holds.
	\end{proof}
	\begin{rem}\label{}
		In the following  proof,we will use two relations several times,
		\begin{equation}
		\{\mathbf{1}(\eta\ge1)+\mathbf{1}(\Xi)\}\frac{1}{M}\|{\cal G}^{(i)}\|_{F}\prec\Psi_{\Theta}, \quad
		\{\mathbf{1}(\eta\ge1)+\mathbf{1}(\Xi)\}\frac{1}{M}\|{\cal G}\|_{F}\prec\Psi_{\Theta},\label{eq:lem3.5 eq:1}
		\end{equation}
		so we summarize them here.
	\end{rem}
	
	With the above results, we can further prove the next lemma.
	\begin{lem}
		\label{lem:Gmumu-Gnunu}Under the assumptions in Lemma \ref{lem:bounds of Z and Lambda_o}, one has
		\begin{equation}
		\{\mathbf{1}(\eta\ge1)+\mathbf{1}(\Xi)\}|G_{ii}-G_{jj}|\prec\Psi_{\Theta}+q\label{eq:Gmumu-Gnunu},
		\end{equation}
		uniformly for $i,j\in{\cal I}$ and $z\in\mathbf{D}$.
	\end{lem}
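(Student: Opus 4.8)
The starting point is the self-consistent expansion (\ref{eq:1/Gii}). Writing $S(z):=\frac1M{\rm Tr}\{(-zm_N\Sigma-zI)^{-1}\Sigma\}$, which does not depend on the index, and subtracting (\ref{eq:1/Gii}) for $j$ from (\ref{eq:1/Gii}) for $i$, the deterministic term $-z$ cancels and one obtains
\[
\frac{1}{G_{ii}}-\frac{1}{G_{jj}}
= z\bigl(\xi_i^2\,{\cal U}_i-\xi_j^2\,{\cal U}_j\bigr)
+ z(\xi_i^2-\xi_j^2)\,{\cal V}
- z(\xi_i^2-\xi_j^2)\,S
- (Z_i-Z_j).
\]
The plan is to show that the right-hand side is $O_\prec(\Psi_\Theta)$ on the event $\Xi$ (the region $\eta\ge1$ is handled analogously, using only the crude bound $\|{\cal G}\|\le\eta^{-1}\le1$; off the event $\{\mathbf{1}(\eta\ge1)+\mathbf{1}(\Xi)\ge1\}$ there is nothing to prove), and then to pass to the statement via $G_{ii}-G_{jj}=-G_{ii}G_{jj}(G_{ii}^{-1}-G_{jj}^{-1})$ together with $|G_{ii}|,|G_{jj}|\prec1$.

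For the term-by-term estimates I would use the inputs already at hand: Condition \ref{cond:xi} gives $\xi_i^2-\xi_j^2=(\xi_i^2-\mathbb{E}\xi_i^2)-(\xi_j^2-\mathbb{E}\xi_j^2)\prec N^{-1/2}$ and $\xi_i^2\prec1$ (as $\mathbb{E}\xi_i^2=M/N$ is bounded); Lemma \ref{lem:bounds of Z and Lambda_o} gives $|Z_i|+|Z_j|\prec\Psi_\Theta$; and Lemma \ref{lem:U and V bound} gives $|{\cal U}_i|,|{\cal U}_j|\prec\Psi_\Theta^2$ and $|{\cal V}|\prec\Psi_\Theta$ on $\Xi$. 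The one factor requiring a short argument is $S$: since $S=-(zM)^{-1}\sum_{k=1}^M\sigma_k/(1+m_N\sigma_k)$ with $|z|\ge\tau$ and $\sigma_k\le\sigma_1=O(1)$, and on $\Xi$ one has $|m_N-m|\le\Lambda\le(\log N)^{-1}$, estimate (\ref{eq:1+m sigma lower bound}) yields $|1+m_N\sigma_k|\ge|1+m\sigma_k|-\sigma_1(\log N)^{-1}\ge\tau/2$ for large $N$, whence $|S|\prec1$.

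To combine, I would record the elementary deterministic lower bound $\Psi_\Theta(z)^2=(\Im m+\Theta)/(N\eta)\ge N^{-1}\int\{(x-E)^2+\eta^2\}^{-1}\varrho({\rm d}x)\ge cN^{-1}$ uniformly on $\mathbf{D}$ (because $\varrho$ is a probability measure with support in a fixed bounded set and $\eta\le\tau^{-1}$), so that $N^{-1/2}\prec\Psi_\Theta$; together with $|z|\le\tau^{-1}$ and $\Psi_\Theta^2\le C\Psi_\Theta$ this makes each of the four groups above $O_\prec(\Psi_\Theta)$, hence $|G_{ii}^{-1}-G_{jj}^{-1}|\prec\Psi_\Theta$. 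Finally $|G_{ii}|\le|m(z)|+\Lambda\le|m(z)|+(\log N)^{-1}\prec1$ on $\Xi$ (using that $|m|$ is bounded on $\mathbf{D}$), and $|G_{ii}|\le\|G\|\le\eta^{-1}\le1$ when $\eta\ge1$; multiplying through gives $|G_{ii}-G_{jj}|\prec\Psi_\Theta$. All bounds are uniform in $i,j\in{\cal I}$ and $z\in\mathbf{D}$, since each cited lemma, Condition \ref{cond:xi}, and the bound on $\Psi_\Theta$ are.

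I expect the only genuinely delicate point to be the uniform lower bound $|1+m_N\sigma_k|\gtrsim1$ (equivalently $|S|\prec1$ and $|m|$ bounded) over all of $\mathbf{D}$ on $\Xi$, rather than merely over $\mathbf{D}^e$ where it is immediate from Lemma \ref{lem:basic property of m}; away from the edge one invokes the corresponding deterministic properties of the generalized Mar\v{c}enko--Pastur law, which are standard (the square-root edge at $\lambda_+$ and the exclusion of $z=0$ via $|z|\ge\tau$ rule out any blow-up). Everything else is a routine assembly of the preceding lemmas with the estimate $\Psi_\Theta\gtrsim N^{-1/2}$; it is precisely that estimate which keeps the extra fluctuation introduced by the radii $\xi_i$ — the feature absent in the i.i.d.\ column setting of \citep{bao2015,Bao2013,Knowles2017,pillai2014} — below the target scale $\Psi_\Theta$.
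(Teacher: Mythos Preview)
Your approach is correct and almost certainly matches the paper's (the proof is deferred to the supplement, but the paper sets up exactly the tools you invoke: the expansion \eqref{eq:1/Gii}, the $Z_i$ bound of Lemma~\ref{lem:bounds of Z and Lambda_o}, and the ${\cal U}_i$ bound of Lemma~\ref{lem:U and V bound}). Two small points of precision:

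\begin{itemize}
\item Lemma~\ref{lem:U and V bound} does \emph{not} bound $\mathcal V$ directly; it bounds the pieces $\mathscr V_1,\dots,\mathscr V_5$. The bound $|\mathcal V|\prec\Psi_\Theta$ you quote is \eqref{eq:V bound}, obtained in the proof of Proposition~\ref{prop:weak local law} by combining Lemma~\ref{lem:10.5} (the algebraic decomposition) with Lemma~\ref{lem:U and V bound}. Since that derivation does not use Lemma~\ref{lem:Gmumu-Gnunu} itself, there is no circularity, but you should cite Lemmas~\ref{lem:10.5} and~\ref{lem:U and V bound} together (or simply note that for your purposes the cruder bound $|\mathcal V|\prec1$ suffices, since the prefactor $\xi_i^2-\xi_j^2\prec N^{-1/2}\prec\Psi_\Theta$ already delivers the target scale). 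An even cleaner variant avoids $\mathcal V$ and $S$ entirely: subtract \eqref{eq:resolvent} rather than \eqref{eq:1/Gii}, so that the non-$Z$ part becomes $-z\big(\xi_i^2M^{-1}{\rm Tr}({\cal G}^{(i)}\Sigma)-\xi_j^2M^{-1}{\rm Tr}({\cal G}^{(j)}\Sigma)\big)$, which splits into $(\xi_i^2-\xi_j^2)M^{-1}{\rm Tr}({\cal G}\Sigma)$ plus $-\xi_i^2\mathcal U_i+\xi_j^2\mathcal U_j$; the first factor is $O_\prec(1)$ on $\Xi$ directly from \eqref{eq:resolvent} and $|G_{ii}^{-1}|\prec1$.
\item Your remark that the delicate point is the uniform bound $|1+m\sigma_k|\gtrsim1$ (hence $|m|\lesssim1$) on all of $\mathbf D$, not just $\mathbf D^e$, is exactly right; Lemma~\ref{lem:basic property of m} is stated on $\mathbf D^e$, so on the rest of $\mathbf D$ one appeals to the standard global properties of the deformed Mar\v{c}enko--Pastur transform under Condition~\ref{cond:4} (boundedness of the density and the restriction $|z|\ge\tau$). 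This is the only place where something beyond the paper's displayed lemmas is needed.
\end{itemize}

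The deterministic lower bound $\Psi_\Theta\gtrsim N^{-1/2}$ you record is correct and is indeed the mechanism that absorbs the extra $\xi$-fluctuations peculiar to the elliptical model. With these clarifications your argument is complete.
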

	\begin{proof}
		We observe from (\ref{eq:resolvent}) that
		\begin{equation}
		\begin{split}
		|G_{ii}-G_{jj}|&=\Big| G_{ii}G_{jj}\Big(\frac{1}{G_{jj}}-\frac{1}{G_{ii}}\Big)\Big| \\
		&\leqslant|G_{ii}G_{jj}||Z_{i}-Z_{j}|+\Big|G_{ii}G_{jj}z\bigg\{\frac{\xi_{i}^{2}}{M}{\rm Tr}({\cal G}^{(i)}\Sigma)-\frac{\xi_{j}^{2}}{M}{\rm Tr}({\cal G}^{(j)}\Sigma)\bigg\}\Big| \\
		&\leqslant|G_{ii}G_{jj}||Z_{i}-Z_{j}|+|G_{ii}G_{jj}||z|\Big( \frac{\xi_{i}^{2}}{M}|{\rm Tr}({\cal G}^{(i)}\Sigma)|+\frac{\xi_{j}^{2}}{M}|{\rm Tr}({\cal G}^{(j)}\Sigma)|\Big)\\
		&\prec|Z_{i}-Z_{j}|+\frac{\xi_i^2}{M}{\rm Tr}(|\mathcal{G}^{(i)}\Sigma-\mathcal{G}^{(j)}\Sigma|)+\frac{\xi_i^2-\xi_j^2}{M}{\rm Tr}(|\mathcal{G}^{(j)}\Sigma|)\\
		&\prec\Psi_{\Theta}+q+\Psi_{\Theta}^2,
		\label{eq:3.28}
		\end{split}
		\end{equation}
		where we used Condition \ref{cond:3} and the fact that under $\Xi$, $
		|\mathcal{G}^{(i)}_{kk}|\asymp\sigma_{k}.$
	\end{proof}
	
	Now we can complete the proof of Proposition \ref{prop:deformed weak local law}.
	\begin{proof}[Proof of Proposition \ref{prop:deformed weak local law}]
		We observe from (\ref{eq:Gmumu-Gnunu}) that
		\begin{eqnarray}
		&  & \{\mathbf{1}(\Xi)+\mathbf{1}(\eta\ge1)\}\Big\{\frac{1}{N}\sum_{i\in{\cal I}}\frac{1}{G_{ii}}-\frac{1}{m_{N}}\Big\}\nonumber \\
		& = & \{\mathbf{1}(\Xi)+\mathbf{1}(\eta\ge1)\}\frac{1}{N}\sum_{i\in{\cal I}}\Big(-\frac{G_{ii}-m_{N}}{m_{N}^{2}}+\frac{(G_{ii}-m_{N})^{2}}{G_{ii}m_{N}^2}\Big)\nonumber \\
		& = & \{\mathbf{1}(\Xi)+\mathbf{1}(\eta\ge1)\}\frac{1}{N}\sum_{i\in{\cal I}}\frac{(G_{ii}-m_{N})^{2}}{G_{ii}m_{N}^2}\nonumber \\
		& \prec & \Psi_{\Theta}^{2}+q^2.\label{eq:1/G-1/m-N}
		\end{eqnarray}
		It then follows from (\ref{eq:1/Gii}), Condition \ref{cond:3} and (\ref{eq:1/G-1/m-N})
		that
		\begin{multline}
		\{\mathbf{1}(\Xi)+\mathbf{1}(\eta\ge1)\}\frac{1}{m_{N}}=\{\mathbf{1}(\Xi)+\mathbf{1}(\eta\ge1)\}\frac{1}{N}\sum_{i\in{\cal I}}\frac{1}{G_{ii}}+O_{\prec}(\Psi_{\Theta}^{2})+O_{\prec}(q^2)\\
		\begin{aligned}= & \{\mathbf{1}(\Xi)+\mathbf{1}(\eta\ge1)\}\bigg[z\Big(-1+\frac{1}{N}\sum_{i\in{\cal I}}\xi_{i}^{2}{\cal U}_{i}+\frac{1}{N}{\cal V}\sum_{i\in{\cal I}}\xi_{i}^{2}\Big)\\
		& +\frac{1}{M}\sum_{i\in{\cal I}}\xi_{i}^{2}\frac{1}{N}{\rm Tr}\{(m_{N}\Sigma+I)^{-1}\Sigma\}-\frac{z}{N}\sum_{i\in{\cal I}}Z_{i}\bigg]+O_{\prec}(\Psi_{\Theta}^{2})+O_{\prec}(q^2)\\
		=& \{\mathbf{1}(\Xi)+\mathbf{1}(\eta\ge1)\}\Big(-z+\frac{1}{N}{\rm Tr}\{(m_{N}\Sigma+I)^{-1}\Sigma\}\Big)+O_{\prec}(\Psi_{\Theta})+O_{\prec}(q^2).
		\end{aligned}
		\label{eq:3.34}
		\end{multline}
		Since
		\[
		{\rm Tr}\{(m_{N}\Sigma+I)^{-1}\Sigma\}=\sum_{i\in\mathcal{I}}\frac{\sigma_i}{m_N\sigma_i+1},
		\]
		it follows from the definition of $f(x)$ in (\ref{def:f}) that
		\begin{equation}
		\{\mathbf{1}(\Xi)+\mathbf{1}(\eta\ge1)\}\{f(m_{N})-z\}\prec\Psi_{\Theta}+q^2.\label{eq:1(XI)(f(m)-z)}
		\end{equation}
		Applying Proposition \ref{prop:stability}, for any $\varepsilon>0$
		we have
		\begin{equation}
		\mathbf{1}(\eta\ge1)|m_{N}-m|\prec\frac{\Psi_{\Theta}+q^2}{\sqrt{\kappa+\eta}+\sqrt{N^{\varepsilon}(\Psi_{\Theta}+q^2})}\prec \sqrt{\Psi_{\Theta}+q^2}.\label{eq:3.36}
		\end{equation}
		Therefore, it follows from (\ref{eq:Gmumu-Gnunu}), (\ref{eq:3.36})
		and Lemma \ref{lem:bounds of Z and Lambda_o} that
		\begin{equation}
		\mathbf{1}(\eta\ge1)\Lambda(z)\le\mathbf{1}(\eta\ge1)\{\max_{i}|G_{ii}-m_{N}|+|m_{N}-m|+\Lambda_{o}\}\prec N^{-1/2}+q.\label{eq:3.38-1}
		\end{equation}
		The rest proof of Proposition \ref{prop:deformed weak local law}
		follows from a standard bootstrapping step which we summarize  into Appendix \ref{Appendix B}-\ref{Suppl:prop deformed wll} and omit further details here.
	\end{proof}

	Now we can prove Theorem \ref{thm:strong local law}. Note that for
	$i\in{\cal I}$,
	\begin{equation}
	Q_{i}\frac{1}{G_{ii}}=Q_{i}\{-z-z\frac{\xi_{i}^{2}}{M}{\rm Tr}({\cal G}^{(i)}\Sigma)-zZ_{i}\}=-zZ_{i},\label{eq:observation Q1/Gii}
	\end{equation}
	and we write
	\begin{equation}\label{eq:FA of V}
	\begin{split}
	\mathcal{V}&=\frac{1}{M}{\rm Tr}\Big(\sum_{i\in\mathcal{I}}\frac{(m_N\Sigma+I)^{-1}}{z(1+\mathbf{x}_{i}^{*}\mathcal{G}^{(i)}\mathbf{x}_{i})}(\mathbf{x}_{i}\mathbf{x}_{i}^{*}\mathcal{G}^{(i)}\Sigma-\frac{1}{N}\Sigma\mathcal{G}^{(i)}\Sigma+\frac{1}{N}\Sigma\mathcal{G}^{(i)}\Sigma-\frac{1}{N}\Sigma\mathcal{G}\Sigma)\Big)\\
	&=\frac{1}{M}\sum_{i\in\mathcal{I}}G_{ii}{\rm Tr}(\mathcal{V}_i)+\frac{1}{M}\sum_{i\in\mathcal{I}}G_{ii}\frac{1}{N}{\rm Tr}((m_N\Sigma+I)^{-1}\Sigma(\mathcal{G}^{(i)}-\mathcal{G})\Sigma),
	\end{split}
	\end{equation}
	where
	\begin{equation}\label{def:Vi}
	\mathcal{V}_i:=(m_N\Sigma+I)^{-1}(\mathbf{x}_{i}\mathbf{x}_{i}^{*}\mathcal{G}^{(i)}\Sigma-\frac{1}{N}\Sigma\mathcal{G}^{(i)}\Sigma).
	\end{equation}
	
	For the second term in (\ref{eq:FA of V}),
	\begin{equation}\label{eq:second term of V}
	\begin{split}
	&\Big|\frac{1}{M}\sum_{i\in\mathcal{I}}G_{ii}\frac{1}{N}{\rm Tr}((m_N\Sigma+I)^{-1}\Sigma(\mathcal{G}^{(i)}-\mathcal{G})\Sigma)\Big|\\
	=&\Big|\frac{1}{M}\sum_{i\in\mathcal{I}}G_{ii}\frac{1}{N}{\rm Tr}((m_N\Sigma+I)^{-1}\Sigma\frac{\mathcal{G}^{(i)}\mathbf{x}_i\mathbf{x}_i^{*}\mathcal{G}^{(i)}}{1+\mathbf{x}_i^{*}\mathcal{G}^{(i)}\mathbf{x}_i}\Sigma)\Big| \\
	\leqslant& \frac{1}{M}\sum_{i\in\mathcal{I}}|G_{ii}|^2\frac{1}{N}|\mathbf{x}_i^{*}\mathcal{G}^{(i)}(m_N\Sigma+I)^{-1}\mathcal{G}^{(i)}\mathbf{x}_i|,
	\end{split}
	\end{equation}
	which can be bounded by $\Psi_{\Theta}^2$ by Lemma \ref{lem:norm inequality}, Lemma \ref{lem:large deviation}, Lemma \ref{lem:crude bound on G and G inverse} and (\ref{eq:lem3.5 eq:1}).
	
	Furthermore, using the same methods one can easily verify that
	\begin{equation}\label{eq:Vi decomp}
	\begin{split}
	\frac{1}{M}\sum_{i\in\mathcal{I}}G_{ii}{\rm Tr}(\mathcal{V}_i)&= \frac{1}{M}\sum_{i\in\mathcal{I}}G_{ii}{\rm Tr}(\mathcal{V}_i^{(i)})+\frac{1}{M}\sum_{i\in\mathcal{I}}G_{ii}{\rm Tr}(\mathcal{V}_i-\mathcal{V}_i^{(i)})\\
	&\prec \frac{1}{M}\sum_{i\in\mathcal{I}}G_{ii}{\rm Tr}(\mathcal{V}_i^{(i)})+\Psi_{\Theta}^2,
	\end{split}
	\end{equation}
	where $\mathcal{V}_i^{(i)}:=(m_N^{(i)}\Sigma+I)^{-1}(\mathbf{x}_{i}\mathbf{x}_{i}^{*}\mathcal{G}^{(i)}\Sigma-\frac{1}{N}\Sigma\mathcal{G}^{(i)}\Sigma).$
	
	From Proposition \ref{prop:deformed weak local law}, we know that $\Xi$ is true with high probability, i.e. $1\prec\mathbf{1}(\Xi)$. So from
	now on, we can drop the factor $\mathbf{1}(\Xi)$ in all $\Xi$ dependent
	results without affecting their validity. To improve the deformed weak local
	law to the strong local law, a key input is Proposition \ref{prop:fluctuation averaging}
	below whose proof we postpone to Appendix  \ref{Appendix B}-\ref{Suppl:FA}.
	
	\begin{prop}[Fluctuation averaging]
		\label{prop:fluctuation averaging}Let $\nu\in[1/4,1]$ and $\tau^{\prime}$
		be defined in Proposition \ref{prop:deformed weak local law}. Denote $\Phi_{\nu}=\sqrt{\frac{\operatorname{Im} m+(N\eta)^{-\nu}+q}{N\eta}}.$
		Suppose moreover that $\Theta\prec(N\eta)^{-\nu}+q$ uniformly for $z\in\mathbf{D}^{e}(\tau,\tau^{\prime},N)$.
		Then we have
		\begin{equation}
		\frac{1}{N}\sum_{i\in{\cal I}}Q_{i}\frac{1}{G_{ii}}\prec\Phi_{\nu}^{2},\label{eq:fluctuation of Z}
		\end{equation}
		and
		\begin{equation}
		\frac{1}{N}\sum_{i\in{\cal I}}Q_{i}\mathscr{V}_{i}\prec \Phi_{\nu}^{2},\label{eq:fluctuation of V}
		\end{equation}
		uniformly for $z\in\mathbf{D}^{e}(\tau,\tau^{\prime},N)$, where $\mathscr{V}_{i}$ is defined as
		\begin{equation}
		\mathscr{V}_i:=\mathbf{x}_i^{*}\mathcal{G}^{(i)}\Sigma(m_N^{(i)}\Sigma+I)^{-1}\mathbf{x}_i.
		\end{equation}
	\end{prop}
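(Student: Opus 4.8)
The plan is to prove both estimates by the moment method of \citep{pillai2014, Knowles2017}, with the i.i.d.\ concentration inputs there replaced by the spherical large deviation bounds of Lemma~\ref{lem:large deviation}. By Proposition~\ref{prop:weak local law} we have $1\prec\mathbf{1}(\Xi)$, so the $\Xi$-restricted estimates of Lemmas~\ref{lem:bounds of Z and Lambda_o}, \ref{lem:U and V bound}, \ref{lem:GFbound general} and \ref{lem:Gmumu-Gnunu} are available unconditionally, and by Lemma~\ref{lem:equivalence} (whose hypotheses are readily verified on $\mathbf{D}^{e}$) it suffices to show, for each fixed $p\in\mathbb{N}$, that $\mathbb{E}\bigl|\tfrac1N\sum_{i\in{\cal I}}Q_i\tfrac1{G_{ii}}\bigr|^{2p}\prec\Phi_\nu^{4p}$ and $\mathbb{E}\bigl|\tfrac1N\sum_{i\in{\cal I}}Q_i\mathscr{V}_{1i}\bigr|^{2p}\prec\bigl((N\eta)^{-\nu}\Phi_\nu+\Phi_\nu^{2}\bigr)^{2p}$. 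By (\ref{eq:observation Q1/Gii}), $Q_i\tfrac1{G_{ii}}=-z\,Q_i(\mathbf{r}_i^{*}{\cal G}^{(i)}\mathbf{r}_i)$, and since $\mathbf{r}_i=\xi_i\Sigma^{1/2}\mathbf{u}_i$ while ${\cal G}^{(i)}$, $m_N^{(i)}$ and $\xi_i$ do not depend on $\mathbf{u}_i$, both $Q_i\tfrac1{G_{ii}}$ and $Q_i\mathscr{V}_{1i}$ take the common form $\xi_i^{2}\bigl(\mathbf{u}_i^{*}A_i\mathbf{u}_i-\tfrac1M{\rm Tr}A_i\bigr)$, where $A_i$ is a product of $\Sigma^{1/2}$, ${\cal G}^{(i)}$ and, in the second case, $\Sigma(m_N^{(i)}\Sigma+I)^{-1}$; the last factor has $O_{\prec}(1)$ norm by (\ref{eq:1+m sigma lower bound}).

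Expand the $2p$-th moment over multi-indices $\mathbf{i}=(i_1,\dots,i_{2p})\in{\cal I}^{2p}$ and put $S=\{i_1,\dots,i_{2p}\}$. The technical core is a decoupling: in each factor, use the resolvent identities of Lemma~\ref{lem:resolvent} together with the Sherman--Morrison formula, iterated as a telescoping sum of rank-one updates, to replace ${\cal G}^{(i_k)}$ by ${\cal G}^{(S)}$ and $m_N^{(i_k)}$ by $m_N^{(S)}$; the telescope is truncated at an order depending on $p$, with the remainder bounded crudely through Lemmas~\ref{lem:bounds of Z and Lambda_o} and \ref{lem:equivalence}. Each rank-one step contributes a denominator $1+\mathbf{r}_a^{*}{\cal G}^{(\cdot)}\mathbf{r}_a$ that is bounded away from $0$ and $\infty$ by Lemmas~\ref{lem:bounds of Z and Lambda_o} and \ref{lem:Gmumu-Gnunu}, and a numerator carrying two off-diagonal spherical bilinear forms $\xi_a\mathbf{u}_{\bullet}^{*}\Sigma^{1/2}{\cal G}^{(\cdot)}\Sigma^{1/2}\mathbf{u}_a$ with distinct directions, each $O_{\prec}(\Psi_\Theta)=O_{\prec}(\Phi_\nu)$ by (\ref{eq:10.4-2}) and Lemma~\ref{lem:GFbound general}; hence every ${\cal G}$-reduction error is accompanied by a gain $\Phi_\nu^{2}$. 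For the $\mathscr{V}_{1i}$-factors there is, in addition, the replacement of $m_N^{(i_k)}$ inside $(m_N^{(i_k)}\Sigma+I)^{-1}$ by $m_N^{(S)}$, which by the interlacing bound $|m_N^{(T)}-m_N^{(T\cup\{a\})}|\prec(N\eta)^{-1}\le(N\eta)^{-\nu}$ (using $\nu\le1$) costs a factor $(N\eta)^{-\nu}$; this is exactly what produces the extra $(N\eta)^{-\nu}\Phi_\nu$ term in (\ref{eq:fluctuation of V}).

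After reduction, the only dependence of the $k$-th factor on $\{\mathbf{u}_a:a\in S\}$ is through $\mathbf{u}_{i_k}^{*}(\cdot)\mathbf{u}_{i_k}$, so if some $a\in S$ occurs exactly once in $\mathbf{i}$, applying the partial expectation $P_a$ to the whole product annihilates it, since $P_aQ_a(\cdot)=0$. Thus only multi-indices with every index of $S$ occurring at least twice survive, whence $|S|\le p$ and there are $O(N^{p})$ of them; evaluating the remaining $P_a$'s via the spherical integration formulas underlying Lemma~\ref{lem:large deviation} (so that $P_a$ of a product of centered quadratic forms in $\mathbf{u}_a$ becomes a sum of normalised traces of matrix products, each $O_{\prec}(\Phi_\nu)$ by Lemma~\ref{lem:GFbound general}) shows that each surviving term carries at least $2p$ factors of $\Phi_\nu$; since $\Phi_\nu^{2}=(\Im m+(N\eta)^{-\nu})/(N\eta)\gtrsim N^{-1}$ uniformly on $\mathbf{D}^{e}(\tau,\tau^{\prime},N)$, the prefactor $N^{-2p}O(N^{p})$ is absorbed into another $\Phi_\nu^{2p}$, giving $\Phi_\nu^{4p}$ and, with the $(N\eta)^{-\nu}$-gains, its $\mathscr{V}_1$-analogue. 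The main obstacle is precisely this accounting: one must organise the error terms of the resolvent telescopes so that, in the elliptical setting where every coincidence of indices is mediated by spherical bilinear forms $\mathbf{u}_a^{*}A\mathbf{u}_b$ rather than by independent matrix entries, no surviving term forfeits any of the $\Phi_\nu$-factors it needs, all uniformly in $z\in\mathbf{D}^{e}$ and in the multi-indices. Finally, combining (\ref{eq:fluctuation of V}) with (\ref{eq:observation V1}) controls $\mathscr{V}_{1}$ itself.
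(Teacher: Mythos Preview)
The paper does not give a detailed proof of Proposition~\ref{prop:fluctuation averaging} in the main text; it is deferred to the supplement, and only the heuristic in Section~\ref{sec:3} (the weak correlation of the $Q_i\tfrac{1}{G_{ii}}$ and the gain of order $(N\eta)^{-1/2}$ from averaging) is given. Your proposal follows exactly that standard moment-method route of \citep{pillai2014,Knowles2017}, with the i.i.d.\ concentration replaced by Lemma~\ref{lem:large deviation}, and this is the same approach the paper alludes to and carries out in the supplement. Your identification $Q_i\tfrac{1}{G_{ii}}=-zZ_i$ and $Q_i\mathscr{V}_{1i}=\xi_i^{2}\bigl(\mathbf{u}_i^{*}A_i\mathbf{u}_i-\tfrac{1}{M}{\rm Tr}A_i\bigr)$ is correct (since $P_i$ integrates only over $\mathbf{u}_i$), your verification that $\Phi_\nu^{2}\gtrsim N^{-1}$ on $\mathbf{D}^{e}$ is right (via $\Im m/(N\eta)\gtrsim N^{-1}$ from Lemma~\ref{lem:basic property of m}), and your accounting for the extra $(N\eta)^{-\nu}\Phi_\nu$ term in (\ref{eq:fluctuation of V}) via the interlacing bound on $m_N^{(T)}-m_N^{(T\cup\{a\})}$ is the right mechanism.

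One point deserves more care than your sketch gives it. You write that ``after reduction, the only dependence of the $k$-th factor on $\{\mathbf{u}_a:a\in S\}$ is through $\mathbf{u}_{i_k}^{*}(\cdot)\mathbf{u}_{i_k}$, so if some $a\in S$ occurs exactly once \dots $P_a$ annihilates it''. This is true only for the \emph{leading} (fully decoupled) term; every rank-one Sherman--Morrison step you use to pass from ${\cal G}^{(i_k)}$ to ${\cal G}^{(S)}$ reintroduces explicit $\mathbf{u}_a$-dependence into the $k$-th factor through the bilinear forms $\mathbf{u}_{\bullet}^{*}\Sigma^{1/2}{\cal G}^{(\cdot)}\Sigma^{1/2}\mathbf{u}_a$. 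The correct statement is that each lone index $a$ forces you to pick up at least one such error term (since the leading, $\mathbf{u}_a$-independent term is killed by $P_aQ_a=0$), and each error term carries an extra $\Phi_\nu^{2}$; combined with the combinatorial constraint $|S|\le p+\tfrac{l}{2}$ (where $l$ is the number of lone indices), this is what closes the estimate. Your proposal has all the right ingredients, but the causal chain ``lone index $\Rightarrow$ must take an error branch $\Rightarrow$ gain $\Phi_\nu^{2}$'' should be stated explicitly rather than folded into the (false as written) claim that the reduced factors are $\mathbf{u}_a$-independent.
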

	
	\begin{proof}[Proof of Theorem \ref{thm:strong local law}]
		Let $\varepsilon>0$ be an arbitrary small number. Suppose $\Theta\le N^{\varepsilon}(q^{1/2}+(N\eta)^{-\nu})$
		holds with high probability for some $\nu\in[1/4,1]$ uniformly for
		$z\in\mathbf{D}^{e}$. The idea is to update $\nu$ by applying Proposition
		\ref{prop:fluctuation averaging} iteratively.
		
		Let $\Phi_{\nu}$ be defined in Proposition \ref{prop:fluctuation averaging}. Given
		that $\Theta\le N^{\varepsilon}(q^{1/2}+(N\eta)^{-\nu})$ holds with high probability,
		it follows from  (\ref{eq:observation Q1/Gii}), (\ref{eq:second term of V}),
		Proposition \ref{prop:fluctuation averaging} and (\ref{eq:3.34})
		that
		\[
		|f(m_{N})-z|\le N^{\varepsilon}\{\Phi_{\nu}^2+q^2\}\leqslant N^{\varepsilon}\{q^2+\frac{1}{(N\eta)^{\nu+1}}+\frac{\operatorname{Im} m}{N\eta}\},
		\]
		holds with high probability uniformly for $z\in\mathbf{D}^{e}$.
		
		Then we observe from Lemma \ref{lem:basic property of m} and Proposition \ref{prop:stability} that
		\begin{equation}
		\begin{split}
		\Theta&\leqslant N^{\epsilon}\frac{\{q^2+\frac{1}{(N\eta)^{\nu+1}}+\frac{\operatorname{Im} m}{N\eta}\}}{\sqrt{\kappa+\eta}+\sqrt{\{q^2+\frac{1}{(N\eta)^{\nu+1}}+\frac{\operatorname{Im} m}{N\eta}\}}}\\
		&\leqslant CN^{\epsilon}\Big(\frac{\operatorname{Im} m}{N\eta\sqrt{\kappa+\eta}}+\sqrt{q^2+\frac{1}{(N\eta)^{\nu+1}}+\frac{\operatorname{Im} m}{N\eta}}\Big)\\
		&\leqslant CN^{\epsilon}\Big(\sqrt{\frac{\operatorname{Im} m}{N\eta}}+q+\frac{1}{(N\eta)^{(\nu+1)/2}}\Big)
		\end{split}
		\end{equation}
		holds with high probability uniformly for $z\in\mathbf{D}^{e}$. Then using Lemma \ref{lem:bounds of Z and Lambda_o} and Lemma \ref{lem:Gmumu-Gnunu}, it is easy to check
		\[
		\begin{split}
		\Lambda&\leqslant CN^{\epsilon}(\Psi_{\Theta}+q)+\Theta\leqslant CN^{\epsilon}\Big(\sqrt{\frac{\operatorname{Im} m}{N\eta}}+q+\frac{1}{(N\eta)^{(\nu+1)/2}}\Big).
		\end{split}
		\]
		
		One can see that after the self-improving arguments, the error bound of $\Lambda$ improves from $1/(N\eta)^{\nu}$ to $1/(N\eta)^{(\nu+1)/2}$. Hence implementing the auguments a finite number (depending
		only on $\varepsilon$) of times, we obtain that
		\begin{equation}\label{eq:lambda_result}
		\Lambda\leqslant CN^{\epsilon}(q+\frac{1}{N\eta}+\sqrt{\frac{\operatorname{Im} m}{N\eta}})
		\end{equation}
		holds with high probability uniformly for $z\in\mathbf{D}^{e}$. Applying \eqref{eq:lambda_result} in Lemma \ref{lem:basic property of m}, Proposition \ref{prop:fluctuation averaging} and Proposition \ref{prop:stability}, we conclude Theorem \ref{thm:strong local law}.
	\end{proof}
	
	\section{\label{sec:6}Proof of the edge universality with small support}
	Once the following Green function comparison Theorem \ref{thm:GreenFuncitonComparision} holds, Theorem \ref{thm:Edge universality with small support} will follow from  a standard procedure. We only prove Theorem \ref{thm:GreenFuncitonComparision} in this section while the complete proof of Theorem \ref{thm:Edge universality with small support} is put in Appendix \ref{Appendix C}.
	
	\begin{thm}[Green function comparison on the edge]
		\label{thm:GreenFuncitonComparision}Let $X^V$ and $X^W$ be
		defined  in Theorem \ref{thm:Edge universality with small support}. Let $F:\mathbb{R}\to\mathbb{R}$
		be a function whose derivatives satisfy
		\begin{equation}
		\sup_{x\in\mathbb{R}}|F^{(k)}(x)|(1+|x|)^{-C_{1}}\le C_{1},\qquad k=1,2,3,4,\label{eq:Fderivative}
		\end{equation}
		with some constants $C_{1}>0$. Then there exist $\varepsilon_{0}>0$,
		$N_{0}\in\mathbb{Z}_{+}$ depending on $C_{1}$ such that for any
		$\varepsilon<\varepsilon_{0}$ and $N\ge N_{0}$ and for any real
		numbers $E,E_{1}$ and $E_{2}$ satisfying
		\[
		|E-\lambda_{+}|,|E_{1}-\lambda_{+}|,|E_{2}-\lambda_{+}|\le N^{-2/3+\varepsilon}
		\]
		and $\eta=N^{-2/3-\varepsilon}$, we have
		\begin{equation}
		|\mathbb{E}F(N\eta\operatorname{Im} m_{N}^V(z))-\mathbb{E}F(N\eta\operatorname{Im} m_{N}^W(z))|\le CN^{-1/6+C_{\epsilon}},\qquad z=E+\imath\eta,\label{eq:E F(N eta Im m_N) bound}
		\end{equation}
		and
		\begin{equation}
		\Big|\mathbb{E}F\Big(\int_{E_{1}}^{E_{2}}N\operatorname{Im} m_{N}^V(y+\imath\eta){\rm d}y\Big)-\mathbb{E}F\Big(\int_{E_{1}}^{E_{2}}N\operatorname{Im} m_{N}^W(y+\imath\eta){\rm d}y\Big)\Big|\le CN^{-1/6+C_{\epsilon}},\label{eq:E F(integral) bound}
		\end{equation}
		where $m_{N}^V(z)=N^{-1}{\rm Tr}((X^{V})^{*}X^V-zI)^{-1}$, $C_{\epsilon}$ is a constant which tends to $0$ as $\epsilon \to 0$.
	\end{thm}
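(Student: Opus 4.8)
The plan is a Lindeberg-type replacement, swapping the radii $\xi_1,\dots,\xi_N$ for $\tilde\xi_1,\dots,\tilde\xi_N$ one at a time. For $\gamma=0,1,\dots,N$ let $X_\gamma$ be obtained from $X$ by replacing $\xi_i$ with $\tilde\xi_i$ for every $i\le\gamma$ (keeping the directions $\mathbf u_i$ and $\Sigma$ unchanged), so $X_0=X$ and $X_N=\tilde X$; write $m_{N,\gamma}$, $G_\gamma$, ${\cal G}_\gamma$ for the corresponding Stieltjes transform and resolvents. After the normalization $\mathbb E\tilde\xi_i^2=MN^{-1}$ the variable $\tilde\xi_i$ satisfies Condition~\ref{cond:xi} (indeed $\tilde\xi_i^2-\mathbb E\tilde\xi_i^2\prec N^{-1/2}$ since $M\asymp N$), so every mixed family $\{\xi_i\}_{i>\gamma}\cup\{\tilde\xi_i\}_{i\le\gamma}$ satisfies Conditions~\ref{cond:1}--\ref{cond:4} uniformly in $\gamma$; hence Theorems~\ref{thm:strong local law} and~\ref{thm:rigidity} hold uniformly along the interpolation, in particular $|(G_\gamma)_{ii}|\asymp1$ on $\mathbf D^e$ (using~\eqref{eq:1+m sigma lower bound}) and $|\lambda_1^{(X_\gamma)}-\lambda_+|\prec N^{-2/3}$. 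The left side of~\eqref{eq:E F(N eta Im m_N) bound} telescopes as $\sum_{\gamma=1}^N\big(\mathbb EF(N\eta\Im m_{N,\gamma-1})-\mathbb EF(N\eta\Im m_{N,\gamma})\big)$, and $X_{\gamma-1}$, $X_\gamma$ differ only through the single scalar $\xi_\gamma$ versus $\tilde\xi_\gamma$; so it suffices to bound each summand by $N^{-7/6+C\varepsilon}$.

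Fix $\gamma$ and let $G^{(\gamma)},{\cal G}^{(\gamma)}$ be the resolvents with column $\gamma$ deleted; these are common to $X_{\gamma-1},X_\gamma$ and independent of $\mathbf u_\gamma,\xi_\gamma,\tilde\xi_\gamma$. Summing the third identity of Lemma~\ref{lem:resolvent} over $i\in{\cal I}$ and using $(G^2)_{\gamma\gamma}=\partial_z G_{\gamma\gamma}$ gives ${\rm Tr}\,G={\rm Tr}\,G^{(\gamma)}+\partial_z\log G_{\gamma\gamma}$, while the first identity of Lemma~\ref{lem:resolvent} gives $G_{\gamma\gamma}=-(z+z\mathbf r_\gamma^*{\cal G}^{(\gamma)}\mathbf r_\gamma)^{-1}$ with $\mathbf r_\gamma=\xi_\gamma\Sigma^{1/2}\mathbf u_\gamma$. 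Hence $N\eta\Im m_{N}=Y_\gamma+\eta\Im h_\gamma$, where $Y_\gamma:=\eta\Im{\rm Tr}\,G^{(\gamma)}$ is independent of the swapped data and
\[
h_\gamma=-\frac1z-\frac{\xi_\gamma^2\,Q'_\gamma}{1+\xi_\gamma^2\,Q_\gamma},\qquad
Q_\gamma:=\mathbf u_\gamma^*\Sigma^{1/2}{\cal G}^{(\gamma)}\Sigma^{1/2}\mathbf u_\gamma,\quad
Q'_\gamma:=\mathbf u_\gamma^*\Sigma^{1/2}({\cal G}^{(\gamma)})^2\Sigma^{1/2}\mathbf u_\gamma,
\]
so the radius enters $h_\gamma$ only through the scalar $\xi_\gamma^2$; $\tilde h_\gamma$ denotes $h_\gamma$ with $\xi_\gamma$ replaced by $\tilde\xi_\gamma$. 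Granting the bound $|Q'_\gamma|\prec N^{1/3+C\varepsilon}$ proved in the last paragraph, together with the local-law bounds $|G_{\gamma\gamma}|\asymp1$, $|1+\xi_\gamma^2Q_\gamma|\asymp1$ and $|Q_\gamma|\prec1$, one obtains $\eta|\Im h_\gamma|\prec N^{-1/3+C\varepsilon}$ — this smallness is what makes the replacement work. Taylor-expanding $F$ to fourth order around $Y_\gamma$ and using~\eqref{eq:Fderivative} with $|Y_\gamma|\prec N^{C\varepsilon}$ (by the local law and rigidity),
\[
\mathbb EF(N\eta\Im m_{N,\gamma-1})-\mathbb EF(N\eta\Im m_{N,\gamma})
=\sum_{k=1}^3\frac{\eta^k}{k!}\,\mathbb E\big[F^{(k)}(Y_\gamma)\big((\Im h_\gamma)^k-(\Im\tilde h_\gamma)^k\big)\big]+O_\prec\big(N^{-4/3+C\varepsilon}\big).
\]

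To estimate the remaining sum, condition on everything except $(\mathbf u_\gamma,\xi_\gamma)$, resp.\ $(\mathbf u_\gamma,\tilde\xi_\gamma)$; then $F^{(k)}(Y_\gamma)$ is deterministic, so only $\mathbb E_\gamma[(\Im h_\gamma)^k]-\mathbb E_\gamma[(\Im\tilde h_\gamma)^k]$ matters. Since $h_\gamma$ depends on the radius only through $\xi_\gamma^2$ and $\mathbf u_\gamma$ has the same law in both models, $(\Im h_\gamma)^k=g_k(\xi_\gamma^2)$ and $(\Im\tilde h_\gamma)^k=g_k(\tilde\xi_\gamma^2)$ for the same $\mathbf u_\gamma$-measurable function $g_k$, smooth on the relevant range; expanding $g_k$ about $\phi=MN^{-1}=\mathbb E\xi_\gamma^2=\mathbb E\tilde\xi_\gamma^2$, the first-order terms cancel (equal first moments), leaving
\[
\mathbb E_\gamma[(\Im h_\gamma)^k]-\mathbb E_\gamma[(\Im\tilde h_\gamma)^k]=\tfrac12\,g_k''(\phi)\big(\mathbb E(\xi_\gamma^2-\phi)^2-\mathbb E(\tilde\xi_\gamma^2-\phi)^2\big)+(\text{higher order in }\xi^2),
\]
with both central moments $\prec N^{-1}$ by Condition~\ref{cond:xi} and the $\chi^2_M$ normalization. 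Bounding $g_k''(\phi)$ and the higher $\xi^2$-derivatives crudely by powers of $1+|Q'_\gamma|\prec N^{1/3+C\varepsilon}$ and then averaging over $\mathbf u_\gamma$ with Lemma~\ref{lem:large deviation} (and Lemma~\ref{lem:equivalence} for the moments), each summand of the telescope is $\prec N^{-4/3+C\varepsilon}$, so the total is $\le CN^{-1/6+C\varepsilon}$, proving~\eqref{eq:E F(N eta Im m_N) bound}. For~\eqref{eq:E F(integral) bound} the same scheme applies with the correction $\eta\Im h_\gamma(z)$ replaced by $\int_{E_1}^{E_2}\Im\partial_y\log G_{\gamma\gamma}(y+\imath\eta)\,{\rm d}y=\Im\log\big(G_{\gamma\gamma}(E_2+\imath\eta)/G_{\gamma\gamma}(E_1+\imath\eta)\big)$, whose modulus is $\prec|E_2-E_1|\,\sup_{[E_1,E_2]}|Q'_\gamma|\prec N^{-1/3+C\varepsilon}$; this correction again depends on the radius only through $\xi_\gamma^2$, so the moment-matching argument is unchanged.

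The crux is the bound $|Q'_\gamma|=|\mathbf u_\gamma^*\Sigma^{1/2}({\cal G}^{(\gamma)})^2\Sigma^{1/2}\mathbf u_\gamma|\prec N^{1/3+C\varepsilon}$ (and the corresponding control of $\mathbb E|Q'_\gamma|^p$), i.e.\ control of quadratic forms in $({\cal G}^{(\gamma)})^2$. In the elliptical setting the individual entries of $({\cal G}^{(\gamma)})^2$ are not accessible from the local law; the way around this is that, since $\mathbf u_\gamma$ is uniform on the sphere and independent of ${\cal G}^{(\gamma)}$, Lemma~\ref{lem:large deviation}~\eqref{eq:10.2} reduces the estimate to bounding (i) $M^{-1}|{\rm Tr}(\Sigma({\cal G}^{(\gamma)})^2)|$ and (ii) $M^{-1}\|\Sigma^{1/2}({\cal G}^{(\gamma)})^2\Sigma^{1/2}\|_F$. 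Writing $({\cal G}^{(\gamma)})^2=\partial_z{\cal G}^{(\gamma)}$, using that ${\cal W}^{(\gamma)}$ is Hermitian so that ${\rm Tr}\big({\cal G}^{(\gamma)}({\cal G}^{(\gamma)})^*\big)=\eta^{-1}\Im{\rm Tr}\,{\cal G}^{(\gamma)}$ and $\|{\cal G}^{(\gamma)}\|\prec\eta^{-1}$, and invoking the edge estimate $\Im{\rm Tr}\,{\cal G}^{(\gamma)}\prec N\sqrt{\kappa+\eta}\prec N^{2/3+C\varepsilon}$ from Lemma~\ref{lem:basic property of m} and Theorem~\ref{thm:strong local law}, one finds that both (i) and (ii) are $\prec N^{1/3+C\varepsilon}$, whence $|Q'_\gamma|\prec N^{1/3+C\varepsilon}$; the same computation bounds $\mathbb E|Q'_\gamma|^p$. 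The conceptual point — and the reason the elliptical case needs the argument above rather than the classical one — is that (i) and (ii) are of the \emph{same} order, so $Q'_\gamma$ does not self-average; the power gain at each swap must therefore come entirely from the matching of the first moments of $\xi_\gamma^2$ and $\tilde\xi_\gamma^2$ and the $O(N^{-1})$ control of their second moments, not from concentration of the quadratic forms, which in the i.i.d.\ case is what produces the large-deviation cancellations.
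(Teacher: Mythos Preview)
Your proof is correct and follows the same Lindeberg replacement scheme as the paper: interpolate between $X$ and $\tilde X$ by swapping the radii one at a time, write the one-column correction to $N\eta\Im m_N$ as a scalar function of $\xi_\gamma^2$ via the quadratic forms $Q_\gamma,Q'_\gamma$, and compare moments. The structural identities you use ($\mathrm{Tr}\,G=\mathrm{Tr}\,G^{(\gamma)}+\partial_z\log G_{\gamma\gamma}$, the formula for $h_\gamma$, and the bound on $Q'_\gamma$ via $\|{\cal G}^{(\gamma)}\|_F^2=\eta^{-1}\Im\mathrm{Tr}\,{\cal G}^{(\gamma)}$) coincide with what the paper uses implicitly in its equations~(\ref{eq:B bound})--(\ref{eq:rG^2r bound}).

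The one genuine difference is in the bookkeeping of the $\xi^2$-expansion. The paper expands $G_{\gamma\gamma}=m\sum_{k\ge0}(-B)^k$ with $B=(m-G_{\gamma\gamma})/G_{\gamma\gamma}$, obtaining explicit polynomials ${\cal E}_1,{\cal E}_2,{\cal E}_3$ in $\xi_\gamma^2$ (since $B$ is affine in $\xi_\gamma^2$), and then compares $\mathbb E\xi_\gamma^{2k}$ with $\mathbb E\tilde\xi_\gamma^{2k}$ for $k=1,2,3$ term by term, arriving at $N^{-7/6+C\varepsilon}$ per swap. You instead Taylor-expand $g_k(\xi_\gamma^2)=(\Im h_\gamma)^k$ about $\phi$ and use only that $\mathbb E\xi_\gamma^2=\mathbb E\tilde\xi_\gamma^2$ together with $\mathbb E(\xi_\gamma^2-\phi)^2,\mathbb E(\tilde\xi_\gamma^2-\phi)^2\prec N^{-1}$; this is cleaner and, with your derivative bounds $|g_k^{(j)}(\phi)|\prec N^{k/3}$, actually gives $N^{-4/3+C\varepsilon}$ per swap (your line ``so the total is $\le CN^{-1/6+C\varepsilon}$'' should read $N^{-1/3+C\varepsilon}$, a harmless slip in the safe direction). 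The paper's explicit route has the advantage of showing exactly which moment discrepancies enter, but your abstract expansion is shorter and equally rigorous.
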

	
	\begin{proof}
		Let $\gamma\in\{1,\dots,N+1\}$ and set $X_{\gamma}$ to be the matrix
		whose first $\gamma-1$ columns are the same as those of $X^W$
		and the remaining $N-\gamma+1$ columns are the same as those of $X^V$.
		Then we note that since $X_{\gamma}$ and $X_{\gamma+1}$ only differ
		in the $\gamma$-th column,
		\[
		X_{\gamma}^{(\gamma)}=X_{\gamma+1}^{(\gamma)}.
		\]
		
		We define $m_{N,\gamma}(z)$ and $m_{N,\gamma+1}(z)$ to be the analogs of $m_{N}(z)$ with the matrix $X$ replaced by $X_{\gamma}$ and $X_{\gamma+1}$ respectively. Similarly, for $i\in{\cal I}$, define $m_{N,\gamma}^{(i)}(z)$ and $m_{N,\gamma+1}^{(i)}(z)$
		to be the analogs of $m_{N}^{(i)}(z)$ with the matrix $X^{(i)}$
		replaced by $X_{\gamma}^{(i)}$ and $X_{\gamma+1}^{(i)}$ respectively. Then we have
		\begin{eqnarray*}
			&  & \mathbb{E}^VF(N\eta\operatorname{Im} m^V_{N}(z))-\mathbb{E}^WF(N\eta\operatorname{Im} m^W_{N}(z))\\
			&=&  \sum_{\gamma=1}^{N}\Big\{\mathbb{E}F(N\eta\operatorname{Im} m_{N,\gamma}(z))-\mathbb{E}F(N\eta\operatorname{Im} m_{N,\gamma+1}(z))\Big\}.\\
		\end{eqnarray*}
		So (\ref{eq:E F(N eta Im m_N) bound}) follows from Lemma \ref{lem:GreenCompLem1} below.
		(\ref{eq:E F(integral) bound}) follows from an analogous argument.
		Hence we omit its proof.
	\end{proof}
	\begin{lem}
		\label{lem:GreenCompLem1}Let $F$ be a function satisfying (\ref{eq:Fderivative})
		and $z=E+\imath\eta$. If $|E-\lambda_{+}|\le N^{-2/3+\varepsilon}$
		and $N^{-2/3-\varepsilon}\le\eta\le N^{-2/3}$ for some $\varepsilon>0$,
		there exists some positive constant $C$ independent of $\varepsilon$
		such that
		\begin{equation}
		|\mathbb{E}F(N\eta\operatorname{Im} m_{N,\gamma}(z))-\mathbb{E}F(N\eta\operatorname{Im} m_{N,\gamma+1}(z))|\prec N^{-7/6+C\varepsilon},\label{eq:expectation comparison}
		\end{equation}
		uniformly for $\gamma\in\{1,\dots,N+1\}.$
	\end{lem}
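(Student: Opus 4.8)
The plan is to realise $m_{N,\gamma}$ and $m_{N,\gamma+1}$ as a common ``minor'' term plus a rank-one correction carried by column $\gamma$, and then to run a Lindeberg-type Taylor swap. Write $\mathbf{r}_\gamma=\xi_\gamma\Sigma^{1/2}\mathbf{u}_\gamma$ for the $\gamma$-th column of $X_\gamma$ and couple the two ensembles by keeping the \emph{same} direction $\mathbf{u}_\gamma$ in $X_{\gamma+1}$, so that its $\gamma$-th column is $\tilde{\mathbf{r}}_\gamma=\tilde\xi_\gamma\Sigma^{1/2}\mathbf{u}_\gamma$; this is the point at which we use that the law of a column is determined by its radius. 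Since $\mathcal{W}_\gamma=\mathcal{W}^{(\gamma)}+\mathbf{r}_\gamma\mathbf{r}_\gamma^{*}$ is a rank-one perturbation, the Sherman--Morrison identity together with the elementary relation ${\rm Tr}\,G={\rm Tr}\,\mathcal{G}-(N-M)/z$ between the two spectra and Lemma~\ref{lem:resolvent} give
\[
N\eta\,\Im m_{N,\gamma}(z)=Y^{(\gamma)}+\mathcal{R}_\gamma,\qquad Y^{(\gamma)}:=N\eta\,\Im m_{N}^{(\gamma)}(z)+\frac{\eta^{2}}{|z|^{2}},\qquad \mathcal{R}_\gamma:=-\eta\,\Im\frac{\mathbf{r}_\gamma^{*}(\mathcal{G}^{(\gamma)})^{2}\mathbf{r}_\gamma}{1+\mathbf{r}_\gamma^{*}\mathcal{G}^{(\gamma)}\mathbf{r}_\gamma},
\]
and identically $N\eta\,\Im m_{N,\gamma+1}(z)=Y^{(\gamma)}+\tilde{\mathcal{R}}_\gamma$ with the \emph{same} $Y^{(\gamma)}$, because $X_\gamma^{(\gamma)}=X_{\gamma+1}^{(\gamma)}$. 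Here $Y^{(\gamma)}$ is $\mathcal{F}^{(\gamma)}:=\sigma(\{\mathbf{u}_i,\xi_i:i\ne\gamma\})$-measurable while $\mathcal{R}_\gamma,\tilde{\mathcal{R}}_\gamma$ depend only on $(\xi_\gamma,\mathbf{u}_\gamma)$, $(\tilde\xi_\gamma,\mathbf{u}_\gamma)$, and crucially only the \emph{scalar} quadratic form $\mathbf{r}_\gamma^{*}(\mathcal{G}^{(\gamma)})^{2}\mathbf{r}_\gamma=\xi_\gamma^{2}\,\mathbf{u}_\gamma^{*}\Sigma^{1/2}(\mathcal{G}^{(\gamma)})^{2}\Sigma^{1/2}\mathbf{u}_\gamma$ enters, never the individual entries of $(\mathcal{G}^{(\gamma)})^{2}$, so the lack of an i.i.d.\ structure is no obstruction here.

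First I would record a priori bounds, valid on an event of probability $\ge 1-N^{-D}$ for every $D$. Applying Theorem~\ref{thm:strong local law} to the $(N-1)$-column minor at $\eta=N^{-2/3-\varepsilon}$ near $\lambda_{+}$, together with Lemmas~\ref{lem:basic property of m} and \ref{lem:GFbound general}, gives $N\eta\,\Im m_N^{(\gamma)}(z)\prec 1$ (hence $|Y^{(\gamma)}|\prec 1$, and since $|Y^{(\gamma)}|\le N+C$ deterministically and $F$ has polynomial growth by \eqref{eq:Fderivative}, also $|F^{(k)}(Y^{(\gamma)})|\prec 1$), $\tfrac{1}{M}{\rm Tr}(\mathcal{G}^{(\gamma)}\Sigma)=O_\prec(1)$, $\tfrac{1}{M}\|\mathcal{G}^{(\gamma)}\|_F\prec N^{-1/3+\varepsilon}$, and, via $\|\mathcal{G}^{(\gamma)}\|_F^{2}=\eta^{-1}\Im{\rm Tr}\,\mathcal{G}^{(\gamma)}$ and $|{\rm Tr}(A^{2}\Sigma)|\le\|\Sigma\|\|A\|_F^{2}$ (Lemma~\ref{lem:norm inequality}), also $\tfrac{1}{M}|{\rm Tr}((\mathcal{G}^{(\gamma)})^{2}\Sigma)|\prec N^{1/3+C\varepsilon}$. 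By Lemma~\ref{lem:large deviation}, the quadratic forms $P_j:=\mathbf{u}_\gamma^{*}\Sigma^{1/2}(\mathcal{G}^{(\gamma)})^{j}\Sigma^{1/2}\mathbf{u}_\gamma$ then obey $|P_1|\prec 1$ and $|P_2|\prec N^{1/3+C\varepsilon}$, while $1+\mathbf{r}_\gamma^{*}\mathcal{G}^{(\gamma)}\mathbf{r}_\gamma=-1/(zG_{\gamma\gamma})$ is $\asymp 1$ in modulus by the local law (as $|m|\asymp 1$ on $\mathbf{D}^{e}$ near the edge, and this survives replacing $\xi_\gamma^{2}$ by any $t$ with $|t-M/N|\le N^{-1/2+\delta}$). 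Consequently $|\mathcal{R}_\gamma|+|\tilde{\mathcal{R}}_\gamma|\prec\eta|P_2|\prec N^{-1/3+C\varepsilon}$. On the complementary event one uses $\|\mathcal{G}^{(\gamma)}\|\le\eta^{-1}\le N$, the polynomial growth of $F$, and the finite-moment assumption of Condition~\ref{cond:xi}, so that its contribution to every expectation below is $O(N^{-100})$.

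Next, the Taylor swap. Write $F(Y^{(\gamma)}+\mathcal{R}_\gamma)-F(Y^{(\gamma)}+\tilde{\mathcal{R}}_\gamma)=\sum_{k=1}^{3}\tfrac{1}{k!}F^{(k)}(Y^{(\gamma)})(\mathcal{R}_\gamma^{k}-\tilde{\mathcal{R}}_\gamma^{k})+\mathcal{E}$ with $|\mathcal{E}|\prec|\mathcal{R}_\gamma|^{4}+|\tilde{\mathcal{R}}_\gamma|^{4}\prec N^{-4/3+C\varepsilon}$ (the arguments of $F^{(4)}$ are $\prec 1$). Take $\mathbb{E}$ and condition on $\mathcal{F}^{(\gamma)}$; since $F^{(k)}(Y^{(\gamma)})$ is $\mathcal{F}^{(\gamma)}$-measurable with $|F^{(k)}(Y^{(\gamma)})|\prec 1$, it remains to control $\mathbb{E}[\mathcal{R}_\gamma^{k}\mid\mathcal{F}^{(\gamma)}]-\mathbb{E}[\tilde{\mathcal{R}}_\gamma^{k}\mid\mathcal{F}^{(\gamma)}]$. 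Conditioning also on $\mathbf{u}_\gamma$ freezes $P_1,P_2$ — which are then \emph{shared} by both — so $\mathcal{R}_\gamma=h(\xi_\gamma^{2})$ and $\tilde{\mathcal{R}}_\gamma=h(\tilde\xi_\gamma^{2})$ for the common rational function $h(t):=-\eta\,\Im\big(tP_2/(1+tP_1)\big)$, which on $\{|t-M/N|\le N^{-1/2+\delta}\}$ (where $1+tP_1$ stays $\asymp 1$) is analytic and has all derivatives $|h^{(\ell)}|\prec\eta|P_2|\prec N^{-1/3+C\varepsilon}$, hence $|(h^{k})^{(\ell)}|\prec N^{-k/3+C\varepsilon}$. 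Expanding $h(t)^{k}$ to first order about $t_0:=\mathbb{E}\xi_\gamma^{2}=\mathbb{E}\tilde\xi_\gamma^{2}=M/N$: the constant term is deterministic and cancels, the linear term cancels since $\mathbb{E}(\xi_\gamma^{2}-t_0)=\mathbb{E}(\tilde\xi_\gamma^{2}-t_0)=0$, and the second-order remainder contributes at most $\prec N^{-k/3+C\varepsilon}\cdot\mathbb{E}[(\xi_\gamma^{2}-t_0)^{2}+(\tilde\xi_\gamma^{2}-t_0)^{2}]\prec N^{-k/3+C\varepsilon}\cdot N^{-1}\prec N^{-4/3+C\varepsilon}$, using $\xi_\gamma^{2}-M/N,\tilde\xi_\gamma^{2}-M/N\prec N^{-1/2}$ by Condition~\ref{cond:xi} (the rare event $|t-M/N|>N^{-1/2+\delta}$ is absorbed by crude deterministic bounds and finite moments). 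Summing the finitely many terms, adding $\mathcal{E}$, and bounding the outer expectation by $|F^{(k)}(Y^{(\gamma)})|\prec 1$ yields $|\mathbb{E}F(N\eta\,\Im m_{N,\gamma}(z))-\mathbb{E}F(N\eta\,\Im m_{N,\gamma+1}(z))|\prec N^{-4/3+C\varepsilon}$, uniformly in $\gamma$, which is stronger than the claimed $N^{-7/6+C\varepsilon}$.

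The main obstacle is the a priori control in the second step: obtaining, uniformly for $z\in\mathbf{D}^{e}$ near the edge, the bounds $\tfrac{1}{M}|{\rm Tr}((\mathcal{G}^{(\gamma)})^{2}\Sigma)|\prec N^{1/3+C\varepsilon}$ and $|1+\mathbf{r}_\gamma^{*}\mathcal{G}^{(\gamma)}\mathbf{r}_\gamma|\asymp 1$, and — beforehand — arranging the algebra so that only scalar quadratic forms of $\mathcal{G}^{(\gamma)}$, not its individual entries, ever appear, since for elliptically distributed data the entries of $\mathcal{G}^{2}$ are not directly accessible. Once these are in hand, the cancellation is driven entirely by the smallness $\eta=N^{-2/3-\varepsilon}$ carried inside $h$ and by the $N^{-1/2}$-concentration of the radii in Condition~\ref{cond:xi}, so that only the first moment $\mathbb{E}\xi_\gamma^{2}=M/N$ — automatically matched by the normalisation of $\tilde\xi_\gamma$ — needs to agree between the two models.
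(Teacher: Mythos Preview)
Your argument is correct and follows the same overall scheme as the paper: a column-wise Lindeberg swap, the identity $N\eta\,m_{N,\gamma}=Y^{(\gamma)}+\mathcal{E}$ with $\mathcal{E}=\eta zG_{\gamma\gamma}\mathbf{r}_\gamma^{*}(\mathcal{G}^{(\gamma)})^{2}\mathbf{r}_\gamma$, a third-order Taylor expansion of $F$, and then a comparison after conditioning on everything except the radii (the paper's $\mathbb{E}_1$ is exactly your conditioning on $\mathcal{F}^{(\gamma)}$ and $\mathbf{u}_\gamma$). The difference lies only in how the final comparison is carried out. The paper first expands $G_{\gamma\gamma}=m\sum_{k\ge0}(-B)^{k}$ with $B=(m-G_{\gamma\gamma})/G_{\gamma\gamma}\prec N^{-1/3+\varepsilon}$, splits $\mathcal{E}=\sum_{k=1}^{4}\mathcal{E}_k$, writes each $\mathcal{E}_k$ explicitly as a polynomial in $\xi_\gamma^{2},\alpha_1,\alpha_2$, and then matches $\mathbb{E}\xi_\gamma^{2j}$ against $\mathbb{E}\tilde\xi_\gamma^{2j}$ for $j=1,2,3$; the crudest of these estimates (for $\Im\mathcal{E}_3$ and $\Im\mathcal{E}_1\Im\mathcal{E}_2$) produces the stated $N^{-7/6+C\varepsilon}$. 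You instead keep $\mathcal{R}_\gamma=h(\xi_\gamma^{2})$ as a single rational function and Taylor-expand $h^{k}$ around $t_0=\mathbb{E}\xi_\gamma^{2}$; since only the linear term needs moment matching and the quadratic remainder is $\prec N^{-k/3}\cdot N^{-1}$ by the concentration $\xi_\gamma^{2}-M/N\prec N^{-1/2}$ from Condition~\ref{cond:xi}, you obtain $N^{-4/3+C\varepsilon}$. Your route is shorter and yields a slightly sharper bound, at the cost of being less explicit about which moments of $\xi$ enter; the paper's route makes that dependence transparent (e.g.\ the $\mathbb{E}\xi_1^{6}-\mathbb{E}\tilde\xi_1^{6}$ contribution is isolated), which could matter if one wished to relax Condition~\ref{cond:xi}. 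Both are sufficient for Theorem~\ref{thm:GreenFuncitonComparision} after summing over the $N$ columns.
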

	\begin{proof}
		Recall the relationship between the eigenvalues of $\mathcal{G}$ and $G$,
		\[
		m_N=\frac{1}{N}{\rm Tr}G=\frac{1}{N}{\rm Tr}\mathcal{G}-\frac{1-\phi}{z},
		\]
		with
		\[
		|\frac{1}{N}{\rm Tr}\mathcal{G}-\frac{1}{N}{\rm Tr}\mathcal{G}^{(\gamma)}|=\frac{zG_{\gamma\gamma}}{N}\mathbf{x}_{\gamma}^{*}(\mathcal{G}^{(\gamma)})^2\mathbf{x}_{\gamma}.
		\]
		Here we can assume $|1-\phi|$ to be $1$ after introducing a multiplicative constant. Then
		\[
		\mathbb{E}f(N\eta\operatorname{Im} m_{N,\gamma}(z))=\mathbb{E}F(N\eta\operatorname{Im}(m_{N,\gamma}^{(\gamma)}(z)-\frac{1}{Nz}+\frac{zG_{\gamma\gamma}}{N}\mathbf{x}_{\gamma}^{*}(\mathcal{G}^{(\gamma)})^2\mathbf{x}_{\gamma})).
		\]
		Denoting
		\begin{equation}
		y^V=\eta zG_{\gamma\gamma}\mathbf{x}_{\gamma}^{V^{*}}(\mathcal{G}^{(\gamma)})^2\mathbf{x}^V_{\gamma},
		\end{equation}
		by the Taylor expansion we obtain that
		\begin{equation}
		\begin{split}
		&\mathbb{E}F(N\eta\operatorname{Im} m_{N,\gamma}(z))=\mathbb{E}F(N\eta\operatorname{Im} m_{N,\gamma}^{(\gamma)}(z)-\operatorname{Im}\frac{\eta}{z}+\operatorname{Im} y^V)\\
		=&\mathbb{E}\Big[F(N\eta\operatorname{Im} m_{N,\gamma}^{(\gamma)}(z)-\frac{\eta^2}{|z|^2})\\
		&+\sum_{k=1}^3\frac{1}{k!}F^{(k)}(N\eta\operatorname{Im} m_{N,\gamma}^{(\gamma)}(z)-\frac{\eta^2}{|z|^2})(\operatorname{Im} y^V)^k+O_{\prec}(N^{-4/3+C_{\epsilon}})\Big],\\
		\end{split}
		\end{equation}
		where we used the estimation
		$
		|y^V|\prec N^{-1/3+C_{\epsilon}}.
		$
		
		Then the left-hand side of (\ref{eq:expectation comparison}) reads
		\begin{equation}\label{eq:expectation comparison II}
		\begin{split}
		&|\mathbb{E}F(N\eta\operatorname{Im} m_{N,\gamma}(z))-\mathbb{E}F(N\eta\operatorname{Im} m_{N,\gamma+1}(z))|\\
		=&|\sum_{k=1}^3\frac{1}{k!}F^{(k)}(N\eta\operatorname{Im} m_{N,\gamma}^{(\gamma)}(z)-\frac{\eta^2}{|z|^2})(\operatorname{Im} y^V)^k\\
		&-\sum_{k=1}^3\frac{1}{k!}F^{(k)}(N\eta\operatorname{Im} m_{N,\gamma+1}^{(\gamma)}(z)-\frac{\eta^2}{|z|^2})(\operatorname{Im} y^W)^k+O_{\prec}(N^{-4/3+C_{\epsilon}})|\\
		=&|\sum_{k=1}^3\frac{1}{k!}F^{(k)}(N\eta\operatorname{Im} m_{N,\gamma}^{(\gamma)}(z)-\frac{\eta^2}{|z|^2})\left((\operatorname{Im} y^V)^k-(\operatorname{Im} y^W)^k\right)+O_{\prec}(N^{-4/3+C_{\epsilon}})|.
		\end{split}
		\end{equation}
		
		One can check that by Theorem \ref{thm:strong local law} and Lemma \ref{lem:basic property of m} as well as the choice of $\eta$,
		\[
		N\eta\operatorname{Im} m_{N,\gamma}^{(\gamma)}(z)-\frac{\eta^2}{|z|^2}=N\eta\operatorname{Im} m(z)+O_{\prec}(1)\prec 1.
		\]
		Then  for $k=1,2,3,4$, there exists $c>0$ such that with high probability
		\begin{equation}
		F^{(k)}(N\eta\operatorname{Im} m_{N,\gamma}^{(\gamma)}(z)-\frac{\eta^2}{|z|^2})\le N^c.
		\end{equation}
		
		Now the proof of (\ref{eq:expectation comparison}) is reduced to showing
		\begin{equation}\label{eq:comparing y^k}
		|(y^V)^k-(y^W)^k|\prec N^{-7/6+C_{\epsilon}},
		\end{equation}
		for $k=1,2,3$. Let
		\begin{equation}
		B:=\frac{(m-G_{\gamma\gamma})^2}{m^2G_{\gamma\gamma}}=\frac{1}{G_{\gamma\gamma}}+\frac{G_{\gamma\gamma}-2m}{m^2},
		\end{equation}
		so by Lemma \ref{lem:crude bound on G and G inverse} and Theorem \ref{thm:strong local law} we have
		\begin{equation}
		|B|\prec\frac{1}{(N\eta)^2}\le N^{-2/3+\epsilon}.
		\end{equation}
		
		On the other hand, we may write
		\begin{equation}
		\begin{split}
		G_{\gamma\gamma}&=\frac{m^2/(2m-G_{\gamma\gamma})}{m^2/(2m-G_{\gamma\gamma})B+1}=\frac{m^2}{2m-G_{\gamma\gamma}}\sum_{k\ge0}(-\frac{m^2}{zm-G_{\gamma\gamma}}B)^k.
		\end{split}
		\end{equation}
		Consequently, we obtain
		\begin{equation}
		\begin{split}
		y&=\sum_{k\ge 0}\eta z\frac{m^2}{2m-G_{\gamma\gamma}}(-\frac{m^2}{2m-G_{\gamma\gamma}}B)^k\mathbf{x}_{\gamma}^{*}(\mathcal{G}^{(1)})^2\mathbf{x}_{\gamma}=\sum_{k\ge 0}y_k,
		\end{split}
		\end{equation}
		where
		\[
		y_k:=\eta z\frac{m^2}{2m-G_{\gamma\gamma}}(-\frac{m^2}{2m-G_{\gamma\gamma}}B)^k\mathbf{x}_{\gamma}^{*}(\mathcal{G}^{(1)})^2\mathbf{x}_{\gamma}.
		\]
		Then one can check that
		\begin{equation}
		|y_k|\prec N^{-2/3}N^{-2k/3+\epsilon}N^{1/3+2\epsilon}\le N^{-1/3-2k/3+C_{\epsilon}}.
		\end{equation}
		Therefore it suffices to prove
		\begin{equation}
		\Big|\sum_{k=1}^3\left((\sum_{j\ge0}y^V_j)^k-(\sum_{j\ge0}y^W_j)^k\right)\Big|\prec N^{-7/6+C_{\epsilon}}.
		\end{equation}
		
		We note that for $j\ge2$, $|y_j|$ is sufficiently small, hence it suffices  to consider $y_0, y_1$ for the following three cases. Now let $\mathbb{E}_{\gamma}$ be  the conditional expectation with respect to $\xi^V_{\gamma}$ and $\xi^W_{\gamma}$.
		\begin{itemize}
			\item [a)] $k=1$.\par
			It suffices to bound
			\[
			|\operatorname{Im} (y^V_0+y^V_1)-\operatorname{Im} (y^W_0+y^W_1)|.
			\]
			We observe that
			\begin{equation}
			\mathbb{E}_{\gamma}(y_0^V-y_0^W)=0;
			\end{equation}
			\begin{equation}
			\begin{split}
			\big|\mathbb{E}_{\gamma}(y_1^V-y_1^W)\big|&=|\eta z\frac{1}{C^2}\mathbb{E}_{\gamma}\Big((\frac{1}{G_{\gamma\gamma}}+C)\mathbf{x}_{\gamma}^{V^{*}}(\mathcal{G}^{(\gamma)})^2\mathbf{x}_{\gamma}^V-(\frac{1}{G_{\gamma\gamma}}+C)\mathbf{x}_{\gamma}^{W^{*}}(\mathcal{G}^{(\gamma)})^2\mathbf{x}_{\gamma}^W\Big)|\\
			&=|\frac{\eta z^2}{C^2}\mathbb{E}_{\gamma}\Big((\xi_{\gamma}^V)^4-(\xi_{\gamma}^W)^4\Big)(\Sigma^{1/2}\mathbf{u}_{\gamma}^{*}\mathcal{G}^{(\gamma)}\mathbf{u}_{\gamma}\Sigma\mathbf{u}_{\gamma}^{*}(\mathcal{G}^{(\gamma)})^2\mathbf{u}_{\gamma}\Sigma^{1/2})|\\
			&\prec|\frac{\eta z^2}{C^2}N^{-10/12+C_{\epsilon}}\Sigma^{1/2}\mathbf{u}_{\gamma}^{*}\mathcal{G}^{(\gamma)}\mathbf{u}_{\gamma}\Sigma\mathbf{u}_{\gamma}^{*}(\mathcal{G}^{(\gamma)})^2\mathbf{u}_{\gamma}\Sigma^{1/2}|\\
			&\prec N^{-7/6+C_{\epsilon}},
			\end{split}
			\end{equation}
			where we have used Condition \ref{cond:3} and the fact that second moments of $\xi^V$, $\xi^W$ match .
			\item [b)] $k=2$.\par
			In this case we only need to consider
			\[
			|(\operatorname{Im} y_0^V)^2-(\operatorname{Im} y_0^W)^2|.
			\]
			Similarly, we observe that
			\begin{equation}
			\begin{split}
			&\big|\mathbb{E}_{\gamma}\Big((y_0^V)^2-(y_0^W)^2\Big)\big|\\
			=&|\frac{\eta^2z^2}{C}\mathbb{E}_{\gamma}\Big((\xi_{\gamma}^V)^4-(\xi_{\gamma}^W)^4\Big)(\Sigma^{1/2}\mathbf{u}_{\gamma}^{*}(\mathcal{G}^{(\gamma)})^2\mathbf{u}_{\gamma}\Sigma\mathbf{u}_{\gamma}^{*}(\mathcal{G}^{(\gamma)})^2\mathbf{u}_{\gamma}\Sigma^{1/2})|\\
			\prec& N^{-3/2+C_{\epsilon}}.
			\end{split}
			\end{equation}
			\item[c)] $k=3$.\par
			In this case we need to bound
			\[
			|(\operatorname{Im} y_0^V)^3-(\operatorname{Im} y_0^W)^3|.
			\]
			We observe that
			\begin{equation}
			\begin{split}
			|\mathbb{E}_{\gamma}\Big((y_0^V)^3-(y_0^W)^3\Big)|&=|\frac{\eta^3z^3}{C}\mathbb{E}_{\gamma}\Big((\xi_{\gamma}^V)^6-(\xi_{\gamma}^W)^6\Big)(\Sigma^{1/2}\mathbf{u}_{\gamma}^{*}(\mathcal{G}^{(\gamma)})^2\mathbf{u}_{\gamma}\Sigma^{1/2})^3|\\
			&\prec N^{-11/6+C_{\epsilon}}.
			\end{split}
			\end{equation}
		\end{itemize}
		
		Finally, combining all the results, we see that (\ref{eq:expectation comparison}) holds. Thus we complete the proof of Lemma \ref{lem:GreenCompLem1}.
	\end{proof}

	\section{Proof of Theorem \ref{thm:Rigidity of eigenvalues with large support} and Theorem \ref{thm:Edge universality with large support}}\label{sec:8}
	\subsection{Proof of Theorem \ref{thm:Rigidity of eigenvalues with large support}}
	We need the next lemma to prove Theorem \ref{thm:Rigidity of eigenvalues with large support}.
	\begin{lem}\label{lem:match tilde_X with X}
		Suppose $\xi_i$'s satisfy the assumptions in Theorem \ref{thm:Rigidity of eigenvalues with large support}. Then there exists one matrix $\tilde{X}=(\tilde{x}_{ij})$, such that the elements $\tilde{\xi}_i$'s satisfy Condition \ref{cond:3} with $q=O(N^{-1/2}\log N)$, and the first four moments of $\xi_i$ and $\tilde{\xi}_i$ match for all $i$, that is
		\begin{equation}
		\mathbb{E}\xi_i^k=\mathbb{E}\tilde{\xi}_i^k,\qquad k=1,2,3,4.
		\end{equation}
	\end{lem}
	The proof of this lemma can be found in \cite{Yin2014}. We note that $\tilde{X}$ satisfies the conditions of Theorem \ref{thm:Rigidity of eigenvalues with large support}. Now we process to prove Theorem \ref{thm:Rigidity of eigenvalues with large support}.
	\begin{proof}
		Note that from Theorem \ref{thm:strong local law}, $\tilde{X}$ satisfies (\ref{eq:strong average law with large support}). We use the Green function comparison idea to show that (\ref{eq:strong average law with large support}) also holds for $X$. Since we have the trivial bound
		\[
		\max_{ij}G_{ij}\le C\eta^{-1}\le N,
		\]
		for any $X$ with $q\le N^{-c}$. Then by Lemma \ref{lem:equivalence}, it suffices to show that
		\begin{equation}
		\mathbb{E}|m_N-m|^p\prec (N\eta)^{-p},
		\end{equation}
		for $X$ with $q\le N^{-c}$.
		
		Firstly, recall the relationship (\ref{eq:relationship between m_w and m_N}). For simplification, we denote $m_M:=N^{-1}{\rm Tr}\mathcal{G}$ and $\underline{m}:=m+(1-\phi)z^{-1}$, so
		\[
		m_M=m_N+(1-\phi)z^{-1}.
		\]
		Then it is equivalent to showing that
		\begin{equation}\label{eq:m_M in thm1}
		\mathbb{E}|m_M-\underline{m}|^p\prec(N\eta)^{-p}.
		\end{equation}
		
		For $\gamma=0,\cdots,N$, let $X_{\gamma}$ be the matrix whose first $\gamma$ columns are the same as those of $X$ and the remaining $N-\gamma$ columns are the same as those of $\tilde{X}$ with entries $\sigma_i\tilde{\xi}_j\mathbf{u}_{ij}$, where $\tilde{\xi}_j$'s satisfy the assumptions in Lemma \ref{lem:match tilde_X with X}. Then $X_0=\tilde{X}$ and $X_N=X$. Denote $G_{\gamma}$, $\mathcal{G}_{\gamma}$ as the Green functions of $X_{\gamma}^*X_{\gamma}$ and $X_{\gamma}X_{\gamma}^*$ respectively, and $m_{M,\gamma}=N^{-1}{\rm Tr}\mathcal{G}_{\gamma}$.
		$\mathcal{G}_{\gamma}^{(\gamma)}$ and $m_{M,\gamma}^{(\gamma)}$ are defined similarly with $X_{\gamma}^{(\gamma)}$.
		
		The resolvent expansion gives
		\[
		\mathcal{G}_{\gamma}=\mathcal{G}_{\gamma}^{(\gamma)}-\mathcal{G}_{\gamma}\mathbf{x}_{\gamma}\mathbf{x}_{\gamma}^{*}\mathcal{G}_{\gamma}^{(\gamma)}.
		\]
		Consequently, by $m_{M,\gamma}=\frac{1}{N}{\rm Tr}\mathcal{G}_{\gamma}$, we may write
		\begin{equation}\label{eq:expansion of m_M}
		\begin{split}
		&m_{M,\gamma}-m_{M,\gamma}^{(\gamma)}
		=-\frac{1}{N}\xi_{\gamma}^2\mathbf{r}_{\gamma}^{*}\mathcal{G}_{\gamma}^{(\gamma)}\mathcal{G}_{\gamma}\mathbf{r}_{\gamma}.
		\end{split}
		\end{equation}
		Similarly,
		\[
		m_{M,\gamma-1}-m_{M,\gamma-1}^{(\gamma)}
		=-\frac{1}{N}\tilde\xi_{\gamma}^2\mathbf{r}_{\gamma}^{*}\mathcal{G}_{\gamma-1}^{(\gamma)}\mathcal{G}_{\gamma-1}\mathbf{r}_{\gamma}.
		\]

		We note that $|m_M-\underline{m}|^p=(m_M-\underline{m})^{p/2}(m_M^{*}-\underline{m}^{*})^{p/2}$ for any even integer $p>0$. In the following of this proof, we slightly abuse the notation by ignoring the conjugate $*$ in $m_M$ and $\underline{m}$ for simplicity. We shall see that this will not affect the validity of our result.
		
		When $\gamma=0$, from Theorem \ref{thm:strong local law} and the assumptions on $\tilde{X}$, it is clear that
		\begin{equation}\label{eq:induction at m_0}
		|m_{M,0}-\underline{m}|\prec(N\eta)^{-1},\qquad \mathbb{E}|m_{M,0}-\underline{m}|^p\prec(N\eta)^{-p}.
		\end{equation}
		
		The target is to show that $|\mathbb{E}(m_{M,N}-\underline{m})^p|\prec(N\eta)^p$. Actually, in the proof below, we use the deterministic form of the bound in (\ref{eq:m_M in thm1}), that is, we choose $\epsilon>0$ such that $|\mathbb{E}(m_{M,N}-\underline{m})^p|\le(N\eta)^pN^{\epsilon}$.
		
		Note that $\mathcal{G}_{\gamma}^{(\gamma)}=\mathcal{G}_{\gamma-1}^{(\gamma)}$, and
		\[
		\begin{split}
		\mathbf{r}_\gamma^*\mathcal{G}_{\gamma}^{(\gamma)}\mathcal{G}_{\gamma}\mathbf{r}_\gamma=\frac{\mathbf{r}_\gamma^*\mathcal{G}_{\gamma}^{(\gamma)}\mathcal{G}_{\gamma}^{(\gamma)}\mathbf{r}_\gamma}{1+\xi_{\gamma}^2\mathbf{r}_\gamma^*\mathcal{G}_{\gamma}^{(\gamma)}\mathbf{r}_\gamma}.
		\end{split}
		\]
		It's not hard to see that the local law also holds for $G_{\gamma}$, then by large deviations bounds
		\[
		\begin{split} |\mathbf{r}^{*}_{\gamma}\mathcal{G}_{\gamma}^{(\gamma)}\mathbf{r}_{\gamma}|\le&\sigma_1^2|\mathbf{u}_{\gamma}^{*}\mathcal{G}_{\gamma}^{(\gamma)}\mathbf{u}_{\gamma}|\prec |\frac{1}{M}{\rm Tr}(\mathcal{G}_{\gamma}^{(\gamma)})|+|\frac{1}{M}{\rm Tr}(\mathcal{G}_{\gamma}^{(\gamma)})|^2\le C,\\
		|\frac{1}{N}\mathbf{r}^{*}_{\gamma}\mathcal{G}_{\gamma}^{(\gamma)}\mathcal{G}_{\gamma}^{(\gamma)}\mathbf{r}_{\gamma}|\le&\frac{\sigma_1^2}{N}|\mathbf{u}_{\gamma}^{*}\mathcal{G}_{\gamma}^{(\gamma)}\mathcal{G}_{\gamma}^{(\gamma)}\mathbf{u}_{\gamma}|\prec \frac{1}{N}\bigg(|\frac{1}{M}{\rm Tr}(\mathcal{G}_{\gamma}^{(\gamma)}\mathcal{G}_{\gamma}^{(\gamma)})|+|\frac{1}{M}{\rm Tr}(\mathcal{G}_{\gamma}^{(\gamma)}\mathcal{G}_{\gamma}^{(\gamma)})|^2\bigg)\\
		\le &\frac{1}{N}\frac{1}{M}\|\mathcal{G}_{\gamma}^{(\gamma)}\|_F^2\prec \frac{1}{N\eta},
		\end{split}
		\]
		where we use
		\[
		\frac{1}{N} \frac{1}{M}\|\mathcal{G}_{\gamma}^{(\gamma)}\|_{F}^2
		=\frac{1}{N} M(\frac{\operatorname{Im} {\rm Tr}\mathcal{G}_{\gamma}^{(\gamma)}}{M^2\eta})=\frac{1}{N} M(\frac{N\operatorname{Im} m+N\Theta }{M^2\eta}-\frac{N-M}{M^2|z|^2})
		\prec \frac{q+\sqrt{\eta}}{N\eta}\le \frac{1}{N\eta}.
		\]
		Using Taylor's expansion
		\[
		\frac{1}{1+\xi_\gamma^2\mathbf{r}_\gamma^*\mathcal{G}_{\gamma}^{(\gamma)}\mathbf{r}_\gamma}=\sum_{k\ge 0}\bigg(\frac{1}{1+\phi\mathbf{r}_\gamma^*\mathcal{G}_{\gamma}^{(\gamma)}\mathbf{r}_\gamma}\bigg)^{k+1}\frac{\big(-(\xi_\gamma^2-\phi)\mathbf{r}_\gamma^*\mathcal{G}_{\gamma}^{(\gamma)}\mathbf{r}_\gamma\big)^k}{k!}.
		\]
		and the fact that $|1+\phi\mathbf{r}_\gamma^*\mathcal{G}_{\gamma}^{(\gamma)}\mathbf{r}_\gamma|^{-1}\le C$, $\xi_{\gamma}^2=\xi_\gamma^2-\phi+\phi$, the RHS of (\ref{eq:expansion of m_M})  can be written as
		\[
		\sum_{k\ge 0}(\xi_\gamma^2-\phi)^kA_{k,\gamma},
		\]
		where $A_{k,\gamma}$ is independent of $\xi_\gamma$ and for any $k\ge 0$,
		\[
		|\mathbb{E}(A_{k,\gamma})^p|\le (N\eta)^{-p}N^\epsilon.
		\]
		Similarly, we can write
		\begin{equation}\label{eq:decomp}
		m_{M,\gamma-1}-m_{M,\gamma-1}^{(\gamma)}=\sum_{k\ge 0}(\tilde \xi_{\gamma}^2-\phi)^kA_{k,\gamma-1},\quad \text{ while } 	A_{k,\gamma-1}=A_{k,\gamma}.
		\end{equation}

		Hence, we can write
		\begin{equation}\label{eq:gamma}
		\begin{split}
		&\mathbb{E}(m_{M,\gamma}-\underline{m})^p\\
		=&\mathbb{E}(m_{M,\gamma}^{(\gamma)}-\underline{m})^p+\mathbb{E}\sum_{k=1}^p\binom{p}{k}(m_{M,\gamma}^{(\gamma)}-\underline{m})^{p-k}\bigg(	\sum_{k_1\ge 0}(\xi_\gamma^2-\phi)^{k_1}A_{k_1,\gamma}\bigg)^k,
		\end{split}
		\end{equation}
		and similarly,
		\begin{equation}\label{eq:gamma-1}
		\begin{split}
		&\mathbb{E}(m_{M,\gamma-1}-\underline{m})^p\\
		=&\mathbb{E}(m_{M,\gamma-1}^{(\gamma)}-\underline{m})^p+\mathbb{E}\sum_{k=1}^p\binom{p}{k}(m_{M,\gamma-1}^{(\gamma)}-\underline{m})^{p-k}\bigg(	\sum_{k_1\ge 0}(\tilde\xi_\gamma^2-\phi)^{k_1}A_{k_1,\gamma}\bigg)^k.
		\end{split}
		\end{equation}
		We claim the fact that $m_{M,\gamma-1}^{(\gamma)}=m_{M,\gamma}^{(\gamma)}$, $m_{M,\gamma}^{(\gamma)}$ is independent of $\xi_\gamma$ and $\tilde\xi_\gamma$,  $\mathbb{E}(\xi_\gamma^2-\phi)^k=\mathbb{E}(\tilde \xi_\gamma^2-\phi)^k$ for $k\le 2$, and for any $k\ge 3$,
		\[
		\mathbb{E}(\xi_\gamma^2-\phi)^k\le N^{-1}\log Nq^{k-2},\quad 	\mathbb{E}(\tilde\xi_\gamma^2-\phi)^k\le N^{-1}\log Nq^{k-2}.
		\]
		Then, comparing (\ref{eq:gamma}) and (\ref{eq:gamma-1}), we infer from the Cauchy-Schwartz inequality  that
		\begin{equation}\label{eq:ep1}
		\begin{split}
		&|\mathbb{E}(m_{M,\gamma}-\underline{m})^p|
		\le|\mathbb{E}(m_{M,\gamma-1}-\underline{m})^p|+\sum_{k=1}^p\binom{p}{k}\frac {(N\eta)^{-k}} {N^{1+c-\epsilon}}|\mathbb{E}(m_{M,\gamma-1}^{(\gamma)}-\underline{m})^{2(p-k)}|^{1/2}.
		\end{split}
		\end{equation}
		Moreover, we know that
		\begin{equation}\label{eq:ep2}
		\begin{split}
		&|\mathbb{E}(m_{M,\gamma-1}^{(\gamma)}-\underline{m})^{2(p-k)}|=|\mathbb{E}(m_{M,\gamma-1}^{(\gamma)}-m_{M,\gamma-1}+m_{M,\gamma-1}-\underline{m})^{2(p-k)}|\\
		=&\sum_{l=0}^{2(p-k)}\binom{2(p-k)}{l}|\mathbb{E}((m_{M,\gamma-1}^{(\gamma)}-m_{M,\gamma-1})^l(m_{M,\gamma-1}-\underline{m}))^{2(p-k)-l}|\\
		\le &\sum_{l=0}^{2(p-k)}\binom{2(p-k)}{l}\bigg(|\mathbb{E}(m_{M,\gamma-1}^{(\gamma)}-m_{M,\gamma-1})^{2l}|\bigg)^{1/2}\bigg(|\mathbb{E}(m_{M,\gamma-1}-\underline{m})^{4(p-k)-2l}|\bigg)^{1/2}\\
		\le &C_pN^{c_p\epsilon}\sum_{l=0}^{2(p-k)}(N\eta)^{-l}\bigg(|\mathbb{E}(m_{M,\gamma-1}-\underline{m})^{4(p-k)-2l}|\bigg)^{1/2}
		\end{split}
		\end{equation}
		for some constants $C_p$ and $c_p$, where the last inequality is by (\ref{eq:decomp}).

		We then use (\ref{eq:ep1}) and (\ref{eq:ep2}) to complete the induction. For $\gamma=0$, we already know that
		\[
		|\mathbb{E}(m_{M,0}-\underline{m})^p|\le (N\eta)^{-p}N^{\epsilon},\quad |\mathbb{E}(m_{M,0}^{(1)}-\underline{m})^p|\le (N\eta)^{-p}N^{\epsilon}.
		\]
		Then by (\ref{eq:ep1}) it's easy to see that for $\gamma=1$,
		\[
		|\mathbb{E}(m_{M,1}-\underline{m})^p|\le\bigg(1+\frac{1}{N^{1+c/2}}\bigg) (N\eta)^{-p}N^{\epsilon}.
		\]
		Now assume that for some $\gamma\ge 1$, there exists constant $a>0$ such that  $|\mathbb{E}(m_{M,\gamma-1}-\underline{m})^p|\le (1+N^{-1-c/2})^a(N\eta)^{-p}N^{\epsilon}$ for any fixed $p$. By (\ref{eq:ep2}),
		\begin{equation}\label{eq:induction}
		|\mathbb{E}(m_{M,\gamma-1}^{(\gamma)}-\underline{m})^{2(p-k)}|\le C_p\bigg(1+\frac{1}{N^{1+c/2}}\bigg)^a(N\eta)^{-2(p-k)}N^{c_p\epsilon}
		\end{equation}
		for some constants $C_p$ and $c_p$. Note that $\epsilon$ is arbitrary small, so plug (\ref{eq:induction}) into (\ref{eq:ep1}) to obtain
		\[
		|\mathbb{E}(m_{M,\gamma}-\underline{m})^p|\le\bigg(1+\frac{1}{N^{1+c/2}}\bigg)^{a+1} (N\eta)^{-p}N^{\epsilon}.
		\]
		Then by induction,
		\[
		|\mathbb{E}(m_{M,N}-\underline{m})^p|\le\bigg(1+\frac{1}{N^{1+c/2}}\bigg)^{N} (N\eta)^{-p}N^{\epsilon}\le (N\eta)^{-p}N^{2\epsilon},
		\]
		for arbitrary small $\epsilon>0$.

		A similar but more complicated procedure can lead to
		\[
		\mathbb{E}|m_{M,N}-\underline{m}|^p\le (N\eta)^{-p}N^{\epsilon},
		\]
		and the theorem follows from Chebyshev's inequality.
		The other conclusions in Theorem \ref{thm:Rigidity of eigenvalues with large support} can be obtained by the standard procedure used in the proof of Theorem \ref{thm:rigidity}. So we omit  details.

	\end{proof}
	
	\subsection{Proof of Theorem \ref{thm:Edge universality with large support}}
	We note that $\tilde{X}$ in Lemma \ref{lem:match tilde_X with X} satisfies the desired edge universality according to Theorem \ref{thm:Edge universality with small support}. Thus if we can prove the following lemma, then Theorem \ref{thm:Edge universality with large support} follows immediately.
	\begin{lem}\label{lem:Green comparsion for tilde_X}
		Let $X$ and $\tilde{X}$ be two matrices  in Lemma \ref{lem:match tilde_X with X}. Then there exist constants $\epsilon,\delta>0$ such that, for any $s\in\mathbb{R}$
		\begin{equation}
		\begin{split}
		\mathbb{P}^{\tilde{X}}(N^{2/3}&(\lambda_1-\lambda_{+})\le s-N^{-\epsilon})-N^{-\delta}\le\mathbb{P}^{X}(N^{2/3}(\lambda_1-\lambda_{+})\le s)\\
		&\le\mathbb{P}^{\tilde{X}}(N^{2/3}(\lambda_1-\lambda_{+})\le s+N^{-\epsilon})+N^{-\delta}
		\end{split}
		\end{equation}
		where $\mathbb{P}^{X}$ and $\mathbb{P}^{\tilde{X}}$ are the laws of $X$ and $\tilde{X}$, respectively.
	\end{lem}
	
	Most of the proof of Lemma \ref{lem:Green comparsion for tilde_X} is the same as the one of Theorem \ref{thm:Edge universality with small support}. We only write down the Green function comparison part, which is slightly different from before but simpler since we have the first four moments matching at this time.
	\begin{thm}\label{thm:Green comparsion of tilde_X}
		Let $X$ and $\tilde{X}$ be two matrices  in Lemma \ref{lem:match tilde_X with X}. Suppose $F:\mathbb{R}\rightarrow\mathbb{R}$ is a function whose derivatives satisfy
		\[
		\sup_{x\in\mathbb{R}}|F^{(l)}(x)|(1+|x|)^{-C_{2}}\le C_{2},\qquad l=1,2,3
		\]
		with some constant $C_{2}>0$. Then for any sufficiently small constant $\epsilon>0$ and  for any real
		numbers $E,E_{1}$ and $E_{2}$ satisfying
		\[
		|E-\lambda_{+}|,|E_{1}-\lambda_{+}|,|E_{2}-\lambda_{+}|\le N^{-2/3+\epsilon}
		\]
		and $\eta=N^{-2/3-\epsilon}$, we have
		\begin{equation}
		|\mathbb{E}F(N\eta\operatorname{Im} m_{N}(z))-\mathbb{E}F(N\eta\operatorname{Im} \tilde{m}_{N}(z))|\le N^{-c_1+C_{\epsilon}},\qquad z=E+\imath\eta,
		\end{equation}
		and
		\begin{equation}
		\Big|\mathbb{E}F\Big(\int_{E_{1}}^{E_{2}}N\operatorname{Im} m_{N}(y+\imath\eta){\rm d}y\Big)-\mathbb{E}F\Big(\int_{E_{1}}^{E_{2}}N\operatorname{Im} \tilde{m}_{N}(y+\imath\eta){\rm d}y\Big)\Big|\le N^{-c_1+C_{\epsilon}},
		\end{equation}
		where $c_1$ is a positive constant and $C_\epsilon\rightarrow 0$ as $\epsilon\rightarrow 0$.
	\end{thm}
	\begin{proof}
		We only prove the first inequality and the second one follows from similar arguments. The beginning part is the same as before. We split
		\begin{eqnarray*}
			&  & \mathbb{E}F(N\eta\operatorname{Im} m_{N}(z))-\mathbb{E}F(N\eta\operatorname{Im}
			\tilde{m}_{N}(z))\\
			& = & \sum_{\gamma=1}^{N}\Big\{\mathbb{E}F(N\eta\operatorname{Im} m_{N,\gamma}(z))-\mathbb{E}F(N\eta\operatorname{Im} m_{N,\gamma-1}(z))\Big\}.
		\end{eqnarray*}
		We will prove that
		\begin{equation}\label{eq:8.12}
		|\mathbb{E}F(N\eta\operatorname{Im} m_{N,\gamma}(z))-\mathbb{E}F(N\eta\operatorname{Im} m_{N,\gamma-1}(z))|\prec N^{-1-c+C\epsilon}.
		\end{equation}
		
		Use the resolvent expansion that $\mathcal{G}_{\gamma}=\mathcal{G}_{\gamma}^{(\gamma)}-\mathcal{G}_{\gamma}\mathbf{x}_\gamma\mathbf{x}_\gamma^*\mathcal{G}_{\gamma}^{(\gamma)}$ and the relationship $m_{M,\gamma}=m_{N}+(1-\phi)z^{-1}$, we have
		\[
		N\eta\operatorname{Im} m_{N,\gamma}(z)=N\eta\operatorname{Im} m^{(\gamma)}_{M,\gamma}(z)+(1-\phi)N^{-1/3-\epsilon}-\operatorname{Im}\frac{\eta\mathbf{x}_\gamma^*\mathcal{G}_{\gamma}^{(\gamma)}\mathcal{G}_{\gamma}^{(\gamma)}\mathbf{x}_\gamma}{1+\mathbf{x}_\gamma^*\mathcal{G}_{\gamma}^{(\gamma)}\mathbf{x}_\gamma}.
		\]
		We further expand the last term (ignoring $\operatorname{Im}$) of the last identity to
		\[
		\begin{split}
		&\eta(\xi_\gamma^2-\phi+\phi)\mathbf{r}_\gamma^*\mathcal{G}_{\gamma}^{(\gamma)}\mathcal{G}_{\gamma}^{(\gamma)}\mathbf{r}_\gamma\sum_{k\ge 0}\bigg(\frac{1}{1+\phi\mathbf{r}_\gamma^*\mathcal{G}_{\gamma}^{(\gamma)}\mathbf{r}_\gamma}\bigg)^{k+1}\bigg(\frac{\Big(-(\xi_\gamma^2-\phi)\mathbf{r}_\gamma^*\mathcal{G}_{\gamma}^{(\gamma)}\mathbf{r}_\gamma\Big)^k}{k!}\bigg)\\
		=&
		\frac{\eta\phi\mathbf{r}_\gamma^*\mathcal{G}_{\gamma}^{(\gamma)}\mathcal{G}_{\gamma}^{(\gamma)}\mathbf{r}_\gamma}{1+\phi\mathbf{r}_\gamma^*\mathcal{G}_{\gamma}^{(\gamma)}\mathbf{r}_\gamma}-\frac{\eta\phi\mathbf{r}_\gamma^*\mathcal{G}_{\gamma}^{(\gamma)}\mathcal{G}_{\gamma}^{(\gamma)}\mathbf{r}_\gamma(\xi_\gamma^2-\phi)\mathbf{r}_\gamma^*\mathcal{G}_{\gamma}^{(\gamma)}\mathbf{r}_\gamma}{(1+\phi\mathbf{r}_\gamma^*\mathcal{G}_{\gamma}^{(\gamma)}\mathbf{r}_\gamma)^2}+\frac{\eta(\xi_\gamma^2-\phi)\mathbf{r}_\gamma^*\mathcal{G}_{\gamma}^{(\gamma)}\mathcal{G}_{\gamma}^{(\gamma)}\mathbf{r}_\gamma}{1+\phi\mathbf{r}_\gamma^*\mathcal{G}_{\gamma}^{(\gamma)}\mathbf{r}_\gamma}+R_\gamma\\
		:=&A_\gamma+B_\gamma+C_\gamma+R_{\gamma},
		\end{split}
		\]
		where $A_{\gamma}$ is independent of $\xi_\gamma$, $\mathbb{E}B_{\gamma}=\mathbb{E}C_{\gamma}=0$. We have already known that
		\[
		\eta\mathbf{r}_\gamma^*\mathcal{G}_{\gamma}^{(\gamma)}\mathcal{G}_{\gamma}^{(\gamma)}\mathbf{r}_\gamma\prec q,\quad \mathbf{r}_\gamma^*\mathcal{G}_{\gamma}^{(\gamma)}\mathbf{r}_\gamma\le C, \quad |1+\phi\mathbf{r}_\gamma^*\mathcal{G}_{\gamma}^{(\gamma)}\mathbf{r}_\gamma|^{-1}\le C,
		\]
		\[\quad\mathbb{E}(\xi_\gamma^2-\phi)^{2k}\le N^{-1}\log N,k\ge 1.
		\]
		Hence, $\mathbb{E}|R_\gamma|^i\le q\times N^{-1}\log N \times N^{\epsilon}\le N^{-1-c+2\epsilon}$, $i=1,2$. Then
		\begin{equation} \label{diff}
		\begin{split}
		&F(N\eta\operatorname{Im} m_{N,\gamma}(z))-F\Big(N\eta\operatorname{Im} m^{(\gamma)}_{M,\gamma}(z)+(1-\phi)N^{-1/3-\epsilon}-\operatorname{Im} A_\gamma\Big)\\
		=&-F^{(1)}\Big(N\eta\operatorname{Im} m^{(\gamma)}_{M,\gamma}(z)+(1-\phi)N^{-1/3-\epsilon}-\operatorname{Im} A_\gamma\Big) \times \operatorname{Im}(B_\gamma+C_\gamma+R_\gamma)\\
		&+\frac{1}{2}F^{(2)}(\psi) \operatorname{Im}^2(B_\gamma+C_\gamma+R_\gamma),
		\end{split}
		\end{equation}
		where $\psi$ is some number between $N\eta m_{N,\gamma}(z)$ and $N\eta\operatorname{Im} m^{(\gamma)}_{M,\gamma}(z)+(1-\phi)N^{-1/3-\epsilon}-\operatorname{Im} A_\gamma$. By the local law and large deviation bounds,  $A_{\gamma}\prec q+\sqrt{\eta}\rightarrow 0$. Furthermore, we observe that from Theorem \ref{thm:Rigidity of eigenvalues with large support},
		\[
		N\eta\operatorname{Im} m_{N,\gamma}(z)\prec N\eta(\operatorname{Im} m(z)+(N\eta)^{-1})\le C,
		\]
		which implies
		\begin{equation}
		\Big|F^{(1)}\Big(N\eta\operatorname{Im} m^{(\gamma)}_{M,\gamma}(z)+(1-\phi)N^{-1/3-\epsilon}-\operatorname{Im} A_{\gamma}\Big)\Big|\prec 1,\quad |F^{(2)}(\psi)|\prec 1.
		\end{equation}
		Therefore,
		\[
		\begin{split}
		&\bigg|\mathbb{E}\left(F(N\eta\operatorname{Im} m_{N,\gamma}(z))-F(N\eta\operatorname{Im} m^{(\gamma)}_{M,\gamma}(z)+(1-\phi)N^{-1/3-\epsilon}-\operatorname{Im} A_\gamma)\right)\bigg|\\
		\le &C(\mathbb{E}|R_\gamma|+\mathbb{E}|R_\gamma|^2+\mathbb{E}|B_\gamma|^2+\mathbb{E}|C_\gamma|^2)\\
		\le &q\times N^{-1}\log N\times N^{\epsilon}\le N^{-1-c+2\epsilon}.
		\end{split}
		\]
		
		Similarly, we can prove that
		\[
		\bigg|\mathbb{E}\left(F(N\eta\operatorname{Im} m_{N,\gamma-1}(z))-F(N\eta\operatorname{Im} m^{(\gamma)}_{M,\gamma-1}(z)+(1-\phi)N^{-1/3-\epsilon}-\operatorname{Im} A_{\gamma-1})\right)\bigg|\le \frac 1{N^{1+c-2\epsilon}}.
		\]
		Note that $m^{(\gamma)}_{M,\gamma-1}(z)=m^{(\gamma)}_{M,\gamma}(z)$ and $A_{\gamma-1}=A_{\gamma}$, which conclude (\ref{eq:8.12}) and the Theorem holds.

	\end{proof}
	
	\section{Proof of Theorem \ref{thm:universality}}\label{sec:9}
	Suppose the matrix $X$ satisfies Condition \ref{cond:1} and Condition \ref{cond:2}. We can write the sample covariance matrix as
	\begin{equation}
	\mathcal{W}=XX^{*}=\sum_{i=1}^{N}\xi_i^2\mathbf{r}_i\mathbf{r}_i^{*},
	\end{equation}
	where  $\mathbf{r}_i=\Sigma^{1/2}\mathbf{u}_i$.
	
	For any fixed $\epsilon>0$, define
	\begin{equation}
	\alpha_N:=\mathbb{P}(|\hat{\xi}^2_i-M|>N^{1-\epsilon}).
	\end{equation}
	
	Using Condition \ref{cond:2}, we can see that for any $\delta>0$ and large enough $N$,
	\begin{equation}
	\alpha_N\le\delta N^{-1+2\epsilon}.
	\end{equation}
	
	Let $\rho(x)$ be the distribution of $\xi_i^2$. Then we define independent random variables $\zeta_i^s$, $\zeta_i^l$ and $c_i$, $1\le i\le N$ in the following ways:
	\begin{itemize}
		\item [1.] $\zeta_i^s$ has distribution density $\rho_s(x)$, where
		\begin{equation}
		\rho_s(x):=\mathbf{1}\Big(|x-\phi|\le N^{-\epsilon}\Big)\frac{\rho(x)}{1-\alpha_N};
		\end{equation}
		\item [2.] $\zeta_i^l$ has distribution density $\rho_l(x)$, where
		\begin{equation}
		\rho_l(x):=\mathbf{1}\Big(|x-\phi|>N^{-\epsilon}\Big)\frac{\rho(x)}{\alpha_N};
		\end{equation}
		\item [3.] $c_i$ is a Bernoulli $0-1$ random variable with $\mathbb{P}(c_i=1)=\alpha_N$ and $\mathbb{P}(c_i=0)=1-\alpha_N$.
	\end{itemize}
	
	It is easy to check
	\begin{equation}
	\begin{split}
	\xi_i^2&\overset{d}{=}\zeta_i^s(1-c_i)+\zeta_i^lc_i,
	\end{split}
	\end{equation}
	therefore we may write
	\begin{equation}\label{eq:split of W}
	\mathcal{W}=\sum_{i=1}^{N}\xi_i^2\mathbf{r}_i\mathbf{r}_i^{*}=\sum_{i=1}^{N}\Big(\zeta_i^s(1-c_i)+\zeta_i^lc_i\Big)\mathbf{r}_i\mathbf{r}_i^{*}
	\end{equation}
	We observe that
	\begin{equation}
	\mathbb{E}|\zeta_i^s-\phi|^2=O(N^{-1}\log N),
	\end{equation}
	so  $\zeta_i^s$ satisfies the assumptions in Theorem \ref{thm:Edge universality with large support}. We conclude that for the matrix
	\[
	\tilde{\mathcal{W}}:=\sum_{i=1}^{N}\zeta_i^s\mathbf{r}_i\mathbf{r}_i^{*},
	\]
	there exist constants $\epsilon,\delta>0$ such that for any $s\in\mathbb{R}$,
	\begin{equation}\label{eq:first main result}
	\begin{split}
	\mathbb{P}^{G}(N^{2/3}&(\lambda_1-\lambda_{+})\le s-N^{-\epsilon})-N^{-\delta}\le\mathbb{P}^{\tilde{W}}(N^{2/3}(\lambda_1-\lambda_{+})\le s)\\
	&\le\mathbb{P}^{G}(N^{2/3}(\lambda_1-\lambda_{+})\le s+N^{-\epsilon})+N^{-\delta},
	\end{split}
	\end{equation}
	where $\mathbb{P}^{G}$ denotes the law for a Gaussian covariance matrix and $\mathbb{P}^{\tilde{W}}$ denotes the law for $\tilde{\mathcal{W}}$.
	
	Now we write the right-hand side of (\ref{eq:split of W}) as
	\[
	\begin{split}
	\mathcal{W}&=\sum_{i=1}^{N}(\zeta_i^s+(\zeta_i^l-\zeta_i^s)c_i)\mathbf{r}_i\mathbf{r}_i^{*}:=\sum_{i=1}^{N}(\zeta_i^s+R_ic_i)\mathbf{r}_i\mathbf{r}_i^{*},
	\end{split}
	\]
	where $R_i:=\zeta_i^l-\zeta_i^s$. We aim to show that  the $R_ic_i$ terms have negligible effects on $\lambda_1$. Define the corresponding matrix as
	\[
	R^c:=\sum_{i=1}^{N}R_ic_i\mathbf{r}_i\mathbf{r}_i^{*}.
	\]
	Note that $c_i$ is independent of $\zeta_i^s$ and $\zeta_i^l$. In order to understand the spectral behavior of this matrix, we first introduce the following event
	\[
	A:=\{\sharp\{i:c_i=1\}\le N^{5\epsilon}\}.
	\]
	Since $c_i$'s are independent and identically distributed Bernoulli random variables, by Bernstein's inequality it is easy to check
	\begin{equation}\label{eq:estimation of probability for c_i}
	\begin{split}
	\mathbb{P}(A)&\ge 1-\exp(-N^{\epsilon}).
	\end{split}
	\end{equation}
	Without loss of generality, we will assume that $c_i=0$ for $i>N^{5\epsilon}$ and $c_i=1$ for $i\le N^{5\epsilon}$.
	On the other hand, by Condition \ref{cond:2}, we have
	\begin{equation}
	\begin{split}
	\mathbb{P}(|R_i|\ge\omega)&\le\mathbb{P}(|\zeta_{i}^l|\ge\frac{\omega}{2})=\mathbb{P}(|\hat{\xi}_{i}^2-M|\ge\omega N)=o(N^{-2}),
	\end{split}
	\end{equation}
	for any fixed constant $\omega>0$. Hence, the event
	\[
	A\bigcap\{\max_{i}|R_i|\le\omega\}
	\]
	happens with probability approaching to 1.  Hereafter, we will focus on this event. Define
	\[
	\mathcal{W}_t(\lambda)=\lambda I-\bigg(\tilde{\mathcal{W}}+t\sum_{i=1}^{N^{5\epsilon}}R_i\mathbf{r}_i\mathbf{r}_i^*\bigg),\quad t\in[0,1].
	\]
	In fact, by taking $\omega$ sufficiently small, the eigenvalues of $\tilde{\mathcal{W}}+t\sum_{i=1}^{N^{5\epsilon}}R_i\mathbf{r}_i\mathbf{r}_i^*$ are continuous in $t$.  Next, we aim to prove that for  $\lambda=\mu:=\lambda_1(\tilde{\mathcal{W}})\pm N^{-3/4}$,
	\begin{equation}\label{eq:pr}
	\mathbb{P}\Big(\det(\mathcal{W}_t(\mu))\ne 0,\forall t\in[0,1]\Big)=1-o(1).
	\end{equation}
	If (\ref{eq:pr}) holds, by continuity we know that the largest eigenvalue of $\tilde{\mathcal{W}}+t\sum_{i=1}^{N^{5\epsilon}}R_i\mathbf{r}_i\mathbf{r}_i^*$ will not cross the boundary $\lambda_1(\tilde{\mathcal{W}})\pm N^{-3/4}$. Hence  $\lambda_1(\mathcal{W})$ is sticking to $\lambda_1(\tilde{\mathcal{W}})$ with a rate smaller than $N^{-3/4}$, which concludes the theorem.
	
	Now we prove (\ref{eq:pr}). We know that the eigenvalues of GOE are separated at the scale of $N^{-2/3}$, so by (\ref{eq:first main result}),
	\[
	\mathbb{P}(|\lambda_k(\tilde{\mathcal{W}})-\mu|\ge N^{-3/4})=1-o(1).
	\]
	Therefore, $\mu$ is not an eigenvalue of $\tilde{\mathcal{W}}$, and
	\[
	\det({\mathcal{W}}_t(\mu))=\det(\mu-\tilde{\mathcal{W}})\det\bigg(1-t\sum_{i=1}^{N^{5\epsilon}}R_i\mathbf{r}_i\mathbf{r}_i^*\mathcal{G}^s(\mu)\bigg),
	\]
	where $\mathcal{G}^s(z)=(\tilde{\mathcal{W}}-z)^{-1}$ is the Green function. Hereafter, we ignore the superscript $s$ in $\mathcal{G}^s(z)$ for simplicity.  Let $z=\lambda_++\imath N^{-1+\delta}$ for some $\delta>0$. Then
	\begin{equation}\label{eq:dec}
	\begin{split}
	1-t\sum_{i=1}^{N^{5\epsilon}}R_i\mathbf{r}_i\mathbf{r}_i^*\mathcal{G}(\mu)=1-t\sum_{i=1}^{N^{5\epsilon}}R_i\mathbf{r}_i\mathbf{r}_i^*\big(\mathcal{G}(\mu)-\mathcal{G}(z)\big)-t\sum_{i=1}^{N^{5\epsilon}}R_i\mathbf{r}_i\mathbf{r}_i^*\mathcal{G}(z).
	\end{split}
	\end{equation}
	Note that for each $i$,
	\[
	\mathbf{r}_i\mathbf{r}_i^*\mathcal{G}(z)=\mathbf{r}_i\mathbf{r}_i^*\mathcal{G}^{(i)}(z)+\frac{\zeta_i^s\mathbf{r}_i\mathbf{r}_i^*\mathcal{G}^{(i)}(z)\mathbf{r}_i\mathbf{r}_i^*\mathcal{G}^{(i)}(z)}{1+\zeta_i^s\mathbf{r}_i^*\mathcal{G}^{(i)}(z)\mathbf{r}_i},
	\]
	while
	\[
	\begin{split}
	&\bigg|\mathbf{r}_i^*\mathcal{G}^{(i)}(z)\mathbf{r}_i-\frac{1}{M}\underline m\text{Tr}\Sigma\bigg|
	\\
	\le&\bigg|\mathbf{r}_i^*\mathcal{G}^{(i)}(z)\mathbf{r}_i-\frac{1}{M}\text{Tr}\mathcal{G}^{(i)}(z)\Sigma\bigg|+\bigg|\frac{1}{M}\text{Tr}(\mathcal{G}^{(i)}(z)-\underline{m})\Sigma\bigg|\prec N^{-1/6+\epsilon}.
	\end{split}
	\]
	Therefore, we can replace $\mathbf{r}_i^*\mathcal{G}^{(i)}(z)\mathbf{r}_i$ with $M^{-1}\underline{m}\text{Tr}\Sigma$ and write
	\begin{equation}\label{eq:second}
	\bigg\|t\sum_{i=1}^{N^{5\epsilon}}R_i\mathbf{r}_i\mathbf{r}_i^*\mathcal{G}(z)\bigg\|\le \max|R_i|\bigg\|\sum_{i=1}^{N^{5\epsilon}}\mathbf{r}_i\mathbf{r}_i^*\bigg\|\big\|\mathcal{G}(z)\big\|\le C\omega\|\sum_i\mathbf{r}_i\mathbf{r}_i^*\|\le \frac{1}{10},
	\end{equation}
	where we have used the fact that $w$ can be sufficiently small and $\|\mathcal{G}(z)\|\le C$ by Theorem \ref{thm:rigidity}. Then, it remains  to consider the second surm in (\ref{eq:dec}).
	
	Let $\beta_\alpha$ be the eigenvector of $\tilde{\mathcal{W}}$ corresponding to the $\alpha$-th eigenvalue $\lambda_\alpha$. Note that for any $\lambda_\alpha>\tau$ and $z^*=\lambda_\alpha+\imath N^{-1+\delta}$, we have $|\mathbf{r}_i^*\mathcal{G}(z^*)\mathbf{r}_i|\le C$ with high probability. Moreover,
	\[
	|\operatorname{Im} \mathbf{r}_i^*\mathcal{G}(z^*)\mathbf{r}_i|=(\operatorname{Im} z^*)\sum_{j}\frac{<\mathbf{r}_i,\beta_j>^2}{|\lambda_j-z^*|^2}\ge \frac{\operatorname{Im} z^*}{|\lambda_\alpha-z^*|^2}<\mathbf{r}_i,\beta_\alpha>^2=(\operatorname{Im} z^*)^{-1}<\mathbf{r}_i,\beta_\alpha>^2.
	\]
	Since $\delta$ can be arbitrary small, we have $<\mathbf{r}_i,\beta_\alpha>^2\prec N^{-1}$ for any $\alpha$ satisfying $\lambda_{\alpha}>\tau$.  Let $\alpha^*$ be the largest $\alpha$ satisfying this condition, so by the eigenvalue rigidity we have $\alpha^*\asymp kN$ for some constant $k$. The eigenvalue rigidity also implies that $\lambda_\alpha-\lambda_+\asymp (\alpha/N)^{2/3}$ for any $\alpha\ge N^{\epsilon}$.
	
	Recall $z=\lambda_++\imath N^{-2/3}$, so for each $i$,
	\[
	\begin{split}
	&\bigg|\mathbf{r}_i^*\big(\mathcal{G}(\mu)-\mathcal{G}(z)\big)\mathbf{r}_i\bigg|=\sum_\alpha<\mathbf{r}_i,\beta_\alpha>^2\bigg|\frac{1}{\mu-\lambda_\alpha}-\frac{1}{z-\lambda_\alpha}\bigg|\\
	\le&\sum_\alpha<\mathbf{r}_i,\beta_\alpha>^2\bigg(\frac{\eta}{(\lambda_+-\lambda_\alpha)^2+\eta^2}+\frac{(1+o(1))\eta^2}{|\lambda_\alpha-\mu||(\lambda_+-\lambda_\alpha)^2+\eta^2|}\bigg).
	\end{split}
	\]
	Firstly,
	\[
	\begin{split}
	&\sum_{\alpha\le N^{\epsilon}}<\mathbf{r}_i,\beta_\alpha>^2\bigg(\frac{\eta}{(\lambda_+-\lambda_\alpha)^2+\eta^2}+\frac{(1+o(1))\eta^2}{|\lambda_\alpha-\mu||(\lambda_+-\lambda_\alpha)^2+\eta^2|}\bigg)\\
	\prec&N^{\epsilon}N^{-1}(N^{2/3+\epsilon}+N^{3/4+\epsilon})\le N^{-1/4+2\epsilon}.
	\end{split}
	\]
	Secondly,
	\[
	\begin{split}
	&\sum_{ N^{\epsilon}\le \alpha\le \alpha^*}<\mathbf{r}_i,\beta_\alpha>^2\bigg(\frac{\eta}{(\lambda_+-\lambda_\alpha)^2+\eta^2}+\frac{(1+o(1))\eta^2}{|\lambda_\alpha-\mu||(\lambda_+-\lambda_\alpha)^2+\eta^2|}\bigg)\\
	\prec&\sum_{N^{\epsilon}\le \alpha\le \alpha^*}N^{-5/3+\epsilon}\bigg(\frac{\alpha}{N}\bigg)^{-4/3}\le N^{-1/3+\epsilon}\sum_{1\le \alpha\le kN}\alpha^{-4/3}\le N^{-1/3+2\epsilon}.
	\end{split}
	\]
	Lastly,
	\[
	\begin{split}
	&\sum_{ \alpha> \alpha^*}<\mathbf{r}_i,\beta_\alpha>^2\bigg(\frac{\eta}{(\lambda_+-\lambda_\alpha)^2+\eta^2}+\frac{(1+o(1))\eta^2}{|\lambda_\alpha-\mu||(\lambda_+-\lambda_\alpha)^2+\eta^2|}\bigg)\\
	\le &\frac{\eta}{(\lambda_+-\tau)^2}\sum_{\alpha}<\mathbf{r}_i,\beta_\alpha>^2\le\frac{\eta}{(\lambda_+-\tau)^2}\|\mathbf{r}_i\|^2\le N^{-2/3+\epsilon}.
	\end{split}
	\]
	Therefore, we have
	\begin{equation}\label{eq:first}
	\bigg\|t\sum_{i=1}^{N^{5\epsilon}}R_i\mathbf{r}_i\mathbf{r}_i^*\big(\mathcal{G}(\mu)-\mathcal{G}(z)\big)\bigg\|\prec N^{-1/4+7\epsilon}.
	\end{equation}
	Combining (\ref{eq:dec}), (\ref{eq:second}) and (\ref{eq:first}),  with probability approaching to 1 we have
	\[
	\det\bigg(1-t\sum_{i=1}^{N^{5\epsilon}}R_i\mathbf{r}_i\mathbf{r}_i^*\mathcal{G}(z)\bigg)\ne 0,\forall t\in[0,1],
	\]
	which concludes the theorem.

	\qedhere

	\newpage

	\mbox{}\par
	\begin{appendix}\label{Suppl1}
		
		\setcounter{section}{0}
		\renewcommand\thesection{\Roman{section}}
		\renewcommand\thesubsection{\roman{subsection}}

		In the following appendices, we provide the proofs of some lemmas and results which are omitted in the main text.

		\section{\label{Appendix A}Proof of results in Section \ref{sec:Preliminary-results}.}
		\subsection{Proof of Lemma \ref{lem:equivalence}}
		\begin{proof}
			If $\mathbb{E}X_{N}^{p}\prec\Phi_{N}^{p}$, then for any $\varepsilon>0$
			we get from Markov's inequality that
			\[
			\mathbb{P}(|X_{N}|>N^{\varepsilon}\Phi)\le\frac{\mathbb{E}|X_{N}|^{p}}{N^{\varepsilon p}\Phi^{p}}\le\frac{1}{N^{\varepsilon(p-1)}}.
			\]
			
			Choosing $p$ large enough (depending on $\varepsilon$) proves the
			``$\Leftarrow$'' part. Conversely, if $X_{N}\prec\Phi_{N}$, then
			for any $D>0$ we get
			\begin{multline*}
			|\mathbb{E}X_{N}|\le\mathbb{E}|X_{N}|=\mathbb{E}|X_{N}|\mathbf{1}(|X_{N}|\le N^{\varepsilon}\Phi_{N})+\mathbb{E}|X_{N}|\mathbf{1}(|X_{N}|>N^{\varepsilon}\Phi_{N})\\
			\le N^{\varepsilon}\Phi_{N}+\sqrt{\mathbb{E}|X_{N}|^{2}}\sqrt{\mathbb{P}(|X_{N}|>N^{\varepsilon}\Phi_{N})}\le N^{\varepsilon}\Phi_{N}+N^{C_{2}/2-D/2}.
			\end{multline*}
			
			Using $\Phi_{N}\ge N^{-C}$ and choosing $D$ large enough, we obtain
			the ``$\Rightarrow$'' part for $p=1$. The same implication for
			arbitrary $p$ follows from the fact that $X_{N}\prec\Phi_{N}$ implies
			$X_{N}^{p}\prec\Phi_{N}^{p}$ for any fixed $p$.
		\end{proof}

		\subsection{Proof of large deviation bounds in Lemma \ref{lem:large deviation}}
		
		\begin{proof}
			Let ${\cal F}_{k}=\sigma\{u_{1},\dots,u_{k}\}$ be the $\sigma$-algebra
			generated by $u_{1},\dots,u_{k}$. In particular, ${\cal F}_{0}$
			is the trivial $\sigma$-algebra, i.e., $\mathbb{E}(\cdot|{\cal F}_{0})$
			is the unconditional expectation. For $k=1,\dots,M-1$, we see by
			symmetry that conditioned on ${\cal F}_{k}$, $(u_{k+1},\dots,u_{M})^{\prime}$
			follows the uniform distribution on the $(M-k)$-dimensional sphere
			with radius $\sqrt{1-\sum_{i=1}^{k}u_{i}^{2}}$, namely,
			\begin{equation}
			(u_{k+1},\dots,u_{M})^{\prime}|{\cal F}_{k}\sim U\Big(\big(1-\sum_{i=1}^{k}u_{i}^{2}\big)^{1/2}\mathbb{S}^{M-k}\Big).\label{eq:conditional spherical distribution}
			\end{equation}
			
			Define the martingale difference sequence
			\begin{eqnarray*}
				\mathbf{s}_{k} & = & \sum_{i=1}^{M}b_{i}\{\mathbb{E}(u_{i}|{\cal F}_{k})-\mathbb{E}(u_{i}|{\cal F}_{k-1})\}.
			\end{eqnarray*}
			A direct observation from (\ref{eq:conditional spherical distribution})
			is that $\mathbf{s}_{k}=b_{k}u_{k}$. Then it follows from Theorem
			\ref{thm:mix_moment} in Appendix \ref{Appendix D}  and the Burkholder inequality \cite{burkholder1973}
			that for any positive integer $q$, there exists a constant $C_{q}>0$
			such that
			\begin{eqnarray*}
				\mathbb{E}|\mathbf{b}^{*}\mathbf{u}|^{2q} & = & \mathbb{E}\Big|\sum_{k=1}^{M}\mathbf{s}_{k}\Big|^{2q}
				\le  C_{q}\mathbb{E}\Big(\sum_{k=1}^{M}|\mathbf{s}_{k}|^{2}\Big)^{q}
				\le  C_{q}\mathbb{E}\Big(\sum_{k=1}^{M}|b_{k}|^{2}u_{k}^{2}\Big)^{q}\\
				& = & C_{q}\Big\{\sum_{1\le k_{1}\cdots k_{q}\le M}|b_{k_{1}}|^{2}\cdots|b_{k_{q}}|^{2}\mathbb{E}(u_{k_{1}}^{2}\cdots u_{k_{q}}^{2})\Big\}\\
				& \le & \frac{C_{q}}{M^{q}}\sum_{1\le k_{1}\cdots k_{q}\le M}|b_{k_{1}}|^{2}\cdots|b_{k_{q}}|^{2}=\frac{C_{q}}{M^{q}}\Big(\sum_{k=1}^{M}|b_{k}|^{2}\Big)^{q}=C_{q}\Big(\frac{\|\mathbf{b}\|^{2}}{M}\Big)^{q}.
			\end{eqnarray*}
			Then (\ref{eq:10.1-1}) follows from that for any $q\in\mathbb{Z}_{+}$,
			\[
			\mathbb{P}(|\mathbf{b}^{*}\mathbf{u}|>M^{\varepsilon}\sqrt{\frac{\|\mathbf{b}\|^{2}}{M}})\le\frac{\mathbb{E}|\mathbf{b}^{*}\mathbf{u}|^{2q}}{M^{2\varepsilon q}\Big(\frac{\|\mathbf{b}\|^{2}}{M}\Big)^{q}}\le\frac{C_{q}}{M^{2\varepsilon q}}.
			\]
			
			To show (\ref{eq:10.2}), we first show that
			\begin{equation}
			\Big|\sum_{k=1}^{M}a_{kk}(u_{k}^{2}-\frac{1}{M})\Big|\prec\frac{1}{M}\sqrt{\sum_{k=1}^{M}|a_{kk}|^{2}}.\label{eq:10.5}
			\end{equation}
			We construct the martingale difference sequence as
			\[
			\mathcal{N}_k:=\sum_{i=1}^Ma_{ii}\bigg(\mathbb{E}(u_{i}^2|\mathcal{F}_{k})-\mathbb{E}(u_{i}^2|\mathcal{F}_{k-1})\bigg).
			\]
			Note that $\mathcal{F}_{M}=\mathcal{F}_{M-1}$, and
			\[
			\mathbb{E}(u_i^2|\mathcal{F}_k)=\frac{1}{M-k}(1-\sum_{l=1}^ku_l^2), \forall i\ge k+1.
			\]
			Therefore,
			\[
			\begin{split}
			\mathcal{N}_k=&a_{kk}u_{k}^2+\sum_{i=k+1}^{M}a_{ii}\mathbb{E}(u_i^2|\mathcal{F}_k)-\sum_{i=k}^{M}a_{ii}\mathbb{E}(u_i^2|\mathcal{F}_{k-1})\\
			=&\bigg(a_{kk}-\sum_{i=k+1}^M\frac{a_{ii}}{M-k}\bigg)\bigg(u_{k}^2-\frac{1}{M-k+1}(1-\sum_{l=1}^{k-1}u_l^2)\bigg)\\
			=&\bigg(a_{kk}-\sum_{i=k+1}^M\frac{a_{ii}}{M-k}\bigg)\bigg(u_{k}^2-M^{-1}-\frac{1}{M-k+1}\sum_{l=k}^{M}(u_l^2-M^{-1})\bigg)\\
			=&\bigg(a_{kk}-\sum_{i=k+1}^M\frac{a_{ii}}{M-k}\bigg)\bigg(u_{k}^2-M^{-1}+\frac{1}{M-k+1}\sum_{l=1}^{k-1}(u_l^2-M^{-1})\bigg)\\
			:=&A_k\nu_k.
			\end{split}
			\]
			Use the Burkholder inequality to obtain that
			\[
			\begin{split}
			&\mathbb{E}{|\sum_{k=1}^Ma_{kk}(u_k^2-\frac{1}{M})|^{2q}}=\mathbb{E}{|\sum_{k=1}^M\mathcal{N}_k|^{2q}}\le C_q\mathbb{E}\bigg(\sum_k\mathcal{N}_k^2\bigg)^q\\
			\le & C_q\sum_{1\le k_1,\ldots,k_q\le M}A_{k_1}^2\cdots A_{k_q}^2\mathbb{E}\bigg(\nu_{k_1}^2\cdots\nu_{k_q}^2\bigg)\\
			\le &\frac{C_q}{M^{2q}}\bigg(\sum_kA_k^2\bigg)^q\le \frac{C_q\log M}{M^{2q}}\Big(\sum_ka_{kk}^2\Big)^q,
			\end{split}
			\]
			where the third line is by Theorem \ref{thm:mix_moment}.  So (\ref{eq:10.5}) holds.
			
			In order to complete the proof of (\ref{eq:10.2}), we will prove that
			\begin{equation}
			\Big|\sum_{j< k}^{M}a_{jk}u_{j}u_{k}\Big|\prec\frac{1}{M}\sqrt{\sum_{j\ne k}|a_{jk}|^{2}}.\label{eq:10.3}
			\end{equation}
			We begin with the martingale difference sequence
			\[
			\mathcal{Y}_l:=\sum_{j< k}a_{jk}\bigg(\mathbb{E}(u_{j}u_{k}|\mathcal{F}_l)-\mathbb{E}(u_{j}u_{k}|\mathcal{F}_{l-1})\bigg).
			\]
			Note that
			\[
			\begin{split}
			\sum_{j< k}\mathbb{E}(u_{j}u_{k}|\mathcal{F}_l)=&\sum_{j< k\le l}\mathbb{E}(u_{j}u_{k}|\mathcal{F}_l)+\sum_{j\le l< k}\mathbb{E}(u_{j}u_{k}|\mathcal{F}_l)+\sum_{l<j< k}\mathbb{E}(u_{j}u_{k}|\mathcal{F}_l)\\
			=&\sum_{j< k\le l}\mathbb{E}(u_{j}u_{k}|\mathcal{F}_l)=\sum_{j< k\le l}u_{j}u_{k}.
			\end{split}
			\]
			Then,
			\[
			\mathcal{Y}_l=\sum_{j< l}a_{jl}u_{j}u_{l}.
			\]
			By the Burkholder inequality,
			\[
			\begin{split}
			&\mathbb{E}|\sum_{j< k}^{M}a_{jk}u_{j}u_{k}|^{2q}\le C_q\mathbb{E}\bigg(\sum_l\mathcal{Y}_l^2\bigg)^q\le C_q\mathbb{E}\bigg(\sum_lu_l^2(\sum_{j<l}a_{jl}u_j)^2\bigg)^q\\
			\le &C_q\sum_{k_1}\sum_{i_1<k_1}\sum_{j_1<k_1}\ldots\sum_{k_q}\sum_{i_q<k_q}\sum_{j_q<k_q}a_{i_1k_1}a_{j_1k_1}\ldots a_{i_qk_q}a_{j_qk_q}\mathbb{E}\bigg(u_{k_1}^2u_{i_1}^2u_{j_1}^2\ldots u_{k_q}^2u_{i_q}^2u_{j_q}^2\bigg)\\
			\le &\frac{C_q}{M^{3q}}\bigg(\sum_k(\sum_{j<k}a_{jk})^2\bigg)^q\\
			\le &\frac{C_q}{M^{2q}}(\sum_{j\ne k}a_{jk}^2)^q,
			\end{split}
			\]
			which concludes (\ref{eq:10.3}).
			Therefore, (\ref{eq:10.2})
			follows from
			\begin{eqnarray*}
				|\mathbf{u}^{*}A\mathbf{u}-\frac{1}{M}{\rm tr}A| & = & |\sum_{k=1}^{M}a_{kk}(u_{k}^{2}-\frac{1}{M})+\sum_{j\ne k}a_{jk}u_{j}u_{k}|\\
				& \prec & \frac{1}{\sqrt{2}M}\sqrt{\sum_{k=1}^{M}|a_{kk}|^{2}}+\frac{1}{\sqrt{2}M}\sqrt{\sum_{j\ne k}|a_{jk}|^{2}}\\
				& \le & \frac{1}{M}\sqrt{\sum_{k=1}^{M}|a_{kk}|^{2}+\sum_{j\ne k}|a_{jk}|^{2}}\\
				& = & \frac{1}{M}\|A\|_{F}.
			\end{eqnarray*}
			
			Denote the $i$th column of $A$ as $A_{\cdot i}$. Finally (\ref{eq:10.4-2})
			follows from (\ref{eq:10.1-1})  conditioned on $\mathbf{u}$,
			\[
			|\mathbf{u}^{*}A\tilde{\mathbf{u}}|\prec\sqrt{\frac{\|\mathbf{u}^{*}A\|^{2}}{M}},
			\]
			and
			\[
			\|\mathbf{u}^{*}A\|^{2}=\sum_{i=1}^{M}|\mathbf{u}^{*}A_{\cdot i}|^{2}\prec\sum_{i=1}^{M}\frac{\|A_{\cdot i}\|^{2}}{M}=\frac{1}{M}\sum_{i,j}a_{ij}^{2}=\frac{1}{M}\|A\|_{F}^{2},
			\]
			which concludes the lemma.
		\end{proof}
		
		\section{\label{Appendix B}Proof of the results in Section \ref{sec:5}.}
		\subsection{Proof of Lemma \ref{lem:10.5}}
		\begin{proof}
			We recall that
			\[
			{\cal G}=\big(\sum_{i\in{\cal I}}\mathbf{x}_{i}\mathbf{x}_{i}^{*}-zI\big)^{-1}.
			\]
			It follows from the resolvent identity that
			\begin{equation}
			{\cal G}-(-zm_{N}\Sigma-zI)^{-1}=(-zm_{N}\Sigma-zI)^{-1}\big(-zm_{N}\Sigma-\sum_{i\in{\cal I}}\mathbf{x}_{i}\mathbf{x}_{i}^{*}\big){\cal G}.\label{eq:10.21-1}
			\end{equation}
			Using the Sherman-Morrison formula (see, e.g., (2.2) of \cite{BaiandSilverstein1995} or Lemma \ref{lem:Sherman-Morrison formula} in Appendix \ref{Appendix D},
			we have
			\begin{equation}
			\mathbf{x}_{i}\mathbf{x}_{i}^{*}{\cal G}=\frac{\mathbf{x}_{i}\mathbf{x}_{i}^{*}{\cal G}^{(i)}}{1+\mathbf{x}_{i}^{*}{\cal G}^{(i)}\mathbf{x}_{i}}.\label{eq:10.22-1}
			\end{equation}
			Using (\ref{eq:10.21-1}), (\ref{eq:10.22-1}) and (\ref{eq:resolvent}), we have
			\begin{eqnarray*}
				{\cal G}-(-zm_{N}\Sigma-zI)^{-1} & = & \frac{1}{N}\sum_{i\in{\cal I}}\frac{(-zm_{N}\Sigma-zI)^{-1}\Sigma{\cal G}}{1+\mathbf{x}_{i}^{*}{\cal G}^{(i)}\mathbf{x}_{i}}\\
				&  & -\sum_{i\in{\cal I}}\frac{(-zm_{N}\Sigma-zI)^{-1}\mathbf{x}_{i}\mathbf{x}_{i}^{*}{\cal G}^{(i)}}{1+\mathbf{x}_{i}^{*}{\cal G}^{(i)}\mathbf{x}_{i}}\\
				& = & \sum_{i\in{\cal I}}\frac{(m_{N}\Sigma+I)^{-1}}{z(1+\mathbf{x}_{i}^{*}{\cal G}^{(i)}\mathbf{x}_{i})}\big(\mathbf{x}_{i}\mathbf{x}_{i}^{*}{\cal G}^{(i)}-\frac{1}{N}\Sigma{\cal G}\big),
			\end{eqnarray*}
			which concludes the lemma.
		\end{proof}
		
		\subsection{Proof of Lemma \ref{lem:ward}}
		\begin{proof}
			Let $\tilde{\lambda}_{k}$ and $\tilde{\mathbf{v}}_{k}$ be the $k$-th
			largest eigenvalue of ${\cal W}^{(T)}$ and the eigenvector corresponding
			to $\tilde{\lambda}_{k}$ respectively for $k=1,\dots,M$. Denote
			$\tilde{\mathbf{v}}_{k}(i)$ as the $i$-th entry of $\tilde{\mathbf{v}}_{k}$.
			We observe that
			\begin{multline*}
			\sum_{i,j\in\{1,\dots,M\}}|{\cal G}_{ij}^{(T)}|^{2}=\sum_{i\in\{1,\dots,M\}}\big({\cal G}^{(T)}({\cal G}^{(T)})^{*}\big)_{ii}\\
			=\sum_{i=1}^{M}\Big\{\Big(\sum_{k_{1}=1}^{M}\frac{\tilde{\mathbf{v}}_{k_{1}}(i)\tilde{\mathbf{v}}_{k_{1}}^{*}}{\tilde{\lambda}_{k_{1}}-z}\Big)\Big(\sum_{k_{1}=1}^{M}\frac{\tilde{\mathbf{v}}_{k_{2}}\tilde{\mathbf{v}}_{k_{2}}^{*}(i)}{\tilde{\lambda}_{k_{2}}-z^{*}}\Big)\Big\}\\
			=\sum_{i=1}^{M}\sum_{k=1}^{M}\frac{\tilde{\mathbf{v}}_{k}(i)\tilde{\mathbf{v}}_{k}^{*}(i)}{|\tilde{\lambda}_{k}-z|^{2}}=\sum_{k=1}^{M}\eta^{-1}\operatorname{Im}\Big(\frac{1}{\tilde{\lambda}_{k}-z}\Big)=\eta^{-1}\operatorname{Im}{\rm Tr}{\cal G}^{(T)},
			\end{multline*}
			which concludes the lemma.
		\end{proof}

		\subsection{Proof of Lemma \ref{lem:trace_difference}}
		\begin{proof}
			The first inequality follows from Theorem A.6 of \cite{BaiandSilverstein2010}. To show the second and the third inequalities, we observe that
			\[
			{\rm Tr}({\cal G}^{(i)}-{\cal G})=\frac{N-1-M}{z}-\frac{N-M}{z}+{\rm Tr}(G^{(i)}-G)=-\frac{1}{z}+{\rm Tr}(G^{(i)}-G),
			\]
			so the results follow.
		\end{proof}
		
		\subsection{Proof of Proposition \ref{prop:Imm(z)}}
		\begin{proof}
			Following Lemma 1 of \cite{Jing2010}, taking imaginary part
			and multiplying $(\operatorname{Im} m)^{-1}$ on both sides of $z=f(m)$, we get
			\[
			\frac{1}{|m|^{2}}-\phi\int\frac{x\pi({\rm d}x)}{|1+xm|^{2}}=\frac{\operatorname{Im} z}{\operatorname{Im} m}>0.
			\]
			Hence
			\begin{equation}
			\frac{1}{|m|^{2}}>\phi\int\frac{x\pi({\rm d}x)}{|1+xm|^{2}}.\label{eq:64}
			\end{equation}
			
			By (\ref{eq:64}) and the Cauchy-Schwartz inequality, we obtain from $z=f(m)$
			that
			\begin{eqnarray}
			|m| & = & \left|-\frac{1}{z}+\frac{\phi}{z}-\frac{\phi}{z}\int\frac{\pi({\rm d}x)}{1+xm}\right|\nonumber \\
			& < & \frac{|1-\phi|}{|z|}+\frac{\phi}{|z|}\left(\int\frac{x^{2}\pi({\rm d}x)}{|1+xm|^{2}}\right)^{1/2}\left(\int x^{-2}\pi({\rm d}x)\right)^{1/2}\nonumber \\
			& < & \frac{|1-\phi|}{|z|}+\frac{\sqrt{\phi}}{|z||m|}\left(\int x^{-2}\pi({\rm d}x)\right)^{1/2}.\label{eq:65}
			\end{eqnarray}
			This implies that
			\[
			|m|^{2}-\frac{|1-\phi||m|}{|z|}-\frac{\sqrt{\phi}}{|z|}\left(\int x^{-2}\pi({\rm d}x)\right)^{1/2}<0.
			\]
			Some basic calculations yield
			\[
			|m|\le\frac{|1-\phi|+\sqrt{|z||1-\phi|+4|z|\sqrt{\phi}\left(\int x^{-2}\pi({\rm d}x)\right)^{1/2}}}{2|z|}.
			\]
			Since ${\rm supp}(\pi)$ is uniformly bounded away from $0$ for
			all $N$,  $\int x^{-2}\pi({\rm d}x)$ is uniformly bounded.  Then from (\ref{eq:65}), we have
			\[
			\sup_{|z|\in[\tau,\tau^{-1}]}\sup_{N}|m|\le C,
			\]
			for some constant $C>0$.
			
			Suppose $\inf_{|z|\in[\tau,\tau^{-1}]}\inf_{N}|m|=0$. Then we can
			choose a sequence $\{z_{N}\}_{N=1}^{\infty}\subset\{\tau\le|z|\le\tau^{-1}\}$
			such that $m(z_{N})\to0$ as $N\to\infty$.  From $z=f(m)$, we have for all $N$
			\[
			z_{N}m(z_{N})+1=\phi\int\frac{xm(z_{N})\pi({\rm d}x)}{1+xm(z_{N})},
			\]
			which implies that with probability $1$, as $N\to\infty$,
			\[
			\phi\int\frac{xm(z_{N})\pi({\rm d}x)}{1+m(z_{N})}\to1.
			\]
			However one can see
			\begin{eqnarray*}
				\left|\phi\int\frac{xm(z_{N})\pi({\rm d}x)}{1+xm(z_{N})}\right| & \le & |m(z_{N})|\phi\int\frac{x\pi({\rm d}x)}{|1+xm(z_{N})|}\to0,
			\end{eqnarray*}
			which is a contradiction. Therefore we have
			\[
			\inf_{|z|\in[\tau,\tau^{-1}]}\inf_{N}|m|>0.
			\]
			Finally,
			
			\[
			\operatorname{Im} m=\frac{\operatorname{Im} z}{\frac{1}{|m|^{2}}-\phi\int\frac{x\pi({\rm d}x)}{|1+xm|^{2}}}\ge|m^{2}|\operatorname{Im} z\ge C^{-1}\eta,
			\]
			which concludes the lemma.
		\end{proof}
		
		\subsection{Proof of Lemma \ref{lem:crude bound on G and G inverse}}
		\begin{proof}
			Given the event $\Xi$, $G_{ii}$ is within $\log^{-1}N$ distance
			to $m$ uniformly for $i\in{\cal I}$. Since $|m|\asymp1$, it then
			follows that
			\[
			\mathbf{1}(\Xi)G_{ii}\asymp1.
			\]
			
			Next, it follows from Lemma \ref{lem:resolvent} and the definition of $\Xi$ that			
			\begin{eqnarray*}
				\mathbf{1}(\Xi)|G_{ij}^{(k)}| & \le & |G_{ij}|+|\frac{G_{ik}G_{kj}}{G_{kk}}|\le\delta_{ij}|m|+\log^{-1}N+\frac{\log^{-2}N}{|m|-\log^{-1}N},\\
				\mathbf{1}(\Xi)|G_{ij}^{(k)}| & \ge & |G_{ij}|-|\frac{G_{ik}G_{kj}}{G_{kk}}|\ge\delta_{ij}|m|-\log^{-1}N-\frac{\log^{-2}N}{|m|-\log^{-1}N},
			\end{eqnarray*}
			which implies that given $\Xi$, $G_{ij}^{(k)}$ is within $2\log^{-1}N$
			distance to $m$ uniformly for all $i,j,k\in{\cal I}$ such that $i,j\ne k$.
			
			Applying this argument inductively, we conclude that for any index
			set $T$ such that $|T|\le C_{1}$, given $\Xi$, there exists a constant
			$C_{2}>0$ such that
			\begin{equation}
			\mathbf{1}(\Xi)|G_{ij}^{(T)}-\delta_{ij}m|\le C_{2}\log^{-1}N,\label{eq:3.16}
			\end{equation}
			uniformly for $i,j\in{\cal I}\backslash T$. Consequently, it follows from Proposition \ref{prop:Imm(z)} that there
			exists some constant $C>0$ such that
			\begin{equation}
			\mathbf{1}(\Xi)|G_{ij}^{(T)}|+\mathbf{1}(\Xi)\Big|\frac{1}{G_{ii}^{(T)}}\Big|\le C.\label{eq:1(Xi)Gij < 1}
			\end{equation}
			
			Let $G^{(T)}=V(L^{(T)}-zI)^{-1}V^{*}$ be the eigen-decomposition
			of $G^{(T)}$ where $V=(\mathbf{v}_{1},\dots,\mathbf{v}_{N})$ with
			orthonormal columns $\mathbf{v}_{1},\dots,\mathbf{v}_{N}$ is an orthogonal
			matrix and $L^{(T)}={\rm diag}(\lambda_{1}^{(T)},\cdots,\lambda_{N-|T|}^{(T)})$.
			Then we see that for any $i,j\in{\cal I}$,
			\begin{equation}
			|G_{ij}^{(T)}|\le\|G^{(T)}\|=\sup_{\|\mathbf{w}\|=1}\Big|\sum_{k=1}^{N-|T|}\frac{\mathbf{w}^{*}\mathbf{v}_{k}\mathbf{v}_{k}^{*}\mathbf{w}}{\lambda_{k}^{(T)}-z}\Big|\le\sup_{\|\mathbf{w}\|=1}\sum_{k=1}^{N-|T|}\frac{\mathbf{w}^{*}\mathbf{v}_{k}\mathbf{v}_{k}^{*}\mathbf{w}}{\eta}=\eta^{-1}.\label{eq:Gij<eta^-1}
			\end{equation}
			
			Therefore the desired result follows from (\ref{eq:1(Xi)Gij < 1})
			and (\ref{eq:Gij<eta^-1}).
		\end{proof}
		
		\subsection{Complement of the proof of Proposition \ref{prop:deformed weak local law}}\label{Suppl:prop deformed wll}
		\begin{proof}
			Let $\omega_1, \omega_2\in\mathbb{C}^{+}$. Some basic calculations yield that
			\begin{equation}
			|G_{ij}(w_{1})-G_{ij}(w_{2})|\le(\operatorname{Im} w_{1})^{-1}(\operatorname{Im} w_{2})^{-1}|w_{1}-w_{2}|,\qquad i,j\in{\cal I}.\label{eq:lipschitz}
			\end{equation}
			
			Let $z\equiv E+\imath\eta\in\mathbf{D}^{e}$. We construct a lattice
			as follows. Let $z_{0}=E+\imath$. Fix $\varepsilon\in(0,\tau/8)$.
			For $k=0,1,2,\dots,N^{5}-N^{4+\tau}$, define
			\begin{eqnarray*}
				\eta_{k} & = & 1-kN^{-5},\qquad z_{k}=E+\imath\eta_{k},\\
				\delta_{k} & = & (N\eta_{k})^{-1/2}+q^2,\qquad\Xi_{k}=\{\Lambda(z_{k})\le N^{\varepsilon}\sqrt{\delta_{k}}\}.
			\end{eqnarray*}
			
			Let $C>0$ be a fixed constant. We show by induction for $k=1,\dots,N^{5}-N^{4+\tau}$
			that if the two events
			\begin{equation}
			\Theta(z_{k-1})\le\frac{CN^{\varepsilon}\delta_{k-1}}{\sqrt{\kappa+\eta_{k-1}}+\sqrt{N^{\varepsilon}\delta_{k-1}}},\qquad\mathbf{1}(\Xi_{k-1})=1,\label{eq:induction hypothesis}
			\end{equation}
			hold with high probability, then
			\[
			\Theta(z_{k})\le\frac{N^{\varepsilon}\delta_{k}}{\sqrt{\kappa+\eta_{k}}+\sqrt{N^{\varepsilon}\delta_{k}}},\qquad\mathbf{1}(\Xi_{k})=1,
			\]
			hold with high probability.
			
			It is clear that (\ref{eq:induction hypothesis}) for $k=1$ follows
			from (\ref{eq:3.36}) and (\ref{eq:3.38-1}). We verify that if $\mathbf{1}(\Xi_{k-1})=1$, 	then $\Lambda(z_{k})\le\log^{-1}N,$
			$k=1,\dots,N^{5}-N^{4+\tau}$. Using the Lipschitz condition (\ref{eq:lipschitz}),
			we have
			\begin{eqnarray*}
				\mathbf{1}(\Xi_{k-1})\Lambda(z_{k}) & \le & \mathbf{1}(\Xi_{k-1})|\Lambda(z_{k})-\Lambda(z_{k-1})|+\mathbf{1}(\Xi_{k-1})\Lambda(z_{k-1})\\
				& \le & \max_{i,j}|G_{ij}(z_{k})-G_{ij}(z_{k-1})|+N^{\varepsilon}\sqrt{\delta_{k-1}}\\
				& \le & |z_{k}-z_{k-1}|\eta_{k}^{-1}\eta_{k-1}^{-1}+N^{\varepsilon}[(N\eta_{k-1})^{-1/4}+q]\\
				& \le & N^{-3-2\tau}+N^{\epsilon}[N^{-\tau/4}+q]\\
				& \le & \log^{-1}N.
			\end{eqnarray*}
			
			Let $D>0$ be an arbitrarily large number. Therefore, by (\ref{eq:Gmumu-Gnunu}),
			(\ref{eq:Zi+Lambdao bound}) and (\ref{eq:1(XI)(f(m)-z)}), we can
			choose $N_{0}\in\mathbb{Z}_{+}$ such that as $N\ge N_{0}$,
			\begin{multline}
			\sup_{k\in\{1,\dots,N^{5}-N^{4+\tau}\}}\mathbb{P}\Big(\mathbf{1}(\Xi_{k-1})(\Lambda_{o}(z_{k})+\max_{i\in{\cal I}}|G_{ii}(z_{k})-m_{N}(z_{k})|)>\frac{1}{2}N^{\varepsilon}\delta_{k}\Big)\\
			\le N^{-D},\label{eq:sup_k Lambdao + max Gii-mN}
			\end{multline}
			and
			\begin{equation}
			\sup_{k\in\{1,\dots,N^{5}-N^{4+\tau}\}}\mathbb{P}\Big(\mathbf{1}(\Xi_{k-1})|f\big(m_{N}(z_{k})\big)-z_{k}|>\delta_{k}\Big)\le N^{-D}.\label{eq:sup_k probability}
			\end{equation}
			Then, applying Proposition \ref{prop:stability}, we obtain from the
			induction hypothesis (\ref{eq:induction hypothesis}) and (\ref{eq:sup_k probability})
			that
			\begin{multline}
			\mathbb{P}\Big(\mathbf{1}(\Xi_{k-1})\Theta(z_{k})>CN^{\varepsilon/2}\sqrt{\delta_{k}}\Big)\\
			\le\mathbb{P}\Big(\mathbf{1}(\Xi_{k-1})\Theta(z_{k})>\frac{CN^{\varepsilon}\delta_{k}}{\sqrt{\kappa+\eta_{k}}+\sqrt{N^{\varepsilon}\delta_{k}}}\Big)\le N^{-D}.\label{eq:1(Xi_k-1)Theta(z_k) bound}
			\end{multline}
			
			Using (\ref{eq:sup_k Lambdao + max Gii-mN}), (\ref{eq:1(Xi_k-1)Theta(z_k) bound})
			and the fact that $\delta_{k}<\sqrt{\delta_{k}}$ for all $k=1,\dots,N^{5}-N^{4+\tau}$,
			we get that as $N\ge N_{0}$,
			\begin{multline*}
			\mathbb{P}(\Xi_{k-1}\cap\Xi_{k}^{c})\\
			\le\mathbb{P}\Big(\mathbf{1}(\Xi_{k-1})\big(\max_{i\in{\cal I}}|G_{ii}(z_{k})-m_{N}(z_{k})|+\Theta(z_{k})+\Lambda_{o}(z_{k})\big)>N^{\varepsilon}\sqrt{\delta_{k}}\Big)\\
			\le\mathbb{P}\Big(\mathbf{1}(\Xi_{k-1})\big(\max_{i\in{\cal I}}|G_{ii}(z_{k})-m_{N}(z_{k})|+\Lambda_{o}(z_{k})>\frac{1}{2}N^{\varepsilon}\sqrt{\delta_{k}}\Big)\\
			+\mathbb{P}\Big(\mathbf{1}(\Xi_{k-1})\Theta(z_{k})>\frac{1}{2}N^{\varepsilon}\sqrt{\delta_{k}}\Big)\le2N^{-D}.
			\end{multline*}
			Then we see that for any $k\in\{1,\dots,N^{5}-N^{4+\tau}\}$, as $N\ge N_{0}$,
			\[
			\mathbb{P}(\Xi_{k}^{c})=1-\mathbb{P}(\Xi_{k})=\sum_{i=1}^{k}\mathbb{P}(\Xi_{i-1}\cap\Xi_{i}^{c})+\mathbb{P}(\Xi_{0}^{c})\le2N^{5-D}.
			\]
			This shows that $1\prec\mathbf{1}(\Xi_{k})$ or equivalently $\Lambda(z_{k})\prec\sqrt{\delta_{k}}$ uniformly for all $k\in\{0,\dots,N^{5}-N^{4+\tau}\}$.
			
			Finally, by choosing $\hat{k}\in\{1,\dots,N^{5}-N^{4+\tau}\}$ such
			that $-\imath(z-z_{\hat{k}})\le N^{-5}$, we have
			\begin{multline*}
			\Lambda(z)\le|\Lambda(z)-\Lambda(z_{\hat{k}})|+\Lambda(z_{\hat{k}})\le\max_{i,j}|G_{ij}(z)-G_{ij}(z_{\hat{k}})|+\Lambda(z_{\hat{k}})\\
			\le N^{-3-2\tau}+\Lambda(z_{\hat{k}})\prec(N\eta)^{-1/4}+q.
			\end{multline*}
			The proof of Proposition \ref{prop:deformed weak local law} is now complete.
		\end{proof}

		\subsection{Proof of Proposition \ref{prop:fluctuation averaging}.}\label{Suppl:FA}
		\begin{proof}
			We omit the proof of (\ref{eq:fluctuation of Z}), since it is similar
			to that of (\ref{eq:fluctuation of V}) (actually it is also simpler
			than (\ref{eq:fluctuation of V}) since we only need to expand the
			$G$ terms using the third identity of Lemma \ref{lem:resolvent}).
			In the following, we give the proof of (\ref{eq:fluctuation of V}).
			
			For simplicity of notation, denote $\Sigma_{0}=\Sigma(m_N^{(i)}\Sigma+I)^{-1}$
			and write $\mathscr{V}_{i}=\mathbf{x}_{i}^{*}{\cal G}^{(i)}\Sigma_{0}\mathbf{x}_{i}$.
			In the following, we bound the quantity
			\[
			|\frac{1}{N}\sum_{i=1}^{N}Q_{i}\mathscr{V}_{i}|
			\]
			Let $p$ be an even integer. Denote $V_{i_{s}}:=Q_{i_{s}}\mathscr{V}_{i_{s}}$
			for $s\le p/2$ and $V_{i_{s}}:=Q_{i_{s}}\mathscr{V}_{i_{s}}^{*}$
			for $s>p/2$. We bound $\mathbb{E}\big|\frac{1}{N}\sum_{i_{s}=1}^{N}V_{i}\big|^{p}$.
			
			We see that
			\[
			\mathbb{E}\Big|\frac{1}{N}\sum_{i=1}^{N}Q_{i}\mathscr{V}_{i}\Big|^{p}=\frac{1}{N^{p}}\sum_{i_{1},\dots,i_{p}}\mathbb{E}\prod_{s=1}^{p}V_{i_{s}}=\frac{1}{N^{p}}\sum_{i_{1},\dots,i_{p}}\mathbb{E}\prod_{s=1}^{p}\Big(\prod_{r=1}^{p}(P_{i_{r}}+Q_{i_{r}})V_{i_{s}}\Big).
			\]
			
			Introducing the notation $\mathbf{i}=(i_{1},\dots,i_{p})$, $[\mathbf{i}]=\{i_{1},\dots,i_{p}\}$,
			$P_{A}=\prod_{i\in A}P_{i}$ and $Q_{A}=\prod_{i\in A}Q_{i}$ for
			some index set $A$, we get
			\begin{equation}
			\mathbb{E}\Big|\frac{1}{N}\sum_{i=1}^{N}Q_{i}\mathscr{V}_{i}\Big|^{p}=\frac{1}{N^{p}}\sum_{\mathbf{i}}\sum_{A_{1},\dots,A_{p}\subset[\mathbf{i}]}\mathbb{E}\prod_{s=1}^{p}\Big(P_{A_{s}^{c}}Q_{A_{s}}V_{i_{s}}\Big).\label{eq:p power}
			\end{equation}
			
			By definition of $V_{i}$, we have that $V_{i_{s}}=Q_{i_{s}}V_{i_{s}}$
			and $P_{i_{s}}V_{i_{s}}=0$, which imply that $P_{A_{s}^{c}}V_{i_{s}}=0$
			if $i_{s}\notin A_{s}$. Hence we may restrict the summation to $A_{s}$
			satisfying
			\begin{equation}
			i_{s}\in A_{s}\label{eq:mu in A}
			\end{equation}
			for all $s$. Moreover, we see that if $i_{s}\in\cap_{q\ne s}A_{q}^{c}$
			for some $s$, say $s=1$, then $P_{A_{q}^{c}}Q_{A_{q}}V_{i_{q}}$
			is $X^{(s)}$-measurable for each $q=2,\dots,p$. Thus, we have
			\begin{multline}
			\mathbb{E}\prod_{s=1}^{p}(P_{A_{s}^{c}}Q_{A_{s}}V_{i_{s}})=\mathbb{E}(P_{A_{1}^{c}}Q_{A_{1}}Q_{i_{1}}V_{i_{1}})\prod_{s=2}^{p}(P_{A_{s}^{c}}Q_{A_{s}}V_{i_{s}})\\
			=\mathbb{E}Q_{i_{1}}\Big\{(P_{A_{1}^{c}}Q_{A_{1}}V_{i_{1}})\prod_{s=2}^{p}(P_{A_{s}^{c}}Q_{A_{s}}V_{i_{s}})\Big\}=0.\label{eq:mu in bigcup Aq}
			\end{multline}
			
			(\ref{eq:mu in A}) and (\ref{eq:mu in bigcup Aq}) show that each
			index $i_{s}$ must belong to at least two different sets: $A_{s}$
			and $A_{q}$ for some $q\ne s$. Hence
			\begin{equation}
			\sum_{s=1}^{p}|A_{s}|\ge2|[\mathbf{i}]|.\label{eq:sum A ge 2mu}
			\end{equation}
			
			In the following, a crucial step is to show that for $i\in A$
			\begin{equation}
			|Q_{A}V_{i}|\prec\Phi_{\nu}^{|A|}.\label{eq:QW}
			\end{equation}
			
			When $|A|=1$ (corresponding to the case $A=\{i\}$), it follows straightforward
			from Lemma \ref{lem:large deviation}. Suppose $|A|\ge2$. For ease
			of presentation, we assume without loss of generality that $i=1$
			and $A=\{1,2,\dots,\nu\}$ for some $\nu\ge2$. Before we proceed, we note the
			following equality that for any $i\ne j$ and $T\subset{\cal I}$
			with $i,j\notin T$,
			\begin{eqnarray}
			\mathbf{x}_{i}^{*}{\cal G}^{(iT)} & = & \mathbf{x}_{i}^{*}({\cal G}^{(ijT)}-\frac{{\cal G}^{(ijT)}\mathbf{x}_{j}\mathbf{x}_{j}^{*}{\cal G}^{(ijT)}}{1+\mathbf{x}_{j}^{*}{\cal G}^{(ijT)}\mathbf{x}_{j}})\nonumber \\
			& = & \mathbf{x}_{i}^{*}{\cal G}^{(ijT)}+zG_{jj}^{(iT)}\mathbf{x}_{i}{\cal G}^{(ijT)}\mathbf{x}_{j}\mathbf{x}_{j}^{*}{\cal G}^{(ijT)}\nonumber \\
			& = & \mathbf{x}_{i}^{*}{\cal G}^{(ijT)}+\frac{G_{ij}^{(T)}}{G_{ii}^{(T)}}\mathbf{x}_{j}^{*}{\cal G}^{(ijT)},\label{eq:xG expansion}
			\end{eqnarray}
			and
			\begin{equation}
			\Sigma_{0}^{(T)}=\Sigma_{0}^{(iT)}+\frac{1}{N}\sum_{j\in{\cal I}\backslash(\{i\}\cup T)}\frac{G_{ji}^{(T)}G_{ij}^{(T)}}{G_{ii}^{(T)}}\Sigma_{0}^{(iT)}\Sigma\Sigma_{0}^{(T)}.\label{eq:resolvent of Sigma0}
			\end{equation}
			
			We show an example of the expansion. It follows from
			Lemma \ref{lem:Sherman-Morrison formula} and Lemma \ref{lem:resolvent}
			that
			\begin{eqnarray*}
				Q_{2}\mathscr{V}_{1} & = & Q_{2}(\mathbf{x}_{1}^{*}{\cal G}^{(1)}\Sigma_{0}^{(1)}\mathbf{x}_{1})\\
				& = & Q_{2}(\mathbf{x}_{1}^{*}{\cal G}^{(1)}\Sigma_{0}^{(12)}\mathbf{x}_{1}+\Big(\frac{1}{N}\sum_{j\notin\{1,2\}}\frac{G_{j2}^{(1)}G_{2j}^{(1)}}{G_{22}^{(1)}}\Big)\mathbf{x}_{1}^{*}{\cal G}^{(1)}\Sigma_{0}^{(12)}\Sigma\Sigma_{0}^{(1)}\mathbf{x}_{1})\\
				& = & Q_{2}(\mathbf{x}_{1}^{*}{\cal G}^{(12)}\Sigma_{0}^{(12)}\mathbf{x}_{1})+Q_{2}(\frac{G_{12}}{G_{11}}\mathbf{x}_{2}^{*}{\cal G}^{(12)}\Sigma_{0}^{(12)}\mathbf{x}_{1})\\
				&  & +Q_{2}(\Big(\frac{1}{N}\sum_{j\notin\{1,2\}}\frac{G_{j2}^{(1)}G_{2j}^{(1)}}{G_{22}^{(1)}}\Big)\mathbf{x}_{1}^{*}{\cal G}^{(1)}\Sigma_{0}^{(12)}\Sigma\Sigma_{0}^{(1)}\mathbf{x}_{1})\\
				& = & Q_{2}(\frac{G_{12}}{G_{11}}\mathbf{x}_{2}^{*}{\cal G}^{(12)}\Sigma_{0}^{(12)}\mathbf{x}_{1})+Q_{2}(\Big(\frac{1}{N}\sum_{j\notin\{1,2\}}\frac{G_{j2}^{(1)}G_{2j}^{(1)}}{G_{22}^{(1)}}\Big)\mathbf{x}_{1}^{*}{\cal G}^{(1)}\Sigma_{0}^{(12)}\Sigma\Sigma_{0}^{(1)}\mathbf{x}_{1}).
			\end{eqnarray*}
			
			We note that
			\[
			\Big|\frac{G_{12}}{G_{11}}\mathbf{x}_{2}^{*}{\cal G}^{(12)}\Sigma_{0}^{(12)}\mathbf{x}_{1}\Big|\prec\frac{\Phi_{\nu}}{M}\|{\cal G}^{(12)}\Sigma_{0}^{(12)}\|_{F}\le\frac{\Phi_{\nu}}{M}\|{\cal G}^{(12)}\|_{F}\|\Sigma_{0}^{(12)}\|\prec\Phi_{\nu}^{2},
			\]
			
			and
			\begin{eqnarray*}
				&  & Q_{1}(\Big(\frac{1}{N}\sum_{j\notin\{1,2\}}\frac{G_{j2}^{(1)}G_{2j}^{(1)}}{G_{22}^{(1)}}\Big)\mathbf{x}_{1}^{*}{\cal G}^{(1)}\Sigma_{0}^{(12)}\Sigma\Sigma_{0}^{(1)}\mathbf{x}_{1})\\
				& = & \Big(\frac{1}{N}\sum_{j\notin\{1,2\}}\frac{G_{j2}^{(1)}G_{2j}^{(1)}}{G_{22}^{(1)}}\Big)Q_{1}(\mathbf{x}_{1}^{*}{\cal G}^{(1)}\Sigma_{0}^{(12)}\Sigma\Sigma_{0}^{(1)}\mathbf{x}_{1})\\
				& \prec & \frac{\Phi_{\nu}^{2}}{M}\|{\cal G}^{(1)}\Sigma_{0}^{(12)}\Sigma\Sigma_{0}^{(1)}\|_{F}\\
				& \prec & \Phi_{\nu}^{3}.
			\end{eqnarray*}
			
			We see that
			
			\begin{eqnarray}
			&  & Q_{3}Q_{2}\mathscr{V}_{1}\nonumber \\
			& = & Q_{3}Q_{2}(\mathbf{x}_{1}^{*}{\cal G}^{(1)}\Sigma_{0}^{(1)}\mathbf{x}_{1})\nonumber \\
			& = & Q_{3}Q_{2}(\mathbf{x}_{1}^{*}{\cal G}^{(1)}\Sigma_{0}^{(12)}\mathbf{x}_{1}+\Big(\frac{1}{N}\sum_{j\notin\{1,2\}}\frac{G_{j2}^{(1)}G_{2j}^{(1)}}{G_{22}^{(1)}}\Big)\mathbf{x}_{1}^{*}{\cal G}^{(1)}\Sigma_{0}^{(12)}\Sigma\Sigma_{0}^{(1)}\mathbf{x}_{1})\nonumber \\
			& = & Q_{3}Q_{2}(\mathbf{x}_{1}^{*}{\cal G}^{(1)}\Sigma_{0}^{(123)}\mathbf{x}_{1}+\Big(\frac{1}{N}\sum_{j\notin\{1,2,3\}}\frac{G_{j3}^{(12)}G_{3j}^{(12)}}{G_{33}^{(12)}}\Big)\mathbf{x}_{1}^{*}{\cal G}^{(1)}\Sigma_{0}^{(123)}\Sigma\Sigma_{0}^{(12)}\mathbf{x}_{1}\nonumber \\
			&  & +\Big(\frac{1}{N}\sum_{j\notin\{1,2\}}\frac{G_{j2}^{(1)}G_{2j}^{(1)}}{G_{22}^{(1)}}\Big)\mathbf{x}_{1}^{*}{\cal G}^{(1)}\Sigma_{0}^{(12)}\Sigma\Sigma_{0}^{(1)}\mathbf{x}_{1})\nonumber \\
			& = & Q_{3}Q_{2}(\mathbf{x}_{1}^{*}{\cal G}^{(12)}\Sigma_{0}^{(123)}\mathbf{x}_{1}+\frac{G_{12}}{G_{11}}\mathbf{x}_{2}^{*}{\cal G}^{(12)}\Sigma_{0}^{(123)}\mathbf{x}_{1}\nonumber \\
			&  & +\Big(\frac{1}{N}\sum_{j\notin\{1,2,3\}}\frac{G_{j3}^{(12)}G_{3j}^{(12)}}{G_{33}^{(12)}}\Big)\mathbf{x}_{1}^{*}{\cal G}^{(1)}\Sigma_{0}^{(123)}\Sigma\Sigma_{0}^{(12)}\mathbf{x}_{1}\nonumber \\
			&  & +\Big(\frac{1}{N}\sum_{j\notin\{1,2\}}\frac{G_{j2}^{(1)}G_{2j}^{(1)}}{G_{22}^{(1)}}\Big)\mathbf{x}_{1}^{*}{\cal G}^{(1)}\Sigma_{0}^{(12)}\Sigma\Sigma_{0}^{(1)}\mathbf{x}_{1})\nonumber \\
			& = & Q_{3}Q_{2}(\frac{G_{12}}{G_{11}}\mathbf{x}_{2}^{*}{\cal G}^{(123)}\Sigma_{0}^{(123)}\mathbf{x}_{1}+\frac{G_{12}G_{23}^{(1)}}{G_{11}G_{22}^{(1)}}\mathbf{x}_{3}^{*}{\cal G}^{(123)}\Sigma_{0}^{(123)}\mathbf{x}_{1}\nonumber \\
			&  & +\Big(\frac{1}{N}\sum_{j\notin\{1,2,3\}}\frac{G_{j3}^{(12)}G_{3j}^{(12)}}{G_{33}^{(12)}}\Big)\mathbf{x}_{1}^{*}{\cal G}^{(1)}\Sigma_{0}^{(123)}\Sigma\Sigma_{0}^{(12)}\mathbf{x}_{1}\nonumber \\
			&  & +\Big(\frac{1}{N}\sum_{j\notin\{1,2\}}\frac{G_{j2}^{(1)}G_{2j}^{(1)}}{G_{22}^{(1)}}\Big)\mathbf{x}_{1}^{*}{\cal G}^{(1)}\Sigma_{0}^{(12)}\Sigma\Sigma_{0}^{(1)}\mathbf{x}_{1})\nonumber \\
			& = & Q_{3}Q_{2}(\Big(\frac{G_{12}^{(3)}}{G_{11}^{(3)}}+\frac{G_{13}G_{32}}{G_{33}G_{11}^{(3)}}-\frac{G_{12}^{(3)}G_{13}G_{31}}{G_{11}G_{11}^{(3)}G_{33}}-\frac{G_{13}^{2}G_{32}G_{31}}{G_{11}G_{11}^{(3)}G_{33}^{2}}\Big)\mathbf{x}_{2}^{*}{\cal G}^{(123)}\Sigma_{0}^{(123)}\mathbf{x}_{1}\nonumber \\
			&  & +\frac{G_{12}G_{23}^{(1)}}{G_{11}G_{22}^{(1)}}\mathbf{x}_{3}^{*}{\cal G}^{(123)}\Sigma_{0}^{(123)}\mathbf{x}_{1}\nonumber \\
			&  & +\Big(\frac{1}{N}\sum_{j\notin\{1,2,3\}}\frac{G_{j3}^{(12)}G_{3j}^{(12)}}{G_{33}^{(12)}}\Big)\mathbf{x}_{1}^{*}{\cal G}^{(1)}\Sigma_{0}^{(123)}\Sigma\Sigma_{0}^{(12)}\mathbf{x}_{1}\nonumber \\
			&  & +\Big(\frac{1}{N}\sum_{j\notin\{1,2\}}\frac{G_{j2}^{(1)}G_{2j}^{(1)}}{G_{22}^{(1)}}\Big)\mathbf{x}_{1}^{*}{\cal G}^{(1)}\Sigma_{0}^{(12)}\Sigma\Sigma_{0}^{(1)}\mathbf{x}_{1})\nonumber \\
			& = & Q_{3}Q_{2}(\Big(\frac{G_{13}G_{32}}{G_{33}G_{11}^{(3)}}-\frac{G_{12}^{(3)}G_{13}G_{31}}{G_{11}G_{11}^{(3)}G_{33}}-\frac{G_{13}^{2}G_{32}G_{31}}{G_{11}G_{11}^{(3)}G_{33}^{2}}\Big)\mathbf{x}_{2}^{*}{\cal G}^{(123)}\Sigma_{0}^{(123)}\mathbf{x}_{1}\nonumber \\
			&  & +\frac{G_{12}G_{23}^{(1)}}{G_{11}G_{22}^{(1)}}\mathbf{x}_{3}^{*}{\cal G}^{(123)}\Sigma_{0}^{(123)}\mathbf{x}_{1}\nonumber \\
			&  & +\Big(\frac{1}{N}\sum_{j\notin\{1,2,3\}}\frac{G_{j3}^{(12)}G_{3j}^{(12)}}{G_{33}^{(12)}}\Big)\mathbf{x}_{1}^{*}{\cal G}^{(1)}\Sigma_{0}^{(123)}\Sigma\Sigma_{0}^{(12)}\mathbf{x}_{1}\nonumber \\
			&  & +\Big(\frac{1}{N}\sum_{j\notin\{1,2\}}\frac{G_{j2}^{(1)}G_{2j}^{(1)}}{G_{22}^{(1)}}\Big)\mathbf{x}_{1}^{*}{\cal G}^{(1)}\Sigma_{0}^{(12)}\Sigma\Sigma_{0}^{(1)}\mathbf{x}_{1}).\label{eq:phi3}
			\end{eqnarray}
			
			We observe that the first two terms in (\ref{eq:phi3}) are $\prec\Phi_{\nu}^{3}$, and we continue the expansion procedures for $\mathcal{G}^{(T)}$ and $\Sigma_0^{(T)}$ for which $(T)$ is not maximally expanded. Thus, the third term can be written as
			\[
			\begin{split}
			&Q_3Q_2(\Big(\frac{1}{N}\sum_{j\notin\{1,2,3\}}\frac{G_{j3}^{(12)}G_{3j}^{(12)}}{G_{33}^{(12)}}\Big)(\mathbf{x}_1^{*}\mathcal{G}^{(12)}+\frac{G_{12}}{G_{11}}\mathbf{x}_2^{*}\mathcal{G}^{(12)})\Sigma_{0}^{(123)}\Sigma\Sigma_{0}^{(12)}\mathbf{x}_{1})\\
			&=Q_3Q_2(\Big(\frac{1}{N}\sum_{j\notin\{1,2,3\}}\frac{G_{j3}^{(12)}G_{3j}^{(12)}}{G_{33}^{(12)}}\Big)\frac{G_{12}}{G_{11}}\mathbf{x}_2^{*}\mathcal{G}^{(12)})\Sigma_{0}^{(123)}\Sigma\Sigma_{0}^{(12)}\mathbf{x}_{1})\prec\Phi_{\nu}^3.
			\end{split}
			\]
			Similar results can be obtained for the fourth term. Actually, one can observe that the first term of $Q_AV_i$ is the leading term, so we can only clarify the bounds for the first term.
			
			We see that
			\begin{eqnarray*}
				&  & Q_{4}Q_{3}Q_{2}Q_{1}\mathscr{V}_{1}\\
				& = & Q_{1}Q_{4}Q_{3}Q_{2}(\mathbf{x}_{1}^{*}{\cal G}^{(1)}\Sigma_{0}^{(123)}\mathbf{x}_{1}+\Big(\frac{1}{N}\sum_{j\notin\{1,2,3\}}\frac{G_{j3}^{(12)}G_{3j}^{(12)}}{G_{33}^{(12)}}\Big)\mathbf{x}_{1}^{*}{\cal G}^{(1)}\Sigma_{0}^{(123)}\Sigma\Sigma_{0}^{(12)}\mathbf{x}_{1}\\
				&  & +\Big(\frac{1}{N}\sum_{j\notin\{1,2\}}\frac{G_{j2}^{(1)}G_{2j}^{(1)}}{G_{22}^{(1)}}\Big)\mathbf{x}_{1}^{*}{\cal G}^{(1)}\Sigma_{0}^{(12)}\Sigma\Sigma_{0}^{(1)}\mathbf{x}_{1})\\
				& = & Q_{1}Q_{4}Q_{3}Q_{2}(\mathbf{x}_{1}^{*}{\cal G}^{(1)}\Sigma_{0}^{(1234)}\mathbf{x}_{1}\\
				&  & +\Big(\frac{1}{N}\sum_{j\notin\{1,2,3,4\}}\frac{G_{j4}^{(123)}G_{4j}^{(123)}}{G_{44}^{(123)}}\Big)\mathbf{x}_{1}^{*}{\cal G}^{(1)}\Sigma_{0}^{(1234)}\Sigma\Sigma_{0}^{(123)}\mathbf{x}_{1}\\
				&  & +\Big(\frac{1}{N}\sum_{j\notin\{1,2,3\}}\frac{G_{j3}^{(12)}G_{3j}^{(12)}}{G_{33}^{(12)}}\Big)\mathbf{x}_{1}^{*}{\cal G}^{(1)}\Sigma_{0}^{(123)}\Sigma\Sigma_{0}^{(12)}\mathbf{x}_{1}\\
				&  & +\Big(\frac{1}{N}\sum_{j\notin\{1,2\}}\frac{G_{j2}^{(1)}G_{2j}^{(1)}}{G_{22}^{(1)}}\Big)\mathbf{x}_{1}^{*}{\cal G}^{(1)}\Sigma_{0}^{(12)}\Sigma\Sigma_{0}^{(1)}\mathbf{x}_{1})\\
				& = & Q_{1}Q_{4}Q_{3}Q_{2}(\mathbf{x}_{1}^{*}{\cal G}^{(12)}\Sigma_{0}^{(1234)}\mathbf{x}_{1}+\frac{G_{12}}{G_{11}}\mathbf{x}_{2}^{*}{\cal G}^{(12)}\Sigma_{0}^{(1234)}\mathbf{x}_{1}),
			\end{eqnarray*}
			whose first term can be expanded as
			\begin{eqnarray*}
				&  & Q_{1}Q_{4}Q_{3}Q_{2}(\mathbf{x}_{1}^{*}{\cal G}^{(1)}\Sigma_{0}^{(1234)}\mathbf{x}_{1})\\
				& = & Q_{1}Q_{4}Q_{3}Q_{2}(\mathbf{x}_{1}^{*}{\cal G}^{(12)}\Sigma_{0}^{(1234)}\mathbf{x}_{1}+\frac{G_{12}}{G_{11}}\mathbf{x}_{2}^{*}{\cal G}^{(12)}\Sigma_{0}^{(1234)}\mathbf{x}_{1})\\
				& = & Q_{1}Q_{4}Q_{3}Q_{2}(\frac{G_{12}}{G_{11}}\mathbf{x}_{2}^{*}{\cal G}^{(123)}\Sigma_{0}^{(1234)}\mathbf{x}_{1}+\frac{G_{12}G_{23}^{(1)}}{G_{11}G_{22}^{(1)}}\mathbf{x}_{3}^{*}{\cal G}^{(123)}\Sigma_{0}^{(1234)}\mathbf{x}_{1})\\
				& = & Q_{1}Q_{4}Q_{3}Q_{2}(\Big(\frac{G_{12}^{(3)}}{G_{11}^{(3)}}+\frac{G_{13}G_{32}}{G_{33}G_{11}^{(3)}}-\frac{G_{12}^{(3)}G_{13}G_{31}}{G_{11}G_{11}^{(3)}G_{33}}-\frac{G_{13}^{2}G_{32}G_{31}}{G_{11}G_{11}^{(3)}G_{33}^{2}}\Big)\mathbf{x}_{2}^{*}{\cal G}^{(123)}\Sigma_{0}^{(1234)}\mathbf{x}_{1}\\
				&  & +\frac{G_{12}G_{23}^{(1)}}{G_{11}G_{22}^{(1)}}\mathbf{x}_{3}^{*}{\cal G}^{(1234)}\Sigma_{0}^{(1234)}\mathbf{x}_{1}+\frac{G_{12}G_{23}^{(1)}G_{34}^{(12)}}{G_{11}G_{22}^{(1)}G_{33}^{(12)}}\mathbf{x}_{4}^{*}{\cal G}^{(1234)}\Sigma_{0}^{(1234)}\mathbf{x}_{1})\\
				& = & Q_{1}Q_{4}Q_{3}Q_{2}(\Big(\frac{G_{13}G_{32}}{G_{33}G_{11}^{(3)}}-\frac{G_{12}^{(3)}G_{13}G_{31}}{G_{11}G_{11}^{(3)}G_{33}}-\frac{G_{13}^{2}G_{32}G_{31}}{G_{11}G_{11}^{(3)}G_{33}^{2}}\Big)\mathbf{x}_{2}^{*}{\cal G}^{(123)}\Sigma_{0}^{(1234)}\mathbf{x}_{1}\\
				&  & +\frac{G_{12}G_{23}^{(1)}}{G_{11}G_{22}^{(1)}}\mathbf{x}_{3}^{*}{\cal G}^{(1234)}\Sigma_{0}^{(1234)}\mathbf{x}_{1}+\frac{G_{12}G_{23}^{(1)}G_{34}^{(12)}}{G_{11}G_{22}^{(1)}G_{33}^{(12)}}\mathbf{x}_{4}^{*}{\cal G}^{(1234)}\Sigma_{0}^{(1234)}\mathbf{x}_{1})\\
				& = & Q_{1}Q_{4}Q_{3}Q_{2}(\Big(\frac{G_{13}G_{32}}{G_{33}G_{11}^{(3)}}-\frac{G_{12}^{(3)}G_{13}G_{31}}{G_{11}G_{11}^{(3)}G_{33}}-\frac{G_{13}^{2}G_{32}G_{31}}{G_{11}G_{11}^{(3)}G_{33}^{2}}\Big)\mathbf{x}_{2}^{*}{\cal G}^{(1234)}\Sigma_{0}^{(1234)}\mathbf{x}_{1}\\
				&  & +\Big(\frac{G_{13}G_{32}}{G_{33}G_{11}^{(3)}}-\frac{G_{12}^{(3)}G_{13}G_{31}}{G_{11}G_{11}^{(3)}G_{33}}-\frac{G_{13}^{2}G_{32}G_{31}}{G_{11}G_{11}^{(3)}G_{33}^{2}}\Big)\frac{G_{24}^{(13)}}{G_{44}^{(13)}}\mathbf{x}_{4}^{*}{\cal G}^{(1234)}\Sigma_{0}^{(1234)}\mathbf{x}_{1}\\
				&  & +\frac{G_{12}G_{23}^{(1)}}{G_{11}G_{22}^{(1)}}\mathbf{x}_{3}^{*}{\cal G}^{(1234)}\Sigma_{0}^{(1234)}\mathbf{x}_{1}+\frac{G_{12}G_{23}^{(1)}G_{34}^{(12)}}{G_{11}G_{22}^{(1)}G_{33}^{(12)}}\mathbf{x}_{4}^{*}{\cal G}^{(1234)}\Sigma_{0}^{(1234)}\mathbf{x}_{1}).
			\end{eqnarray*}
			We see that the term
			\[			Q_{1}Q_{4}Q_{3}Q_{2}(\Big(\frac{G_{13}G_{32}}{G_{33}G_{11}^{(3)}}-\frac{G_{12}^{(3)}G_{13}G_{31}}{G_{11}G_{11}^{(3)}G_{33}}-\frac{G_{13}^{2}G_{32}G_{31}}{G_{11}G_{11}^{(3)}G_{33}^{2}}\Big)\mathbf{x}_{2}^{*}{\cal G}^{(1234)}\Sigma_{0}^{(1234)}\mathbf{x}_{1})
			\]
			can be bounded by carrying out the following expansion
			\[
			\frac{G_{13}G_{32}}{G_{33}G_{11}^{(3)}}=\Big(G_{13}^{(4)}+\frac{G_{14}G_{43}}{G_{44}}\Big)\Big(G_{32}^{(4)}+\frac{G_{34}G_{42}}{G_{44}}\Big)\Big(\frac{1}{G_{33}^{(4)}}-\frac{G_{34}G_{43}}{G_{33}G_{33}^{(4)}G_{44}}\Big)\Big(\frac{1}{G_{11}^{(34)}}-\frac{G_{14}^{(3)}G_{41}^{(3)}}{G_{11}^{(3)}G_{11}^{(34)}G_{44}^{(3)}}\Big).
			\]
			
			Thus the first term resulting from the expansion yields that
			\[
			Q_{1}Q_{4}Q_{3}Q_{2}(\frac{G_{13}^{(4)}G_{32}^{(4)}}{G_{33}^{(4)}G_{11}^{(34)}}\mathbf{x}_{2}^{*}{\cal G}^{(1234)}\Sigma_{0}^{(1234)}\mathbf{x}_{1})=0,
			\]
			and the remaining terms all contain three off-diagonal $G$ terms
			in the numerators.
			
			Consequently,
			\[
			Q_{1}Q_{4}Q_{3}Q_{2}(\Big(\frac{G_{13}G_{32}}{G_{33}G_{11}^{(3)}}-\frac{G_{12}^{(3)}G_{13}G_{31}}{G_{11}G_{11}^{(3)}G_{33}}-\frac{G_{13}^{2}G_{32}G_{31}}{G_{11}G_{11}^{(3)}G_{33}^{2}}\Big)\mathbf{x}_{2}^{*}{\cal G}^{(1234)}\Sigma_{0}^{(1234)}\mathbf{x}_{1})\prec\Phi_{\nu}^{4}.
			\]
			
			Similarly, the term
			\[
			Q_{1}Q_{4}Q_{3}Q_{2}(\frac{G_{12}G_{23}^{(1)}}{G_{11}G_{22}^{(1)}}\mathbf{x}_{3}^{*}{\cal G}^{(1234)}\Sigma_{0}^{(1234)}\mathbf{x}_{1})
			\]
			can be bounded by $\Phi_{\nu}^{4}$ via expanding the $G$ terms into
			those with $(4)$ added to the superscripts.
			
			So the remaining term  is
			\[
			Q_{1}Q_{4}Q_{3}Q_{2}(\Big(\frac{1}{N}\sum_{j\notin\{1,2\}}\frac{G_{j2}^{(1)}G_{2j}^{(1)}}{G_{22}^{(1)}}\Big)\mathbf{x}_{1}^{*}{\cal G}^{(12)}\Sigma_{0}^{(12)}\Sigma\Sigma_{0}^{(12)}\mathbf{x}_{1}).
			\]
			
			We note that
			\begin{eqnarray*}
				&  & Q_{1}Q_{4}Q_{3}Q_{2}(\Big(\frac{1}{N}\sum_{j\notin\{1,2\}}\frac{G_{j2}^{(1)}G_{2j}^{(1)}}{G_{22}^{(1)}}\Big)\mathbf{x}_{1}^{*}{\cal G}^{(12)}\Sigma_{0}^{(12)}\Sigma\Sigma_{0}^{(12)}\mathbf{x}_{1})\\
				& = & Q_{1}Q_{4}Q_{3}Q_{2}(\Big(\frac{1}{N}\sum_{j\notin\{1,2\}}\frac{G_{j2}^{(1)}G_{2j}^{(1)}}{G_{22}^{(1)}}\Big)\mathbf{x}_{1}^{*}{\cal G}^{(123)}\Sigma_{0}^{(123)}\Sigma\Sigma_{0}^{(123)}\mathbf{x}_{1})+O_{\prec}(\Phi_{\nu}^{4})\\
				& = & Q_{1}Q_{4}Q_{3}Q_{2}(\Big(\frac{1}{N}\sum_{j\notin\{1,2,3\}}\frac{G_{j2}^{(13)}G_{2j}^{(13)}}{G_{22}^{(13)}}+\mathcal{T}\Big)\mathbf{x}_{1}^{*}{\cal G}^{(123)}\Sigma_{0}^{(123)}\Sigma\Sigma_{0}^{(123)}\mathbf{x}_{1})+O_{\prec}(\Phi_{\nu}^{4})\\
				& \prec & \Phi_{\nu}^{4},
			\end{eqnarray*}
			where $\mathcal{T}$ is the term with at least three off-diagonal $G$ terms
			in the numerator which can be bounded by $\Phi_{\nu}^{4}$ via expanding the $G$ terms.
			
			Now we summarise the expansion steps as follows. Consider
			\[
			Q_A\mathscr{V}_{1}:=Q_{\nu}\cdots Q_{2}Q_{1}\mathscr{V}_{1}=Q_{\nu}\cdots Q_{2}Q_{1}\mathbf{x}_1^{*}\mathcal{G}^{(1)}\Sigma_0^{(1)}\mathbf{x}_1
			\]
			\begin{itemize}
				\item [a)] We first expand the term $\mathbf{x}_1^{*}\mathcal{G}^{(1)}$ to $\mathbf{x}_i^{*}\mathcal{G}^{(A)}$ from the smallest index to the largest index. We will get a sequence of monomials with the maximally expanded $\mathcal{A}_i:=\mathbf{x}_i^{*}\mathcal{G}^{(A)}$ where $i\in A$. The coefficients of $\mathcal{A}_i$'s are of the pattern
				\[
				\frac{\prod_{\{a,b\}\in P_i}G_{ab}^{(T_{ab})}}{\prod_{\{a,b\}\in P_i}G_{aa}^{(T_{ab})}},
				\]
				where $P_i$ is an ordered paired pattern subset of $\{1,\cdots,i\}\subset A$, for example, $P_i=\{\{1,3\},\{3,4\},\{4,i\}\}$ and $T_{ab}=\{1,\cdots,b\}\setminus\{a,b\}$ or $T_{ab}=\emptyset$. And these coefficients can be handled by the resolvent extension just like the same procedures in $(1/N)\sum_{i=1}Q_i1/G_{ii}$. One can observe that the only remaining terms after taking $Q_A$ (we imprecisely ignore the term $\Sigma_0$ here) are those monomials whose lower indexes in the numerator contain all elements of $A$. Since there are only off-diagonal entries in the numerator,  by the paired pattern $P_i$ we remark that for those monomials, the number of off-diagonal entries is $|A|$.
				\item [b)] We expand $\Sigma_0$ to match the upper index with $\mathcal{G}^{(T)}$ which ensures that several undesired terms vanish after taking conditional expectation. Actually, we use the same strategy as in step a), adding the upper index of $\Sigma_0$ from the smallest value to the largest value of $A$. We remark that except the leading term with coefficient $1$, other terms give us more off-diagonal entries as coefficients.
				\item [c)] We expand the Green function of coefficients after steps a) and b)  to a maximal extent, or have at least $|A|$ off-diagonal entries.
			\end{itemize}

			Then it follows that for $\nu\geqslant2$
			\[
			Q_AV_i=Q_{\nu}\cdots Q_{2}Q_{1}\mathscr{V}_{i}\prec\Phi_{\nu}^{\nu+1}.
			\]
			
			This completes the proof of (\ref{eq:QW}). By (\ref{eq:sum A ge 2mu})
			and (\ref{eq:QW}), it follows from (\ref{eq:p power}) that there
			exists some constant $C_{p}>0$ depending on $p$ only such that
			\begin{multline}
			\mathbb{E}\Big|\frac{1}{N}\sum_{i=1}^{N}Q_{i}\mathscr{V}_{i}\Big|^{p}=\frac{1}{N^{p}}\sum_{\mathbf{i}}\sum_{A_{1},\dots,A_{p}\subset[\mathbf{i}]}\mathbb{E}\prod_{s=1}^{p}\Big(P_{A_{s}^{c}}Q_{A_{s}}V_{i_{s}}\Big)\\
			\prec\frac{C_{p}}{N^{p}}\sum_{\mathbf{i}}\Phi_{\nu}^{2|[\boldsymbol{i}]|}=\frac{C_{p}}{N^{p}}\sum_{s=1}^{p}\Phi_{\nu}^{2s}\sum_{\mathbf{i}}\mathbf{1}(|\mathbf{i}|=s)\\
			\le C_{p}\sum_{s=1}^{p}\Phi_{\nu}^{2s}N^{s-p}\le C_{p}(\Phi_{\nu}+N^{-1/2})^{2p}\le C_{p}\Phi_{\nu}^{2p},\label{eq:E average QtildeV power p}
			\end{multline}
			where the second last step follows from the elementary inequality
			$a^{n}b^{m}\le(a+b)^{n+m}$ for positive $a,b$ and the last step
			follows from the fact that $CN^{-1/2}\le\Phi_{\nu}$. (\ref{eq:E average QtildeV power p})
			shows that $\frac{1}{N}\sum_{i=1}^{N}Q_{i}\mathscr{V}_{i}\prec\Phi_{\nu}^{2}$,
			which concludes (\ref{eq:fluctuation of V}).
		\end{proof}

		\section{Proof of Theorem \ref{thm:rigidity}.}\label{Appendix C}
		Firstly we show (\ref{eq:no eigenvalues outside sepectrum}).
		\begin{proof}[Proof of (\ref{eq:no eigenvalues outside sepectrum})] Using Proposition \ref{prop:fluctuation averaging}, we get from
			(\ref{eq:1(XI)(f(m)-z)}) that
			\[
			|f(m_{N})-z|\prec N^{\varepsilon}\{q^2+\frac{1}{(N\eta)^2}+\frac{\operatorname{Im} m}{N\eta}\},
			\]
			uniformly for $z\in\mathbf{D}^{e}$. Note here we assume $q<N^{-1/3}$. Then, we obtain from Proposition \ref{prop:stability} that, for any
			$\varepsilon,D>0$, as $N$ is sufficiently large,
			\begin{multline*}
			\sup_{z\in\mathbf{D}^{e}}\mathbb{P}\Big(|m_{N}-m|>\frac{N^{\varepsilon}}{\sqrt{\kappa+\eta}}(\frac{\operatorname{Im} m}{N\eta}+\frac{1}{(N\eta)^{2}}+q^2)\Big)\\
			\le\sup_{z\in\mathbf{D}^{e}}\mathbb{P}\Big(|m_{N}-m|>\frac{N^{\varepsilon}(\frac{\operatorname{Im} m}{N\eta}+\frac{1}{(N\eta)^{2}}+q^2)}{\sqrt{\kappa+\eta}+\sqrt{N^{\varepsilon/2}(\frac{\operatorname{Im} m}{N\eta}+\frac{1}{(N\eta)^{2}}+q^2)}}\Big)\le N^{-D},
			\end{multline*}
			so uniformly for $z\in\mathbf{D}^{e}$,
			\begin{equation}
			|m_{N}-m|\prec\frac{1}{\sqrt{\kappa+\eta}}(\frac{\operatorname{Im} m}{N\eta}+\frac{1}{(N\eta)^{2}}+q^2).\label{eq:improved bound of m-m}
			\end{equation}
			
			Denote
			\[
			\tilde{\Psi}(z)=\frac{1}{\sqrt{\kappa+\eta}}(\frac{\operatorname{Im} m}{N\eta}+\frac{1}{(N\eta)^{2}}+q^2).
			\]
			Next, we show that
			\begin{equation}
			\lambda_{1}=\lambda_{+}+O_{\prec}(N^{-2/3}+q^2).\label{eq:rigidity of largest eigenvalue}
			\end{equation}
			We know from Lemma \ref{lem:YY<C w.h.p} there exists some constant
			$C>0$ such that $\lambda_{1}\le C$ with high probability. Therefore,
			it remains to show that for any fixed $\varepsilon>0$, there is no
			eigenvalue of $W$ in the interval
			\begin{equation}
			\mathbf{I}:=[\lambda_{+}+N^{-2/3+4\varepsilon}+N^{4\epsilon}q^2,C]\label{eq:definition of I}
			\end{equation}
			with high probability. The idea of the proof is to choose, for each
			$E\in\mathbf{I}$, a scale $\eta(E)$ such that $\operatorname{Im} m_{N}(E+\imath\eta(E))\le\frac{N^{-\varepsilon}}{N\eta(E)}$
			with high probability. First, we need a simultaneous version of (\ref{eq:improved bound of m-m}),
			i.e.
			\begin{equation}
			\bigcap_{z\in\mathbf{D}^{e}}\Big\{|m_{N}(z)-m(z)|\le\tilde{\Psi}(z)\Big\}\text{ holds with high probability}.\label{eq:improved bound}
			\end{equation}
			
			It suffices to show that
			\begin{equation}
			\sup_{z\in\mathbf{D}^{e}}\frac{|m_{N}(z)-m(z)|}{\tilde{\Psi}(z)}\prec1.\label{eq:uniform local law}
			\end{equation}
			
			Let for $i=1,2$, $z_{i}\equiv E_{i}+\imath\eta_{i}\in\mathbf{D}^{e}$
			and $\kappa_{i}=|E_{i}-\lambda_{+}|$. Elementary calculation yields
			that there exists some constant $C_{1}>0$ such that
			\begin{eqnarray}
			|m_{N}(z_{1})-m_{N}(z_{2})| & \le & C_{1}N^{2}|z_{1}-z_{2}|,\nonumber \\
			|m(z_{1})-m(z_{2})| & \le & C_{1}N^{2}|z_{1}-z_{2}|,\nonumber \\
			|\tilde{\Psi}(z_{1})-\tilde{\Psi}(z_{2})| & \le & C_{1}N^{5/2}|z_{1}-z_{2}|,\nonumber \\
			\inf_{z\in\mathbf{D}^{e}}\tilde{\Psi}(z) & \ge & (C_{1}N)^{-1}.\label{eq:Lipschitz inequalities}
			\end{eqnarray}
			
			We define the $N^{-4}$-net $\hat{\mathbf{D}}^{e}=(N^{-4}\mathbb{Z}^{2})\cap\mathbf{D}^{e}$.
			Hence, $|\hat{\mathbf{D}}^{e}|\le CN^{8}$ and for any $z\in\mathbf{D}^{e}$,
			there exists a $w\in\hat{\mathbf{D}}^{e}$ such that $|z-w|\le2N^{-4}$.
			Then using a simple union bound and (\ref{eq:Lipschitz inequalities}),
			we can deduce (\ref{eq:uniform local law}).
			
			Let $\varepsilon$ be as in (\ref{eq:definition of I}). Then we observe
			from (\ref{eq:improved bound}) and Lemma \ref{lem:basic property of m}
			that
			\begin{equation}
			\bigcap_{z\in\mathbf{D}^{e},E\ge\lambda_{+}}\Big\{|m_{N}(z)-m(z)|\le N^{\varepsilon}\Big(\frac{\eta}{\kappa}\frac{1}{N\eta}+\frac{1}{\sqrt{\kappa}}(\frac{1}{(N\eta)^{2}}+q^2)\Big)\Big\}\label{eq:proof of rigidity bound 1}
			\end{equation}
			holds with high probability.
			
			For each $E\in\mathbf{I}$, we define
			\begin{equation}
			\eta(E)=q^{-1}N^{-1}\kappa(E)^{1/2},\qquad z(E)=E+\imath\eta(E).\label{eq:choice of eta(E)}
			\end{equation}
			
			Using Lemma \ref{lem:basic property of m}, we find that there exists
			a constant $C_{2}>0$ such that for all $E\in\mathbf{I}$
			\begin{equation}
			\operatorname{Im} m(z(E))\le\frac{C_{2}\eta(E)}{\sqrt{\kappa(E)}}\le\frac{C_{2}N^{-\varepsilon}}{N\eta(E)}.\label{eq:rigidity Im m bound}
			\end{equation}
			
			With the choice $\eta(E)$ in (\ref{eq:choice of eta(E)}), we obtain
			from (\ref{eq:proof of rigidity bound 1}) that
			\begin{equation}
			\bigcap_{E\in\mathbf{I}}\Big\{|m_{N}(z)-m(z)|\le\frac{2N^{-\varepsilon}}{N\eta(E)}\Big\}\text{ holds with high probability}.\label{eq:rigidity mN-m bound}
			\end{equation}
			
			From (\ref{eq:rigidity Im m bound}) and (\ref{eq:rigidity mN-m bound})
			we conclude that
			\begin{equation}
			\bigcap_{E\in\mathbf{I}}\Big\{\operatorname{Im} m_{N}(z)\le\frac{(2+C_{2})N^{-\varepsilon}}{N\eta(E)}\Big\}\text{ holds with high probability}.\label{eq:proof of rigidity intersection bounds of Imm}
			\end{equation}
			
			Now suppose that there is an eigenvalue, say $\lambda_{i}$ of $W$
			in $\mathbf{I}$. Then we find that
			\[
			\operatorname{Im} m_{N}(z(\lambda_{i}))=\frac{1}{N}\sum_{j}\frac{\eta(\lambda_{i})}{(\lambda_{j}-\lambda_{i})^{2}+\eta(\lambda_{i})^{2}}\ge\frac{1}{N\eta(\lambda_{i})},
			\]
			which contradicts with the inequality in (\ref{eq:proof of rigidity intersection bounds of Imm}).
			Therefore, we conclude that with high probability, there is no eigenvalue
			in $\mathbf{I}$. Since $\varepsilon>0$ in (\ref{eq:definition of I})
			is arbitrary, (\ref{eq:rigidity of largest eigenvalue}) follows.
		\end{proof}

		Now we show (\ref{eq:local law on small scale}). First we show the following lemma.
		\begin{lem}
			\label{lem:integral of rhoDelta bound}Let $a_{1},a_{2}$ be two numbers
			with $a_{1}\le a_{2}$ and $|a_{1}|+|a_{2}|=O(1)$. For any $E_{1},E_{2}\in[a_{1},a_{2}]$
			and $\eta=N^{-1}$, let $\psi(\lambda):=\psi_{E_{1},E_{2},\eta}(\lambda)$
			be a $C^{2}(\mathbb{R})$ function such that $\psi(x)=1$ for $x\in[E_{1}+\eta,E_{2}-\eta]$,
			$\psi(x)=0$ for $x\in\mathbb{R}\backslash[E_{1},E_{2}]$ and the
			first two derivatives of $\psi$ satisfy $|\psi^{(1)}(x)|\le C\eta^{-1}$,
			$|\psi^{(2)}(x)|\le C\eta^{-2}$ for all $x\in\mathbb{R}$. Let $\varrho^{\Delta}$
			be a signed measure on the real line and $m^{\Delta}$ be the Stieltjes
			transform of $\varrho^{\Delta}$. Suppose, for some positive number
			$c_{N}$ depending on $N$, we have
			\begin{equation}
			|m^{\Delta}(x+\imath y)|\le Cc_{N}(\frac{1}{Ny}+\frac{q^2}{\sqrt{\kappa+y}})\qquad\forall y<1,x\in[a_{1},a_{2}],\label{eq:mDelta bound}
			\end{equation}
			then
			\begin{equation}
			\begin{split}
			\Big|\int\psi(\lambda)\varrho^{\Delta}(d\lambda)\Big|&\leqslant c_N(\frac{1}{N}+\frac{q^2}{\sqrt{\kappa+1/2}})\\
			&\leqslant c_N(\frac{1}{N}+q^2\sqrt{E_2-E_1+\eta})\\
			&\leqslant c_N(\frac{1}{N}+q^3+q^2\sqrt{\kappa_{E_1}}).
			\end{split}
			\end{equation}
		\end{lem}
		
		\begin{proof}
			By (\ref{eq:no eigenvalues outside sepectrum}), it suffices to show the case where $E_2=\lambda_{+}+N^{\epsilon}(q^2+N^{-2/3})$.
			Let $\chi(y)$ be a smooth cutoff function with support $[-1,1]$
			such that $\chi(y)=1$ for $|y|\le1/2$ and $\chi(y)$ has bounded
			derivatives otherwise.
			Define:
			\[
			\varrho^{\Delta}(x):=\rho_{W}(x)-\varrho(x),\qquad m^{\Delta}(z):=m_N(z)-m(z).
			\]
			Using the Helffer-Sj\"{o}strand formula (setting
			$\chi(x+\imath y)=\chi(y)$ in Proposition C.1 of \cite{locallawlecturenotes}),
			we get that
			\[
			\psi(\lambda)=\frac{\imath}{2\pi}\int_{\mathbb{R}^{2}}\frac{y\psi^{(2)}(x)\chi(y)+\{\psi(x)+\imath y\psi^{(1)}(x)\}\chi^{(1)}(y)}{\lambda-x-\imath y}{\rm d}x{\rm d}y.
			\]
			
			Integrating with respect to $\varrho^{\Delta}$ and using the fact
			that $\psi$ and $\chi$ are real, we obtain that
			\begin{eqnarray}
			\Big|\int\psi(\lambda)\varrho^{\Delta}(d\lambda)\Big| & = & \Big|\frac{\imath}{2\pi}\int_{\mathbb{R}^{2}}[y\psi^{(2)}(x)\chi(y)+\{\psi(x)+\imath y\psi^{(1)}(x)\}\chi^{(1)}(y)]m^{\Delta}(x+\imath y){\rm d}x{\rm d}y\Big|\nonumber \\
			& \le & C\int_{\mathbb{R}^{2}}\{|\psi(x)|+|y||\psi^{(1)}(x)|\}|\chi^{(1)}(y)||m^{\Delta}(x+\imath y)|{\rm d}x{\rm d}y\nonumber \\
			&  & +C\Big|\int_{|y|\le\eta}\int_{\mathbb{R}}y\psi^{(2)}(x)\chi(y)\operatorname{Im} m^{\Delta}(x+\imath y){\rm d}x{\rm d}y\Big|\nonumber \\
			&  & +C\Big|\int_{|y|>\eta}\int_{\mathbb{R}}y\psi^{(2)}(x)\chi(y)\operatorname{Im} m^{\Delta}(x+\imath y){\rm d}x{\rm d}y\Big|.\label{eq:f(lambda)rho(dlambda)}
			\end{eqnarray}
			
			With (\ref{eq:mDelta bound}), the first term in (\ref{eq:f(lambda)rho(dlambda)})
			can be estimated as
			\begin{eqnarray*}
				&  & C\int_{\mathbb{R}^{2}}\{|\psi(x)|+|y||\psi^{(1)}(x)|\}|\chi^{(1)}(y)||m^{\Delta}(x+\imath y)|{\rm d}x{\rm d}y\\
				& = & C\int_{[-1,1]\backslash[-1/2,1/2]}\int_{E_{1}}^{E_{2}}|\psi(x)||\chi^{(1)}(y)||m^{\Delta}(x+\imath y)|{\rm d}x{\rm d}y\\
				&  & +C\int_{[-1,1]\backslash[-1/2,1/2]}\int_{[E_{1},E_{2}]\backslash[E_{1}+\eta,E_{2}-\eta]}|y||\psi^{(1)}(x)||\chi^{(1)}(y)||m^{\Delta}(x+\imath y)|{\rm d}x{\rm d}y\\
				&  & \le Cc_N(\frac{1}{N}+\frac{q^2}{\sqrt{\kappa+1/2}}).
			\end{eqnarray*}
			
			The second term in (\ref{eq:f(lambda)rho(dlambda)}) can be estimated
			as
			\begin{eqnarray*}
				&  & C\Big|\int_{|y|\le\eta}\int_{\mathbb{R}}y\psi^{(2)}(x)\chi(y)\operatorname{Im} m^{\Delta}(x+\imath y){\rm d}x{\rm d}y\Big|\\
				& = & C\Big|\int_{|y|\le\eta}\int_{[E_{1},E_{2}]\backslash[E_{1}+\eta,E_{2}-\eta]}y\psi^{(2)}(x)\chi(y)\operatorname{Im} m^{\Delta}(x+\imath y){\rm d}x{\rm d}y\Big|\\
				& \le & \frac{Cc_{N}}{N}.
			\end{eqnarray*}
			
			For the third term in (\ref{eq:f(lambda)rho(dlambda)}), we note that
			\begin{equation*}
			\frac{\partial}{\partial x}\operatorname{Im} m^{\Delta}(x+\imath y)=\operatorname{Im}\Big\{\frac{\partial}{\partial x}m^{\Delta}(x+\imath y)\Big\}
			=\operatorname{Im}\Big\{-\imath\frac{\partial}{\partial y}m^{\Delta}(x+\imath y)\Big\}=-\frac{\partial}{\partial y}\operatorname{Re} m^{\Delta}(x+\imath y).
			\end{equation*}
			
			Then it follows from integration by parts first with respect to $x$
			then $y$ that
			\begin{eqnarray*}
				&  & C\Big|\int_{|y|>\eta}\int_{\mathbb{R}}y\psi^{(2)}(x)\chi(y)\operatorname{Im} m^{\Delta}(x+\imath y){\rm d}x{\rm d}y\Big|\\
				& = & C\Big|\int_{|y|>\eta}\int_{\mathbb{R}}y\psi^{(1)}(x)\chi(y)\frac{\partial}{\partial x}\operatorname{Im} m^{\Delta}(x+\imath y){\rm d}x{\rm d}y\Big|\\
				& = & C\Big|-\int_{\mathbb{R}}\int_{|y|>\eta}y\psi^{(1)}(x)\chi(y)\frac{\partial}{\partial y}\operatorname{Re} m^{\Delta}(x+\imath y){\rm d}y{\rm d}x\Big|\\
				& = & C\Big|-\int_{\mathbb{R}}\Big[y\psi^{(1)}(x)\chi(y)\operatorname{Re} m^{\Delta}(x+\imath y)\Big]_{\eta}^{\infty}{\rm d}x\\
				&  & -\int_{\mathbb{R}}\Big[y\psi^{(1)}(x)\chi(y)\operatorname{Re} m^{\Delta}(x+\imath y)\Big]_{-\infty}^{-\eta}{\rm d}x\\
				&  & +C\int_{\mathbb{R}}\int_{|y|\ge\eta}\psi^{(1)}(x)\{\chi(y)+y\chi^{(1)}(y)\}\operatorname{Re} m^{\Delta}(x+\imath y){\rm d}y{\rm d}x\Big|\\
				& = & C\Big|2\int_{\mathbb{R}}\eta\psi^{(1)}(x)\operatorname{Re} m^{\Delta}(x+\imath\eta){\rm d}x\\
				&  & +C\int_{\mathbb{R}}\int_{|y|>\eta}\psi^{(1)}(x)\{\chi(y)+y\chi^{(1)}(y)\}\operatorname{Re} m^{\Delta}(x+\imath y){\rm d}y{\rm d}x\Big|\\
				& \le & C\int_{\mathbb{R}}\eta|\psi^{(1)}(x)||\operatorname{Re} m^{\Delta}(x+\imath\eta)|{\rm d}x\\
				&  & +\frac{C}{\eta}\int_{\eta<|y|\le1}\int_{[E_{1},E_{2}]\backslash[E_{1}+\eta,E_{2}-\eta]}|\operatorname{Re} m^{\Delta}(x+\imath y)|{\rm d}x{\rm d}y\\
				&  & +C\int_{\eta<|y|\le1}\int_{[E_{1},E_{2}]\backslash[E_{1}+\eta,E_{2}-\eta]}|y\psi^{(1)}(x)\chi^{(1)}(y)\operatorname{Re} m^{\Delta}(x+\imath y)|{\rm d}x{\rm d}y\\
				& \le & C\int_{[E_{1},E_{2}]\backslash[E_{1}+\eta,E_{2}-\eta]}\frac{c_{N}}{N\eta}{\rm d}x+C\int_{\eta<|y|\le1}c_N(\frac{1}{N|y|}+\frac{q^2}{\sqrt{\kappa+|y|}}){\rm d}y\\
				&  & +C\int_{\eta<|y|\le1}\int_{[E_{1},E_{2}]\backslash[E_{1}+\eta,E_{2}-\eta]}\frac{c_N}{\eta}(\frac{1}{N}+\frac{q^2}{\sqrt{\kappa+1/2}}){\rm d}x{\rm d}y\\
				& \le & \frac{Cc_{N}q^2}{\sqrt{\kappa+1/2}}+\frac{Cc_{N}}{N}|\log\eta|\leqslant Cc_N(\frac{\log N}{N}+\frac{q^2}{\sqrt{\kappa+1/2}}).
			\end{eqnarray*}
			
			Then we summarise that
			\begin{equation}
			\begin{split}
			\Big|\int\psi(\lambda)\varrho^{\Delta}(d\lambda)\Big|&\leqslant c_N(\frac{1}{N}+\frac{q^2}{\sqrt{\kappa+1/2}})\\
			&\leqslant c_N(\frac{1}{N}+q^2\sqrt{E_2-E_1+\eta})\\
			&\leqslant c_N(\frac{1}{N}+q^3+q^2\sqrt{\kappa_{E_1}})
			\end{split}
			\end{equation}
		\end{proof}
		
		Next, let $\varrho^{\Delta}$ be the signed measure $\varrho_{N}-\varrho$.
		If $y\ge y_{0}=N^{-1+\tau}$, the condition (\ref{eq:mDelta bound})
		in Lemma \ref{lem:integral of rhoDelta bound} holds for the difference
		$m^{\Delta}=m_{N}-m$ and $c_{N}=N^{\varepsilon}$ for any small $\varepsilon>0$
		with high probability due to Theorem \ref{thm:strong local law}.
		For $y\le y_{0}$, set $z=x+\imath y$, $z_{0}=x+\imath y_{0}$ and
		estimate
		\begin{equation}
		|m_{N}(z)-m(z)|\le|m_{N}(z_{0})-m(z_{0})|+\int_{y}^{y_{0}}\Big|\frac{\partial}{\partial\eta}\{m_{N}(x+\imath\eta)-m(x+\imath\eta)\}\Big|d\eta.\label{eq:bound of m_N-m by int_y^y0}
		\end{equation}
		
		Note that
		\begin{multline*}
		\Big|\frac{\partial}{\partial\eta}m_{N}(x+i\eta)\Big|=\Big|\frac{\partial}{\partial\eta}\int\frac{1}{\lambda-x-\imath\eta}\varrho_{N}({\rm d}\lambda)\Big|\\
		\le\int\frac{1}{|\lambda-x-\imath\eta|^{2}}\varrho_{N}({\rm d}\lambda)=\eta^{-1}\operatorname{Im} m_{N}(x+\imath\eta).
		\end{multline*}
		
		The same bound applies to $|\frac{\partial}{\partial\eta}m(x+\imath\eta)|$
		with $m_{N}$ replaced by $m$.
		
		Then using Theorem \ref{thm:strong local law} and the fact that the
		functions $y\to y\operatorname{Im} m_{N}(x+\imath y)$ and $y\to y\operatorname{Im} m(x+\imath y)$
		are both monotone increasing for any $y>0$ since both are Stieltjes
		transforms of a positive measure, we obtain that
		\begin{eqnarray*}
			\int_{y}^{y_{0}}\Big|\frac{\partial}{\partial\eta}\{m_{N}(x+\imath\eta)-m(x+\imath\eta)\}\Big|d\eta & \le & \int_{y}^{y_{0}}\frac{1}{\eta}\{\operatorname{Im} m_{N}(x+\imath\eta)+\operatorname{Im} m(x+\imath\eta)\}d\eta\\
			& \le & y_{0}\{\operatorname{Im} m_{N}(z_{0})+\operatorname{Im} m(z_{0})\}\int_{y}^{y_{0}}\frac{1}{\eta^{2}}d\eta\\
			& = & y_{0}\{\operatorname{Im} m_{N}(z_{0})+\operatorname{Im} m(z_{0})\}(\frac{1}{y}-\frac{1}{y_{0}})\\
			& = & \{\operatorname{Im} m_{N}(z_{0})+\operatorname{Im} m(z_{0})\}\frac{y_{0}-y}{y}\\
			& \prec & 2\operatorname{Im} m(z_{0})+(Ny_{0})^{-1}.
		\end{eqnarray*}
		
		Hence we have from (\ref{eq:bound of m_N-m by int_y^y0}) that
		\begin{equation}
		|m_{N}(z)-m(z)|\prec2\operatorname{Im} m(z_{0})+(Ny_{0})^{-1}+q\le\frac{CNy+1}{Ny}+q\le\frac{CN^{\tau}}{Ny}+q.\label{eq:check the bound for lemma below}
		\end{equation}
		
		Let $\psi_{E_{1},E_{2},\eta}$ be the function in Lemma \ref{lem:integral of rhoDelta bound}.
		Applying Lemma \ref{lem:integral of rhoDelta bound} below with $c_{N}=N^{\tau}$,
		we obtain that for any $\eta=N^{-1}$
		\[
		\Big|\int_{\mathbb{R}}\psi_{E_{1},E_{2},\eta}(\lambda)\varrho_{N}(d\lambda)-\int_{\mathbb{R}}\psi_{E_{1},E_{2},\eta}(\lambda)\varrho(d\lambda)\Big|\prec N^{-1+\tau}.
		\]
		
		Integrating with respect to $\varrho_{N}({\rm d}\lambda)$ and $\varrho({\rm d}\lambda)$
		on both sides of the following elementary inequality
		\[
		\mathbf{1}_{[x-\eta,x+\eta]}(\lambda)\le\frac{2\eta^{2}}{(\lambda-x)^{2}+\eta^{2}}\qquad\forall x,\lambda\in\mathbb{R},
		\]
		and using (\ref{eq:check the bound for lemma below}), Lemma \ref{lem:basic property of m}
		and the definitions of $y_{0}$ and $q$, we get that for some constant $C>0$
		\[
		\mathfrak{n}_{N}(x-\eta,x+\eta)\le C\eta\operatorname{Im} m_{N}(x+\imath\eta)\le Cy_{0}\operatorname{Im} m_{N}(x+\imath y_{0})\prec N^{-1+\tau},
		\]
		and
		\[
		\mathfrak{n}(x-\eta,x+\eta)\le C\eta\operatorname{Im} m(x+\imath\eta)\le Cy_{0}\operatorname{Im} m(x+\imath y_{0})\prec N^{-1+\tau},
		\]
		uniformly for $x$ in a small neighborhood of $\lambda_{+}$.
		
		We note that
		\begin{eqnarray*}
			|\mathfrak{n}_{N}(E_{1},E_{2})-\mathfrak{n}(E_{1},E_{2})| & = & |\int_{E_{1}}^{E_{2}}\varrho_{N}({\rm d}\lambda)-\int_{E_{1}}^{E_{2}}\varrho({\rm d}\lambda)|\\
			& \le & |\int_{E_{1}+\eta}^{E_{2}-\eta}\psi_{E_{1},E_{2},\eta}(\lambda)\varrho_{N}({\rm d}\lambda)-\int_{E_{1}+\eta}^{E_{2}-\eta}\psi_{E_{1},E_{2},\eta}(\lambda)\varrho({\rm d}\lambda)|\\
			&  & +|\int_{E_{1}}^{E_{1}+\eta}\varrho_{N}({\rm d}\lambda)|+|\int_{E_{1}}^{E_{1}+\eta}\varrho({\rm d}\lambda)|\\
			&  & +|\int_{E_{2}-\eta}^{E_{2}}\varrho_{N}({\rm d}\lambda)|+|\int_{E_{2}-\eta}^{E_{2}}\varrho({\rm d}\lambda)|\\
			& \prec & (\frac{1}{N^{-1+\tau}}+q^3+q^2(\sqrt{\kappa_{E_1}}-\sqrt{\kappa_{E_2}}),
		\end{eqnarray*}
		Since $\tau$ is arbitrary, the desired result follows.
		
		Finally, we are ready to show (\ref{eq:rigidity of eigenvalues}).
		\begin{proof}[Proof of (\ref{eq:rigidity of eigenvalues})]
			With the choice of $q<N^{-1/3}$ by Lemma \ref{eq:local law on small scale} we have that if $\lambda_i,\gamma_i\geqslant\lambda_{+}-N^{c}N^{-2/3}$ for some $c>0$, then, with high probability
			\begin{equation}\label{eq:x near the edge}
			|\lambda_i-\gamma_i|\leqslant N^{-\epsilon}N^{-2/3},
			\end{equation}
			for some $\epsilon>0$. By the square root behavior of $\varrho$, we have $\mathfrak{n}(x)\sim(\lambda_1-x)^{3/2}$ when $x$ is near the edge. That is
			\[
			\mathfrak{n}(\gamma_j)=\frac{j}{N}\sim(\lambda_1-\gamma_j)^{3/2}.
			\]
			
			Thus we have proved the case where $j\leqslant N^{c}$ for small $c$. Together with (\ref{eq:x near the edge}), we conclude (\ref{eq:rigidity of eigenvalues}). For the rest of $j$'s, one can refer to \cite{Ding2018}, so we omit the details since we only need the result near the right edge.
		\end{proof}
		
		\section{Complete the proof of Theorem \ref{thm:Edge universality with small support}.}
		
		We introduce the notation of functional calculus.
		Specifically, for a function $f(\cdot)$ and a matrix $H$, $f(H)$
		denotes the matrix whose eigenvectors are those of $H$ and eigenvalues are the values
		of $f$ applied to each eigenvalue of $H$.
		
		First, we present a lemma for the approximation of the eigenvalue
		counting function. For any $\eta>0$, define
		\[
		\vartheta_{\eta}(x)=\frac{\eta}{\pi(x^{2}+\eta^{2})}=\frac{1}{\pi}\operatorname{Im}\frac{1}{x-\imath\eta}.
		\]
		
		We notice that for any $a,b\in\mathbb{R}$ with $a\le b$, the convolution
		of $\mathbf{1}_{[a,b]}$ and $\vartheta_{\eta}$ applied to the eigenvalues
		$\lambda_{i}$, $i=1,\dots,N$ yields that
		\[
		\sum_{i=1}^{N}\mathbf{1}_{[a,b]}*\vartheta_{\eta}(\lambda_{i})=\frac{N}{\pi}\int_{a}^{b}\operatorname{Im} m_{N}(x+\imath\eta){\rm d}x.
		\]
		In terms of the functional calculus notation, we have
		\begin{eqnarray*}
			\sum_{i=1}^{N}\mathbf{1}_{[a,b]}(\lambda_{i}) =  {\rm Tr}\mathbf{1}_{[a,b]}(W),\quad
			\sum_{i=1}^{N}\mathbf{1}_{[a,b]}*\vartheta_{\eta}(\lambda_{i})  =  {\rm Tr}\mathbf{1}_{[a,b]}*\vartheta_{\eta}(W).
		\end{eqnarray*}
		For $a,b\in\mathbb{R}\cup\{-\infty,\infty\}$, define ${\cal N}(a,b)=N\int_{a}^{b}\varrho_{N}({\rm d}x)$
		as the number of eigenvalues of $W$ in $[a,b]$.
		
		The following lemma shows that ${\rm Tr}\mathbf{1}_{[a,b]}(W)$ can
		be well approximated by its smoothed version ${\rm Tr}\mathbf{1}_{[a,b]}*\vartheta_{\eta}(W)$
		for $a,b$ around the edge $\lambda_{+}$ so that the problem can
		be converted to comparison of the Stieltjes transform.
		\begin{lem}
			\label{lem:smooth approximation of indicator}Let $\varepsilon>0$
			be an arbitrarily small number. Set $E_{\varepsilon}=\lambda_{+}+N^{-2/3+\varepsilon}$,
			$\ell_{1}=N^{-2/3-3\varepsilon}$ and $\eta_{1}=N^{-2/3-9\varepsilon}$.
			Then for any $E$ satisfying $|E-\lambda_{+}|\le\frac{3}{2}N^{-2/3+\varepsilon}$,
			it holds with high probability that
			\[
			|{\rm Tr}\mathbf{1}_{[E,E_{\varepsilon}]}(W)-{\rm Tr}\mathbf{1}_{[E,E_{\varepsilon}]}*\vartheta_{\eta}(W)|\le C(N^{-2\varepsilon}+{\cal N}(E-\ell_{1},E+\ell_{1})).
			\]
		\end{lem}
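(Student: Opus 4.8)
The plan is to work with the function $g:=\mathbf{1}_{[E,E_\varepsilon]}-\mathbf{1}_{[E,E_\varepsilon]}*\vartheta_{\eta_1}$ (here $\eta=\eta_1=N^{-2/3-9\varepsilon}$), so that the quantity to be bounded is $\mathrm{Tr}\,g(W)=\sum_{i=1}^{N}g(\lambda_i)$, and to control this sum by partitioning the eigenvalues according to their distance from the two endpoints $E$ and $E_\varepsilon$. We may assume $E<E_\varepsilon$, since otherwise $\mathbf{1}_{[E,E_\varepsilon]}$ vanishes a.e.\ and $g\equiv0$. The first input is a set of pointwise bounds on $g$ coming from the explicit formula $\mathbf{1}_{[E,E_\varepsilon]}*\vartheta_{\eta_1}(x)=\pi^{-1}(\arctan\frac{x-E}{\eta_1}-\arctan\frac{x-E_\varepsilon}{\eta_1})$: one gets $0\le \mathbf{1}_{[E,E_\varepsilon]}*\vartheta_{\eta_1}\le1$, hence $|g|\le1$; for $x$ strictly between $E$ and $E_\varepsilon$, $|g(x)|\le \pi^{-1}\eta_1(|x-E|^{-1}+|x-E_\varepsilon|^{-1})$; and, crucially, for $x$ outside $[E,E_\varepsilon]$ the leading $\eta_1/|x-E|$ and $\eta_1/|x-E_\varepsilon|$ tails cancel to second order, yielding the faster decay $|g(x)|\le \pi^{-1}\eta_1(E_\varepsilon-E)\,\mathrm{dist}(x,[E,E_\varepsilon])^{-2}$.

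Next I would isolate the eigenvalues near the two endpoints. For those with $|\lambda_i-E|\le \ell_1$ the crude bound $|g|\le1$ gives a contribution at most the number of such eigenvalues, i.e.\ $\mathcal{N}(E-\ell_1,E+\ell_1)$ --- exactly the counting term appearing in the claim. For those with $|\lambda_i-E_\varepsilon|\le\ell_1$: since $E_\varepsilon-\ell_1=\lambda_++N^{-2/3+\varepsilon}-N^{-2/3-3\varepsilon}\ge \lambda_++\tfrac12N^{-2/3+\varepsilon}$ for large $N$, the edge rigidity bound \eqref{eq:edge rigidity} shows that with high probability $\lambda_1<E_\varepsilon-\ell_1$, so there are no eigenvalues in this block and it contributes $0$ with high probability.

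It remains to estimate $\sum|g(\lambda_i)|$ over the eigenvalues at distance $>\ell_1$ from both $E$ and $E_\varepsilon$. Here I would run a dyadic decomposition in the distance from $E$ (the endpoint near the spectral edge; eigenvalues far above $E_\varepsilon$ are disposed of by the same decay bound or ruled out by rigidity). On the shell $\{2^k\ell_1<E-\lambda_i\le 2^{k+1}\ell_1\}$ I bound $|g(\lambda_i)|$ by $\eta_1/(2^k\ell_1)$ when $2^k\ell_1\le E_\varepsilon-E$ and by the faster $\eta_1(E_\varepsilon-E)/(2^k\ell_1)^2$ when $2^k\ell_1>E_\varepsilon-E$, while the number of eigenvalues in the shell is controlled via the small-scale local law \eqref{eq:local law on small scale}, giving $\mathcal{N}(E-2^{k+1}\ell_1,E)\le N\,\mathfrak{n}(E-2^{k+1}\ell_1,E)+O_\prec(1)$, combined with the square-root behaviour of $\varrho$ at $\lambda_+$ from Lemma \ref{lem:basic property of m} (which yields $\mathfrak{n}(\lambda_+-t,\lambda_+)\asymp t^{3/2}$, hence $\mathfrak{n}(E-s,E)\lesssim(s+N^{-2/3+\varepsilon})^{3/2}$ for the relevant $E$). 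The shells with $2^k\ell_1\le E_\varepsilon-E$ together contribute $\prec N^{-6\varepsilon}\cdot N^{3\varepsilon/2}$; the shells with $E_\varepsilon-E<2^k\ell_1\lesssim1$ give a geometric series summing to $\prec N\eta_1(E_\varepsilon-E)^{1/2}=N^{-17\varepsilon/2}$; and the $O(1)$-scale tail is $\prec N\eta_1(E_\varepsilon-E)=N^{-1/3-8\varepsilon}$. All these are $\prec N^{-c\varepsilon}$ with $c>2$, so their sum is $\le N^{-2\varepsilon}$ with high probability. Combining the three blocks gives the lemma.

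The main obstacle is precisely the bookkeeping in this last step. The naive tail bound $|g(x)|\lesssim\eta_1/|x-E|$ by itself is too lossy --- it would produce an error of order $N\eta_1=N^{1/3-9\varepsilon}$, which is large --- so one genuinely needs both the second-order cancellation that upgrades the tail to $\eta_1(E_\varepsilon-E)\,\mathrm{dist}^{-2}$ and the fact that the spectral density near the soft edge is only of size $\sqrt{\,\cdot\,}$ (so that $\mathfrak{n}$ over a window of width $N^{-2/3+\varepsilon}$ touching $\lambda_+$ is as small as $N^{-1+3\varepsilon/2}$) in order to beat the factor $N$ coming from the trace and land safely below $N^{-2\varepsilon}$. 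Everything else is routine, and the various ``with high probability'' statements combine since each failure probability is at most $N^{-D}$ for arbitrary $D$.
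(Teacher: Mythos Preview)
The paper does not actually prove this lemma: its entire proof reads ``See Lemma 4.1 of \citep{pillai2014} or Lemma 6.1 of \citep{ERDOS2012}.'' Your proposal supplies precisely the argument contained in those references, and it is correct. The three ingredients you identify --- the pointwise bounds on $g$ (in particular the second-order cancellation $|g(x)|\lesssim \eta_1(E_\varepsilon-E)\,\mathrm{dist}(x,[E,E_\varepsilon])^{-2}$ outside the interval), the dyadic decomposition combined with the counting estimate \eqref{eq:local law on small scale}, and the square-root vanishing of $\varrho$ at $\lambda_+$ --- are exactly the inputs used there, and your exponent bookkeeping checks out (the dominant surviving terms are of order $N^{-9\varepsilon/2}$ and $N^{-17\varepsilon/2}$, both comfortably below $N^{-2\varepsilon}$).

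Two small remarks on the write-up. First, for the far tail you invoke \eqref{eq:local law on small scale}, but that estimate is stated only for $E_1,E_2$ in the edge window $\mathbf{D}^e$; once the dyadic scale $2^k\ell_1$ exceeds $\tau'$ you should simply use the trivial bound $\mathcal{N}\le N$ together with the $\mathrm{dist}^{-2}$ decay, which still gives $N\eta_1(E_\varepsilon-E)\lesssim N^{-1/3-8\varepsilon}$. Second, for eigenvalues inside $(E+\ell_1,E_\varepsilon-\ell_1)$ you need to control the $\eta_1/|x-E_\varepsilon|$ piece as well as the $\eta_1/|x-E|$ piece; this is harmless since both are at most $\eta_1/\ell_1=N^{-6\varepsilon}$ and the total eigenvalue count in that window is $\prec N^{3\varepsilon/2}$, but it is worth saying explicitly. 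With these cosmetic fixes your argument is complete and matches the literature the paper defers to.
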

		\begin{proof}
			See Lemma 4.1 of \cite{pillai2014} or Lemma 6.1 of \cite{ERDOS2012}.
		\end{proof}
		Let $q:\mathbb{R\to\mathbb{R}}_{+}$ be a smooth cutoff function such
		that
		\[
		q(x)=\begin{cases}
		1 & \text{if }|x|\le1/9,\\
		0 & \text{if }|x|\ge2/9,
		\end{cases}
		\]
		so $q(x)$ is decreasing for $x\ge0$. Then we have
		the following corollary.
		\begin{cor}
			\label{cor:bound of N(E,infty)}Let $\varepsilon,\ell_{1},\eta_{1},E_{\varepsilon}$
			be defined in Lemma \ref{lem:smooth approximation of indicator}. Set $\ell=\ell_{1}N^{2\varepsilon}/2=N^{-2/3-\varepsilon}/2$.
			Then for all $E$ such that
			\begin{equation}
			|E-\lambda_{+}|\le N^{-2/3+\varepsilon},\label{eq:choice of E in Corollary 5.1}
			\end{equation}
			the inequality
			\begin{equation}
			{\rm Tr}\mathbf{1}_{[E+\ell,E_{\varepsilon}]}*\vartheta_{\eta_{1}}(W)-N^{-\varepsilon}\le{\cal N}(E,\infty)\le{\rm Tr}\mathbf{1}_{[E-\ell,E_{\varepsilon}]}*\vartheta_{\eta_{1}}(W)+N^{-\varepsilon}\label{eq:bound of N(E,infty)}
			\end{equation}
			holds with high probability. Furthermore, for any $D>0$, there exists
			$N_{0}\in\mathbb{N}$ independent of $E$ such that for all $N\ge N_{0}$,
			\begin{multline}
			\mathbb{E}q\Big\{{\rm Tr}\mathbf{1}_{[E-\ell,E_{\varepsilon}]}*\vartheta_{\eta_{1}}(W)\Big\}\\
			\le\mathbb{P}({\cal N}(E,\infty)=0)\le\mathbb{E}q\Big\{{\rm Tr}\mathbf{1}_{[E+\ell,E_{\varepsilon}]}*\vartheta_{\eta_{1}}(W)\Big\}+N^{-D}.\label{eq:bound of P(N(E,infty))}
			\end{multline}
		\end{cor}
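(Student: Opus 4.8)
The plan is to establish the two–sided estimate (\ref{eq:bound of N(E,infty)}) first, and then to read off (\ref{eq:bound of P(N(E,infty))}) from it using only that ${\cal N}(E,\infty)$ is a nonnegative integer and that $q$ is a cutoff on the $O(1)$ scale. As input I would record two a priori facts. By the edge rigidity (\ref{eq:edge rigidity}), with high probability $\lambda_{1}\le\lambda_{+}+N^{-2/3+\varepsilon/2}<E_{\varepsilon}-\tfrac12 N^{-2/3+\varepsilon}$, so that ${\cal N}(E,\infty)={\cal N}(E,E_{\varepsilon})={\rm Tr}\,\mathbf{1}_{[E,E_{\varepsilon}]}(W)$; moreover, since $E$ satisfies (\ref{eq:choice of E in Corollary 5.1}) and $\varrho$ has a square-root edge at $\lambda_{+}$ (which follows from $\Im m(z)\asymp\sqrt{\kappa+\eta}$ in Lemma \ref{lem:basic property of m}), one gets $\mathfrak{n}(E,E_{\varepsilon})=O(N^{-1+3\varepsilon/2})$, whence, by the small-scale estimate (\ref{eq:local law on small scale}), the crucial bound ${\cal N}(E,\infty)\prec N^{3\varepsilon/2}$: only polynomially few eigenvalues sit near the edge. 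The role of Lemma \ref{lem:smooth approximation of indicator} is that a sharp spectral counting function can be replaced by its $\vartheta_{\eta_{1}}$–mollification up to the number of eigenvalues in a window of size $\ell_{1}$; the point of translating the endpoint by $\ell\gg\eta_{1},\ell_{1}$ is precisely to push every eigenvalue that matters out of the $\eta_{1}$-transition region of the mollified indicator.

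For the inequality ${\cal N}(E,\infty)\le{\rm Tr}\,\mathbf{1}_{[E-\ell,E_{\varepsilon}]}*\vartheta_{\eta_{1}}(W)+N^{-\varepsilon}$ I would use that the mollification scale is a genuine power below the shift: $\eta_{1}/\ell\asymp N^{-8\varepsilon}$ and $\eta_{1}N^{2/3}\asymp N^{-9\varepsilon}$. From the explicit formula $\mathbf{1}_{[a,b]}*\vartheta_{\eta}(\lambda)=\frac1\pi\big(\arctan\frac{b-\lambda}{\eta}-\arctan\frac{a-\lambda}{\eta}\big)$ and $\arctan s\ge\frac\pi2-\frac1s$ one finds $\mathbf{1}_{[E-\ell,E_{\varepsilon}]}*\vartheta_{\eta_{1}}(\lambda)\ge 1-CN^{-8\varepsilon}$ for every $\lambda\in[E,E_{\varepsilon}-\tfrac12 N^{-2/3+\varepsilon}]$. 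On the rigidity event all eigenvalues $\lambda_{i}\ge E$ lie in that range and the remaining summands are nonnegative, so ${\rm Tr}\,\mathbf{1}_{[E-\ell,E_{\varepsilon}]}*\vartheta_{\eta_{1}}(W)\ge(1-CN^{-8\varepsilon}){\cal N}(E,\infty)\ge{\cal N}(E,\infty)-CN^{-8\varepsilon}\cdot N^{3\varepsilon/2}$, which is at least ${\cal N}(E,\infty)-N^{-\varepsilon}$ by the a priori count bound.

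The reverse inequality ${\rm Tr}\,\mathbf{1}_{[E+\ell,E_{\varepsilon}]}*\vartheta_{\eta_{1}}(W)\le{\cal N}(E,\infty)+N^{-\varepsilon}$ is the step I expect to be the main obstacle, because now the $\asymp N$ eigenvalues lying \emph{below} $E$ all contribute to the mollified count and their total contribution must be shown to be $\prec N^{-\varepsilon}$. I would split off the eigenvalues $\lambda_{i}\ge E$ (each contributes at most $1$, for a total at most ${\cal N}(E,\infty)$), and for $\lambda_{i}<E$ use the kernel-tail bound $\mathbf{1}_{[E+\ell,E_{\varepsilon}]}*\vartheta_{\eta_{1}}(\lambda_{i})\le\frac{(E_{\varepsilon}-E)\eta_{1}}{\pi(\ell+(E-\lambda_{i}))^{2}}\le\frac{CN^{-2/3+\varepsilon}\eta_{1}}{(\ell+(E-\lambda_{i}))^{2}}$. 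Decomposing $\{\lambda_{i}<E\}$ dyadically in the distance $E-\lambda_{i}$ and bounding the number of eigenvalues in each shell by (\ref{eq:local law on small scale}) together with the square-root edge (so that at most $O(Nt^{3/2}+1)$ eigenvalues lie within distance $t\lesssim1$ of the edge), the resulting geometric series converges and is $\prec N^{-2/3+\varepsilon}\eta_{1}$ times a polynomial factor, giving $\sum_{\lambda_{i}<E}\mathbf{1}_{[E+\ell,E_{\varepsilon}]}*\vartheta_{\eta_{1}}(\lambda_{i})\prec N^{-c\varepsilon}$. The cancellation making this work is that the kernel tail decays like $(\mathrm{distance})^{-2}$ while the square-root edge only permits $O(Nt^{3/2})$ eigenvalues within distance $t$, and the window width $E_{\varepsilon}-E$ is itself $O(N^{-2/3+\varepsilon})$; the one subtlety is to check separately that the $O(N)$ bulk eigenvalues, far from $\lambda_{+}$, contribute only $O(N^{-1/3}\log N)$.

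Finally, (\ref{eq:bound of P(N(E,infty))}) is formal. Let $\Omega$ be the high-probability event on which (\ref{eq:bound of N(E,infty)}) holds, with $\mathbb{P}(\Omega^{c})\le N^{-D}$ for $N\ge N_{0}(D)$. On $\Omega\cap\{{\cal N}(E,\infty)=0\}$ the smoothed count ${\rm Tr}\,\mathbf{1}_{[E+\ell,E_{\varepsilon}]}*\vartheta_{\eta_{1}}(W)$ is $\le N^{-\varepsilon}<\tfrac19$, so $q$ of it equals $1$; since $0\le q\le1$ this gives $\mathbf{1}\{{\cal N}(E,\infty)=0\}\mathbf{1}_{\Omega}\le q\bigl({\rm Tr}\,\mathbf{1}_{[E+\ell,E_{\varepsilon}]}*\vartheta_{\eta_{1}}(W)\bigr)$, and taking expectations yields the upper bound in (\ref{eq:bound of P(N(E,infty))}). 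Conversely, whenever $q\bigl({\rm Tr}\,\mathbf{1}_{[E-\ell,E_{\varepsilon}]}*\vartheta_{\eta_{1}}(W)\bigr)>0$ the smoothed count at $E-\ell$ is $<\tfrac29$, so on $\Omega$ we get ${\cal N}(E,\infty)\le\tfrac29+N^{-\varepsilon}<1$, which forces ${\cal N}(E,\infty)=0$ by integrality; hence $q\bigl({\rm Tr}\,\mathbf{1}_{[E-\ell,E_{\varepsilon}]}*\vartheta_{\eta_{1}}(W)\bigr)\mathbf{1}_{\Omega}\le\mathbf{1}\{{\cal N}(E,\infty)=0\}$, and taking expectations gives the lower bound (the term $\mathbb{P}(\Omega^{c})\le N^{-D}$ discarded in each step being harmless since $D$ was arbitrary).
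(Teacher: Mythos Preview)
Your argument is correct, but for the sandwich estimate (\ref{eq:bound of N(E,infty)}) you take a genuinely different route from the paper. The paper never estimates the Poisson kernel pointwise; instead it invokes Lemma~\ref{lem:smooth approximation of indicator} at each $x\in[E-\ell,E]$ and averages, using
\[
{\rm Tr}\,\mathbf{1}_{[E,E_{\varepsilon}]}(W)\le\ell^{-1}\int_{E-\ell}^{E}{\rm Tr}\,\mathbf{1}_{[x,E_{\varepsilon}]}(W)\,{\rm d}x
\le\ell^{-1}\int_{E-\ell}^{E}{\rm Tr}\,\mathbf{1}_{[x,E_{\varepsilon}]}*\vartheta_{\eta_{1}}(W)\,{\rm d}x+\text{(error)},
\]
then bounds the error $C\ell^{-1}\int_{E-\ell}^{E}\{N^{-2\varepsilon}+{\cal N}(x-\ell_{1},x+\ell_{1})\}\,{\rm d}x$ by $CN^{-2\varepsilon}+C(\ell_{1}/\ell)\,{\cal N}(E-2\ell,E+\ell)$ via a Fubini-type counting, and controls the last factor by Theorem~\ref{thm:rigidity} and the square-root edge. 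This averaging trick treats both directions of (\ref{eq:bound of N(E,infty)}) symmetrically and avoids your dyadic summation over the bulk; in particular the paper never has to confront the ``$N$ eigenvalues below $E$'' issue you flag as the main obstacle, because Lemma~\ref{lem:smooth approximation of indicator} has already absorbed that difficulty. Your approach, by contrast, bypasses Lemma~\ref{lem:smooth approximation of indicator} entirely and is self-contained: the $\arctan$ lower bound handles one direction in a line, and the kernel-tail plus dyadic-shell argument handles the other. Both are valid; the paper's is shorter given Lemma~\ref{lem:smooth approximation of indicator} as a black box, while yours makes the role of the scale separation $\eta_{1}\ll\ell_{1}\ll\ell$ more transparent. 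For (\ref{eq:bound of P(N(E,infty))}) your derivation coincides with the paper's.
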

		\begin{proof}
			Notice that for $E$ satisfying $|E-\lambda_{+}|\le N^{-2/3+\varepsilon}$,
			we have $|E-\ell-\lambda_{+}|\le|E-\lambda_{+}|+\ell\le\frac{3}{2}N^{-2/3+\varepsilon}$.
			Therefore, Lemma \ref{lem:smooth approximation of indicator} holds
			with $E$ replaced by any $x\in[E-\ell,E]$. By the mean value theorem,
			we obtain that with high probability,
			\begin{eqnarray*}
				{\rm Tr}\mathbf{1}_{[E,E_{\varepsilon}]}(W) & \le & \ell^{-1}\int_{E-\ell}^{E}{\rm Tr}\mathbf{1}_{[x,E_{\varepsilon}]}(W){\rm d}x\\
				& \le & \ell^{-1}\int_{E-\ell}^{E}{\rm Tr}\mathbf{1}_{[x,E_{\varepsilon}]}*\vartheta_{\eta_{1}}(W){\rm d}x\\
				&  & +C\ell^{-1}\int_{E-\ell}^{E}\{N^{-2\varepsilon}+{\cal N}(x-\ell_{1},x+\ell_{1})\}{\rm d}x\\
				& \le & {\rm Tr}\mathbf{1}_{[E-\ell,E_{\varepsilon}]}*\vartheta_{\eta_{1}}(W){\rm d}x\\
				&  & +CN^{-2\varepsilon}+C\frac{\ell_{1}}{\ell}{\cal N}(E-2\ell,E+\ell),
			\end{eqnarray*}
			where the last inequality follows from the fact that each eigenvalue in $[E-2\ell,E+\ell]$
			will contribute to the integral $\int_{E-\ell}^{E}{\cal N}(x-\ell_{1},x+\ell_{1}){\rm d}x$
			at most $2\ell_{1}$ mass, since the length of the interval $[x-\ell_{1},x+\ell_{1}]$
			is $2\ell_{1}$ for each $x\in[E-\ell,E]$. From Theorem \ref{thm:rigidity},
			(\ref{eq:choice of E in Corollary 5.1}), $\ell_{1}/\ell=2N^{-2\varepsilon}$,
			$\ell\le N^{-2/3}$ and the square root behavior of $\varrho$ (see
			e.g. Lemma 2.1 of \cite{Bao2013}), we get that
			
			\[
			\begin{split}
			\frac{\ell_{1}}{\ell}{\cal N}(E-2\ell,E+\ell)=& 2N^{1-2\varepsilon}\{\mathfrak{n}(E-2\ell,E+\ell)+O_{\prec}(N^{-1})\}\\
			=&2N^{1-2\varepsilon}\{\int_{E-2\ell}^{E+\ell}\varrho({\rm d}x)+O_{\prec}(N^{-1})\}\\
			\le & 2N^{1-2\varepsilon}\{\int_{E-2\ell}^{(E+\ell)\land\lambda_{+}}O(1)\sqrt{\lambda_{+}-x}\varrho({\rm d}x)+O_{\prec}(N^{-1})\}\\
			\le & CN^{-5\varepsilon/2}+O_{\prec}(N^{-2\varepsilon}).
			\end{split}
			\]
			
			We have thus proved that
			\[
			{\cal N}(E,E_{\varepsilon})={\rm Tr}\mathbf{1}_{[E,E_{\varepsilon}]}(W)\le{\rm Tr}\mathbf{1}_{[E-\ell,E_{\varepsilon}]}*\vartheta_{\eta_{1}}(W)+N^{-\varepsilon}
			\]
			holds with high probability.
			
			Using Theorem \ref{thm:rigidity}, it follows that we can replace
			${\cal N}(E,E_{\varepsilon})$ by ${\cal N}(E,\infty)$ with a loss
			of probability of at most $N^{-D}$ for any large $D>0$. This proves
			the upper bound of (\ref{eq:bound of N(E,infty)}). The lower bound
			of (\ref{eq:bound of N(E,infty)}) can be shown analogously.
			
			When (\ref{eq:bound of N(E,infty)}) holds, the event ${\cal N}(E,\infty)=0$
			implies that ${\rm Tr}\mathbf{1}_{[E+\ell,E_{\varepsilon}]}*\vartheta_{\eta_{1}}(W)\le1/9$.
			Thus we have
			\[
			\mathbb{P}({\cal N}(E,\infty)=0)\le\mathbb{P}({\rm Tr}\mathbf{1}_{[E+\ell,E_{\varepsilon}]}*\vartheta_{\eta_{1}}(W)\le1/9)+N^{-D},
			\]
			which together with Markov's inequality proves the upper bound of (\ref{eq:bound of P(N(E,infty))}).
			For the lower bound, by using the upper bound of (\ref{eq:bound of N(E,infty)})
			and the fact that ${\cal N}(E,\infty)$ is an integer, we see that
			\begin{multline*}
			\mathbb{E}q\big({\rm Tr}\mathbf{1}_{[E-\ell,E_{\varepsilon}]}*\vartheta_{\eta_{1}}(W)\big)\le\mathbb{P}\big({\rm Tr}\mathbf{1}_{[E-\ell,E_{\varepsilon}]}*\vartheta_{\eta_{1}}(W)\le2/9\big)\\
			\le\mathbb{P}\big({\cal N}(E,\infty)\le2/9+N^{-\varepsilon}\big)=\mathbb{P}\big({\cal N}(E,\infty)=0\big).
			\end{multline*}
			
			This completes the proof of Corollary \ref{cor:bound of N(E,infty)}.
		\end{proof}
		
		\begin{proof}[Proof of Theorem \ref{thm:Edge universality with small support}]
			Let $\varepsilon>0$ be an arbitrary small number. Let $E=\lambda_{+}+sN^{-2/3}$
			for some $|s|\le N^{\varepsilon}$. Define $E_{\varepsilon}=\lambda_{+}+N^{-2/3+\varepsilon}$,
			$\ell=N^{-2/3-\varepsilon}/2$ and $\eta_{1}=N^{-2/3-9\varepsilon}$.
			Define $\tilde{W}$, $\tilde{{\cal N}}$ to be the analogs of $W$, ${\cal N}$
			but with $\tilde{X}$ in place of $X$.
			
			Using Corollary \ref{cor:bound of N(E,infty)}, we have
			\begin{equation}
			\mathbb{E}q\big({\rm Tr}\mathbf{1}_{[E-\ell,E_{\varepsilon}]}*\vartheta_{\eta_{1}}(\tilde{W})\big)\le\mathbb{P}\big(\tilde{{\cal N}}(E,\infty)=0\big).\label{eq:proof of universality bound 1}
			\end{equation}
			
			Recall that by definition
			\[
			{\rm Tr}\mathbf{1}_{[E-\ell,E_{\varepsilon}]}*\vartheta_{\eta_{1}}(W)=\frac{N}{\pi}\int_{E-\ell}^{E_{\varepsilon}}\operatorname{Im} m_{N}(x+\imath\eta_{1}){\rm d}x.
			\]
			
			Theorem \ref{thm:GreenFuncitonComparision} applied to the case where
			$E_{1}=E-\ell$ and $E_{2}=E_{\varepsilon}$ shows that there exists
			$\delta>0$ such that
			\begin{equation}
			\mathbb{E}q\big({\rm Tr}\mathbf{1}_{[E-\ell,E_{\varepsilon}]}*\vartheta_{\eta_{1}}(W)\big)\le\mathbb{E}q\big({\rm Tr}\mathbf{1}_{[E-\ell,E_{\varepsilon}]}*\vartheta_{\eta_{1}}(\tilde{W})\big)+N^{-\delta}.\label{eq:difference between q and qtilde}
			\end{equation}
			Then applying Corollary \ref{cor:bound of N(E,infty)} to the left-hand
			side of (\ref{eq:difference between q and qtilde}), we have for arbitrarily
			large $D>0$
			\begin{equation}
			\mathbb{P}\big({\cal N}(E-2\ell,\infty)=0\big)\le\mathbb{E}q\big({\rm Tr}\mathbf{1}_{[E-\ell,E_{\varepsilon}]}*\vartheta_{\eta_{1}}(W)\big)+N^{-D}\label{eq:proof of universality bound 2}
			\end{equation}
			as $N$ is sufficiently large.
			
			Using the bounds (\ref{eq:proof of universality bound 1}), (\ref{eq:difference between q and qtilde})
			and (\ref{eq:proof of universality bound 2}), we get that
			\[
			\mathbb{P}\big({\cal N}(E-2\ell,\infty)=0\big)\le\mathbb{P}\big(\tilde{{\cal N}}(E,\infty)=0\big)+2N^{-\delta}
			\]
			for sufficiently small $\varepsilon>0$ and sufficiently large $N$.
			Recall that $E=\lambda_{+}+sN^{-2/3}$. The proof of the first inequality
			of Theorem \ref{thm:Edge universality with small support} is thus complete. By switching
			the roles of $X$ and $\tilde{X}$, the second inequality follows.
			The proof is done.
		\end{proof}

		\section{\label{Appendix D}Some useful results.}
		
		\begin{lem}[Sherman-Morrison formula]
			\label{lem:Sherman-Morrison formula}Let $A$ be an invertible matrix
			and $\mathbf{x}$ be a column vector such that $\mathbf{x}\mathbf{x}^{*}$
			has the same size as $A$. Then
			\begin{equation}
			(A+\mathbf{x}\mathbf{x}^{*})^{-1}=A^{-1}-\frac{A^{-1}\mathbf{x}\mathbf{x}^{*}A^{-1}}{1+\mathbf{x}^{*}A^{-1}\mathbf{x}}.\label{eq:Sherman-Morrison formula}
			\end{equation}
		\end{lem}
		\begin{proof}
			Multiplying $A+\mathbf{x}\mathbf{x}^{*}$ on both sides of (\ref{eq:Sherman-Morrison formula}),
			we get the identity $I=I$.
		\end{proof}
		
		\begin{thm}[Moments of uniform spherical distribution]
			\label{thm:mix_moment}Let $\mathbf{u}=(u_{1},\dots,u_{M})^{\prime}$
			be an $M$-dimensional random vector of spherical uniform distribution.
			Let $n\in\{1,\dots,M\}$, $i_{1},\dots,i_{n}\in\{1,\dots,M\}$ and
			$k_{1},\dots,k_{n}$ be positive integers. Defining $k_{0}=k_{1}+\cdots+k_{n}$,
			we have
			\[
			\mathbb{E}|u_{i_{1}}^{k_{1}}\cdots u_{i_{n}}^{k_{n}}|=\frac{\Gamma(\frac{M}{2})\prod_{i=1}^{n}\Gamma(\frac{k_{i}+1}{2})}{\pi^{n/2}\Gamma(\frac{k_{0}+M}{2})}\le C_{k_{0},n}M^{-k_{0}/2},
			\]
			where $C_{k_{0},n}>0$ is a constant depending on $k_{0}$ and $n$
			only. Moreover, if for some $j\in\{1,\dots,n\}$, $k_{j}$ is odd.
			Then
			\[
			\mathbb{E}(u_{i_{1}}^{k_{1}}\cdots u_{i_{n}}^{k_{n}})=0.
			\]
		\end{thm}
		\begin{proof}[Proof of Theorem \ref{thm:mix_moment}]
			Write $\mathbf{u}=\mathbf{z}/\|\mathbf{z}\|$ for some $N_{M}(0,I)$
			random vector $\mathbf{z}$. Then $\|\mathbf{z}\|$ follows a half
			normal distribution with scale parameter $M$ and $\mathbf{u}$ is
			independent with $\|\mathbf{z}\|$ (see e.g. Page 37 of \cite{Muirhead2005}). So
			\[
			\mathbb{E}(\|\mathbf{z}\|^{k_{0}}|u_{i_{1}}^{k_{1}}\cdots u_{i_{n}}^{k_{n}}|)=\mathbb{E}|\mathbf{z}_{i_{1}}^{k_{1}}\cdots\mathbf{z}_{i_{n}}^{k_{n}}|=\frac{2^{k_{0}/2}\prod_{i=1}^{n}\Gamma(\frac{k_{i}+1}{2})}{\pi^{n/2}}.
			\]
			
			We see that
			\begin{eqnarray*}
				\mathbb{E}(\|\mathbf{z}\|^{k_{0}}) & = & \frac{2^{k_{0}/2}\Gamma(\frac{k_{0}+M}{2})}{\Gamma(\frac{M}{2})}\\
				& = & \begin{cases}
					2^{k_{0}/2}\prod_{i=1}^{k_{0}/2}(\frac{k_{0}+M}{2}-i) & \text{if }k_{0}\text{ is even},\\
					2^{k_{0}/2}\prod_{i=1}^{(k_{0}-1)/2}(\frac{k_{0}+M}{2}-i)\frac{\Gamma(\frac{M+1}{2})}{\Gamma(\frac{M}{2})} & \text{if }k_{0}\text{ is odd},
				\end{cases}\\
				& \ge & \begin{cases}
					M^{k_{0}/2} & \text{if }k_{0}\text{ is even},\\
					\sqrt{\frac{1}{2}}M^{k_{0}/2} & \text{if }k_{0}\text{ is odd},
				\end{cases}
			\end{eqnarray*}
			where for odd $k_{0}$, we have used Wendel's inequality (see e.g.
			\cite{Qi2013}) that
			\[
			\frac{\Gamma(x+s)}{\Gamma(x)}\ge x^{s}\Big(\frac{x}{x+s}\Big)^{1-s}\qquad\forall x>0,\ 0<s<1.
			\]
			
			The second result simply follows from the fact that
			\[
			\mathbb{E}(u_{i_{1}}^{k_{1}}\cdots u_{i_{j}}^{k_{j}}\cdots u_{i_{n}}^{k_{n}})=\mathbb{E}\{u_{i_{1}}^{k_{1}}\cdots(-u_{i_{j}})^{k_{j}}\cdots u_{i_{n}}^{k_{n}}\}=\mathbb{E}\{u_{i_{1}}^{k_{1}}\cdots(-u_{i_{j}}^{k_{j}})\cdots u_{i_{n}}^{k_{n}}\}=0.
			\]
		\end{proof}
		
		\begin{lem}
			\label{lem:YY<C w.h.p} Under Conditions \ref{cond:1} and \ref{cond:3},
			there exists a constant $C>0$ such that $\|XX^{*}\|\le C$ with
			high probability.
		\end{lem}
		\begin{proof}
			Recall that $XX^{*}=\Sigma^{1/2}U\mathscr{D}^{2}U^{*}\Sigma^{1/2}$.
			We observe that
			\begin{equation}
			\max_{i}\xi_{i}^{2}=\max_{i}(\xi_{i}^{2}-MN^{-1})+MN^{-1}=MN^{-1}+O_{\prec}(N^{-1/2}).\label{eq:max xi^2}
			\end{equation}
			
			From (\ref{eq:max xi^2}), the assumption that $\|\Sigma\|$ is bounded
			and the inequality $\|XX^{*}\|\le\|\Sigma\|\|UU^{*}\|$ $\|\mathscr{D}^{2}\|$,
			we see that it suffices to show that there exists a constant $C>0$
			such that $\|UU^{*}\|\le C$ with high probability. Write the $i,j$-th
			entry of $U$ as $U_{ij}$. We see that $\mathbb{E}\sqrt{M}U_{ij}=0$,
			$\mathbb{E}(\sqrt{M}U_{ij})^{2}=1$ and $\mathbb{E}(\sqrt{M}U_{ij})^{k}<\infty$
			for all $k\ge3$. The desired result then follows from the arguments
			in \cite{Yin1988} applied to the matrix $MN^{-1}UU^{*}$ which shows
			that for any $k\in\{1,2,\cdots\}$ and $c>(1+\sqrt{M/N})^{2}$, $\mathbb{E}{\rm Tr}(MN^{-1}UU^{*})^{k}\le c^{k}$
			for all large $N$. Indeed, the matrix $U$ violates the assumption
			in \cite{Yin1988} that all entries of $U$ are mutually independent.
			However, this will not invalidate the proof because the strategy of
			\cite{Yin1988} is to bound the probability $\mathbb{P}({\rm Tr}(MN^{-1}UU^{*})^{k}>c^{k})$,
			for which the only inputs needed are the bounds on
			\[
			|\mathbb{E}U_{i_{1}j_{1}}U_{i_{2}j_{1}}\cdots U_{i_{k}j_{k}}U_{i_{1}j_{k}}|.
			\]
			
			We note that the only consequence that the violation of independence
			leads to is that the expectation of products of the $U$ entires from
			the same column do not factor into products of expectation. However,
			this is not a problem since the expectation of the product of dependent
			$U$ entries can be bounded by product of individual expectations.
			To be specific, we consider, without loss of generality, the $U$ entries
			from the first column. Let $m\le M$ be a positive integer, $i_{1},\dots,i_{m}\in\{1,\dots,M\}$
			be distinct $m$ integers and $a_{1},\dots,a_{m}$ be $m$ positive
			integers. Denote $a_{0}=a_{1}+\cdots+a_{m}$. Then we claim that
			\begin{equation}
			\mathbb{E}(U_{i_{1}1}^{a_{1}}\cdots U_{i_{m}1}^{a_{m}})\le\mathbb{E}(U_{i_{1}1}^{a_{1}})\cdots\mathbb{E}(U_{i_{m}1}^{a_{m}}).\label{eq:EU<factored}
			\end{equation}
			
			If one of $a_{1},\dots,a_{m}$ is odd, (\ref{eq:EU<factored})
			is true because by symmetry of the $U$ entries, both $\mathbb{E}(U_{i_{1}1}^{a_{1}}\cdots U_{i_{m}1}^{a_{m}}).$
			and $\mathbb{E}(U_{i_{1}1}^{a_{1}})\cdots\mathbb{E}(U_{i_{m}1}^{a_{m}})$
			are $0$. Hence from now on, we assume that all of $a_{1},\dots,a_{m}$
			are even.
			
			Let $\mathbf{z}\equiv(\mathbf{z}_{1},\dots,\mathbf{z}_{M})^{*}$ be
			a real-valued $M$ dimensional standard normal random vector. Then
			we have that $\mathbf{z}/\|\mathbf{z}\|\sim U(\mathbb{S}^{M-1})$
			and $\mathbf{z}/\|\mathbf{z}\|$ are independent with $\|\mathbf{z}\|$.
			It thus follows that
			\[
			\mathbb{E}(U_{i_{1}1}^{a_{1}}\cdots U_{i_{m}1}^{a_{m}})\mathbb{E}\|\mathbf{z}\|^{a_{0}}=\mathbb{E}(\mathbf{z}_{i_{1}}^{a_{1}})\cdots\mathbb{E}(\mathbf{z}_{i_{m}}^{a_{m}}).
			\]
			
			Therefore,
			\begin{multline*}
			\frac{\mathbb{E}(U_{i_{1}1}^{a_{1}}\cdots U_{i_{m}1}^{a_{m}})}{\mathbb{E}(U_{i_{1}1}^{a_{1}})\cdots\mathbb{E}(U_{i_{m}1}^{a_{m}})}=\frac{\mathbb{E}(\|\mathbf{z}\|^{a_{1}})\cdots\mathbb{E}(\|\mathbf{z}\|^{a_{m}})}{\mathbb{E}(\|\mathbf{z}\|^{a_{0}})}\le\frac{\mathbb{E}(\|\mathbf{z}\|^{a_{1}})\cdots\mathbb{E}(\|\mathbf{z}\|^{a_{m}})}{\mathbb{E}(\|\mathbf{z}\|^{a_{1}})\mathbb{E}(\|\mathbf{z}\|^{a_{0}-a_{1}})}\\
			\le\frac{\mathbb{E}(\|\mathbf{z}\|^{a_{1}})\cdots\mathbb{E}(\|\mathbf{z}\|^{a_{m}})}{\mathbb{E}(\|\mathbf{z}\|^{a_{1}})(\mathbb{E}\|\mathbf{z}\|^{a_{2}})\mathbb{E}(\|\mathbf{z}\|^{a_{0}-a_{1}-a_{2}})}\le\cdots\le\frac{\mathbb{E}(\|\mathbf{z}\|^{a_{1}})\cdots\mathbb{E}(\|\mathbf{z}\|^{a_{m}})}{\mathbb{E}(\|\mathbf{z}\|^{a_{1}})\cdots\mathbb{E}(\|\mathbf{z}\|^{a_{m}})}=1,
			\end{multline*}
			where the first to the last inequalities follow from the fact that
			$\|\mathbf{z}\|^{a_{k}}$ and $\|\mathbf{z}\|^{a_{0}-\sum_{j=1}^{k}a_{j}}$
			are positive correlated for all $k=1,\dots,m-1$. Now the proof of
			the claim is complete and this lemma is shown.
		\end{proof}

		\
	\end{appendix}

\end{document}